\numberwithin{equation}{section}
\newcommand{\eps}{\varepsilon}
\newcommand{\wn}{\widetilde{n}_{\varepsilon}}
\newcommand{\wc}{\widetilde{c}_{\varepsilon}}
\newcommand{\ww}{\widetilde{w}_{\varepsilon}}
\renewcommand{\O}{\Omega}
\renewcommand{\epsilon}{\varepsilon}
\newcommand{\intO}{\int_{\Omega}}
\newcommand{\intQt}{\iint_{\Omega_{t}}}
\newcommand{\intQT}{\iint_{\Omega_{T}}}
\newtheorem{theorem}{{\bf Theorem}}[section]
\theoremstyle{definition} 
\newtheorem{assumption}{\bf Assumption}[section]
\newtheorem{corollary}{{\bf Corollary}}[]
\theoremstyle{plain}
\newtheorem{lemma}{Lemma}[section]
\newtheorem{remark}{Remark}[section]
\newcommand{\hs}{\hspace*{-0.2cm}}
\DeclareMathOperator*{\esssup}{ess\,sup}
\title{\bf \LARGE Parabolic-elliptic and indirect-direct simplifications in   chemotaxis systems driven by indirect signalling}
\author{Le Trong Thanh Bui$^{a}$\footnote{thanhblt@ueh.edu.vn}\,, Thi Kim Loan Huynh$^{b,c,d}$\footnote{loanhtk@tlu.edu.vn}\,, Bao Quoc Tang$^{e}$\footnote{quoc.tang@uni-graz.at, baotangquoc@gmail.com}\,, Bao-Ngoc Tran$^{e,f}$\footnote{bao-ngoc.tran@uni-graz.at, tranbaongoc@hcmuaf.edu.vn (corresponding author)}}
\date{}
\begin{document}

\maketitle 

\vspace{-1cm}

\begin{center}
{\small
$^{a}$University of Economics Ho Chi Minh City, Ho Chi Minh City,
Vietnam\vspace{0.2cm}\\
$^{b}$Faculty of Mathematics and Computer Science, University of Science, Ho Chi Minh City, Vietnam \vspace{0.2cm} \\
$^{c}$Vietnam National University, Ho Chi Minh City, Vietnam\vspace{0.2cm}\\ 
$^{d}$Department of Basic Science, ThuyLoi University - Southern Campus, Ho Chi Minh City, Vietnam \vspace{0.2cm}\\
$^{e}$Department of Mathematics and Scientific Computing, University of Graz, \\ Heinrichstrasse 36, 8010 Graz, Austria \vspace{0.2cm}
\\
$^{f}$Department of Mathematics, Faculty of Science, Nong Lam University,    Ho Chi Minh City, Vietnam
}
\end{center}

\begin{abstract} 
{Singular limits for the following indirect signalling chemotaxis system}
\begin{align*} 
	\left\{ \begin{array}{lllllll}
		\partial_t n  = \Delta n  -  \nabla \cdot (n  \nabla c ) & \text{in } \Omega\times(0,\infty) , \\ 
	\varepsilon \partial_t c   = \Delta c - c  + w & \text{in } \Omega\times(0,\infty), 	\\ 
	\varepsilon	\partial_t w   = \tau \Delta w  - w  + n  & \text{in } \Omega\times (0,\infty), \\
    \partial_\nu n = \partial_\nu c = \partial_\nu w = 0, &\text{on } \partial\Omega\times (0,\infty)
	\end{array} \right.
\end{align*}
{are investigated. More precisely, we study parabolic-elliptic simplification, or PES, $\varepsilon\to 0^+$ with fixed $\tau>0$ up to the critical dimension $N=4$, and indirect-direct simplification, or IDS, $(\varepsilon,\tau)\to (0^+,0^+)$ up to the critical dimension $N=2$. These are relevant in biological situations where the signalling process is on a much faster time scale compared to the species diffusion and all interactions. Showing singular limits in critical dimensions is challenging. To deal with the PES, we carefully combine the entropy function, an Adam-type inequality, the regularisation of slow evolution, and an energy equation method to obtain strong convergence in representative spaces. For the IDS, a bootstrap argument concerning the $L^p$-energy function is devised, which allows us to obtain suitable uniform bounds for the singular limits. Moreover, in both scenarios, we also present the convergence rates, where the effect of the initial layer and the convergence to the critical manifold are also revealed.}

\medskip

\textbf{Keywords}: Indirect signalling chemotaxis system, Fast signal diffusion limit, Parabolic-elliptic simplification, Indirect-direct simplification, Initial layers. 

\end{abstract}  

\tableofcontents


\section{Introduction} 

The term chemotaxis has been widely used to describe the directed movement of a species responding to a stimulus, with numerous applications in bacterial aggregation \cite{berg1972chemotaxis,erban2004individual}, cell invasion \cite{roussos2011chemotaxis,bellomo2015toward}, food chains \cite{tao2022existence,reisch2024global}, and other contexts. In mathematical modelling, it turns into cross-diffusive terms in parabolic-parabolic or parabolic-elliptic systems of PDEs. Recently, chemotaxis systems with indirect signalling mechanisms have gained a lot of attention, where a system may include one species and two signals, or two species and one signal.  
Besides the suggestion of better responses of a species to the environment, see e.g. \cite{neumann2010differences}, the differences between the direct and indirect signalling also raise many interesting analytical questions, regarding the global solvability and uniform boundedness \cite{fujie2017application,ren2022global}, infinite-time aggregation \cite{tao2017critical,tao2025switch}, large-time behaviours \cite{zhang2019large,liu2020global}, or singular limits \cite{li2023convergence,laurenccot2024singular}.    

\medskip
Let $\Omega \subset \mathbb{R}^N$, $1\le N\le 4$, be a bounded domain with sufficiently smooth boundary {$\Gamma:= \partial\Omega$}. {In this work, we study the singular limits $\eps\to 0^+$ and $(\eps,\tau) \to (0^+, 0^+)$ of the following} indirect signalling chemotaxis system     
\begin{align}\label{Sys:Eps:Main}
	\left\{ \begin{array}{lllllll}
		\partial_t n &\hspace{-0.3cm}=&\hspace{-0.2cm} \Delta n -  \nabla \cdot (n \nabla c) & \text{in } \Omega \times(0,\infty) , \vspace{0.1cm} \\ 
	\varepsilon \partial_t c &\hspace{-0.3cm}=&\hspace{-0.2cm}   \Delta c - c + w  & \text{in } \Omega \times(0,\infty), \vspace{0.1cm} 	\\ 
	\varepsilon \partial_t w &\hspace{-0.3cm}=&\hspace{-0.2cm}  \tau \Delta w - w + n   & \text{in } \Omega \times(0,\infty),
	\end{array} \right.
\end{align}
which is subjected to the no-flux boundary conditions 
\begin{align}\label{Sys:Eps:BounCond}
\frac{\partial n}{\partial \nu} = \frac{\partial c}{\partial \nu} = \frac{\partial w}{\partial \nu}= 0 \quad \text{on } \Gamma \times(0,\infty) , 
\end{align}
and the initial condition 
\begin{align}\label{Sys:Eps:InitCond}
	(n,c,w)|_{t=0}=(n_{0},c_{0},w_{0})  \quad \text{on }  \Omega,  
\end{align}
where    $n_{0},c_{0},w_{0}$ are given smooth data.  {This system has been studied in  \cite{strohm2013pattern,li2023convergence} to model the movement of Mountain Pine Beetles in a forest habitat $\Omega$, with $\eps>0$ and $\tau=0$}, where $n$ and $w$ represent the densities of the flying and nesting species, and $c$ is the concentration of beetle pheromones. {In \cite{fujie2017application},  the authors studied System \eqref{Sys:Eps:Main}, with $\varepsilon=\tau=1$, which models the aggregation phenomena of microglia cells in the Alzheimer disease}, where $n$ represents a species density and $c,w$ are the {concentrations of two different chemicals}. A variant of \eqref{Sys:Eps:Main} with the setting in the whole spatial domain $\mathbb{R}^4$ can be found in \cite{hosono2025global}. For related models {concerning indirect signalling}, we refer the reader to  \cite{tao2017critical,zhang2019large,liu2020global,ren2022global,laurenccot2024singular,tao2025switch} and references therein.  
\medskip

Biologically, signals can diffuse on a much faster time scale than the species self-diffusion, which leads to mathematical models that include a sufficiently small parameter $0<\varepsilon \ll 1$ appearing in front of the time evolution of the signal concentration  (i.e., its time derivatives). This scenario has been discussed for the last several decades, where parabolic-parabolic chemotaxis systems had been simplified to their parabolic-elliptic relatives \cite{corrias2004global,kiselev2022chemotaxis}. This type of simplification is well-known as the notion of \textit{fast signal diffusion limits} or \textit{parabolic-elliptic simplification} (PES for short) \cite{wang2019fast,reisch2024global}, which offers significant benefits not only in mathematical analysis but also in computational simulations. A PES is formally achieved by removing the signal evolution from the considered chemotaxis models, or equivalently, by formally assigning $\eps = 0$, leading to an elliptic instead of a parabolic equation for the chemical/signal concentration. However, rigorous analysis of PES has only been conducted in recent works, such as \cite{mizukami2018fast,
mizukami2019fast,freitag2020fast,wang2019fast,ogawa2023maximal,reisch2024global}. {On the other hand, by setting $\eps = \tau = 0$, we see from the third equation of \eqref{Sys:Eps:Main} that $c \equiv w$, i.e. the two signals coincide, and \eqref{Sys:Eps:Main} is reduced to a chemotaxis system with a direct signal. Thus, the singular limit problem $(\eps,\tau)\to (0^+,0^+)$ is called \textit{indirect-direct simplification} (IDS for short), and has also been considered for related problems in e.g. \cite{li2023convergence,laurenccot2024singular}.} 

\medskip

{The main goals of this work are to study PES and IDS for \eqref{Sys:Eps:Main} up to the critical dimensions, $N=4$ and $N=2$, respectively, where we prove the convergence and estimate the convergence rates including the initial layer effect. In the following, we first give the state of the art, which helps to highlight the motivation and novelty of our work. Then, we present our main results as well as the key ideas.}

\subsection{State of the art} 

\medskip
{The study of PES has been initiated in recent years, with the first work focusing on the classical parabolic-parabolic Keller-Segel model}
\begin{align}
\label{Ma:2018:eps}
\left\{ \begin{array}{llll}
\partial_t u_\lambda =  \Delta u_\lambda - \chi \nabla \cdot ( u_\lambda \nabla v_\lambda)  & \text{in } \Omega \times(0,\infty),  \\
\lambda \partial_t v_\lambda = \Delta v_\lambda - v_\lambda + u_\lambda & \text{in } \Omega \times(0,\infty),   \\ 	
(u_\lambda,v_\lambda)|_{t=0}=(u_{0},v_{0}) &  \text{on }  \Omega, 
\end{array} \right.
\end{align}
(subjected to the no-flux boundary conditions) and its parabolic-elliptic relative  
\begin{align}
\left\{ \begin{array}{llll}
\partial_t u  =  \Delta u - \chi \nabla \cdot (u  \nabla v )  & \text{in } \Omega \times(0,\infty),  \\
 \Delta v - v + u = 0 & \text{in } \Omega \times(0,\infty),  \\ 	
    u|_{t=0}=u_{0} &  \text{on }  \Omega.  
\end{array} \right.
\label{Ma:2018:0}
\end{align}
In \cite{mizukami2019fast}, the author positively answered the question: \textit{Does the solution of \eqref{Ma:2018:eps} converge to that of \eqref{Ma:2018:0} as $\lambda \to 0$?} With {sufficiently small and regular} initial data $u_0,v_0$,  
the author showed for $N\ge 2$ that 
    $u_\lambda \to u \text{ in } C_{\mathsf{loc}}(\overline{\Omega}\times[0,\infty))$ and $v_\lambda \to v \text{ in } C_{\mathsf{loc}}(\overline{\Omega}\times(0,\infty)) \cap L^2_{\mathsf{loc}}((0,\infty);W^{1,2}(\Omega))$
as $\lambda\to 0$, where the limit $(u,v)$ {is the classical solution of} \eqref{Ma:2018:0}. {When the chemotactic flux is of the form $u_\lambda S(v_\lambda) \nabla v_\lambda$ (instead of $u_\lambda\nabla v_\lambda$), \cite{mizukami2018fast} showed that for a sensitivity} $S\in C^{1+\vartheta}((0,\infty))$,   $\vartheta\in(0,1)$, {satisfying} $0\le S(v) \le \chi(a+v)^{-k}$ for  $a\ge 0$, $k>1$, {the above convergence holds provided} $\chi<\chi_*$ for some $\chi_*>0$ depending on $k,a,N,u_0,v_0$.  In \cite{freitag2020fast}, the author investigated PES for \eqref{Ma:2018:eps} but with non-degenerate diffusion of porous medium type. For the whole domain setting $\Omega = \mathbb{R}^N$, we refer the reader, for instance, to \cite{kurokiba2020singular,ogawa2023maximal}. This PES has also been investigated also in \cite{wang2019fast} in the context of Keller-Segel-(Navier-)Stokes system 
\begin{align*} 
\left\{ \begin{array}{llll}
\partial_t n_\varepsilon + u_\varepsilon \cdot \nabla n_\varepsilon =  \Delta n_\varepsilon - \nabla \cdot ( n_\varepsilon S(x,n_\varepsilon,c_\varepsilon) \cdot \nabla c_\varepsilon) + f(x,n_\varepsilon,c_\varepsilon),  \\
\varepsilon \partial_t c_\varepsilon + u_\varepsilon \cdot \nabla c_\varepsilon = \Delta c_\varepsilon - c_\varepsilon + n_\varepsilon  ,   \\ 	
\partial_t u_\varepsilon + \kappa (u_\varepsilon \cdot \nabla) u_\varepsilon = \Delta u_\varepsilon + \nabla P_\varepsilon + n_\varepsilon  \nabla \phi, \; \kappa \in \mathbb R, \hfill \nabla \cdot u_\varepsilon = 0  , \\
(n_\varepsilon,c_\varepsilon,u_\varepsilon)|_{t=0} = (n_0,c_0,u_0),
\end{array} \right.
\end{align*}
subjected {$\partial_{\nu}n_\varepsilon = \partial_{\nu}c_\varepsilon = 0$} and $u_\varepsilon = 0$ on the boundary. It was (conditionally) shown therein that this system can be rigorously simplified  to its relative 
\begin{align*}
\left\{ \begin{array}{llll}
\partial_t n + u \cdot \nabla n =  \Delta n - \nabla \cdot ( n S(x,n,c) \cdot \nabla c) + f(x,n,c),  \\
u \cdot \nabla c = \Delta c - c + n  ,   \\ 	
\partial_t u + \kappa (u \cdot \nabla) u = \Delta u + \nabla P + n   \nabla \phi, \hfill \nabla \cdot u = 0  , \\
(n,u)|_{t=0} = (n_0,u_0),
\end{array} \right.
\end{align*}
via the limit as $\varepsilon\to 0$, {provided the following uniform-in-$\varepsilon$ boundedness of $\nabla c_\varepsilon$ and $u_\varepsilon$}
\begin{align*}
    \sup_{\varepsilon>0} \Big( \|\nabla c_\varepsilon\|_{L^p((0,T);L^q(\Omega))} + \|u_\varepsilon\|_{L^\infty((0,T);L^r(\Omega))} \Big) <\infty,
\end{align*}
for some $p,q,r$ such that {$2<p\le \infty$, $q>N$, $r>\max\{2;N\}$ such that $\frac{1}{p} + \frac{N}{2q} < \frac{1}{2}$}. 
{Related results can be found} in \cite{li2021convergence,
li2023stability,
wu2025fast}. 

\medskip
Besides PES, the investigation of IDS has also attracted considerable attention recently. A first work in this direction seems to be \cite{painter2023phenotype}, where the authors considered a phenotype-switching chemotaxis model, which represents an indirect signalling scheme, of the form
\begin{equation}
    \begin{cases}
        \partial_t u_\gamma = \Delta u_\gamma -  \nabla\cdot(u_\gamma\nabla v_\gamma) - \gamma u_\gamma + \gamma w_\gamma, &x\in\Omega,\\
        \partial_t v_\gamma = \Delta v_\gamma - v_\gamma + w_\gamma, &x\in\Omega,\\
        \partial_tw_\gamma = \Delta w_\gamma - \gamma w_\gamma + \gamma u_\gamma, &x\in\Omega,\\
        \partial_{\nu}u_\gamma = \partial_{\nu}v_\gamma = \partial_{\nu}w_\gamma = 0, &x\in\Gamma.
    \end{cases}
\end{equation}
As $\gamma \to \infty$, one expects the limit $(n_\gamma:= u_\gamma + w_\gamma, v_\gamma) \to (n, v)$ where the latter solves the classical Keller-Segel model with direct signalling
\begin{equation*}
    \begin{cases}
        \partial_tn = \Delta n - \frac{\theta}{1+\theta}\nabla \cdot(n\nabla v), &x\in\Omega,\\
        \partial_t v = \Delta v - v + \frac{n}{1 + \theta}, &x\in\Omega,\\
        \partial_\nu n = \partial_\nu v = 0, &x\in\Gamma.
    \end{cases}
\end{equation*}
This convergence was partially shown in \cite{painter2023phenotype}, and later fully proved in \cite{laurenccot2024singular}. A similar problem was considered in \cite{li2023convergence}, where the authors studied  the following system
\begin{align*} 
	\left\{ \begin{array}{lllllll}
		\partial_t n_\varepsilon = \Delta n_\varepsilon -  \nabla \cdot (n_\varepsilon \nabla c_\varepsilon)  , \\ 
	\varepsilon_1 \partial_t c_\varepsilon  = \Delta c_\varepsilon - c_\varepsilon + w_\varepsilon  , 	\\ 
	\varepsilon_2	\partial_t w_\varepsilon  =   - w_\varepsilon + n_\varepsilon , \\
    (n_\varepsilon,c_\varepsilon,w_\varepsilon)|_{t=0}=(n_0,c_0,w_0).
	\end{array} \right.
\end{align*}
{Under the assumption that the initial mass $\int_\Omega n_0$ is sub-critical, i.e. smaller than $4\pi$, this system is shown to converge to either} 
\begin{align*} 
	\left\{ \begin{array}{lllllll}
		\partial_t n = \Delta n  -  \nabla \cdot (n  \nabla c )   , \\ 
	  \partial_t c   = \Delta c  - c  + w , \\
      (n,c)|_{t=0}=(n_0,c_0),
	\end{array} \right. 
    \quad \text{ or } \quad 
    \left\{ \begin{array}{lllllll}
		\partial_t n = \Delta n  -  \nabla \cdot (n  \nabla c )   , \\ 
	  \Delta c  - c  + w = 0, \\
      n|_{t=0}=n_0,
	\end{array} \right.
\end{align*}
corresponding to $\varepsilon_1 =\varepsilon_2 \to 0$ or $\varepsilon_1=1$, $\varepsilon_2 \to 0$, {respectively}.

\medskip
It's worthwhile to mention that the modelling and analysis of chemotaxis systems with indirect signalling of the type \eqref{Sys:Eps:Main}, both in the parabolic-parabolic and parabolic-elliptic settings, have been subjected to extensive investigation, see e.g. \cite{ahn2021global,fuest2023critical,laurenccot2018global,strohm2013pattern,white1998spatial,wu22global} and references therein. Even the question of global existence can be challenging, especially in the critical dimension $N=4$, see e.g. \cite{fujie2017application,hosono2025global}. 

\medskip
Our current work adequately contributes to this literature by investigating the PES and IDS for chemotaxis systems with indirect signalling \eqref{Sys:Eps:Main}-\eqref{Sys:Eps:InitCond} up to the critical dimensions $N=4$ and $N=2$, respectively. Furthermore, we also provide the convergence rates, which have been seemingly completely left out in the literature, and reveal the effect of the initial layer. 
   
\subsection{Main results, challenges and key ideas}

{\bf Notations}: We denote by $L^p$, $W^{k,p}$, for $1\le p\le \infty$ and $k\ge 0$, the usual Lebesgue and Sobolev spaces. Moreover, a general constant $C$ is used for any positive constant that does not depend on spatial and temporal variables, all the unknowns, as well as the relaxation parameters $\varepsilon,\tau$. This general constant can vary from line to line, or even within the same line. In case where a dependence is important, such as the dependence on a terminal time $T$ or the diffusion coefficient $\tau$, we will write $C_T$ or $C_\tau$, etc. {For $0<T\le \infty$, we denote by $\Omega_T:= \Omega\times(0,T)$}

\medskip
To study singular limits for \eqref{Sys:Eps:Main}, we impose the following assumption on initial data throughout this work.
\begin{assumption} \label{Ass:InitialData} The initial data {$(n_0,c_0,w_0) \in C^1(\bar \Omega)\times C^2(\bar \Omega)^2$} is nonnegative and satisfied the compatible condition, i.e., $\frac{\partial n_0}{\partial \nu} = \frac{\partial c_0}{\partial \nu} = \frac{\partial w_0}{\partial \nu} = 0$ {on the boundary $\Gamma$}. 
\end{assumption}

{\bf Our first main results} are about the PES from \eqref{Sys:Eps:Main}-\eqref{Sys:Eps:InitCond} to \eqref{Sys:Lim:Main}-\eqref{Sys:Lim:InitCond}.
{Fix $\tau>0$ and denote by $(n_\eps, c_\eps, w_\eps)$ the solution of \eqref{Sys:Eps:Main} with respect to $\eps>0$.
As  $\varepsilon \to 0$}, we {formally expect that $(n_\varepsilon,c_\varepsilon,w_\varepsilon) \rightarrow (n,c,w)$, and the limit vector   $(n,c,w)$ solves the system}
\begin{align}\label{Sys:Lim:Main}
	\left\{ \begin{array}{lllllll}
		\partial_t n = \Delta n -  \nabla \cdot (n \nabla c) & \text{in } \Omega \times(0,\infty) , \\ 
	\Delta c - c + w = 0 & \text{in } \Omega \times(0,\infty), 	\\ 
	\tau \Delta w - w + n = 0 & \text{in } \Omega \times(0,\infty), \vspace{0.1cm} \\
    \dfrac{\partial n}{\partial \nu} = \dfrac{\partial c}{\partial \nu} = \dfrac{\partial w}{\partial \nu} = 0 & \text{on } \Gamma \times(0,\infty), 
	\end{array} \right.
\end{align}
equipped with the  initial value condition
\begin{align}\label{Sys:Lim:InitCond}
	n|_{t=0}=n_{0} \quad \text{on }  \Omega.
\end{align}
{One of the main challenges} when connecting solutions of \eqref{Sys:Eps:Main}-\eqref{Sys:Eps:InitCond} and  \eqref{Sys:Lim:Main}-\eqref{Sys:Lim:InitCond} or  \eqref{Sys:LimKappa:Main} is the different structures between the parabolicity and ellipticity and the initial layer, especially in the critical dimensions, {$N=4$ for PES and $N=2$ for IDS}, see \cite{nagai1997application}. 
First, to pass to the limit in a strong sense, the slow evolution (i.e., the products of $\varepsilon$ and the time derivatives of $c_\varepsilon,w_\varepsilon$) make the Aubin-Lions lemma difficult to apply. For example, the $L^p$ maximal regularity applied to the slow-evolution equation $\varepsilon \partial_t u_\varepsilon - d \Delta u_\varepsilon + u_\varepsilon = f(x,t)$, associated with the no-flux boundary condition, reads as
\begin{align*}
 \sup_{\varepsilon>0} \Big( \|\varepsilon \partial_t u_\varepsilon\|_{L
 ^{p}(\Omega\times(0,T))} +  \|\Delta u_\varepsilon\|_{L
 ^{p}(\Omega\times(0,T))} \Big)   \le  \left( \frac{\varepsilon}{p} \right)^{\frac{1}{p}} \| u_0\|_{W^{2,p}(\Omega)} +  C_{d,p} \|f \|_{L^p(\Omega\times(0,T))} , 
\end{align*}
see \cite[Lemma 3.4]{reisch2024global}, 
which do not directly give a uniform-in-$\varepsilon$ boundedness for the time derivative $\partial_t u_\varepsilon$. {Obtaining strong convergence for the slow evolution is tricky and usually requires considerable effort, see e.g. \cite{wang2019fast}}. Second, for fixed $\varepsilon>0$ and $\tau>0$, {even} the global solvability for the system \eqref{Sys:Eps:Main}-\eqref{Sys:Eps:InitCond} in the critical dimension $N=4$ is difficult, see \cite{fujie2017application,laurenccot2018global}.
Some steps in that {proof}, involving e.g. the use of the heat semigroup or testing the equations for $c_\varepsilon,w_\varepsilon$ by $c_\varepsilon,-\Delta c_\varepsilon,w_\varepsilon,-\Delta w_\varepsilon$ {heavily depend on $\eps$, and therefore do not yield the required} uniform-in-$\varepsilon$ estimates. For instance, the Duhamel principle  for the latter slow-evolution equation, represented via the Neumann heat semigroup, is written as  \begin{align*}
    u_\varepsilon(x,t) = e^{\frac{1}{\varepsilon}t(d\Delta-I)} u_\varepsilon(x,0) + \frac{1}{\varepsilon}\int_0^t e^{\frac{1}{\varepsilon}(t-s)(d\Delta-I)} f(x,s)ds,
\end{align*}
which yields that a uniform-in-$\varepsilon$ estimate can only be obtained if the regularity of $f$ is sufficiently regular, at least essentially bounded in time, {which is not the case in our situation}. Third, it has been numerically demonstrated in \cite{reisch2024global} that {initial data} starting far away from the critical manifold {$\mathcal C_{\mathsf{PES}}$ (see \eqref{mathcalM})} can lead to a significant loss of simplification accuracy. Hence, to achieve simplification accuracy, an analysis of the initial layer is required.    

In order to rigorously justify this simplification, we exploit the multiple time scale Lyapunov function, see Lemma \ref{L:FSL:Lya:Id},   \begin{align}
\mathcal E(n_\varepsilon,c_\varepsilon) := \int_\Omega \left( n_\varepsilon(\log n_\varepsilon - c_\varepsilon) +  \frac{1}{2}|\Delta c_\varepsilon - c_\varepsilon + w_\varepsilon|^2 + \frac{\tau}{2} |\Delta c_\varepsilon|^2 + \frac{1+\tau}{2} |\nabla c_\varepsilon|^2 + \frac{1}{2} c_\varepsilon^2 \right),
\label{FSL:Lya:Define}
\end{align}
with {its dissipation given by}
\begin{equation}
\begin{aligned}
\mathcal D(n_\varepsilon,c_\varepsilon) &:= {-\frac{d}{dt}\mathcal{E}(n_\eps,c_\eps)}\\
&= \int_\Omega \Big( n_\varepsilon|\nabla (\log n_\varepsilon - c_\varepsilon)|^2 + \frac{1+\tau}{\varepsilon}  |\nabla(\Delta c_\varepsilon - c_\varepsilon + w_\varepsilon)|^2 + \frac{2}{\varepsilon} |\Delta c_\varepsilon - c_\varepsilon + w_\varepsilon|^2 \Big). 
\end{aligned} 
\label{FSL:Lya:Id}
\end{equation}
{It is remarked that the term $n_\varepsilon (\log n_\varepsilon - c_\varepsilon)$ in the Lyapunov function $\mathcal{E}(n_\eps,c_\eps)$ has} no sign and needs to be estimated from below. If $1\le N\le 3$, the Sobolev embedding is sufficient to absorb the norm of  $n_\varepsilon c_\varepsilon$ in $L^\infty((0,T);L^1(\Omega))$ into the $L^\infty((0,T);H^2(\Omega))$-norm of $c_\varepsilon$ in $\mathcal E(n_\varepsilon,c_\varepsilon)$, cf. Lemma \ref{L:FSL:Est:c}, and to obtain an $L^\infty(\Omega_T)$-estimate for $n_\varepsilon$. In the critical dimension $N=4$, the method of using the Adam-type inequality, see \cite[Section 7]{fujie2017application}, can be adapted to balance the energy-dissipation equality. Unfortunately, because of the slow evolution, the locally spatial truncation argument in \cite[Section 8]{fujie2017application} does not work to control the $L^p$-energy. {We overcome this issue} by {adapting the idea} of combining the Sobolev, Gagliardo-Nirenberg, and Young inequalities in \cite[Proof of Theorem 1.2]{hosono2025global}. Then, {some feedback arguments}, using the heat semigroup as well as maximal regularity with slow evolution, help us to estimate
the slow evolution's components $w_\varepsilon,c_\varepsilon$. 

\medskip

{The strong convergence $c_\varepsilon \to c$ in $L^2((0,T);H^1(\Omega))$ is challenging, see e.g. \cite[Section 5]{wang2019fast}, where this was proved by heavily exploiting the higher regularity of $c_\eps$. In this work, we provide a shortened and more direct proof by employing the argument} from \eqref{Theo:FSL:1:P4}-\eqref{Theo:FSL:1:P8}, which is basically based on the so-called \textit{energy equation method}, see e.g.  \cite{ball2004global,henneke2016fast}. This method uses the equation obtained by considering an $L^2$ energy of $(c_\varepsilon-c)$, instead of the energy inequality, and then shows the convergence in norms before using the uniform convexity of $L^2((0,T);H^1(\Omega))$ to get the strong convergence.  

\begin{theorem}[PES for \eqref{Sys:Eps:Main}]
\label{Theo:FSL:1} Let $1\le N\le 4$ and fix $\tau>0$. Assume that  $(n_0,c_0,w_0)$ is complied with Assumption \ref{Ass:InitialData}, and furthermore in the critical dimension $N=4$ that $\Omega = B_R$ for some $R>0$ and 
\begin{align}
    M{:= \int_{\Omega} n_0} < 64 \tau  \pi^2 .
\label{T:1:4:MassCond}
\end{align}
For each $\varepsilon>0$, let  $(n_\varepsilon,c_\varepsilon,w_\varepsilon)$ be the global classical solution to parabolic-parabolic system \eqref{Sys:Eps:Main}-\eqref{Sys:Eps:InitCond}, given by Theorem \ref{Theo:GloEX}. Then, for any $0<T<\infty$,    
\begin{align}
\begin{gathered}
    \sup_{\varepsilon>0} \Big( \|n_\varepsilon\|_{C^{\gamma,\gamma/2}(\overline{\Omega}\times[0,T])} + \|n_\varepsilon\|_{L^2((0,T);H^1(\Omega))}  \Big) \le C_{\tau,T},\\ 
    \sup_{\varepsilon>0} \Big( \|w_\varepsilon \|_{L^\infty((0,T);W^{1,\infty}(\Omega))} + \|\Delta w_\varepsilon \|_{L^p(\Omega_T)} + \|c_\varepsilon \|_{L^\infty((0,T);W^{2,\infty}(\Omega))} \Big) \le C_{\tau,T,p},
\end{gathered}
\label{Theo:FSL:1:S1}
\end{align}
for some $\gamma\in(0,1)$ and any $1\le p<\infty$. As $\varepsilon\to 0$, {we have the following limits} 
\begin{align}
    \begin{aligned}
\begin{array}{llllcl}
n_\varepsilon  &\longrightarrow& n & \text{strongly in}& C(\overline{\Omega}\times [0,T]), \\
\nabla n_\varepsilon &\xrightharpoonup{\hspace{0.38cm}}& \nabla n & \text{weakly in}& L^2(\Omega_T) , \\
c_\varepsilon  &\longrightarrow& c & \text{strongly in}& L^2((0,T);H^1(\Omega)), \\
w_\varepsilon  &\longrightarrow& w & \text{strongly in}& L^2((0,T);H^1(\Omega)),
\end{array}
\end{aligned}
\label{Theo:FSL:1:S2}
\end{align}
and the limit vector $(n,c,w)$ is the unique global classical solution to the indirect signalling parabolic-elliptic system \eqref{Sys:Lim:Main}-\eqref{Sys:Lim:InitCond}.
\end{theorem}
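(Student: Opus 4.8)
The plan is to establish Theorem \ref{Theo:FSL:1} in five stages: (i) derive $\eps$-uniform a priori bounds from the Lyapunov functional \eqref{FSL:Lya:Define}, (ii) bootstrap these into the higher-regularity bounds \eqref{Theo:FSL:1:S1}, (iii) extract weak/strong limits via compactness, (iv) upgrade the convergence of $c_\eps$ (and $w_\eps$) to strong convergence in $L^2((0,T);H^1(\Omega))$ by the energy equation method, and (v) identify the limit as the unique classical solution of \eqref{Sys:Lim:Main}--\eqref{Sys:Lim:InitCond}.

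**Uniform bounds (the main obstacle).** First I would integrate the dissipation identity \eqref{FSL:Lya:Id} in time to get $\mathcal E(n_\eps,c_\eps)(t) + \int_0^t \mathcal D \le \mathcal E(n_\eps,c_\eps)(0)$, where the right-hand side is $\eps$-uniformly bounded by Assumption \ref{Ass:InitialData}. The difficulty is that $\int_\Omega n_\eps(\log n_\eps - c_\eps)$ is not sign-definite, so I must bound $\int_\Omega n_\eps c_\eps$ from above by a small multiple of the good terms in $\mathcal E$. For $1\le N\le 3$ this is routine: $\|c_\eps\|_{L^\infty}\lesssim \|c_\eps\|_{H^2}$ by Sobolev embedding, and $\int_\Omega n_\eps c_\eps \le \|c_\eps\|_{L^\infty}\|n_\eps\|_{L^1} = M\|c_\eps\|_{L^\infty}$ (mass conservation), which is absorbed after a Young inequality. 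In the critical case $N=4$ with $\Omega=B_R$, I would instead use the Adams--Trudinger--Moser-type inequality (following \cite[Section 7]{fujie2017application}) to bound $\int_\Omega n_\eps c_\eps$ in terms of $\int_\Omega n_\eps\log n_\eps$ and $\|\Delta c_\eps\|_{L^2}^2$; the borderline constant $64\tau\pi^2$ in \eqref{T:1:4:MassCond} is exactly what makes the absorption work (the extra $\tau$ factor arising because $\|\Delta c_\eps\|_{L^2}$ must be traded for $\|w_\eps\|_{L^2}$ via the second equation, and then $\tau\|\Delta w_\eps\|_{L^2}^2$ is the available coercive term). This yields $\eps$-uniform bounds on $n_\eps\log n_\eps$ in $L^\infty_t L^1_x$, on $c_\eps$ in $L^\infty_t H^2_x$, and on $\nabla(\Delta c_\eps-c_\eps+w_\eps)$ in $L^2_{t,x}$ together with $\|\Delta c_\eps-c_\eps+w_\eps\|_{L^2_{t,x}}\to 0$ at rate $\eps^{1/2}$.

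**Bootstrapping to \eqref{Theo:FSL:1:S1}.** From the $L^\infty_t L^1_x$ bound on $n_\eps\log n_\eps$, I would apply $L^p$ maximal regularity with slow evolution to $\eps\partial_t w_\eps = \tau\Delta w_\eps - w_\eps + n_\eps$, using the cited \cite[Lemma 3.4]{reisch2024global}, to gain integrability of $w_\eps$; feeding this into the equation for $c_\eps$ via the Neumann heat semigroup improves $\nabla c_\eps$, hence the chemotactic drift, and a De Giorgi--Moser / semigroup iteration on the $n_\eps$-equation yields $\|n_\eps\|_{L^\infty(\Omega_T)}\le C_{\tau,T}$. Here, in $N=4$, the naive local-truncation argument of \cite[Section 8]{fujie2017application} fails because of the $\eps\partial_t$ term, so I would instead follow \cite[Proof of Theorem 1.2]{hosono2025global}, combining Sobolev, Gagliardo--Nirenberg and Young inequalities on an $L^p$-energy $\int_\Omega n_\eps^p$ to close a differential inequality uniformly in $\eps$. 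Once $n_\eps$ is bounded in $L^\infty_{t,x}$, elliptic/parabolic regularity for the $c_\eps,w_\eps$ equations (again using slow-evolution maximal regularity and the semigroup) gives the stated $L^\infty_t W^{2,\infty}_x$ bound on $c_\eps$, $L^\infty_t W^{1,\infty}_x$ on $w_\eps$, $L^p(\Omega_T)$ on $\Delta w_\eps$, and then parabolic Hölder estimates (Ladyzhenskaya--Solonnikov--Uraltseva) give the $C^{\gamma,\gamma/2}$ bound and, by testing the $n_\eps$-equation with $n_\eps$, the $L^2_t H^1_x$ bound.

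**Passing to the limit and identifying it.** Using the Arzelà--Ascoli theorem on the $C^{\gamma,\gamma/2}$ bound I extract $n_\eps\to n$ in $C(\overline\Omega\times[0,T])$, with $\nabla n_\eps\rightharpoonup\nabla n$ in $L^2(\Omega_T)$ and, along a subsequence, $c_\eps\rightharpoonup c$, $w_\eps\rightharpoonup w$ weakly-$*$ in the respective spaces; since $\eps\partial_t c_\eps,\eps\partial_t w_\eps\to 0$ in the sense of distributions (their coefficients are $\eps$ times bounded quantities — or one uses $\|\Delta c_\eps-c_\eps+w_\eps\|_{L^2_{t,x}}\to 0$ directly) the limit satisfies the elliptic equations $\Delta c-c+w=0$ and $\tau\Delta w-w+n=0$ a.e., while the strong convergence of $n_\eps$ and $\nabla c_\eps$ passes the nonlinear drift $\nabla\!\cdot\!(n_\eps\nabla c_\eps)\to\nabla\!\cdot\!(n\nabla c)$ to the limit; the initial condition $n|_{t=0}=n_0$ survives by uniform continuity. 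For the strong $H^1$-convergence of $c_\eps$ I would run the energy equation method as announced in \eqref{Theo:FSL:1:P4}--\eqref{Theo:FSL:1:P8}: write the $L^2$-identity for $c_\eps-c$ using the equations, show each term converges (the $\eps\partial_t c_\eps$ contribution vanishes because $\eps\|c_\eps\|$-type quantities are controlled and $c_\eps(0)=c_0$ is fixed), deduce $\|\nabla c_\eps\|_{L^2_{t,x}}\to\|\nabla c\|_{L^2_{t,x}}$, and combine weak convergence with norm convergence in the uniformly convex space $L^2((0,T);H^1(\Omega))$ to upgrade to strong convergence; the same for $w_\eps$. Finally, the limit $(n,c,w)$ is a classical solution by the regularity just obtained, and uniqueness follows from a standard contraction/Gronwall estimate for the parabolic-elliptic system \eqref{Sys:Lim:Main} (as in the existence theory cited for the limit system); since the whole family converges to this unique limit, no subsequence extraction is needed in the final statement.
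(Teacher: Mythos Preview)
Your proposal is essentially correct and tracks the paper's proof closely: Lyapunov functional \eqref{FSL:Lya:Define}--\eqref{FSL:Lya:Id} balanced via Sobolev embedding ($N\le 3$) or the Adams-type inequality ($N=4$), a bootstrap to $L^\infty(\Omega_T)$ using slow-evolution maximal regularity and the Neumann semigroup, H\"older regularity via \cite{porzio1993holder}, Arzel\`a--Ascoli for $n_\eps$, and the energy equation method for strong $L^2_tH^1_x$ convergence of $c_\eps,w_\eps$.

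Two points deserve correction or sharpening. First, your explanation of the constant $64\tau\pi^2$ in \eqref{T:1:4:MassCond} is off: the factor $\tau$ does \emph{not} come from trading $\|\Delta c_\eps\|_{L^2}$ for $\|w_\eps\|_{L^2}$ and then using $\tau\|\Delta w_\eps\|_{L^2}^2$. It comes directly from the term $\frac{\tau}{2}\|\Delta c_\eps\|_{L^2}^2$ already present in the Lyapunov functional $\mathcal E$; the Adams inequality yields $\int_{B_R}n_\eps c_\eps \le \frac{1}{\alpha}\int n_\eps\log n_\eps + \frac{M}{\alpha}\bigl(\frac{\alpha^2}{128\pi^2}+\eta\bigr)\|(\Delta-I)c_\eps\|_{L^2}^2 + C$, and absorbing the second term into $\frac{\tau}{2}\|\Delta c_\eps\|_{L^2}^2$ with some $\alpha>1$ is possible exactly when $M<64\tau\pi^2$ (cf.\ Lemma~\ref{L:FSL:Est:c4D}). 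Second, in the $N=4$ bootstrap you jump too quickly: the $L^\infty_tL\log L_x$ bound on $n_\eps$ is not enough to run maximal regularity on the $w_\eps$-equation directly. The paper inserts a crucial intermediate step (Lemma~\ref{4:L:L43}): a separate logarithmic energy $\int h(n_\eps)$ with $h(x)=x\log x - x+1$ combined with a Gagliardo--Nirenberg balancing (Lemma~\ref{L:Ineqn:Bala}) yields $n_\eps\in L^2((0,T);L^{4/3}(B_R))$ and $|\nabla n_\eps|^2/n_\eps\in L^1$ uniformly in $\eps$; this then feeds into $w_\eps$ (Lemma~\ref{Local:Pre:Lem0}) and, via the dissipation bound \eqref{L:FSL:Est:c4D:S2}, gives $c_\eps\in L^2_tH^3_x$, i.e.\ $\nabla c_\eps\in L^4_tL^8_x$ (estimate~\eqref{L:4:Lp:P4}), after which the Hosono--Lauren\c{c}ot $L^p$-energy argument (Lemma~\ref{L:4:Lp}) closes. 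Without this intermediate $L^{4/3}$ step the bootstrap does not start.
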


As a by-product of the proof of Theorem \ref{Theo:FSL:1}, we have the following convergences, which also explain the mechanism of the PES
\begin{equation}\label{Theo:FSL:1:S3}
\begin{aligned}
    \|\varepsilon \partial_t c_\varepsilon \|_{L^2((0,T);H^1(\Omega))}=\|\Delta c_\varepsilon - c_\varepsilon + w_\varepsilon \|_{L^2((0,T);H^1(\Omega))} \le C_\tau \sqrt{\varepsilon} , \\
    \varepsilon \partial_t w_\varepsilon = \tau\Delta w_\varepsilon - w_\varepsilon + n_\varepsilon \quad  \xrightharpoonup{\hspace{0.38cm}} \quad 0  \quad{\text{ in distributional sense}}.  
\end{aligned}
\end{equation}
{Up to now, we have only obtained the weak convergence for the equation of $w_\eps$ due to a lack of uniform regularity information of $\partial_t w_\varepsilon$}. {We show that this strong convergence will be a consequence of the next part, where the accuracy of the PES provided in Theorem \ref{Theo:FSL:1} is investigated}. By subtracting the corresponding equations of solution components of the systems \eqref{Sys:Eps:Main}-\eqref{Sys:Eps:InitCond}
and 
\eqref{Sys:Lim:Main}-\eqref{Sys:Lim:InitCond}, we see that the vector {$(\wn,\wc,\ww)  := (n_{\varepsilon}-n,c_{\varepsilon}-c, w_{\varepsilon}-w)$} is the solution of the so-called \textit{rate system}
\begin{align}\label{Sys:Rate:Convergence}
	\left\{ \begin{array}{lllllll}
		\partial_t \wn &\hs=\hs& \Delta \wn -  \nabla \cdot (\wn \nabla c_\varepsilon + n\nabla \wc) & \text{in } \Omega_\infty , \\ 
		\varepsilon \partial_t \wc  &\hs=\hs& \Delta \wc - \wc + \ww - \varepsilon \partial_t c & \text{in } \Omega_\infty, 	\\ 
		\varepsilon	\partial_t \ww  &\hs=\hs& \tau \Delta \ww - \ww + \wn - \varepsilon \partial_t w & \text{in } \Omega_\infty,
	\end{array} \right.
\end{align}
which is subjected to the boundary conditions 
\begin{align}\label{Sys:Rate:BounCond}
	\frac{\partial \wn} {\partial \nu} = \frac{\partial \wc} {\partial \nu} = \frac{\partial \ww}{\partial \nu} = 0 \quad \text{on } \Gamma_\infty, 
\end{align}
and the initial value condition
\begin{align}\label{Sys:Rate:InitialCond}
	(\wn (0), \wc(0), \ww (0)) = (0, c_0 - c(0), w_0 - w(0)).
\end{align}
It is obvious to see that $c(0)$ and $w(0)$ are not given a priori, and they may be well different from $c_0$ and $w_0$, respectively. These missing initial values can only be recovered, thanks to the last two equations in \eqref{Sys:Lim:Main}-\eqref{Sys:Lim:InitCond}, as
\begin{align}
     w(x,0) = (-\tau \Delta + I)^{-1}n_0, \quad c(x,0) = (- \Delta + I)^{-1}w(x,0).
     \label{InititalLayer0}
\end{align}
This difference in the initial values is referred to as the \textit{initial layer}. {It has been usually assumed to be zero in the literature, see e.g. \cite{li2024characterization}. However, this turns out to be important in studying the accuracy of the PES (or IDS), which is evidenced in the recent work \cite{reisch2024global}, where the effect of the initial layer has been carefully analysed for the PES of a competitive prey-predator chemotaxis system. This effect is especially relevant when the original initial data $(n_0, c_0, w_0)$ do not lie on the critical manifold, which is defined by}
\begin{align}
    \mathcal C_{\mathsf{PES}}:= \Big\{(n,c,w) \in L^2(\Omega) \times H^2(\Omega)^2: \, (\Delta c - c + w , \, \tau \Delta w - w + n) =(0,0)  \Big\}.
    \label{mathcalM}
\end{align}
{We define} the distance from the initial data $(n_0,c_0,w_0)$ to the critical manifold $\mathcal C_{\mathsf{PES}}
$ with respect to the topology $W^{k,p}(\Omega) \times W^{l,p}(\Omega)$ by 
\begin{align}
\label{DistanceDef}
\mathrm{dist}^{k,l}_p[(n_0,c_0,w_0);\mathcal C_{\mathsf{PES}}] := {\sqrt{\|- \Delta c_0 + c_0 -w_0\|_{W^{k,p}(\Omega)}^2 + \|- \tau \Delta w_0 + w_0-n_0\|_{W^{l,p}(\Omega)}^2}}\,,
\end{align}
for $k,l\in \mathbb N$ and $1\le p\le \infty$. {When $k=l=0$, $p=2$, we conveniently write $\mathrm{dist}:= \mathrm{dist}^{0,0}_2$}. {By using} the following representations of the inverse operators $(-\Delta + I)^{-1}$ and $(-\tau\Delta + I)^{-1}$, see e.g. \cite{reisch2024global},  
\begin{align} 
\left\{
\begin{array}{llll}
\displaystyle \wc(x,0) 
	\; = \int_0^{\infty} e^{s(\Delta - I)}[-\Delta c_0(x) + c_0(x) -w_0(x)] ds , & x \in \Omega, \vspace{0.15cm} \\
\displaystyle      \ww(x,0) =   \int_0^{\infty} e^{s(\tau \Delta - I)}[-\tau \Delta w_0(x) + w_0(x) - n_0(x)] ds , & x \in \Omega,
\end{array}\right.
\label{InitialLayer}
\end{align}
we can estimate {these the initial layers}  by the distance {$\mathrm{dist}^{1,2}_p [(n_0,c_0,w_0);\mathcal C_{\mathsf{PES}}]$}, see Lemma \ref{Lem:Layer}. Then, we can employ the {uniform-in-$\eps$ estimates in Theorem \ref{Theo:FSL:1} to obtain for each $1\le k\in \mathbb N$} (see Lemma \ref{est:engrate}), 
\begin{align*}
		\dfrac{d}{dt} \intO \wn ^{2k}(t) \leq -\dfrac{2k-1}{k}  \intO |\nabla \wn^{k}|^2 
		+ C_{k, T}\intO \wn ^{2k} + C_{k, T}\intO |\nabla \wc|^2,
	\end{align*}
{to test} the equations for $\wc,\ww$, and apply the fundamental differential inequality given in Lemma \ref{aDI} to obtain convergence rates as follows.  
\begin{theorem}[Convergence rates and the initial layer's effect]
\label{Theo:FSL:3} Let $1\le N\le 4$,  and fix $\tau>0$. For each $\varepsilon>0$, let $(n_\varepsilon,c_\varepsilon,w_\varepsilon)$ be the global classical solution to the system \eqref{Sys:Eps:Main}-\eqref{Sys:Eps:InitCond}, given by Theorem \ref{Theo:GloEX}. 

\medskip
 
\noindent a) Assuming that the distance $\mathrm{dist}^{2,1}_2 [(n_0,c_0,w_0);\mathcal C_{\mathsf{PES}}]$ 
is finite. Then,  
\begin{align} 
  \|\wn\|_{L^{\infty}((0,T);L^2(\Omega))} + \|\wn\|_{L^{2}((0,T); H^1(\Omega))} &\leq C_T  \big( \varepsilon + \sqrt{\varepsilon}  \, 
\mathrm{dist}  [(n_0,c_0,w_0);\mathcal C_{\mathsf{PES}}]  \big),
\label{Theo:RC:Parta:1}
\end{align}
and 
\begin{align}  
\|\ww\|_{L^{\infty}((0,T);H^1(\Omega))} + \|\ww\|_{L^{2}((0,T);H^2(\Omega))}   \le \,&\, C_{T,\tau}  \big( \varepsilon +  
\mathrm{dist}^{0,1}_2 [(n_0,c_0,w_0);\mathcal C_{\mathsf{PES}}]  \big),  
\label{Theo:RC:Parta:2} \\
    \|\wc\|_{L^{\infty}((0,T);H^2(\Omega))} + \|\wc\|_{L^{2}((0,T);H^3(\Omega))} \; \le \,&\,  C_{T,\tau}  \big( \varepsilon +  
\mathrm{dist}^{2,1}_2 [(n_0,c_0,w_0);\mathcal C_{\mathsf{PES}}]  \big) . 
\label{Theo:RC:Parta:3}
\end{align}
 
\noindent b) Assuming that 
the distance $\mathrm{dist}^{4,2}_p [(n_0,c_0,w_0);\mathcal C_{\mathsf{PES}}]$ { is finite for some}  $2\le p<\infty$. Then, 
\begin{align}\label{est_rate_convergence_3_L2Lq}
\|\wn\|_{L^{\infty}((0,T); L^{p}(\Omega))} &\leq C_{p,T,\tau}  \left( \varepsilon^{\frac{2}{p}}  + \varepsilon^{\frac{1}{p}} \big(
\mathrm{dist}  [(n_0,c_0,w_0);\mathcal C_{\mathsf{PES}}]\big)^{\frac{2}{p}} \right)  , 
		\end{align}
and
\begin{align}
\begin{array}{lllll}
\displaystyle \| \ww \|_{L^p((0,T);W^{2,p}(\Omega))} &\hspace{-0.2cm} \displaystyle \le C_{p,\tau,T}   \left( \varepsilon^{\frac{2}{p}} +
 \varepsilon^{\frac{1}{p}} \big(
\mathrm{dist}^{0,2}_p  [(n_0,c_0,w_0);\mathcal C_{\mathsf{PES}}]\big)^{\frac{2}{p}} \right), \vspace{0.1cm} \\
\displaystyle \| \wc  \|_{L^p((0,T);W^{4,p}(\Omega))} &\hspace{-0.2cm} \displaystyle \le C_{p,\tau,T}   \left( \varepsilon^{\frac{2}{p}} +
 \varepsilon^{\frac{1}{p}} \big(
\mathrm{dist}^{4,2}_p  [(n_0,c_0,w_0);\mathcal C_{\mathsf{PES}}]\big)^{\frac{2}{p}} \right). 
\end{array} 
\label{est_rate_convergence2b}
\end{align} 

\end{theorem}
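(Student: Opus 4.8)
The plan is to work throughout with the rate system \eqref{Sys:Rate:Convergence}--\eqref{Sys:Rate:InitialCond} for $(\wn,\wc,\ww)=(n_\eps-n,c_\eps-c,w_\eps-w)$, proceeding in three steps: (1) test each of the three equations by an appropriate multiplier to obtain a closed family of scalar differential inequalities; (2) eliminate the two fast (slow-evolution) components $\wc,\ww$ quasi-statically, using the exponentially decaying Duhamel kernels, which turns the inequality for $\wn$ into a genuinely closed one; (3) apply the fundamental differential inequality of Lemma \ref{aDI} together with the zero initial datum $\wn(0)=0$ and the initial-layer bounds of Lemma \ref{Lem:Layer} (applied to \eqref{InitialLayer}) to read off the rates. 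All coefficients will come from the uniform-in-$\eps$ estimates of Theorem \ref{Theo:FSL:1} --- notably $\|\nabla c_\eps\|_{L^\infty}$, $\|n\|_{L^\infty}$, $\|\Delta w_\eps\|_{L^p}$ and $\|\eps\partial_t c_\eps\|_{L^2((0,T);H^1)}\le C_\tau\sqrt\eps$ from \eqref{Theo:FSL:1:S3} --- while the time derivatives $\partial_t c,\partial_t w$ of the fixed classical limit solution enter as the $O(\eps)$ inhomogeneities $-\eps\partial_t c$, $-\eps\partial_t w$.

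\emph{Part a).} Testing the $\wn$-equation by $\wn$ and absorbing the drift $\wn\nabla c_\eps+n\nabla\wc$ into half of the dissipation via Young's inequality gives precisely Lemma \ref{est:engrate} with $k=1$, i.e. $\tfrac{d}{dt}\|\wn\|_{L^2}^2+\|\nabla\wn\|_{L^2}^2\le C_T\|\wn\|_{L^2}^2+C_T\|\nabla\wc\|_{L^2}^2$. Testing the slow-evolution $\ww$-equation by $\ww$ and by $-\Delta\ww$ gives $\eps\tfrac{d}{dt}\big(\|\ww\|_{L^2}^2+\|\nabla\ww\|_{L^2}^2\big)+c_\tau\big(\|\ww\|_{H^1}^2+\|\Delta\ww\|_{L^2}^2\big)\le C_\tau\|\wn\|_{L^2}^2+C_T\eps^2$; testing the $\wc$-equation by $\wc$, by $-\Delta\wc$ (and by higher-order multipliers, or via elliptic regularity, for the $H^3$-bound) gives $\eps\tfrac{d}{dt}\|\wc\|_{H^2}^2+c\,\|\wc\|_{H^3}^2+c\,\|\wc\|_{H^2}^2\le C\|\ww\|_{H^1}^2+C_T\eps^2$, where the $O(\eps^2)$ terms use $\|\eps\partial_t c\|_{L^2}^2+\|\eps\partial_t w\|_{L^2}^2\le C_T\eps^2$. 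Plugging these into Lemma \ref{aDI}, the fast components are resolved against $\wn$; their transient parts carry an extra factor $\eps$ multiplying $\|\ww(0)\|^2,\|\wc(0)\|^2\lesssim\mathrm{dist}^2$ (Lemma \ref{Lem:Layer}), which closes the $\wn$-inequality into $\tfrac{d}{dt}\|\wn\|_{L^2}^2+\|\nabla\wn\|_{L^2}^2\le\kappa_T\|\wn\|_{L^2}^2+(\text{transient})+C_T\eps^2$, and Gr\"onwall with $\wn(0)=0$ yields \eqref{Theo:RC:Parta:1}. Re-inserting this bound into the $\ww$- and $\wc$-inequalities and integrating in time via Lemma \ref{aDI} --- separating the non-decaying part of the initial layer --- produces \eqref{Theo:RC:Parta:2}--\eqref{Theo:RC:Parta:3}; there the $L^\infty_t$-norms keep the full distances $\mathrm{dist}^{0,1}_2$, $\mathrm{dist}^{2,1}_2$ (no $\sqrt\eps$), since the initial layer does not decay at $t=0$.

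\emph{Part b).} The same scheme is run at the $L^p$ level. Testing the $\wn$-equation by $|\wn|^{p-2}\wn$ (equivalently, Lemma \ref{est:engrate} with $2k\ge p$) gives $\tfrac{d}{dt}\|\wn\|_{L^p}^p+c_p\|\nabla|\wn|^{p/2}\|_{L^2}^2\le C_{p,T}\|\wn\|_{L^p}^p+C_{p,T}\|\nabla\wc\|_{L^2}^2$; since $\int_0^T\|\nabla\wc\|_{L^2}^2$ is $O\big(\eps^2+\eps\,\mathrm{dist}^2\big)$ by the part-a) machinery, Gr\"onwall with $\wn(0)=0$ gives $\|\wn\|_{L^\infty((0,T);L^p)}^p\lesssim\eps^2+\eps\,\mathrm{dist}^2$, i.e. \eqref{est_rate_convergence_3_L2Lq} after taking $p$-th roots. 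For $\ww$ and $\wc$ I would instead invoke $L^p$-maximal regularity with slow evolution (the estimate recalled after \eqref{InititalLayer0}, cf. \cite{reisch2024global}), applied first to $\eps\partial_t\ww-\tau\Delta\ww+\ww=\wn-\eps\partial_t w$ and then to the $\wc$-equation, with initial data the layers $\ww(0),\wc(0)$ bounded in $W^{2,p}$, $W^{4,p}$ by $\mathrm{dist}^{0,2}_p$, $\mathrm{dist}^{4,2}_p$; the prefactor $(\eps/p)^{1/p}$ in that estimate is what attaches the weight $\eps^{1/p}$ to the layer contribution in \eqref{est_rate_convergence2b}, while the $\|\wn\|_{L^p}$-bound from the first step supplies the remaining $\eps^{2/p}+\eps^{1/p}\mathrm{dist}^{2/p}$ term.

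\emph{Main obstacle.} The real difficulty is the closure of the slow--fast loop: $\wn$ is forced by $\nabla\wc$, $\wc$ by $\ww$, and $\ww$ back by $\wn$, with all coupling constants only $O(1)$ for fixed $\tau$, so a naive ``substitute and absorb'' fails --- one merely gets $\|\wn\|^2\le C\|\wn\|^2+\dots$ with $C$ not necessarily $<1$. The way out exploits the singular structure $\eps\partial_t$ of the $\wc$- and $\ww$-equations: via the Duhamel kernel $\tfrac1\eps e^{-\lambda(t-s)/\eps}$ one can eliminate them \emph{pointwise in time}, $\|\ww(t)\|^2\lesssim\|\wn(t)\|^2+\|\ww(0)\|^2 e^{-\lambda t/\eps}+O(\eps^2)$ and similarly for $\wc$, so that after substitution the inequality for $\|\wn\|^2$ is truly closed and Gr\"onwall applies cleanly \emph{because} $\wn(0)=0$. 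Packaging this elimination --- and bookkeeping the weights it creates (the extra $\eps$ on the time-integrated transient, hence $\sqrt\eps$ on $\mathrm{dist}$ in the $\wn$-estimate, against no gain in $L^\infty_t$) --- is exactly what Lemma \ref{aDI} is for. One final subtlety: $\eps\partial_t w_\eps=\tau\Delta w_\eps-w_\eps+n_\eps$ carries no a priori rate (only the weak convergence of \eqref{Theo:FSL:1:S3}), so one must keep $\ww$ coupled rather than substitute $\ww=(-\tau\Delta+I)^{-1}(\wn-\eps\partial_t w_\eps)$; conversely, once \eqref{Theo:RC:Parta:2} is in hand, $\eps\partial_t w_\eps=\tau\Delta\ww-\ww+\wn-\eps\partial_t w\to0$ at the corresponding rate, which delivers the strong convergence promised right after \eqref{Theo:FSL:1:S3}.
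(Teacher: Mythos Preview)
Your proposal is correct and uses the same ingredients as the paper (Lemmas \ref{Lem:Layer}, \ref{est:dtw,dtc}, \ref{est:engrate}, \ref{Sys:Rate:Energies}, \ref{aDI}, and maximal regularity for part~b). The one organisational difference worth noting is in how the slow--fast loop is closed in part~a). You propose to eliminate $\ww$ and $\wc$ sequentially via Lemma~\ref{aDI}/Duhamel and then substitute into the $\wn$-inequality; the paper instead forms a single combined energy
\[
\int_\Omega\Big[\wn^{\,2}+\eps\Big(\tfrac{C_T}{2}\wc^{\,2}+C_T\ww^{\,2}\Big)\Big],
\]
adds the three scalar inequalities with these weights so that all coupling terms cancel on the right, and applies Gr\"onwall once. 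The $\eps$-weight on the fast components is exactly what produces the factor $\eps\,\mathrm{dist}^2$ in the initial value, hence the $\sqrt\eps\,\mathrm{dist}$ in \eqref{Theo:RC:Parta:1}, without any Duhamel bookkeeping. Your route reaches the same endpoint, but be careful: the ``pointwise'' elimination $\|\ww(t)\|^2\lesssim\|\wn(t)\|^2+\|\ww(0)\|^2e^{-\lambda t/\eps}+O(\eps^2)$ does \emph{not} carry the extra $\eps$ on the layer term; that factor only appears after \emph{time-integrating} the exponential transient (as you correctly note in your final paragraph). The paper's combined-energy device sidesteps this subtlety entirely. For part~b) your plan matches the paper's proof essentially verbatim.
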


\begin{remark}\hfill
    \begin{itemize}
        \item {In the above estimates, the general constants $C_{T,\tau}$, $C_{p,T,\tau}$ may tend to infinity as $\tau\to 0$.}
        \item Thanks to the estimate \eqref{Theo:RC:Parta:1}, the rate $\|\wn\|_{L^{\infty}((0,T);L^2(\Omega))}$  is of order $O(\varepsilon)$ if the distance $\mathrm{dist}  [(n_0,c_0,w_0);\mathcal C_{\mathsf{PES}}]$ is at least of the order $\sqrt{\varepsilon}$. Even if $\mathrm{dist}  [(n_0,c_0,w_0);\mathcal C_{\mathsf{PES}}]$ is large (i.e., the system starts far away from the critical manifold $\mathcal C_{\mathsf{PES}}$), $n_\varepsilon$ always converges to $n$ in $L^{\infty}((0,T);L^2(\Omega))$ at least in the order $O(\sqrt{\varepsilon})$.     
        However, this is {not true} for $c_\varepsilon$ and $w_\varepsilon$.

        In \cite{reisch2024global}, it has been shown numerically that, if a system starts far away from its critical manifold, then the slow evolution's components do not converge to their expected limits in $L^\infty((0,T);L^2(\Omega))$, {and, in contrast, the distances between the solutions can be even sufficiently large}.
        
        \item Since the initial conditions are a major difference between the $\varepsilon$-dependent and limiting systems, a non-zero distance from the initial data to  $\mathcal C_{\mathsf{PES}}$ corresponds to an initial layer. Therefore, Theorem \ref{Theo:FSL:3} also claims that the parabolic-elliptic system    \eqref{Sys:Lim:Main}-\eqref{Sys:Lim:InitCond} is a ``good" approximation of the parabolic-parabolic system \eqref{Sys:Eps:Main}-\eqref{Sys:Eps:InitCond} whenever there is no initial layer, which is recently discussed in \cite{li2024characterization}. This suggests that skipping the slow time evolution should be associated with well-prepared initial data. 
    \end{itemize}
\end{remark}

As discussed after Theorem \eqref{Theo:FSL:1}, we see that the weak convergence in \eqref{Theo:FSL:1:S3} can, in fact, be proved in the strong topology. The following corollary is \textit{understood} as the strong convergence to the critical manifold $C_{\textsf{PES}}$.
\begin{corollary}[Strong convergence to the critical manifold] 
\label{Coro:1} For each $\varepsilon>0$, let $(n_\varepsilon,c_\varepsilon,w_\varepsilon)$ be the global classical solution to the system \eqref{Sys:Eps:Main}-\eqref{Sys:Eps:InitCond}. {Then it holds} 
\begin{align}
    \|\Delta c_\varepsilon - c_\varepsilon + w_\varepsilon \|_{L^2((0,T);H^1(\Omega))} + \|
    \tau\Delta w_\varepsilon - w_\varepsilon + n_\varepsilon \|_{L^2(\Omega_T)} \le C_{T,\tau}  \sqrt{\varepsilon} .
    \label{Coro:1:S}
\end{align}
{Furthermore, if $\mathrm{dist}^{0,1}_2 [(n_0,c_0,w_0);\mathcal C_{\mathsf{PES}}]= O(\eps)$ then we have the improved convergence rate}
\begin{equation*}
    {\|\Delta c_\varepsilon - c_\varepsilon + w_\varepsilon \|_{L^2((0,T);H^1(\Omega))} + \|
    \tau\Delta w_\varepsilon - w_\varepsilon + n_\varepsilon \|_{L^2(\Omega_T)} \le C_{T,\tau}  {\varepsilon}.}
\end{equation*}
\end{corollary}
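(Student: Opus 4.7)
The first term in \eqref{Coro:1:S} has already been established in \eqref{Theo:FSL:1:S3} as a direct consequence of the Lyapunov dissipation identity \eqref{FSL:Lya:Id}: integrating the terms $\tfrac{2}{\eps}|\Delta c_\eps - c_\eps + w_\eps|^2 + \tfrac{1+\tau}{\eps}|\nabla(\Delta c_\eps - c_\eps + w_\eps)|^2$ appearing in $\mathcal D(n_\eps, c_\eps)$ over $(0,T)$ yields the $O(\sqrt{\eps})$ bound on $\|\Delta c_\eps - c_\eps + w_\eps\|_{L^2((0,T);H^1(\Omega))}$. Hence the main task is to control the residual
\begin{equation*}
    R := \tau \Delta w_\eps - w_\eps + n_\eps = \eps \partial_t w_\eps
\end{equation*}
in $L^2(\Omega_T)$, and then to upgrade both terms to $O(\eps)$ under the sharper initial-layer hypothesis.

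The plan is to derive a PDE for $R$ itself and run an $L^2$ energy estimate. Differentiating the third equation of \eqref{Sys:Eps:Main} in time and multiplying by $\eps$ gives
\begin{equation*}
    \eps \partial_t R - \tau \Delta R + R = \eps \partial_t n_\eps \quad \text{in } \Omega \times (0,T),
\end{equation*}
and the homogeneous Neumann condition for $w_\eps$ transfers to $R$ via $\partial_\nu R = \eps \partial_\nu \partial_t w_\eps = 0$. Testing by $R$ and integrating by parts yield
\begin{equation*}
    \frac{\eps}{2}\frac{d}{dt}\|R\|_{L^2}^2 + \tau \|\nabla R\|_{L^2}^2 + \|R\|_{L^2}^2 = \intO \eps \partial_t n_\eps \cdot R\,.
\end{equation*}

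The critical step is the treatment of the source. Substituting the first equation $\partial_t n_\eps = \Delta n_\eps - \nabla \cdot (n_\eps \nabla c_\eps)$ and integrating by parts (using $\partial_\nu n_\eps = \partial_\nu c_\eps = 0$) extracts the missing factor of $\eps$:
\begin{equation*}
    \intO \eps \partial_t n_\eps \cdot R = -\eps \intO (\nabla n_\eps - n_\eps \nabla c_\eps)\cdot \nabla R\,.
\end{equation*}
Young's inequality absorbs $\tfrac{\tau}{2}\|\nabla R\|_{L^2}^2$ into the left-hand side at the cost of $\tfrac{\eps^2}{2\tau}\bigl(\|\nabla n_\eps\|_{L^2}^2 + \|n_\eps\|_{L^\infty}^2 \|\nabla c_\eps\|_{L^2}^2\bigr)$. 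By the uniform estimates \eqref{Theo:FSL:1:S1}, this quantity lies in $L^1(0,T)$ with norm independent of $\eps$, and integrating the differential inequality from $0$ to $T$ gives
\begin{equation*}
    \|R\|_{L^2(\Omega_T)}^2 \le \tfrac{\eps}{2}\|R(0)\|_{L^2}^2 + C_{T,\tau}\eps^2\,.
\end{equation*}

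Since $\|R(0)\|_{L^2} = \|\tau \Delta w_0 - w_0 + n_0\|_{L^2} \le \mathrm{dist}^{0,1}_2[(n_0,c_0,w_0); \mathcal C_{\mathsf{PES}}]$, Assumption \ref{Ass:InitialData} secures the unconditional bound $\|R\|_{L^2(\Omega_T)} \le C_{T,\tau}\sqrt{\eps}$, which establishes \eqref{Coro:1:S}. Under the sharper hypothesis $\mathrm{dist}^{0,1}_2 = O(\eps)$, the inequality improves to $\|R\|_{L^2(\Omega_T)} \le C_{T,\tau}\eps$. To upgrade the first term in the same regime, we repeat the argument with $S := \Delta c_\eps - c_\eps + w_\eps = \eps \partial_t c_\eps$, which satisfies $\eps \partial_t S - \Delta S + S = R$ with homogeneous Neumann data; testing by $S$ and inserting the bound just derived for $R$ yields $\|S\|_{L^2((0,T);H^1(\Omega))} \le C_{T,\tau}(\sqrt{\eps}\,\mathrm{dist}^{0,1}_2 + \eps)$, completing the proof. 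The main obstacle is the forcing $\eps \partial_t n_\eps$: without invoking the $n$-equation to extract the extra factor of $\eps$, the estimate would only produce an $O(1)$ bound on $\|R\|_{L^2(\Omega_T)}$, which is qualitatively consistent with the mere weak convergence recorded in \eqref{Theo:FSL:1:S3}.
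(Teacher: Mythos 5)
Your proof is correct, but it takes a genuinely different route from the paper's. The paper's proof is a two-line triangle-inequality argument: since the limit $(n,c,w)$ satisfies $\tau\Delta w - w + n = 0$, one writes $\tau\Delta w_\eps - w_\eps + n_\eps = \tau\Delta\ww - \ww + \wn$ and invokes the convergence rates for $(\wn,\wc,\ww)$ from Theorem \ref{Theo:FSL:3}; the $c$-term is handled analogously. Your argument never refers to the limiting solution at all. Instead you observe that $R=\eps\partial_t w_\eps$ and $S=\eps\partial_t c_\eps$ satisfy their own slow-evolution parabolic problems, $\eps\partial_t R -\tau\Delta R + R = \eps\partial_t n_\eps$ and $\eps\partial_t S - \Delta S + S = R$, with initial data given exactly by the initial-layer quantities $\tau\Delta w_0 - w_0 + n_0$ and $\Delta c_0 - c_0 + w_0$. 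The key step is your substitution of the $n$-equation followed by integration by parts, which converts the a priori $O(1)$ source $\eps\partial_t n_\eps \cdot R$ into $-\eps(\nabla n_\eps - n_\eps\nabla c_\eps)\cdot\nabla R$, allowing absorption of $\tfrac\tau2\|\nabla R\|_{L^2}^2$ and leaving an $O(\eps^2)$ remainder controlled by \eqref{Theo:FSL:1:S1}. This buys two things: (i) the argument is self-contained, using only the uniform-in-$\eps$ bounds of Theorem \ref{Theo:FSL:1} rather than the full rate machinery of Section \ref{sec:PES_rate}; and (ii) your hypothesis for the improved rate is in fact slightly weaker than the paper's --- only $\mathrm{dist}^{0,0}_2 = O(\eps)$ is needed, since $\|R(0)\|_{L^2}$ and $\|S(0)\|_{L^2}$ involve only the $L^2$ norms of the initial-layer residuals, whereas the paper's route through \eqref{Theo:RC:Parta:2} and \eqref{Theo:RC:Parta:3} invokes $W^{1,2}$- and $W^{2,2}$-level distances. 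One point worth flagging explicitly in a write-up: differentiating the $c$- and $w$-equations in time presupposes that $\partial_{tt}c_\eps$ and $\partial_{tt}w_\eps$ exist; this is implicit throughout the paper (e.g.\ in the proof of Lemma \ref{L:FSL.Compute:nc_t}), and can be justified by parabolic smoothing for $t>0$ together with the compatibility in Assumption \ref{Ass:InitialData}, but it deserves a sentence.
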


\begin{proof} By the triangle inequality and the fact that $\tau\Delta w - w + n=0$, 
\begin{align*}
   \|\tau\Delta w_\varepsilon - w_\varepsilon + n_\varepsilon\|_{L^2(\Omega_T)} \le \,&\,  {\|\tau\Delta \ww - \ww + \wn \|_{L^2(\Omega_T)}}\\
   \le \,&\, {\tau\|\ww\|_{L^2(0,T;H^2(\Omega))} + \|\ww\|_{L^2(\Omega_T)} + \|\wn\|_{L^2(\Omega_T)}} \\
   \le \,&\, C_{T,\tau}  \big( \varepsilon +  \sqrt{\eps}
\mathrm{dist}^{0,1}_2 [(n_0,c_0,w_0);\mathcal C_{\mathsf{PES}}]  \big)
\end{align*}
thanks to \eqref{est_rate_convergence_3_L2Lq} and \eqref{est_rate_convergence2b}. The convergence for $\Delta c_\varepsilon - c_\varepsilon + w_\varepsilon$ follows similarly.
\end{proof}

\medskip
{\textbf{Our second main results} concerning rigorous IDS} for \eqref{Sys:Eps:Main}-\eqref{Sys:Eps:InitCond} will be presented in Theorem \ref{Theo:I2D}. 
{More precisely}, 
 we study the limit as  $\kappa =( \varepsilon,\tau) \to (0,0)$, or in other words, both parameters $\varepsilon$ and $\tau$ tend to zero at the same time. Here, the subscript $\kappa$ in $(n_\kappa,c_\kappa,w_\kappa)$ is used to indicate the dependence of the solution on both parameters. We formally expect   \begin{gather}
(n_\kappa,c_\kappa,w_\kappa) \rightarrow (n,c,w) \; \text{and} \; 
(\varepsilon \partial_t c_\kappa, \varepsilon \partial_t w_\kappa - \tau \Delta w_\kappa)=(\Delta c_\kappa - c_\kappa + w_\kappa, - w_\kappa + n_\kappa)  \rightarrow (0,0), 
\label{Conver:Crit2}
\end{gather}   
and subsequently, at the limit level $w=n$. Therefore, the vector $(n,c)$ is expected to be the  solution to  
\begin{align}\label{Sys:LimKappa:Main}
	\left\{ \begin{array}{lllllll}
		\partial_t n = \Delta n -  \nabla \cdot (n \nabla c) & \text{in } \Omega \times(0,\infty) , \\ 
	\Delta c - c + n = 0 & \text{in } \Omega \times(0,\infty), \vspace{0.1cm} \\ 	
    \dfrac{\partial n}{\partial \nu} = \dfrac{\partial c}{\partial \nu}  = 0 & \text{on } \Gamma \times(0,\infty), \vspace{0.1cm} \\
    n|_{t=0}=n_{0} &  \text{on }  \Omega , 
	\end{array} \right.
\end{align}
which describes a direct signalling mechanism and is {the well-known} \textit{Keller-Segel system}. Particularly, if $\tau=\varepsilon$, or  $\tau,\varepsilon$ are given in the same time scale, the equation for $w_\varepsilon$ can be rewritten as 
\begin{align*}
    \partial_t w_\varepsilon - \Delta w_\varepsilon = - \frac{1}{\varepsilon} \left(w_\varepsilon - n_\varepsilon \right) 
\end{align*}
in which the kinetics of $w_\varepsilon$ is on a much faster time scale compared to its evolution and diffusion. The limit as $\varepsilon\to 0$ then falls into the topic of \textit{fast reaction limits}, which has usually been studied in reaction-diffusion systems with fast interaction,     see e.g. \cite{bothe2012cross,perthame2023fast,tang2024rigorous,morgan2024singular}, and recently in chemotaxis systems 
\cite{laurenccot2024singular,li2023convergence}. {To rigorously prove IDS,} similarly to Theorem \ref{Theo:FSL:1}, it is important to control the Lyapunov functional $\mathcal E(n_\kappa,c_\kappa)$ as well as to obtain the uniform-in-$\kappa$ {estimates} in  $L^\infty(\Omega_T)$, {and therefore, we face similar challenges as in the first part}. {Furthermore, due to $\tau\to 0^+$, the Lyapunov structure from Theorem \ref{Theo:FSL:1} only gives} the uniform-in-$\kappa$ boundedness in $L^\infty((0,T);H^1(\Omega))$ since the term of second order derivatives of $c_\kappa$ now depends explicitly on $\kappa$. {Obtaining uniform-in-$\kappa$ estimates is quite tricky since now both $\eps$ and $\tau$ can be degenerate. Our idea is to} adapt the bootstrap argument proposed in \cite{morgan2024singular}. The starting point in this argument is given in Lemma \ref{L:PL:Est:nL2+}, where we show there is {a small constant} $\delta>0$ such that
\begin{align*}
\sup_{\kappa\in(0,\infty)^2} \left( \esssup_{t\in(0,T)} \intO n_\kappa^{1+\frac{\delta}{2}}(t) + \|n_\kappa \|_{L^{2+\delta}(\Omega_T)} + \iint_{\Omega_T} n_\kappa ^{\frac{\delta}{2}-1} |\nabla n_\kappa |^2 \right) \le C_{T}.
\end{align*}
Then, based on a combination of the heat regularisation, the Gagliardo-Nirenberg inequality, as well as the maximal regularity with slow evolution, we {obtain a recursive} increasing sequence $\{p_j\}_{j=0,1,...}$ with $p_0:=1+\delta/2$ satisfying:  if  
\begin{align*}
    \sup_{\kappa \in (0,\infty)^2} \left( \|n_\kappa \|_{L^{2p_j}(\Omega_T)} \right) \le C_T, 
\end{align*}
then 
\begin{align*}
    \sup_{\kappa \in (0,\infty)^2} \left( \esssup_{t\in(0,T)} \int_\Omega n_\kappa ^{p_{j+1}}(t) +  \|n_\kappa \|_{L^{2p_{j+1}}(\Omega_T)} \right) \le C_{T,p_{j+1}}, 
\end{align*}
see Lemma \ref{L:I2D:Feed}. {This is  sufficient to perform a bootstrap argument} to have the uniform-in-$\kappa$ $L^p(\Omega_T)$-boundedness for any $1\le p < \infty$ that turns into the $L^\infty(\Omega_T)$-boundedness due to the use of the Neumann heat semigroup.   
Finally, the convergence rate is obtained similarly to Theorem \ref{Theo:FSL:3} by tracking carefully the dependence of all constants on both  $\varepsilon$ and $\tau$, as well as the distance from the initial data to the critical manifold $\mathcal C_{\mathsf{IDS}}$, {which is defined by}
\begin{align}
    \mathcal C_{\mathsf{IDS}}:= \Big\{(n,c,w) \in L^2(\Omega) \times H^2(\Omega)  \times L^2(\Omega) : \, (\Delta c - c + w , - w + n) =(0,0)  \Big\}. 
\end{align}
The distance $\mathrm{dist}^{k,l}_p [(n_0,c_0,w_0);\mathcal C_{\mathsf{IDS}}]$ is defined similarly to \eqref{DistanceDef} due to the replacement of $\mathcal C_{\mathsf{PES}}$ by $\mathcal C_{\mathsf{IDS}}$.

 \begin{theorem}[IDS for \eqref{Sys:Eps:Main}]
\label{Theo:I2D} Let $N=1,2$. Assume that $(n_0,c_0,w_0)$ is complied with Assumption \ref{Ass:InitialData}, and furthermore in the critical dimension $N=2$ that 
\begin{align}
    M {:= \int_{\Omega}n_0} < 4 \pi .
\label{Theo:I2D:MassCond}
\end{align}
For each   $\kappa= (\varepsilon,\tau)\in (0,\infty)^2$, let $(n_\kappa,c_\kappa,w_\kappa)$ be the global classical solution to the system \eqref{Sys:Eps:Main}-\eqref{Sys:Eps:InitCond}, given by Theorem  \ref{Theo:GloEX}. 
 Then, for any $0<T<\infty$,    
\begin{align}
\begin{gathered}
   \sup_{\kappa\in(0,\infty)^2} \Big( \|n_\kappa\|_{C^{\gamma,\gamma/2}(\overline{\Omega}\times[0,T])} + \|n_\kappa\|_{L^2((0,T);H^1(\Omega))}  \Big) \le C_{T},\\ 
    \sup_{\kappa\in(0,\infty)^2} \Big(
 \|w_\kappa \|_{L^\infty(\Omega_T)} + \|w_\kappa \|_{L^2((0,T);H^{1}(\Omega))} \Big) \le C_{T},   
  \\
\sup_{\kappa\in(0,\infty)^2} \Big(
 \|c_\kappa \|_{L^\infty((0,T);W^{1,\infty}(\Omega))} + \|c_\kappa \|_{L^p((0,T);W^{2,p}(\Omega))} \Big) \le C_{T,p}.   
\end{gathered}
\label{Theo:I2D:S1}
\end{align}
for some $\gamma\in(0,1)$ and any $1\le p<\infty$. As $\kappa= (\varepsilon,\tau)\to (0,0)$, {we have the following limits}
\begin{align}
    \begin{aligned}
\begin{array}{llllcl}
n_\kappa  &\longrightarrow& n & \text{strongly in}& C(\overline{\Omega}\times [0,T]), \\
\nabla n_\kappa &\xrightharpoonup{\hspace{0.4cm}}& \nabla n & \text{weakly in}& L^2(\Omega_T) , \\
c_\kappa  &\longrightarrow& c & \text{strongly in}& L^2((0,T);H^1(\Omega)), \\
w_\kappa  &\longrightarrow& w & \text{strongly in}& L^2(\Omega_T),
\end{array}
\end{aligned}
\label{Theo:FSL:3:S2}
\end{align}
and the limit vector $(n,c)$ is the unique global classical solution to the direct signalling parabolic-elliptic system \eqref{Sys:LimKappa:Main}. 
Moreover, assuming that the distance $\mathrm{dist}^{1,0}_2 [(n_0,c_0,w_0);\mathcal C_{\mathsf{IDS}}]$ is finite. Then, {for $|\kappa| = \eps + \tau$} we have
\begin{align} 
\|\wn\|_{L^{\infty}((0,T);L^2(\Omega))} + \|\wn\|_{L^{2}((0,T); H^1(\Omega))} &\leq C_T \left( |\kappa| + \sqrt{|\kappa|} \,
\mathrm{dist}[(n_0,c_0,w_0);\mathcal C_{\mathsf{IDS}}] \right),  
\label{Theo:FSL:3:1}
\end{align}
and 
\begin{align}  
    \|\ww\|_{L^{\infty}((0,T);L^2(\Omega))} + 
    \|\ww\|_{L^{2}((0,T);H^1(\Omega))} &\le   C_{T} \Big( |\kappa| + \mathrm{dist}[(n_0,c_0,w_0);\mathcal C_{\mathsf{IDS}}]  \Big),  
\label{Theo:FSL:3:2} \\ 
    \|\wc\|_{L^{\infty}((0,T);H^1(\Omega))} + \|\wc\|_{L^{2}((0,T);H^2(\Omega))} &\le   C_{T} \Big( |\kappa| + \mathrm{dist}^{1,0}_2[(n_0,c_0,w_0);\mathcal C_{\mathsf{IDS}}]  \Big).  
\label{Theo:FSL:3:3}
\end{align}
\end{theorem}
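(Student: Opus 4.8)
The plan is to follow the roadmap announced in the introduction: first extract uniform-in-$\kappa$ bounds from a \emph{degenerate} Lyapunov structure, then bootstrap them up to $L^\infty(\Omega_T)$, pass to the limit, and finally estimate the rate system. \textbf{Step 1 (Lyapunov functional and the low-order bound).} Applying the energy--dissipation identity of Lemma~\ref{L:FSL:Lya:Id} with parameters $\kappa=(\eps,\tau)$, I would first bound $\mathcal E(n_\kappa,c_\kappa)$ uniformly. Since $\tau\to0^+$, the terms $\tfrac{\tau}{2}\|\Delta c_\kappa\|_{L^2}^2$ and part of $\tfrac{1+\tau}{2}\|\na c_\kappa\|_{L^2}^2$ degenerate, so only $\|c_\kappa\|_{L^\infty(0,T;H^1(\O))}$, the entropy $\intO n_\kappa|\log n_\kappa|$, and the dissipation terms survive uniformly. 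In the critical dimension $N=2$ the sign-indefinite entropy $\intO n_\kappa(\log n_\kappa-c_\kappa)$ must be bounded from below, and this is exactly where the subcritical mass condition $M<4\pi$ enters: via the Moser--Trudinger/Adams-type inequality (as in \cite[Section~7]{fujie2017application}) one absorbs $\intO n_\kappa c_\kappa$ into $\|c_\kappa\|_{H^1}^2$ plus a controlled remainder. Testing the $n_\kappa$-equation with $n_\kappa^{\delta/2}$ and using the $H^1$-bound on $c_\kappa$ then produces Lemma~\ref{L:PL:Est:nL2+}: a small $\delta>0$ with a uniform bound on $\esssup_t\intO n_\kappa^{1+\delta/2}$, $\|n_\kappa\|_{L^{2+\delta}(\Omega_T)}$ and $\iint_{\Omega_T}n_\kappa^{\delta/2-1}|\na n_\kappa|^2$.

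\textbf{Step 2 (bootstrap to $L^p$ and then $L^\infty$).} This is the heart of the matter and I expect it to be the main obstacle. Given $\|n_\kappa\|_{L^{2p_j}(\Omega_T)}\le C_T$ with $p_0=1+\delta/2$, I would read the $c_\kappa$-equation as the slow-evolution equation $\eps\partial_t c_\kappa-\Delta c_\kappa+c_\kappa=w_\kappa$ and apply the maximal-regularity-with-slow-evolution estimate (the analogue of \cite[Lemma~3.4]{reisch2024global}, whose constant is uniform in $\eps$) together with the $w_\kappa$-equation to bound $\Delta c_\kappa$ in a suitable $L^q(\Omega_T)$; the Gagliardo--Nirenberg inequality upgrades $\na c_\kappa$ to a higher Lebesgue exponent, and testing the $n_\kappa$-equation with $n_\kappa^{p_{j+1}-1}$ closes the loop, yielding the recursion of Lemma~\ref{L:I2D:Feed} with $p_j\uparrow\infty$. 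Iterating gives $\sup_\kappa\|n_\kappa\|_{L^p(\Omega_T)}\le C_{T,p}$ for every $p<\infty$; feeding this into the Duhamel representation of $n_\kappa$ through the Neumann heat semigroup yields $\sup_\kappa\|n_\kappa\|_{L^\infty(\Omega_T)}\le C_T$, after which parabolic De~Giorgi/Schauder theory gives the $C^{\gamma,\gamma/2}$-bound. The remaining bounds of \eqref{Theo:I2D:S1}, namely $w_\kappa$ in $L^\infty(\Omega_T)\cap L^2(0,T;H^1(\O))$ and $c_\kappa$ in $L^\infty(0,T;W^{1,\infty}(\O))\cap L^p(0,T;W^{2,p}(\O))$, follow from their own equations using the $L^\infty$-bound on $n_\kappa$ and again maximal regularity with slow evolution. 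The delicate point throughout is that $\eps$ and $\tau$ may vanish \emph{simultaneously}, so one cannot invoke parabolic smoothing for $c_\kappa$ or $w_\kappa$, nor the elliptic regularity available at $\eps=0$; the whole recursion must be run with the slow-evolution estimate, and the interplay of the degenerating coefficient $\tau$ in the $w_\kappa$-equation with the Gagliardo--Nirenberg exponents has to be balanced so that $p_j\to\infty$ genuinely closes.

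\textbf{Step 3 (passage to the limit).} From the Hölder bound and Arzel\`a--Ascoli, a subsequence satisfies $n_\kappa\to n$ in $C(\overline\O\times[0,T])$, and the $L^2(0,T;H^1)$-bound gives $\na n_\kappa\rightharpoonup\na n$ weakly in $L^2(\Omega_T)$. Writing the $w_\kappa$-equation as $\eps\partial_t w_\kappa-\tau\Delta w_\kappa+w_\kappa=n_\kappa$ and using that the dissipation keeps $\|{-}w_\kappa+n_\kappa\|$ controlled while $\eps,\tau\to0$, one obtains $w_\kappa\to n$ strongly in $L^2(\Omega_T)$; likewise $\Delta c_\kappa-c_\kappa+w_\kappa=\eps\partial_t c_\kappa\to0$, so the limit $c$ solves $\Delta c-c+n=0$. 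The strong convergence $c_\kappa\to c$ in $L^2(0,T;H^1(\O))$ is obtained by the energy-equation method (cf. \cite{ball2004global,henneke2016fast} and the argument \eqref{Theo:FSL:1:P4}--\eqref{Theo:FSL:1:P8} used for the PES): one writes the $L^2$-identity for $c_\kappa$ rather than an inequality, passes to the limit weakly in the $c_\kappa$-equation, shows convergence of the relevant norms, and concludes by uniform convexity of $L^2(0,T;H^1(\O))$. Passing to the limit in the chemotactic flux $n_\kappa\na c_\kappa$ is then legitimate since $n_\kappa\to n$ uniformly and $\na c_\kappa\to\na c$ strongly in $L^2(\Omega_T)$, so $(n,c)$ solves \eqref{Sys:LimKappa:Main}; uniqueness of the Keller--Segel limit upgrades all convergences in \eqref{Theo:FSL:3:S2} to the full family $\kappa\to(0,0)$.

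\textbf{Step 4 (convergence rates).} Subtracting the equations of \eqref{Sys:Eps:Main} from those of the limit system, with $w=n$ and $\Delta c-c+n=0$, gives a rate system of the type \eqref{Sys:Rate:Convergence} with source terms $-\eps\partial_t c$ in the $\wc$-equation and $-\eps\partial_t w+\tau\Delta w$ in the $\ww$-equation, both $O(|\kappa|)$ in $L^2$ by the regularity of $(n,c)$. Testing the $\wn$-equation with $\wn$ and using the uniform bounds on $n_\kappa$ and $\na c_\kappa$ from \eqref{Theo:I2D:S1} gives $\tfrac{d}{dt}\intO\wn^2\le-\intO|\na\wn|^2+C_T\intO\wn^2+C_T\intO|\na\wc|^2$; testing the $\ww$- and $\wc$-equations and combining, together with the initial-layer estimate $\|\ww(0)\|_{L^2}+\|\wc(0)\|_{H^1}\le C\,\mathrm{dist}^{1,0}_2[(n_0,c_0,w_0);\mathcal C_{\mathsf{IDS}}]$ (the analogue of Lemma~\ref{Lem:Layer} via the representation \eqref{InitialLayer} with $\mathcal C_{\mathsf{PES}}$ replaced by $\mathcal C_{\mathsf{IDS}}$), and applying the fundamental differential inequality of Lemma~\ref{aDI} together with Gr\"onwall, yields \eqref{Theo:FSL:3:1}--\eqref{Theo:FSL:3:3}; here one simply records that every occurrence of $\eps$ or $\tau$ is dominated by $|\kappa|=\eps+\tau$. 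Apart from Step~2, the only other place where sharpness is essential is the critical-mass bound $M<4\pi$ in Step~1.
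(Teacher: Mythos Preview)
Your proposal is essentially correct and follows the paper's architecture closely. One genuine omission in Step~1: you jump from the $L^\infty(0,T;H^1)$-bound on $c_\kappa$ (from the Lyapunov functional) directly to Lemma~\ref{L:PL:Est:nL2+}, but the $L^{1+\delta/2}$-energy estimate there requires $\Delta c_\kappa$ bounded in $L^2(\Omega_T)$, not merely $c_\kappa\in L^\infty(0,T;H^1)$---look at the term $J_\kappa(t)=\tfrac{p-1}{p}\|n_\kappa\|_{L^{2p}(\Omega_t)}^p\|\Delta c_\kappa\|_{L^2(\Omega_t)}$ arising from $\int n_\kappa^p\Delta c_\kappa$. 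The paper supplies this missing ingredient in a separate Lemma~\ref{L:PL:Est:nL2}: test the $n_\kappa$-equation with $\log n_\kappa$, invoke maximal regularity with slow evolution on the $c_\kappa$- and $w_\kappa$-equations to bound $\|\Delta c_\kappa\|_{L^2(\Omega_T)}$ by $\|n_\kappa\|_{L^2(\Omega_T)}$, and close with the balancing inequality of Lemma~\ref{L:Ineqn:Bala} (exploiting the $L\log L$ control from the Lyapunov step). A minor imprecision in Step~2: in the paper's recursion (Lemma~\ref{L:I2D:Feed}) Gagliardo--Nirenberg is applied to $n_\kappa^{p/2}$, not to $\nabla c_\kappa$; the cross term $\int n_\kappa^{p_{j+1}}\Delta c_\kappa$ is handled by H\"older against $\|\Delta c_\kappa\|_{L^{2p_j}(\Omega_T)}$ followed by Lebesgue interpolation of $n_\kappa$ between $L^1$ and $L^{2p_{j+1}}$. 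Steps~3 and~4 match the paper.
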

The case $\tau=0$ was investigated in \cite{li2023convergence}, where only the convergence of  $n_\varepsilon$ to $n$ as $\varepsilon \to 0$ had been showed in a strong sense while $c_\varepsilon \xrightharpoonup{} c$ weakly in $L^4((0,T);W^{1,4}(\Omega))$ and weakly-star in $L^\infty((0,T);H^2(\Omega))$ and  
$w_\varepsilon \xrightharpoonup{} w$ weakly-star in $L^\infty(\Omega_T)$. {Our results improve those of \cite{li2023convergence} by proving this convergence in the strong topology, and furthermore provide the convergence rate}. Similarly to Corollary \ref{Coro:1}, we have the following strong convergence to the critical manifold $\mathcal{C}_{\mathsf{IDS}}$.
\begin{corollary}[Strong convergence to the critical manifold] 
\label{Coro:2} For each   $\kappa= (\varepsilon,\tau)\in (0,\infty)^2$, let $(n_\kappa,c_\kappa,w_\kappa)$ be the globally classical solution to the system \eqref{Sys:Eps:Main}-\eqref{Sys:Eps:InitCond}. Then,
\begin{align*}
    \|\Delta c_\kappa - c_\kappa + w_\kappa \|_{L^2((0,T);H^1(\Omega))} + \|- w_\kappa + n_\kappa\|_{L^2(\Omega_T)} \le C \sqrt{|\kappa|}. 
\end{align*}
\end{corollary}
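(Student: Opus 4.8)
The plan is to estimate the two summands separately and directly, without passing through the rate system \eqref{Sys:Rate:Convergence} or the limit $(n,c)$. The point is that each summand is an explicit expression in the original unknowns: by the second and third equations of \eqref{Sys:Eps:Main},
\[
\Delta c_\kappa - c_\kappa + w_\kappa = \varepsilon\,\partial_t c_\kappa, \qquad -w_\kappa + n_\kappa = \varepsilon\,\partial_t w_\kappa - \tau\,\Delta w_\kappa .
\]
Unlike in Corollary \ref{Coro:1}, the rate-system route does \emph{not} seem to work cleanly for the second summand: the estimate \eqref{Theo:FSL:3:2} for $\ww$ carries the initial-layer term $\mathrm{dist}^{1,0}_2[(n_0,c_0,w_0);\mathcal C_{\mathsf{IDS}}]$ without a $\sqrt{|\kappa|}$ prefactor, and moreover requires that distance to be finite, which Assumption \ref{Ass:InitialData} does not guarantee. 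Throughout one may assume $\varepsilon,\tau\le1$, since only small $\kappa$ is relevant.

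For the first summand I would use the Lyapunov structure. Integrating the energy--dissipation identity $\frac{d}{dt}\mathcal E(n_\kappa,c_\kappa)=-\mathcal D(n_\kappa,c_\kappa)$ of Lemma \ref{L:FSL:Lya:Id} over $(0,T)$ gives $\int_0^T\mathcal D(n_\kappa,c_\kappa)\,dt=\mathcal E(n_0,c_0)-\mathcal E(n_\kappa(T),c_\kappa(T))$. The initial energy \eqref{FSL:Lya:Define} is bounded by a data-dependent constant (using $\tau\le1$ and Assumption \ref{Ass:InitialData}), and the terminal energy is bounded below uniformly in $\kappa$: since $s\log s\ge -e^{-1}$, $\intO n_\kappa(T)=M$ by mass conservation, and $\|c_\kappa\|_{L^\infty(\Omega_T)}\le C$ by \eqref{Theo:I2D:S1}, one has $\intO n_\kappa(\log n_\kappa-c_\kappa)(T)\ge -e^{-1}|\Omega|-M\|c_\kappa\|_{L^\infty(\Omega_T)}\ge -C$, while all remaining terms of $\mathcal E$ are nonnegative. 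Hence $\int_0^T\mathcal D(n_\kappa,c_\kappa)\,dt\le C$; since $\mathcal D(n_\kappa,c_\kappa)\ge \varepsilon^{-1}\|\Delta c_\kappa-c_\kappa+w_\kappa\|_{H^1(\Omega)}^2$ by \eqref{FSL:Lya:Id} (dropping the entropy-dissipation term and using $\frac{1+\tau}{\varepsilon},\frac{2}{\varepsilon}\ge\frac1\varepsilon$), this yields $\|\Delta c_\kappa-c_\kappa+w_\kappa\|_{L^2((0,T);H^1(\Omega))}^2\le C\varepsilon\le C|\kappa|$.

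For the second summand I would bound $\|\varepsilon\partial_t w_\kappa\|_{L^2(\Omega_T)}$ and $\|\tau\Delta w_\kappa\|_{L^2(\Omega_T)}$ separately, each by $C\sqrt{|\kappa|}$. Testing the $w_\kappa$-equation with $\partial_t w_\kappa$ over $\Omega_T$, integrating by parts in $x$, and then treating $\iint_{\Omega_T}n_\kappa\partial_t w_\kappa$ by an integration by parts in $t$ followed by the substitution $\partial_t n_\kappa=\Delta n_\kappa-\nabla\cdot(n_\kappa\nabla c_\kappa)$, every resulting term is controlled by the uniform bounds \eqref{Theo:I2D:S1} on $\|n_\kappa\|_{L^2((0,T);H^1)}$, $\|w_\kappa\|_{L^2((0,T);H^1)}$, $\|n_\kappa\|_{L^\infty(\Omega_T)}$, $\|w_\kappa\|_{L^\infty(\Omega_T)}$ and $\|\nabla c_\kappa\|_{L^\infty(\Omega_T)}$, so that $\varepsilon\iint_{\Omega_T}|\partial_t w_\kappa|^2\le C$, i.e. $\|\varepsilon\partial_t w_\kappa\|_{L^2(\Omega_T)}\le C\sqrt{\varepsilon}$. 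Testing the same equation with $-\Delta w_\kappa$ over $\Omega_T$ gives $\frac{\varepsilon}{2}\|\nabla w_\kappa(T)\|_{L^2(\Omega)}^2+\tau\iint_{\Omega_T}|\Delta w_\kappa|^2+\iint_{\Omega_T}|\nabla w_\kappa|^2=\frac{\varepsilon}{2}\|\nabla w_0\|_{L^2(\Omega)}^2+\iint_{\Omega_T}\nabla n_\kappa\cdot\nabla w_\kappa\le C$, hence $\tau\iint_{\Omega_T}|\Delta w_\kappa|^2\le C$ and $\|\tau\Delta w_\kappa\|_{L^2(\Omega_T)}^2=\tau\cdot\tau\iint_{\Omega_T}|\Delta w_\kappa|^2\le C\tau$. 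Combining, $\|-w_\kappa+n_\kappa\|_{L^2(\Omega_T)}\le C(\sqrt{\varepsilon}+\sqrt{\tau})\le C\sqrt{|\kappa|}$, and adding the two summands proves the claim.

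The main obstacle is the uniform-in-$\kappa$ lower bound on the terminal energy $\mathcal E(n_\kappa(T),c_\kappa(T))$ in the first step, which is precisely where the double degeneracy $(\varepsilon,\tau)\to(0,0)$ could be dangerous: it hinges on the uniform $L^\infty(\Omega_T)$-control of $c_\kappa$ (hence of $n_\kappa$) from Theorem \ref{Theo:I2D}, whose proof is the substantive part of the IDS analysis. Granted those bounds, everything else is a matter of standard integrations by parts; the only technical care needed is in justifying the time integration by parts in $\iint_{\Omega_T}n_\kappa\partial_t w_\kappa$, which is legitimate since $(n_\kappa,c_\kappa,w_\kappa)$ is a classical solution.
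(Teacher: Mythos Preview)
Your proof is correct. For the first summand you recover exactly the dissipation estimate the paper cites (estimate \eqref{L:I2D:Balan:S} in Lemma \ref{L:I2D:Balan}); your lower bound on $\mathcal E(n_\kappa(T),c_\kappa(T))$ via $\|c_\kappa\|_{L^\infty(\Omega_T)}$ is a legitimate shortcut once Theorem \ref{Theo:I2D} is in hand.

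For the second summand your route differs from the paper's in organization, though the ingredients are the same. You split $n_\kappa-w_\kappa=\varepsilon\partial_t w_\kappa-\tau\Delta w_\kappa$ and bound the two pieces separately by testing the $w_\kappa$-equation with $\partial_t w_\kappa$ and with $-\Delta w_\kappa$; this yields the extra information $\|\varepsilon\partial_t w_\kappa\|_{L^2(\Omega_T)}\le C\sqrt{\varepsilon}$ and $\|\tau\Delta w_\kappa\|_{L^2(\Omega_T)}\le C\sqrt{\tau}$ individually. The paper instead computes $\|n_\kappa-w_\kappa\|_{L^2(\Omega_T)}^2=\iint_{\Omega_T}(n_\kappa-w_\kappa)(\varepsilon\partial_t w_\kappa-\tau\Delta w_\kappa)$ directly and expands: the cross term $\varepsilon\iint n_\kappa\partial_t w_\kappa$ is handled by the same time integration by parts and substitution of the $n_\kappa$-equation you use, while the remaining three terms are immediate. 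The paper's single computation gives $\|n_\kappa-w_\kappa\|_{L^2}^2\le C(\varepsilon+\tau)$ in one stroke; your two energy tests are slightly longer but give finer componentwise control. Both rely on the same uniform bounds from Theorem \ref{Theo:I2D}.
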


\medskip
{\bf The rest of this paper is organised as follows}: In Section \ref{sec:PES}, we rigorously simplify from \eqref{Sys:Eps:Main}-\eqref{Sys:Eps:InitCond} to \eqref{Sys:Lim:Main}-\eqref{Sys:Lim:InitCond} in which both subcritical case $1\le N\le 3$ and critical case $N=4$ are considered. The accuracy of this simplification is studied in Section \ref{sec:PES_rate}. In Section \ref{sec:IDS}, the analysis of the indirect-direct simplification from \eqref{Sys:Eps:Main}-\eqref{Sys:Eps:InitCond} to \eqref{Sys:LimKappa:Main}, as well as its accuracy, will be investigated. Finally, we place some auxiliary results in the Appendix \ref{sec:appendix}.

\section{Rigorous parabolic-elliptic simplification}\label{sec:PES}
  
 
{We start this section by the global existence and boundedness of solutions to \eqref{Sys:Eps:Main}-\eqref{Sys:Eps:InitCond} for fixed $\eps>0$ and $\tau>0$, which is done in \cite{fujie2017application}}. {We remark that the constant $C_{\varepsilon,\tau}$ in the following theorem may tend to infinity as either $\varepsilon \to 0$ or $\tau \to 0$.}
 
\begin{theorem}[{\cite[Theorem 1.1]{fujie2017application}}]
\label{Theo:GloEX}
 Suppose that \begin{equation}\label{Condition_initial}
n_0, \, c_0, \,w_0 \geq 0 \textrm{ on } \overline{\Omega}, \textrm{ and } n_0 \in C(\bar{\Omega}), \, c_0, w_0 \in C^2(\overline{\Omega}).
\end{equation} 
For each pair $(\varepsilon,\tau) \in (0,\infty)^2$, System \eqref{Sys:Eps:Main}-\eqref{Sys:Eps:InitCond} admits a unique classical positive solution $(n,c,w)$ which exists globally in time. Moreover, it satisfies 
 \begin{equation}
 	\sup_{t \in [0,\infty)} \Big{(} \|n(t)\|_{L^{\infty}(\Omega)} + \|c(t)\|_{W^{2,\infty}(\Omega)} + \|w(t)\|_{W^{2,\infty}(\Omega)} \Big{)} \le C_{\varepsilon,\tau} < \infty.
 \end{equation}
\end{theorem}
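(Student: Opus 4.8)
The plan is the classical four–step programme for chemotaxis systems, adapted to the ``doubly smoothing'' nature of the indirect signal, and this is essentially the content of \cite[Theorem 1.1]{fujie2017application}.

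\emph{Step 1: local solvability, extensibility, positivity and conserved quantities.} First I would establish local well-posedness by a fixed-point/contraction argument: for frozen $\varepsilon,\tau>0$ the system has a cascade structure for iteration purposes (given $n$, solve the linear parabolic equation for $w$, then the linear parabolic equation for $c$, then the drift–diffusion equation for $n$ with drift $\nabla c$), which in a short-time space such as $C([0,T_0];C(\bar\Omega))\cap C([0,T_0];W^{1,\infty}(\Omega))$ is a contraction. This yields a unique classical solution on a maximal interval $[0,\Tm)$ together with the extensibility criterion $\Tm<\infty\Rightarrow\limsup_{t\uparrow\Tm}\|n(t)\|_{L^\infty(\Omega)}=\infty$ (the regularity of $c,w$ is slaved to that of $n$ through the two linear equations since $\varepsilon,\tau$ are positive and fixed). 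Nonnegativity of $n,c,w$ follows from the parabolic comparison principle applied along the chain $w\to c\to n$ (for $n$ one uses $\Delta c\in L^\infty(\Omega_T)$ on every $[0,T]\subset[0,\Tm)$); integrating the $n$-equation gives mass conservation $\int_\Omega n(t)=M$, and the ODEs for the spatial means of $c,w$ give $\|c(t)\|_{L^1(\Omega)}+\|w(t)\|_{L^1(\Omega)}\le C$ uniformly in $t$.

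\emph{Step 2: entropy functional and $L^p$-bounds.} The core is to rule out $\|n(t)\|_{L^\infty}$ blowing up. I would first exploit a Lyapunov functional of the form \eqref{FSL:Lya:Define} (with $\varepsilon,\tau$ frozen), whose dissipation is coercive, producing $n\in L^\infty_{\mathrm{loc}}([0,\Tm);L\log L(\Omega))$, bounds for $c,w$ in $L^\infty_{\mathrm{loc}}H^2$, and an $L^2_{\mathrm{loc}}$ gradient dissipation. Then, testing the $n$-equation by $n^{p-1}$ and the $c$- and $w$-equations by $c,-\Delta c,w,-\Delta w$ (controlling the slow-evolution contributions through the dissipation of \eqref{FSL:Lya:Define}), one gets a coupled system of differential inequalities for $\int_\Omega n^p$, $\int_\Omega|\nabla c|^2$, $\int_\Omega|\Delta c|^2$, etc.; the decisive structural fact is the double smoothing $n\mapsto w\mapsto c$, so that $c$ is effectively two derivatives more regular than $n$, which renders the chemotactic cross term subcritical for every $p<\infty$ when $1\le N\le3$. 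In the critical dimension $N=4$ the naive estimate fails exactly at the scaling-critical exponent, and one closes the loop with the Adams-type (Moser–Trudinger) inequality for $H^2(\Omega)\hookrightarrow \exp L^2$ — precisely the tool introduced in \cite{fujie2017application} — obtaining $n\in L^\infty_{\mathrm{loc}}([0,\Tm);L^p(\Omega))$ for all $p<\infty$.

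\emph{Step 3: bootstrap to $L^\infty$, global existence, uniform bounds.} With $\|n(t)\|_{L^p}$ bounded for $p>N/2$, Neumann heat-semigroup smoothing in the linear equations gives $w\in L^\infty_{\mathrm{loc}}W^{1,q}$, then $c\in L^\infty_{\mathrm{loc}}W^{2,q}$ for arbitrarily large $q$, hence $\nabla c\in L^\infty(\Omega\times[0,T])$; a Moser iteration (or a Duhamel estimate) on $\partial_t n=\Delta n-\nabla c\cdot\nabla n-n\,\Delta c$ then bounds $\|n(t)\|_{L^\infty(\Omega)}$ on $[0,\Tm)$, which contradicts the extensibility criterion unless $\Tm=\infty$. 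Since $M$ is conserved and the dissipation is coercive, all these bounds are in fact uniform over $t\in[0,\infty)$, and parabolic regularity upgrades them to
\[
\sup_{t\ge0}\Big(\|n(t)\|_{L^\infty(\Omega)}+\|c(t)\|_{W^{2,\infty}(\Omega)}+\|w(t)\|_{W^{2,\infty}(\Omega)}\Big)\le C_{\varepsilon,\tau};
\]
the dependence of $C_{\varepsilon,\tau}$ on $1/\varepsilon$ and $1/\tau$ is unavoidable in this scheme, because the smoothing estimates for the $c$- and $w$-equations involve $e^{(t/\varepsilon)(\Delta-I)}$ and $\tau\Delta$ respectively. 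The main obstacle is Step 2 in dimension $N=4$: the chemotactic term is scaling-critical, so it cannot be absorbed by Gagliardo–Nirenberg alone, and one must combine the Lyapunov bound on $\|\Delta c\|_{L^2(\Omega)}$ with the sharp Adams/Moser–Trudinger inequality; the remaining steps are a routine parabolic bootstrap, and we refer to \cite{fujie2017application} for the details.
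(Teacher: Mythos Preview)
Your outline is correct and matches the strategy of the cited reference \cite{fujie2017application}: local existence with an extensibility criterion, the Lyapunov functional \eqref{FSL:Lya:Define} (with the Adams-type inequality handling the critical case $N=4$), and a semigroup/Moser bootstrap to $L^\infty$. Note that the present paper does not give its own proof of this statement; it simply quotes \cite[Theorem 1.1]{fujie2017application} as a known result and moves on, so there is no ``paper's own proof'' to compare against beyond the citation.
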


\subsection{Multiple time scale Lyapunov functional}
\label{Sec:Lya}
 By integrating the equation for $n_\varepsilon$ and using the homogeneous Neumann boundary condition, {we have the conservation}
\begin{align}
\int_\Omega n_\varepsilon(x,t) = \int_\Omega n_0(x) = M, \quad \text{for all } t\ge 0,
\label{FSL:TotalMass}
\end{align}
which also reads that  $n_\varepsilon$ is uniformly-in-$\varepsilon$ bounded in  $L^\infty((0,T);L^1(\Omega))$. However, this regularity is not sufficient to gain necessary estimates for $w_\varepsilon,c_\varepsilon$ and then improve again the uniform-in-$\varepsilon$ regularity of $n_\varepsilon$. In this part, we present an a priori estimate for solutions by considering a Lyapunov functional according to the system structure. Since the equation for $n_\varepsilon$ can be rewritten as  
$$ \partial_t n_\varepsilon   = \nabla \cdot \big( n_\varepsilon \nabla (\log n_\varepsilon-c_\varepsilon) \big), $$  we 
multiply two sides by $(\log n_\varepsilon - c_\varepsilon)$ and integrate over the spatial domain to get that 
\begin{align*}
\int_\Omega \partial_t n_\varepsilon (\log n_\varepsilon -c_\varepsilon)  = - \int_\Omega n_\varepsilon|\nabla (\log n_\varepsilon-c_\varepsilon)|^2 .
\end{align*}
This suggests considering the Lyapunov functional below for $n_\varepsilon$ 
\begin{align*}
E(n_\varepsilon)=\int_\Omega n_\varepsilon(\log n_\varepsilon - c_\varepsilon),
\end{align*}
which, after differentiating in time and taking into account that $\int_\Omega \partial_t n_\varepsilon =0$, gives
\begin{align}
\frac{d}{dt} E(n_\varepsilon) = - \int_\Omega n_\varepsilon|\nabla (\log n_\varepsilon-c)|^2 -  \int_\Omega n_\varepsilon \partial_t c_\varepsilon . 
\label{FSL:Lya:E}
\end{align}
An estimate for this type of functional was established corresponding to $N=2$ and $\tau=0$ in \cite[Section 4.1]{li2023convergence}. {The analysis in our case is significantly more challenging since $\tau>0$ and $1\le N\le 4$, where $N=4$ is the critical dimension}. {Concerning} the last term of \eqref{FSL:Lya:E}, {we have the following computations}. 

\begin{lemma} 
\label{L:FSL.Compute:nc_t}
For $t>0$, it holds 
\begin{align}
\begin{aligned}
- \int_\Omega n_\varepsilon \partial_t c_\varepsilon   = \,&\, - \frac{d}{dt} \int_\Omega \left( \frac{1}{2}|\Delta c_\varepsilon - c_\varepsilon + w_\varepsilon|^2 + \frac{\tau}{2} |\Delta c_\varepsilon|^2 + \frac{1+\tau}{2} |\nabla c_\varepsilon|^2 + \frac{1}{2} c_\varepsilon^2 \right) \\
\,&\, -  \frac{1+\tau}{\varepsilon} \intO|\nabla(\Delta c_\varepsilon - c_\varepsilon + w_\varepsilon)|^2 - \frac{2}{\varepsilon} \intO |\Delta c_\varepsilon - c_\varepsilon + w_\varepsilon|^2 .
\end{aligned}
\label{L:FSL.Compute:nc_t.S1}
\end{align}
\end{lemma}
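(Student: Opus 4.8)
The plan is to remove $n_\varepsilon$ from the integrand using the third equation in \eqref{Sys:Eps:Main}, rewritten as $n_\varepsilon = \varepsilon\partial_t w_\varepsilon - \tau\Delta w_\varepsilon + w_\varepsilon$, so that
\begin{align*}
-\intO n_\varepsilon\,\partial_t c_\varepsilon \;=\; \underbrace{-\,\varepsilon\intO \partial_t w_\varepsilon\,\partial_t c_\varepsilon}_{=:\,I} \;+\; \underbrace{\tau\intO \Delta w_\varepsilon\,\partial_t c_\varepsilon}_{=:\,II} \;+\; \underbrace{\Bigl(-\intO w_\varepsilon\,\partial_t c_\varepsilon\Bigr)}_{=:\,III}.
\end{align*}
I will write $R_\varepsilon := \Delta c_\varepsilon - c_\varepsilon + w_\varepsilon$, so the second equation reads $\varepsilon\partial_t c_\varepsilon = R_\varepsilon$; in particular $\varepsilon\,|\partial_t c_\varepsilon|^2 = \varepsilon^{-1}R_\varepsilon^2$ and $\varepsilon\,|\nabla\partial_t c_\varepsilon|^2 = \varepsilon^{-1}|\nabla R_\varepsilon|^2$. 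I also record the boundary relations that make every integration by parts below free of boundary terms: differentiating $\partial_\nu c_\varepsilon \equiv \partial_\nu w_\varepsilon \equiv 0$ in time gives $\partial_\nu\partial_t c_\varepsilon \equiv \partial_\nu\partial_t w_\varepsilon \equiv 0$, and then the second equation restricted to $\Gamma$ forces $\partial_\nu\Delta c_\varepsilon \equiv 0$, hence $\partial_\nu R_\varepsilon \equiv 0$; the regularity needed to justify the manipulations is provided by Theorem \ref{Theo:GloEX} together with parabolic bootstrapping.

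For $III$ I substitute $w_\varepsilon = \varepsilon\partial_t c_\varepsilon - \Delta c_\varepsilon + c_\varepsilon$ and integrate by parts once,
\begin{align*}
III = -\varepsilon\intO|\partial_t c_\varepsilon|^2 + \intO\Delta c_\varepsilon\,\partial_t c_\varepsilon - \intO c_\varepsilon\,\partial_t c_\varepsilon = -\frac{1}{\varepsilon}\intO R_\varepsilon^2 - \frac12\frac{d}{dt}\intO|\nabla c_\varepsilon|^2 - \frac12\frac{d}{dt}\intO c_\varepsilon^2 .
\end{align*}
For $I$, I first use $\varepsilon\partial_t c_\varepsilon = R_\varepsilon$ and then $\partial_t w_\varepsilon = \partial_t R_\varepsilon - \Delta\partial_t c_\varepsilon + \partial_t c_\varepsilon$ (the time derivative of the definition of $R_\varepsilon$); integrating the middle term by parts twice and using $\partial_t c_\varepsilon = \varepsilon^{-1}R_\varepsilon$ yields
\begin{align*}
I = -\intO\partial_t w_\varepsilon\,R_\varepsilon = -\frac12\frac{d}{dt}\intO R_\varepsilon^2 + \frac{1}{\varepsilon}\intO R_\varepsilon\,\Delta R_\varepsilon - \frac{1}{\varepsilon}\intO R_\varepsilon^2 = -\frac12\frac{d}{dt}\intO R_\varepsilon^2 - \frac{1}{\varepsilon}\intO|\nabla R_\varepsilon|^2 - \frac{1}{\varepsilon}\intO R_\varepsilon^2 .
\end{align*}

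For $II$, I replace $\Delta w_\varepsilon = \varepsilon\partial_t\Delta c_\varepsilon - \Delta^2 c_\varepsilon + \Delta c_\varepsilon$ (the Laplacian of the second equation) and treat the three resulting integrals separately: the first one equals $\tau\varepsilon\intO\Delta(\partial_t c_\varepsilon)\,\partial_t c_\varepsilon = -\tau\varepsilon\intO|\nabla\partial_t c_\varepsilon|^2 = -\tfrac{\tau}{\varepsilon}\intO|\nabla R_\varepsilon|^2$; the second equals $-\tau\intO\Delta^2 c_\varepsilon\,\partial_t c_\varepsilon = -\tau\intO\Delta c_\varepsilon\,\Delta\partial_t c_\varepsilon = -\tfrac{\tau}{2}\frac{d}{dt}\intO|\Delta c_\varepsilon|^2$ after integrating by parts twice; the third equals $\tau\intO\Delta c_\varepsilon\,\partial_t c_\varepsilon = -\tfrac{\tau}{2}\frac{d}{dt}\intO|\nabla c_\varepsilon|^2$. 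Hence
\begin{align*}
II = -\frac{\tau}{\varepsilon}\intO|\nabla R_\varepsilon|^2 - \frac{\tau}{2}\frac{d}{dt}\intO|\Delta c_\varepsilon|^2 - \frac{\tau}{2}\frac{d}{dt}\intO|\nabla c_\varepsilon|^2 .
\end{align*}

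Summing $I + II + III$, the four time-derivative terms assemble into $-\frac{d}{dt}\intO\bigl(\tfrac12 R_\varepsilon^2 + \tfrac{\tau}{2}|\Delta c_\varepsilon|^2 + \tfrac{1+\tau}{2}|\nabla c_\varepsilon|^2 + \tfrac12 c_\varepsilon^2\bigr)$, while the remaining contributions add up to $-\tfrac{1+\tau}{\varepsilon}\intO|\nabla R_\varepsilon|^2 - \tfrac{2}{\varepsilon}\intO R_\varepsilon^2$; recalling $R_\varepsilon = \Delta c_\varepsilon - c_\varepsilon + w_\varepsilon$, this is precisely \eqref{L:FSL.Compute:nc_t.S1}. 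The computation is elementary; the one point requiring care is the vanishing of all boundary integrals, whose crux is that $\partial_\nu\Delta c_\varepsilon = 0$ on $\Gamma$ is not one of the imposed boundary conditions but is forced by $\partial_\nu c_\varepsilon \equiv \partial_\nu w_\varepsilon \equiv 0$ together with the $c_\varepsilon$-equation. The lemma itself involves no genuine analytic obstacle; its role — and the reason the substitutions above are chosen as they are — is to exhibit the exact grouping of terms into a time-derivative plus a nonnegative dissipation, which is what makes the multiple time scale Lyapunov functional \eqref{FSL:Lya:Define} work.
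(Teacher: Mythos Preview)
Your proof is correct and follows essentially the same approach as the paper: eliminate $n_\varepsilon$ via the two signal equations and collect terms into time derivatives plus nonnegative dissipation. The only organizational difference is that the paper first substitutes completely to obtain the single identity $n_\varepsilon = \varepsilon^2\partial_{tt}^2 c_\varepsilon - (1+\tau)\varepsilon\Delta\partial_t c_\varepsilon + 2\varepsilon\partial_t c_\varepsilon + \tau\Delta^2 c_\varepsilon - (1+\tau)\Delta c_\varepsilon + c_\varepsilon$ and then multiplies by $-\partial_t c_\varepsilon$, whereas you split via the $w$-equation first and apply the $c$-equation inside each of $I$, $II$, $III$; your early introduction of $R_\varepsilon$ and the explicit check that $\partial_\nu\Delta c_\varepsilon=0$ (which the paper leaves implicit) make the dissipation structure and the integration-by-parts justifications a bit more transparent, but the substance is identical.
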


\begin{proof} Using the equation for $c_\varepsilon$, we have 
\begin{align*}
\left\{ \begin{array}{rlll}
\varepsilon \partial_t w_\varepsilon &=& \varepsilon^2 \partial_{tt}^2 c_\varepsilon - \varepsilon  \Delta \partial_t c_\varepsilon + \varepsilon \partial_t c_\varepsilon, \\
\tau \Delta w_\varepsilon &=& \tau\varepsilon \Delta \partial_t c_\varepsilon - \tau \Delta^2 c_\varepsilon + \tau \Delta c_\varepsilon, \\
w_\varepsilon &=& \varepsilon \partial_t c_\varepsilon - \Delta c_\varepsilon +  c_\varepsilon. 
\end{array}\right.
\end{align*}
Then, we imply from the equation for $w_\varepsilon$ that  
\begin{align*}
n_\varepsilon = \varepsilon^2 \partial_{tt}^2 c_\varepsilon - (1+\tau)\varepsilon \Delta \partial_t c_\varepsilon + 2\varepsilon \partial_t c_\varepsilon + \tau \Delta^2  c_\varepsilon - (1+\tau)\Delta c_\varepsilon + c_\varepsilon. 
\end{align*}
Therefore, due to the integration by parts, 
\begin{align*}
- \int_\Omega n_\varepsilon \partial_t c_\varepsilon 
&= - \int_\Omega \Big( \varepsilon^2 \partial_{tt}^2 c_\varepsilon - (1+\tau)\varepsilon \Delta \partial_t c_\varepsilon + 2\varepsilon \partial_t c_\varepsilon + \tau \Delta^2  c_\varepsilon - (1+\tau)\Delta c_\varepsilon + c_\varepsilon \Big) \partial_t c_\varepsilon \\
&= - \frac{d}{dt} \left( \frac{\varepsilon^2}{2} \int_\Omega |\partial_tc_\varepsilon|^2 + \frac{\tau}{2} \int_\Omega |\Delta c_\varepsilon|^2 + \frac{1+\tau}{2} \int_\Omega |\nabla c_\varepsilon|^2 + \frac{1}{2} \int_\Omega c_\varepsilon^2 \right) \\
& \hspace{0.45cm} - \left((1+\tau)\varepsilon \int_\Omega |\nabla \partial_t c_\varepsilon|^2 + 2\varepsilon \int_\Omega |\partial_t c_\varepsilon|^2 \right).  
\end{align*}
{By using the equation for $c_\varepsilon$ at the last step, we obtain \eqref{L:FSL.Compute:nc_t.S1}}. 
\end{proof}

The time derivatives appearing above suggest that a combination of $n_\varepsilon(\log n_\varepsilon - c_\varepsilon)$ and $$\frac{1}{2}|\Delta c_\varepsilon - c_\varepsilon + w_\varepsilon|^2 + \frac{\tau}{2} |\Delta c_\varepsilon|^2 + \frac{1+\tau}{2} |\nabla c_\varepsilon|^2 + \frac{1}{2} c_\varepsilon^2$$ forms the relevant structure of a multiple time scale  Lyapunov functional for the whole system. 
The following lemma is a direct consequence of Lemma \ref{L:FSL.Compute:nc_t} and the identity \eqref{FSL:Lya:E}.

\begin{lemma}\label{L:FSL:Lya:Id}
For $t>0$, it holds  
\begin{equation}
\frac{d}{dt} \mathcal{E}(n_{\varepsilon}(t),c_{\varepsilon}(t)) = -  \mathcal{D}(n_{\varepsilon}(t), c_{\varepsilon}(t)) \le 0
\label{L:FSL:Lya:Id:S}
\end{equation}
{where $\mathcal{E}(n_{\varepsilon},c_{\varepsilon})$ and $\mathcal{D}(n_{\varepsilon}, c_{\varepsilon})$ are defined in \eqref{FSL:Lya:Define} and \eqref{FSL:Lya:Id}, respectively.}
\end{lemma}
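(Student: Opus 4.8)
\textbf{Proof plan for Lemma \ref{L:FSL:Lya:Id}.}

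The plan is to assemble \eqref{L:FSL:Lya:Id:S} directly from the two pieces already established: the identity \eqref{FSL:Lya:E} for $\frac{d}{dt}E(n_\varepsilon)$ obtained by testing the $n_\varepsilon$-equation with $\log n_\varepsilon - c_\varepsilon$, and the representation \eqref{L:FSL.Compute:nc_t.S1} from Lemma \ref{L:FSL.Compute:nc_t} for the awkward coupling term $-\int_\Omega n_\varepsilon\partial_t c_\varepsilon$. The point of Lemma \ref{L:FSL.Compute:nc_t} is precisely that $-\int_\Omega n_\varepsilon\partial_t c_\varepsilon$, which appears in \eqref{FSL:Lya:E} with no obvious sign, can be rewritten as (minus) the time derivative of a manifestly nonnegative quadratic functional in $c_\varepsilon,w_\varepsilon$ plus two nonpositive dissipation terms of order $1/\varepsilon$. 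So the combinatorial heart of the proof is just substitution.

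\medskip
First I would substitute \eqref{L:FSL.Compute:nc_t.S1} into \eqref{FSL:Lya:E}. This gives
\begin{align*}
\frac{d}{dt}\,E(n_\varepsilon)
&= -\int_\Omega n_\varepsilon|\nabla(\log n_\varepsilon - c_\varepsilon)|^2
- \frac{d}{dt}\int_\Omega\Big( \tfrac12|\Delta c_\varepsilon - c_\varepsilon + w_\varepsilon|^2 + \tfrac{\tau}{2}|\Delta c_\varepsilon|^2 + \tfrac{1+\tau}{2}|\nabla c_\varepsilon|^2 + \tfrac12 c_\varepsilon^2\Big)\\
&\quad - \frac{1+\tau}{\varepsilon}\int_\Omega|\nabla(\Delta c_\varepsilon - c_\varepsilon + w_\varepsilon)|^2
- \frac{2}{\varepsilon}\int_\Omega|\Delta c_\varepsilon - c_\varepsilon + w_\varepsilon|^2 .
\end{align*}
Next I would move the second term on the right-hand side to the left, so that the left-hand side becomes $\frac{d}{dt}$ applied to the sum $E(n_\varepsilon) + \int_\Omega(\tfrac12|\Delta c_\varepsilon - c_\varepsilon + w_\varepsilon|^2 + \tfrac{\tau}{2}|\Delta c_\varepsilon|^2 + \tfrac{1+\tau}{2}|\nabla c_\varepsilon|^2 + \tfrac12 c_\varepsilon^2)$, which is exactly $\mathcal E(n_\varepsilon,c_\varepsilon)$ as defined in \eqref{FSL:Lya:Define} (recalling $E(n_\varepsilon)=\int_\Omega n_\varepsilon(\log n_\varepsilon - c_\varepsilon)$). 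The remaining three terms on the right-hand side are precisely $-\mathcal D(n_\varepsilon,c_\varepsilon)$ from \eqref{FSL:Lya:Id}. This yields $\frac{d}{dt}\mathcal E(n_\varepsilon,c_\varepsilon) = -\mathcal D(n_\varepsilon,c_\varepsilon)$, and since each of the three integrands in $\mathcal D$ is nonnegative and $\varepsilon>0$, $1+\tau>0$, we get $\mathcal D(n_\varepsilon,c_\varepsilon)\ge 0$, hence $\frac{d}{dt}\mathcal E\le 0$.

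\medskip
There is essentially no serious obstacle at this level: the genuine work — the integrations by parts that turn $-\int_\Omega n_\varepsilon\partial_t c_\varepsilon$ into a time derivative plus dissipation, and the use of the $c_\varepsilon$-equation to rewrite the $\varepsilon\|\nabla\partial_t c_\varepsilon\|^2$ and $\varepsilon\|\partial_t c_\varepsilon\|^2$ terms — has already been carried out in Lemma \ref{L:FSL.Compute:nc_t}, while the test-function identity \eqref{FSL:Lya:E} is likewise in hand. The only minor point worth a sentence is justification of the formal computations: differentiating under the integral sign and integrating by parts are legitimate because, for fixed $\varepsilon>0$ and $\tau>0$, Theorem \ref{Theo:GloEX} guarantees a global classical (indeed smooth and positive, with $c_\varepsilon,w_\varepsilon\in W^{2,\infty}$) solution, so all the quantities $n_\varepsilon\log n_\varepsilon$, $n_\varepsilon c_\varepsilon$, $\Delta c_\varepsilon$, etc. and their time derivatives are continuous on $\overline\Omega\times(0,\infty)$ and the no-flux boundary conditions \eqref{Sys:Eps:BounCond} kill the boundary contributions. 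Thus the identity \eqref{L:FSL:Lya:Id:S} holds pointwise for every $t>0$.
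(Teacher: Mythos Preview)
Your proposal is correct and matches the paper's approach exactly: the paper states that the lemma is a direct consequence of Lemma \ref{L:FSL.Compute:nc_t} and the identity \eqref{FSL:Lya:E}, without writing out a separate proof. You have simply spelled out this substitution explicitly and added the (appropriate) remark about regularity justifying the formal manipulations.
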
 

Lemma \ref{L:FSL:Lya:Id} suggests an estimate  for $c_\varepsilon$ in $L^\infty((0,T);H^2(\Omega))$ uniformly in $\varepsilon$, as well as in $L^\infty((0,T);H^1(\Omega))$ uniformly in $\kappa$. However, we note here that the lower boundedness of $\mathcal{E}$ has not been guaranteed since it contains $-n_\varepsilon c_\varepsilon$. Therefore, to apply Lemma \ref{L:FSL:Lya:Id}, a lower bound for $-n_\varepsilon c_\varepsilon$ or $ n_\varepsilon (\log n_\varepsilon - c_\varepsilon)$ in $L^1(\Omega_T)$ must be established first. {This will be done separately for the cases $1\le N\le 3$ and $N=4$ in the following subsections, as the latter case is in the critical dimension and a different strategy needs to be employed.}

\subsection{The case of subcritical dimensions $1\le N \le 3$}
\label{Sec:LimEp:Sub}  

\subsubsection{Balancing the Lyapunov functional}

\begin{lemma}  
\label{L:FSL:Est:c}  
There exists a constant   $C=C_{n_0,c_0,\Omega,M}>0$ independently of $\varepsilon,\tau$ such that 
\begin{align}
\sup_{\varepsilon>0} \bigg(  \sup_{t>0} \int_\Omega \left( \frac{\tau}{4} |\Delta c_\varepsilon(t)|^2 + \frac{2+\tau}{4} |\nabla c_\varepsilon(t)|^2 + \frac{2-\tau}{4} c_\varepsilon^2(t) \right) \bigg) \le  \frac{C}{\tau} ,
\label{L:FSL:DissIneqn}
\end{align}  
and 
\begin{align}
    \begin{aligned}
      \sup_{\varepsilon>0} \left( \frac{1}{\varepsilon} \iint_{\Omega_T} \Big( |\nabla(\Delta c_\varepsilon - c_\varepsilon + w_\varepsilon)|^2 +  |\Delta c_\varepsilon - c_\varepsilon + w_\varepsilon|^2 \Big) \right) \le C_{\tau}.
\end{aligned}
\label{L:FSL:Est:c:S2}
\end{align}
\end{lemma}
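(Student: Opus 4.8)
The plan is to harvest both estimates from the energy--dissipation identity of Lemma~\ref{L:FSL:Lya:Id} by first establishing a lower bound for the Lyapunov functional $\mathcal{E}$; in the subcritical range $1\le N\le 3$ this is exactly where the Sobolev embedding $H^2(\Omega)\hookrightarrow L^\infty(\Omega)$ can be exploited. Since $(n_\varepsilon,c_\varepsilon,w_\varepsilon)$ is the classical solution of Theorem~\ref{Theo:GloEX} and $(n_0,c_0,w_0)$ is smooth (Assumption~\ref{Ass:InitialData}), the value $\mathcal{E}(n_0,c_0)$ defined through \eqref{FSL:Lya:Define} is finite and bounded independently of $\varepsilon$ (and of $\tau$ ranging in any fixed bounded set). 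Integrating \eqref{L:FSL:Lya:Id:S} over $(0,t)$ yields the balance
\[
\mathcal{E}(n_\varepsilon(t),c_\varepsilon(t)) + \int_0^t \mathcal{D}(n_\varepsilon(s),c_\varepsilon(s))\,ds = \mathcal{E}(n_0,c_0),\qquad t\ge 0,
\]
so that, discarding the nonnegative dissipation, $\mathcal{E}(n_\varepsilon(t),c_\varepsilon(t))\le \mathcal{E}(n_0,c_0)$ for all $t\ge0$.

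Next I would bound $\mathcal{E}$ from below. From $s\log s\ge -e^{-1}$ one gets $\int_\Omega n_\varepsilon\log n_\varepsilon\ge -|\Omega|/e$, while the mass conservation \eqref{FSL:TotalMass} gives $-\int_\Omega n_\varepsilon c_\varepsilon\ge -M\|c_\varepsilon\|_{L^\infty(\Omega)}$. For $N\le 3$, combining $H^2(\Omega)\hookrightarrow L^\infty(\Omega)$ with the Neumann elliptic regularity estimate produces
\[
\|c_\varepsilon\|_{L^\infty(\Omega)}^2\le C_\Omega\big(\|\Delta c_\varepsilon\|_{L^2(\Omega)}^2+\|\nabla c_\varepsilon\|_{L^2(\Omega)}^2+\|c_\varepsilon\|_{L^2(\Omega)}^2\big),
\]
and hence, by Young's inequality with a weight proportional to $\tau$,
\[
M\|c_\varepsilon\|_{L^\infty(\Omega)}\le \frac{\tau}{4}\Big(\|\Delta c_\varepsilon\|_{L^2(\Omega)}^2+\|\nabla c_\varepsilon\|_{L^2(\Omega)}^2+\|c_\varepsilon\|_{L^2(\Omega)}^2\Big)+\frac{C}{\tau}.
\]
Subtracting this from the quadratic $c_\varepsilon$-terms of $\mathcal{E}$ (which carry coefficients $\tfrac{\tau}{2},\tfrac{1+\tau}{2},\tfrac12$) and dropping the nonnegative term $\tfrac12|\Delta c_\varepsilon-c_\varepsilon+w_\varepsilon|^2$ leaves precisely the coefficients $\tfrac{\tau}{4},\tfrac{2+\tau}{4},\tfrac{2-\tau}{4}$. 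Combined with $\mathcal{E}(n_\varepsilon(t),c_\varepsilon(t))\le \mathcal{E}(n_0,c_0)$ and the fact that $\mathcal{E}(n_0,c_0)+|\Omega|/e\le C/\tau$ on bounded $\tau$-ranges, this gives \eqref{L:FSL:DissIneqn}.

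For \eqref{L:FSL:Est:c:S2} I would use the balance identity rather than the inequality: for any $T>0$,
\[
\int_0^T\mathcal{D}(n_\varepsilon(s),c_\varepsilon(s))\,ds=\mathcal{E}(n_0,c_0)-\mathcal{E}(n_\varepsilon(T),c_\varepsilon(T))\le \mathcal{E}(n_0,c_0)+\frac{|\Omega|}{e}+\frac{C}{\tau}=:C_\tau,
\]
since, for $0<\tau<2$, the lower bound just obtained shows $\mathcal{E}(n_\varepsilon(T),c_\varepsilon(T))\ge-|\Omega|/e-C/\tau$ (the three quadratic $c_\varepsilon$-terms being nonnegative). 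Recalling the form \eqref{FSL:Lya:Id} of $\mathcal{D}$, discarding the nonnegative entropy dissipation $\int_\Omega n_\varepsilon|\nabla(\log n_\varepsilon-c_\varepsilon)|^2$, and using $1+\tau\ge 1$ and $2\ge 1$, we arrive at \eqref{L:FSL:Est:c:S2}.

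The only genuinely delicate point is the lower bound for $\mathcal{E}$, that is, absorbing the sign-indefinite term $-\int_\Omega n_\varepsilon c_\varepsilon$. Because $\mathcal{E}$ controls $\|\Delta c_\varepsilon\|_{L^2}$ only with weight $\tau$, the absorption has to be performed at scale $\tau$, which is exactly what produces the factor $1/\tau$ on the right-hand side and forces $\tau<2$ so that the coefficient $\tfrac{2-\tau}{4}$ stays positive. This argument breaks down at the critical dimension $N=4$, where $H^2(\Omega)\not\hookrightarrow L^\infty(\Omega)$; there one must replace the Sobolev step by an Adams-type inequality in the spirit of \cite[Section~7]{fujie2017application}, which is why that case is treated separately in the following subsection.
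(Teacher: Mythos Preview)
Your argument is correct and follows essentially the same route as the paper: both use the energy--dissipation identity of Lemma~\ref{L:FSL:Lya:Id}, absorb the sign-indefinite term $\int_\Omega n_\varepsilon c_\varepsilon$ via the Sobolev embedding $H^2(\Omega)\hookrightarrow L^\infty(\Omega)$ (valid for $N\le 3$) together with Young's inequality at scale $\tau/4$, and then read off both \eqref{L:FSL:DissIneqn} and \eqref{L:FSL:Est:c:S2} from the resulting balance. Your explicit remarks on the restriction $\tau<2$ and on bounded $\tau$-ranges are in fact slightly more careful than the paper, which tacitly assumes them.
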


\begin{proof} Under the assumption \ref{Ass:InitialData} on the initial data $(n_0,c_0)$, the term  $\mathcal E(n_0,c_0)$ is clearly finite.  By Lemma \ref{L:FSL:Lya:Id}, for all $t>0$,  
\begin{equation*}
\mathcal{E}(n_{\varepsilon}(t),c_{\varepsilon}(t)) \leq  \mathcal{E}(n_0,c_0) - \int_0^t \mathcal D(n_\varepsilon(s),c_\varepsilon(s)) ,  
\end{equation*}
in more detail, which is equivalent to 
\begin{align}
\begin{gathered}
\int_\Omega \left(  (n_\varepsilon \log n_\varepsilon + e^{-1} ) +   \frac{1}{2}|\Delta c_\varepsilon - c_\varepsilon + w_\varepsilon|^2 + \frac{\tau}{2} |\Delta c_\varepsilon|^2 + \frac{1+\tau}{2} |\nabla c_\varepsilon|^2 + \frac{1}{2} c_\varepsilon^2 \right) \\
\le \mathcal E(n_0,c_0) - \int_0^t \mathcal D(n_\varepsilon(s),c_\varepsilon(s)) +  e^{-1}|\Omega| + \int_\Omega n_\varepsilon c_\varepsilon.
\end{gathered}
\label{L:FSL:DissIneqn.P1}
\end{align}
It is necessary to estimate the product $n_\varepsilon c_\varepsilon$ in $L^\infty((0,T);L^1(\Omega))$. By the Sobolev embedding $H^2(\Omega) \hookrightarrow L^\infty(\Omega)$, we have 
\begin{align*}
\int_\Omega n_\varepsilon c_\varepsilon \le M \|c_\varepsilon\|_{L^{\infty}(\Omega)} \le C M \|c_\varepsilon\|_{H^{2}(\Omega)} \le \frac{\tau}{4} \|c_\varepsilon\|_{H^{2}(\Omega)}^2 + \frac{C^2M^2}{\tau}. 
\end{align*}
Therefore, we deduce from \eqref{L:FSL:DissIneqn.P1} that 
\begin{align}
\begin{gathered}
\int_\Omega \left( (n_\varepsilon \log n_\varepsilon + e^{-1} ) +  \frac{1}{2}|\Delta c_\varepsilon - c_\varepsilon + w_\varepsilon|^2 + \frac{\tau}{4} |\Delta c_\varepsilon|^2 + \frac{2+\tau}{4} |\nabla c_\varepsilon|^2 + \frac{2-\tau}{4} c_\varepsilon^2 \right) \\
+ \int_0^t \mathcal D(n_\varepsilon(s),c_\varepsilon(s)) \le \mathcal E(n_0,c_0)   + |\Omega| - M  + \frac{C^2M^2}{\tau},
\end{gathered}
\label{L:FSL:DissIneqn.P2}
\end{align}
and hence, estimate \eqref{L:FSL:DissIneqn} follows. In particular, by paying attention to the last two terms of $\mathcal D(n_\varepsilon,c_\varepsilon)$,  we observe that 
\begin{align*}
      \frac{1+\tau}{\varepsilon} \intQt |\nabla(\Delta c_\varepsilon - c_\varepsilon + w_\varepsilon)|^2 + \frac{2}{\varepsilon} \intQt |\Delta c_\varepsilon - c_\varepsilon + w_\varepsilon|^2 \le \frac{C}{\tau} 
\end{align*}
and obtain \eqref{L:FSL:Est:c:S2}, 
where $C$ depends on $n_0,c_0,
\Omega$ and $M$ and does not on $\varepsilon,\tau$. 
\end{proof}

\subsubsection{Uniform boundedness in sup-norms} 

Thanks to the Sobolev embedding,  Lemma \ref{L:FSL:Est:c} implies that the $L^6(\Omega)^N$-norm of $\nabla v$ is uniformly-in-$(\varepsilon,t)$ bounded. This will help us to obtain the uniform-in-$\varepsilon$ boundedness of  $n_\varepsilon$ in $$L^\infty((0,\infty);L^2(\Omega)) \cap L^3(\Omega_T) \cap L^2((0,T);H^1(\Omega))$$ via testing {its equation} by $n_\varepsilon$, see Lemma \ref{L:FSL:nReg}. Moreover, in Lemma \ref{L:FSL:Est:n}, this boundedness of $\nabla c_\varepsilon$ will show the  uniform-in-$\varepsilon$ boundedness of $n_\varepsilon$ in $L^\infty(\Omega_T)$ via exploiting $L^p-L^q$ estimates for the Neumann heat semigroup.

\begin{lemma} 
\label{L:FSL:nReg} It holds     
    \begin{align}
\sup_{\varepsilon>0}   \left( \sup_{0<t<T} \intO  n_\varepsilon^2  +  \intQT n_\varepsilon^3 + \intQT |\nabla n_\varepsilon|^2 \right) \le C_\tau.
\label{L:FSL:nReg:State}
\end{align}  
\end{lemma}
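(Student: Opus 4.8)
\textbf{Proof proposal for Lemma \ref{L:FSL:nReg}.}

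The plan is to test the equation for $n_\varepsilon$ by $n_\varepsilon$ itself and then close the resulting differential inequality using the uniform-in-$\varepsilon$ bound on $\nabla c_\varepsilon$ coming from Lemma \ref{L:FSL:Est:c}. Concretely, since $\partial_t n_\varepsilon = \Delta n_\varepsilon - \nabla\cdot(n_\varepsilon\nabla c_\varepsilon)$ with no-flux boundary conditions, multiplying by $n_\varepsilon$ and integrating by parts gives
\begin{align*}
\frac12 \frac{d}{dt}\intO n_\varepsilon^2 + \intO |\nabla n_\varepsilon|^2 = \intO n_\varepsilon \nabla n_\varepsilon\cdot\nabla c_\varepsilon \le \frac12\intO|\nabla n_\varepsilon|^2 + \frac12 \intO n_\varepsilon^2 |\nabla c_\varepsilon|^2.
\end{align*}
It remains to absorb the last term. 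By Lemma \ref{L:FSL:Est:c} together with the Sobolev embedding $H^1(\Omega)\hookrightarrow L^6(\Omega)$ (valid since $1\le N\le 3$), we have $\sup_{\varepsilon>0}\sup_{t>0}\|\nabla c_\varepsilon(t)\|_{L^6(\Omega)} \le C_\tau$. Then by Hölder's inequality with exponents $3$ and $3/2$, $\intO n_\varepsilon^2 |\nabla c_\varepsilon|^2 \le \|\nabla c_\varepsilon\|_{L^6}^2 \|n_\varepsilon\|_{L^3}^2 \le C_\tau \|n_\varepsilon\|_{L^3}^2$. The remaining work is to control $\|n_\varepsilon\|_{L^3}^2$ by an interpolation-type bound that can be absorbed into $\intO|\nabla n_\varepsilon|^2$ modulo lower-order terms.

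The key estimate here is the Gagliardo--Nirenberg inequality: for $1\le N\le 3$ one has $\|u\|_{L^3(\Omega)} \le C_{GN}\|\nabla u\|_{L^2(\Omega)}^{\theta}\|u\|_{L^1(\Omega)}^{1-\theta} + C_{GN}\|u\|_{L^1(\Omega)}$ with $\theta = \tfrac{N}{N+2} \cdot \tfrac{2\cdot 3 - (1)\cdot? }{\,\cdots}$; more usefully $\|u\|_{L^3}^2 \le C\|\nabla u\|_{L^2}^{2\theta}\|u\|_{L^1}^{2(1-\theta)} + C\|u\|_{L^1}^2$, where for $N\le 3$ one checks $2\theta < 2$, i.e. $\theta<1$. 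Applying this with $u=n_\varepsilon$ and using the mass conservation $\|n_\varepsilon(t)\|_{L^1}=M$ from \eqref{FSL:TotalMass}, Young's inequality turns $C_\tau\|n_\varepsilon\|_{L^3}^2$ into $\tfrac14\intO|\nabla n_\varepsilon|^2 + C_{\tau}$, since the exponent $2\theta$ is strictly less than $2$. Substituting back yields
\begin{align*}
\frac{d}{dt}\intO n_\varepsilon^2 + \frac12\intO|\nabla n_\varepsilon|^2 \le C_\tau,
\end{align*}
after possibly adding a multiple of $\intO n_\varepsilon^2$ on the right (bounded via another GN/interpolation between $L^1$ and $H^1$, absorbed again). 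A Grönwall argument on $(0,T)$ with the finite initial value $\intO n_0^2$ then gives the uniform bound on $\sup_{0<t<T}\intO n_\varepsilon^2$ and, integrating the differential inequality in time, the uniform bound on $\iint_{\Omega_T}|\nabla n_\varepsilon|^2$. Finally $\iint_{\Omega_T} n_\varepsilon^3 \le C_\tau$ follows by integrating the GN inequality $\|n_\varepsilon\|_{L^3}^3 \le C\|\nabla n_\varepsilon\|_{L^2}^{3\theta}M^{3(1-\theta)} + CM^3$ in time, using $3\theta\le 2$ for $N\le 3$ and the just-obtained $L^2((0,T);H^1)$ bound.

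The main obstacle is the bookkeeping of the Gagliardo--Nirenberg exponents: one must verify that for every $N\in\{1,2,3\}$ the relevant power of $\|\nabla n_\varepsilon\|_{L^2}$ appearing after interpolation is strictly subquadratic (for the $L^3$-in-space term in the energy identity) and at most quadratic (for the space-time $L^3$ bound), so that Young's inequality and time integration respectively close the argument; the critical case $N=4$ is precisely where $3\theta=2$ fails to leave room and a different strategy is needed, which is why this lemma is confined to $1\le N\le 3$. Everything else — the testing procedure, integration by parts, mass conservation, and Grönwall — is routine.
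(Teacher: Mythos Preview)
Your approach is essentially the same as the paper's: test by $n_\varepsilon$, use H\"older to reach $\|n_\varepsilon\|_{L^3}^2\|\nabla c_\varepsilon\|_{L^6}^2$, bound $\|\nabla c_\varepsilon\|_{L^6}$ via Lemma~\ref{L:FSL:Est:c} and the Sobolev embedding $H^1\hookrightarrow L^6$, then interpolate $\|n_\varepsilon\|_{L^3}$ between $\|\nabla n_\varepsilon\|_{L^2}$ and the conserved $L^1$-mass, and absorb with Young.

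There is one concrete slip in your bookkeeping for the $L^3(\Omega_T)$ bound. With the Gagliardo--Nirenberg interpolation against $L^1$ you have $\theta = \tfrac{4N}{3(N+2)}$, so $\theta=\tfrac45$ when $N=3$ and hence $3\theta=\tfrac{12}{5}>2$; your claim ``$3\theta\le2$ for $N\le3$'' is false in the top dimension covered by this lemma, and the direct time-integration argument you wrote does not close there. The remedy is immediate once you already have $n_\varepsilon\in L^\infty((0,T);L^2)\cap L^2((0,T);H^1)$: interpolate instead against $L^2$, namely
\[
\|n_\varepsilon\|_{L^3(\Omega)}^3 \le C\|\nabla n_\varepsilon\|_{L^2(\Omega)}^{N/2}\|n_\varepsilon\|_{L^2(\Omega)}^{3-N/2}+C\|n_\varepsilon\|_{L^2(\Omega)}^3,
\]
where the gradient exponent $N/2\le 3/2<2$ is now harmless under time integration (or simply invoke the standard embedding $L^\infty(L^2)\cap L^2(H^1)\hookrightarrow L^{2(N+2)/N}(\Omega_T)\hookrightarrow L^3(\Omega_T)$ for $N\le3$). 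With this correction your argument is complete and matches the paper.
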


\begin{proof} Multiplying the equation for $n_\varepsilon$ by itself, integrating by parts over $\Omega$ and using the Young inequality, we obtain 
\begin{equation*}
 \frac{d}{dt} \int_{\Omega} n_{\varepsilon}^2 +  \int_{\Omega} |\nabla n_{\varepsilon}|^2  \leq  \int_{\Omega}  n_{\varepsilon}^2 |\nabla c_{\varepsilon}|^2 ,
\end{equation*}
for all $t>0$. Then, by the H\"older inequality,  
\begin{equation}
\frac{d}{dt} \int_{\Omega} n_{\varepsilon}^2 + \int_{\Omega} |\nabla n_{\varepsilon}|^2  \leq \|n_{\varepsilon}\|_{L^{3}(\Omega)}^2 \|\nabla c_{\varepsilon}\|_{L^6(\Omega)^N}^2.
\label{L:FSL:Est:n:P1}
\end{equation}
Noting that the estimate \eqref{L:FSL:DissIneqn} and the Sobolev embedding imply the uniform boundedness for $\nabla c_{\varepsilon}$ in $L^\infty((0,T);W^{1,6}(\Omega)^N)$, where the bound is proportional to $1/\tau^2$. Moreover, by applying the Gagliardo-Nirenberg interpolation inequality,  
\begin{align*}
\|n_{\varepsilon}\|_{L^3(\Omega)}^2 \|\nabla c_{\varepsilon}\|_{L^6(\Omega)^N}^2 &\leq \frac{C}{\tau^2} \left( \frac{C}{\tau} \|\nabla n_{\varepsilon}\|^{4/5}_{L^2(\Omega)^N} \|n_{\varepsilon}\|^{1/5}_{L^1(\Omega)} + \|n_{\varepsilon}\|_{L^1(\Omega)} \right)^2,  
\end{align*} 
and by the Young inequality, 
\begin{align*}
\|n_{\varepsilon}\|_{L^3(\Omega)}^2 \|\nabla c_{\varepsilon}\|_{L^6(\Omega)^N}^2 \le \frac{C_M}{\tau^4} \|\nabla n_{\varepsilon}\|^{8/5}_{L^2(\Omega)^N} + \frac{C_M}{\tau^2}   \leq \frac{1}{2} \int_{\Omega} |\nabla n_{\varepsilon}|^2 + \frac{C_M(\tau^{18}+1)}{\tau^{20}}  .
\end{align*}
Hence, estimate \eqref{L:FSL:nReg:State} is obtained directly from \eqref{L:FSL:Est:n:P1}.
\end{proof}

\begin{lemma}\label{L:FSL:Est:n} For any $0<T<\infty$, it holds  
\begin{align*}
\sup_{\varepsilon>0} \Big( \|n_\varepsilon\|_{L^\infty(\Omega_T)} \Big) \le C_{\tau,T}.
\end{align*}
\end{lemma}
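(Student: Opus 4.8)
The plan is the standard semigroup bootstrap: represent $n_\varepsilon$ by Duhamel's formula and iterate $L^p$--$L^q$ smoothing estimates for the Neumann heat semigroup, exploiting that the first equation of \eqref{Sys:Eps:Main} contains no $\varepsilon$, so that every estimate is automatically uniform in $\varepsilon$ (only the drift $\nabla c_\varepsilon$ carries the $\varepsilon$-dependence, and it is controlled uniformly by Lemma \ref{L:FSL:Est:c}). Concretely: since $1\le N\le 3$, the Sobolev embedding $H^2(\Omega)\hookrightarrow W^{1,6}(\Omega)$ together with Lemma \ref{L:FSL:Est:c} gives $\sup_{\varepsilon>0}\|\nabla c_\varepsilon\|_{L^\infty((0,T);L^6(\Omega)^N)}\le C_\tau$, while Lemma \ref{L:FSL:nReg} gives $\sup_{\varepsilon>0}\|n_\varepsilon\|_{L^\infty((0,T);L^2(\Omega))}\le C_\tau$. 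Writing the $n_\varepsilon$-equation as $\partial_t n_\varepsilon=\Delta n_\varepsilon-\nabla\cdot(n_\varepsilon\nabla c_\varepsilon)$ and using the variation-of-constants formula with respect to the Neumann heat semigroup $(e^{t\Delta})_{t\ge0}$,
$$n_\varepsilon(t)=e^{t\Delta}n_0-\int_0^t e^{(t-s)\Delta}\,\nabla\cdot\big(n_\varepsilon\nabla c_\varepsilon\big)(s)\,ds,$$
where $\|e^{t\Delta}n_0\|_{L^\infty(\Omega)}\le\|n_0\|_{L^\infty(\Omega)}$ is already $\varepsilon$-free.

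For the iteration I would use Hölder in space: if $n_\varepsilon$ is bounded in $L^\infty((0,T);L^r(\Omega))$ uniformly in $\varepsilon$, then $n_\varepsilon\nabla c_\varepsilon$ is bounded in $L^\infty((0,T);L^p(\Omega)^N)$ uniformly, with $\tfrac1p=\tfrac1r+\tfrac16$. Feeding this into the smoothing estimate $\|e^{\sigma\Delta}\nabla\cdot f\|_{L^q(\Omega)}\le C\big(1+\sigma^{-\frac12-\frac N2(\frac1p-\frac1q)}\big)\|f\|_{L^p(\Omega)^N}$ and integrating in time over $(0,t)\subset(0,T)$ yields $\sup_{\varepsilon>0}\|n_\varepsilon\|_{L^\infty((0,T);L^q(\Omega))}\le C_{\tau,T,q}$ for every $q$ with $\tfrac12+\tfrac N2(\tfrac1p-\tfrac1q)<1$; one checks that this admissible range contains exponents strictly larger than $r$ as long as $r$ is below the $L^6$-bottleneck. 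Starting from $r_0=2$ (Lemma \ref{L:FSL:nReg}) and iterating, after finitely many steps the integrability exponent exceeds $N$; choosing it a little larger still, so that the corresponding $p$ with $\tfrac1p=\tfrac1q+\tfrac16$ satisfies $p>N$ (possible because $\tfrac1N>\tfrac16$ for $N\le3$), a final application of the smoothing estimate with target exponent $q=\infty$ — admissible since then $\tfrac12+\tfrac N{2p}<1$ — gives $\sup_{\varepsilon>0}\|n_\varepsilon\|_{L^\infty(\Omega_T)}\le C_{\tau,T}$, which is the assertion.

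The main obstacle is purely the bookkeeping of the bootstrap: one must verify at each stage that the smoothing exponent $\tfrac12+\tfrac N2(\tfrac1p-\tfrac1q)$ stays below $1$ (so that $\int_0^t\sigma^{-\alpha}\,d\sigma<\infty$) and that the integrability exponents genuinely increase and overshoot $N$ after finitely many steps. Working throughout with $L^\infty$-in-time control of the flux $n_\varepsilon\nabla c_\varepsilon$ — available because both $n_\varepsilon$ and $\nabla c_\varepsilon$ are uniformly bounded in $L^\infty$ in time — avoids any Hölder-in-time juggling and keeps the argument short. Finally, one should note that $\|n_\varepsilon\|_{L^\infty(\Omega_T)}$ is a priori finite for each fixed $\varepsilon$ by Theorem \ref{Theo:GloEX}, so the Duhamel identity and all intermediate norms are legitimate; the content of the lemma is that the resulting bounds do not depend on $\varepsilon$ (they do, however, deteriorate as $\tau\to0^+$, consistently with the constant $C_{\tau,T}$).
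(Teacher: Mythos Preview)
Your argument is correct, and relies on the same two uniform-in-$\varepsilon$ inputs as the paper: the bound $\sup_{\varepsilon>0}\|\nabla c_\varepsilon\|_{L^\infty((0,T);L^6(\Omega))}\le C_\tau$ from Lemma \ref{L:FSL:Est:c} and the Sobolev embedding, together with the Duhamel representation of $n_\varepsilon$ via the Neumann heat semigroup. The route, however, differs. You perform a finite $L^r\to L^q$ Moser-type bootstrap, starting from $r_0=2$ (provided by Lemma \ref{L:FSL:nReg}) and gaining at each step an increment governed by $\tfrac1N-\tfrac16>0$, until the flux exponent exceeds $N$ and a final step reaches $L^\infty$. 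The paper instead closes the argument in a single step: it sets $\Lambda_T:=\sup_{0<t<T}\|n_\varepsilon(t)\|_{L^\infty(\Omega)}$, estimates $\|n_\varepsilon\nabla c_\varepsilon\|_{L^p}$ for $3<p<6$ by interpolating $\|n_\varepsilon\|_{L^{6p/(6-p)}}$ between $\|n_\varepsilon\|_{L^\infty}$ and $\|n_\varepsilon\|_{L^1}$ (the latter equals $M$), and obtains the nonlinear inequality $\Lambda_T\le C+C_{\tau,T}\Lambda_T^{(7p-6)/(6p)}$ with exponent strictly below $1$; since $\Lambda_T<\infty$ by Theorem \ref{Theo:GloEX}, this yields the uniform bound directly. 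Your approach is slightly longer but more constructive and does not even require Lemma \ref{L:FSL:nReg} beyond the mass conservation if one is willing to start the bootstrap at $r_0=1$; the paper's approach is shorter and uses only $\|n_\varepsilon\|_{L^1}=M$, at the price of needing the a priori finiteness of $\Lambda_T$ to close.
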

\begin{proof}
To prove the uniform-in-$\varepsilon$ boundedness of $n_\varepsilon$ in $L^\infty(\Omega_T)$, we will estimate the quantity
$$\Lambda_T := \sup_{0<t<T} \|n_{\varepsilon}(t)\|_{L^{\infty}(\Omega)}.$$  
Let $3<p<6$ and take $\frac{3}{2p}< \beta < \frac{1}{2}$. Then,  $D((-\Delta+I)^\beta) \hookrightarrow L^\infty(\Omega)$, thanks to Theorem 1.6.1 in \cite{henry2006geometric}.
Using the Duhamel formula and the estimate \eqref{GloEx:HS:Contraction} for the heat Neumann semigroup, we have that
\begin{align*}
\begin{aligned}
\|n_{\varepsilon}(t)\|_{L^{\infty}(\Omega)} & \leq \|e^{t\Delta}n_0\|_{L^{\infty}(\Omega)} + \bigg{\|}\int_0^t e^{(t-s)\Delta} \nabla \cdot \left(n_{\varepsilon}(s) \nabla c_{\varepsilon}(s)  \right) ds \bigg{\|}_{L^{\infty}(\Omega)}   \\ 
& \leq \|e^{t\Delta}n_0\|_{L^{\infty}(\Omega)} +  \int_0^t \big\| (-\Delta +I)^\beta e^{(t-s)\Delta} \nabla \cdot \left(n_{\varepsilon}(s) \nabla c_{\varepsilon}(s)  \right) \big\|_{L^{p}(\Omega)} ds  \\ 
& \leq \|n_0\|_{L^{\infty}(\Omega)} + C\int_0^t  (t-s)^{-\beta-\frac{1}{2} - \eta}  e^{-\lambda s} \|n_{\varepsilon}(s) \nabla c_{\varepsilon}(s)\|_{L^{p}(\Omega)^N} ds
\end{aligned} 
\end{align*}
for any $\eta>0$, being chosen later. Using the H\"older inequality, 
\begin{align}
& \| n_{\varepsilon}(s) \nabla c_{\varepsilon}(s)\|_{L^p(\Omega)^N}  \leq C  \|n_{\varepsilon}(s) \|_{L^{\frac{6p}{6-p}}(\Omega)} \|\nabla c_{\varepsilon}(s)\|_{L^{6}(\Omega)^N} \nonumber \\
& \leq C  \|n_{\varepsilon}(s) \|^{\frac{7p-6}{6p}}_{L^{\infty}(\Omega)} \|n_{\varepsilon}(s) \|^{\frac{6-p}{6p}}_{L^1(\Omega)} \|\nabla c_{\varepsilon}(s)\|_{L^6(\Omega)^N} \nonumber  \leq C_\tau   \Lambda_T^{\frac{7p-6}{6p}},
\end{align}
where $\sup_{t>0}\|\nabla c_{\varepsilon}(t)\|_{L^6(\Omega)^N}$ is bounded due to estimate \eqref{L:FSL:DissIneqn} and the Sobolev embedding for the dimensions $1\le N\le 3$. Combining the above estimates, we deduce that 
\[
\|n_{\varepsilon}(t)\|_{L^{\infty}(\Omega)} \leq C  + C_\tau  \Lambda_T^{\frac{7p-6}{6p}} \int_0^t  (t-s)^{-\beta-\frac{1}{2} - \eta}  e^{-\lambda s} ds.
\]
Since $\beta<1/2$, we can choose $\eta$ such that $\eta <1/2-\beta$, which guarantees that the above improper integral is finite. Thus, we obtain $\Lambda_T \leq C  + C_{\tau,T} \Lambda_T^{(7p-6)/(6p)},$   
and therefore, the quantity $\Lambda_T$ must be bounded since its exponent on the right-hand side is strictly less than $1$.  
\end{proof}

\begin{lemma}\label{L:FSL:Est:w} 
For any $1<p<\infty$, it holds
\begin{align}
\sup_{\varepsilon>0} \Big(
 \|w_\varepsilon \|_{L^\infty((0,T);W^{1,\infty}(\Omega))} + \|\Delta w_\varepsilon \|_{L^p(\Omega_T)} \Big) \le C_{\tau,T} , \label{L:FSL:Est:w:S1}
\end{align}
and 
\begin{align}
\sup_{\varepsilon>0} \Big(
 \|c_\varepsilon \|_{L^\infty((0,T);W^{2,\infty}(\Omega))}  \Big) \le C_{\tau,T} , \label{L:FSL:Est:w:S2}
\end{align}
\end{lemma}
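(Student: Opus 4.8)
The plan is to bootstrap from the $L^\infty(\Omega_T)$-bound on $n_\varepsilon$ obtained in Lemma \ref{L:FSL:Est:n}, first lifting $w_\varepsilon$ and then $c_\varepsilon$. For the estimate \eqref{L:FSL:Est:w:S1}, I would treat the third equation of \eqref{Sys:Eps:Main}, rewritten in the form $\varepsilon\partial_t w_\varepsilon - \tau\Delta w_\varepsilon + w_\varepsilon = n_\varepsilon$, as a slow-evolution equation with source $f := n_\varepsilon \in L^\infty(\Omega_T)$ uniformly in $\varepsilon$. Applying the maximal regularity estimate for slow evolution recalled in the introduction (cf. \cite[Lemma 3.4]{reisch2024global}) with exponent $p$, together with the regularity of $w_0$ from Assumption \ref{Ass:InitialData}, yields $\|\Delta w_\varepsilon\|_{L^p(\Omega_T)} \le (\varepsilon/p)^{1/p}\|w_0\|_{W^{2,p}(\Omega)} + C_{\tau,p}\|n_\varepsilon\|_{L^p(\Omega_T)} \le C_{\tau,T,p}$, uniformly in $\varepsilon$, since $\varepsilon^{1/p}$ is bounded for $\varepsilon$ near $0$. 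This gives the $\|\Delta w_\varepsilon\|_{L^p(\Omega_T)}$ part. For the $W^{1,\infty}$-bound, I would instead use the Duhamel representation for $w_\varepsilon$ via the Neumann heat semigroup $e^{\frac{1}{\varepsilon}s(\tau\Delta - I)}$; the crucial point is that although the prefactor $\frac1\varepsilon$ is singular, the exponential contraction $\|e^{\frac{1}{\varepsilon}s(\tau\Delta-I)}\|$ decays like $e^{-s/\varepsilon}$, so the $\frac1\varepsilon\int_0^t$ against the smoothing kernel $\|\nabla e^{\frac1\varepsilon s(\tau\Delta-I)}g\|_{L^\infty} \lesssim (s/\varepsilon)^{-1/2-\eta}\varepsilon^{-1/2}\cdots$ type bound integrates (after the substitution $\sigma = s/\varepsilon$) to a finite $\varepsilon$-independent constant times $\|n_\varepsilon\|_{L^\infty(\Omega_T)}$; one also adds the initial contribution $\|\nabla e^{\frac1\varepsilon t(\tau\Delta-I)}w_0\|_{L^\infty}\le \|w_0\|_{W^{1,\infty}(\Omega)}$.

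For \eqref{L:FSL:Est:w:S2}, I would feed the bound just obtained into the second equation, rewritten as $\varepsilon\partial_t c_\varepsilon - \Delta c_\varepsilon + c_\varepsilon = w_\varepsilon$. Now the source $w_\varepsilon$ is uniformly bounded in $L^\infty((0,T);W^{1,\infty}(\Omega))$. Using once more the Neumann-heat-semigroup Duhamel formula for $c_\varepsilon$ with the contraction estimate $\|(-\Delta+I)^\beta e^{\frac1\varepsilon s(\Delta-I)}\|_{L^q\to L^\infty}\lesssim (s/\varepsilon)^{-\beta}\cdots$ for suitable $\beta<1$ (enough to reach $W^{2,\infty}$ via $D((-\Delta+I)^\beta)\hookrightarrow W^{2,\infty}(\Omega)$ for $\beta$ slightly above $1 + N/(2q)$, working on $L^q$ with $q$ large since $w_\varepsilon$ is essentially bounded), the rescaling $\sigma = s/\varepsilon$ again absorbs the $\frac1\varepsilon$ and produces an $\varepsilon$-independent bound, plus the initial term controlled by $\|c_0\|_{W^{2,\infty}(\Omega)}$. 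Alternatively, and perhaps more cleanly, I would differentiate the equation: $\nabla c_\varepsilon$ and $\Delta c_\varepsilon$ each satisfy slow-evolution equations driven by $\nabla w_\varepsilon$ and $\Delta w_\varepsilon$ respectively, so $L^p$-maximal regularity with slow evolution applied to $\Delta c_\varepsilon$ gives $\|\Delta c_\varepsilon\|_{L^p((0,T);W^{2,p}(\Omega))}\le C_{\tau,T,p}(1 + \|\Delta w_\varepsilon\|_{L^p(\Omega_T)})$, and then Sobolev embedding with $p$ large upgrades this to the $L^\infty((0,T);W^{2,\infty}(\Omega))$-bound after interpolating against the $L^\infty((0,T);H^2(\Omega))$-bound from Lemma \ref{L:FSL:Est:c}.

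The main obstacle is handling the singular $1/\varepsilon$ factor uniformly: naive $L^p$-$L^q$ estimates for the semigroup $e^{\frac1\varepsilon s(d\Delta - I)}$ carry powers of $1/\varepsilon$ that do not obviously cancel. The resolution, as sketched, is the change of variables $\sigma = s/\varepsilon$ in the Duhamel integral combined with the exponential-in-$\sigma$ decay coming from the $-I$ term (the reaction term $-w$ in the $c$-equation and $-w$ in the $w$-equation), which is exactly what makes the scaling work; the price is that one needs $s\mapsto (s)^{-\alpha}$ to be integrable near $0$, i.e. $\alpha<1$, which is why one must keep $\beta < 1/2$ (or $\beta<1$ with a vector-valued target) and choose the auxiliary exponents $p,q,\eta$ accordingly. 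A secondary technical point is that the maximal-regularity-with-slow-evolution estimate requires compatibility of $w_0, c_0$ with the Neumann boundary condition, which is guaranteed by Assumption \ref{Ass:InitialData}. Once these are arranged, all constants depend only on $\tau$, $T$, $p$, the domain, and the initial data, as claimed.
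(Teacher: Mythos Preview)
Your treatment of the $w_\varepsilon$-estimates is essentially the paper's argument: slow-evolution maximal regularity for $\|\Delta w_\varepsilon\|_{L^p(\Omega_T)}$, and the Duhamel formula together with the substitution $\sigma=s/\varepsilon$ (which turns $\tfrac{1}{\varepsilon}\int_0^t e^{-s/\varepsilon}\min(s/\varepsilon;1)^{-k/2}\,ds$ into a convergent $\varepsilon$-independent integral) for the $L^\infty((0,T);W^{1,\infty})$-bound.

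The genuine gap is in your route to \eqref{L:FSL:Est:w:S2}. Your first approach asks for $D((-\Delta+I)^\beta)\hookrightarrow W^{2,\infty}(\Omega)$ starting from $L^q$, which forces $\beta>1+\tfrac{N}{2q}>1$, and then the kernel $(s/\varepsilon)^{-\beta}$ is non-integrable near $s=0$; your own constraint ``$\beta<1$'' is therefore incompatible with the embedding you invoke. Your alternative via maximal regularity on $\Delta c_\varepsilon$ yields at best $c_\varepsilon\in L^p((0,T);W^{4,p}(\Omega))$, hence $L^p((0,T);W^{2,\infty}(\Omega))$ for $p$ large, but you have no uniform control of $\partial_t c_\varepsilon$, so the proposed interpolation against $L^\infty((0,T);H^2(\Omega))$ does not upgrade the time integrability to $L^\infty$.

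The paper avoids this by \emph{commuting the derivatives onto the source} rather than extracting them from the semigroup. Since $\Delta$ commutes with $e^{r(\Delta-I)}$, the Duhamel formula for $c_\varepsilon$ gives
\[
\Delta c_\varepsilon(t)=e^{\frac{t}{\varepsilon}(\Delta-I)}\Delta c_0+\frac{1}{\varepsilon}\int_0^t e^{\frac{s}{\varepsilon}(\Delta-I)}\,\Delta w_\varepsilon(t-s)\,ds,
\]
so the semigroup now only has to perform the $L^{q_1}\to L^\infty$ smoothing, whose exponent $\tfrac{N}{2q_1}$ can be made $<1$ by choosing $q_1\gg1$; the already-established bound $\Delta w_\varepsilon\in L^{q_1}(\Omega_T)$ from \eqref{L:FSL:Est:w:S1} then closes the estimate. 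Equivalently (and this is the clean version of what your parenthetical ``vector-valued target'' was groping towards), one may commute only one gradient through and use $\nabla w_\varepsilon\in L^\infty(\Omega_T)$ as the source, so that the remaining semigroup factor carries exactly a $(s/\varepsilon)^{-1/2}$ singularity, which is integrable against $e^{-s/\varepsilon}$ after the rescaling. Either way, the point you were missing is that the two spatial derivatives must be split between the source and the semigroup rather than taken entirely from the semigroup.
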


\begin{proof} Thanks to the parabolic maximal regularity with slow evolution, cf. Lemma  \ref{L:MR:SlowEvolution}, applied to the equation for $w_\varepsilon$, we have 
\begin{align}
\|\Delta w_\varepsilon \|_{L^p(\Omega_T)}  \le  C_p \, \varepsilon^{\frac{1}{p}} \|\Delta w_0\|_{L^p(\Omega)} +  C_{p,\tau} \|n_\varepsilon\|_{L^p(\Omega_T)} \le C_{p,\tau,T},
\label{L:FSL:Est:P1}
\end{align}
for any $1<p<\infty$. Now, using the Neumann heat semigroup, from the equation for $w_\varepsilon$ we can represent this component as 
\begin{align*}
    w_\varepsilon(t)= e^{\frac{1}{\varepsilon}t(\tau \Delta - I)} w_0 + \frac{1}{\varepsilon} \int_0^t e^{\frac{1}{\varepsilon}(t-s)(\tau \Delta - I)} n_\varepsilon (s) ds . 
\end{align*}
Therefore, for any $1\le p_1 \le p_2 \le \infty$ and $k=0,1$, an application of estimate \eqref{GloEx:HS:LpLq} shows
\begin{align*}
\|\nabla^k w_\varepsilon(t)\|_{L^{p_2}(\Omega)} & \le \left\|\nabla^k e^{\frac{1}{\varepsilon}t(\tau \Delta - I)} w_0 \right\|_{L^{p_2}(\Omega)} + \frac{1}{\varepsilon} \left\| \int_0^t \nabla^k e^{\frac{s}{\varepsilon}(\tau \Delta - I)} n_\varepsilon (t-s) ds \right\|_{L^{p_2}(\Omega)} \\ 
& \le C_\tau\| w_0 \|_{W^{k,p_2}(\Omega)} + \frac{C_\tau}{\varepsilon} \int_0^t e^{-\frac{s}{\varepsilon}} \min(s/\varepsilon;1)^{-\frac{N}{2}\big(\frac{1}{p_1}-\frac{1}{p_2}\big)-\frac{k}{2}} \| n_\varepsilon (t-s)\|_{L^{p_1}(\Omega)} ds  \\
& \le C_\tau\| w_0 \|_{W^{k,p_2}(\Omega)} + \frac{C_\tau}{\varepsilon} \| n_\varepsilon\|_{L^\infty((0,T);L^{p_1}(\Omega))} \int_0^t e^{-\frac{s}{\varepsilon}} \min(s/\varepsilon;1)^{-\frac{N}{2}\big(\frac{1}{p_1}-\frac{1}{p_2}\big)-\frac{k}{2}}  ds  
    \end{align*}
using the uniform boundedness of $n_\varepsilon$ in Lemma \ref{L:FSL:Est:n}. By taking $p_1=p_2=\infty$, the latter term is bounded in $L^\infty((0,T))$ since it is obvious that 
\begin{align}
    \frac{1}{\varepsilon}  \int_0^t e^{-\frac{s}{\varepsilon}} \min(s/\varepsilon;1)^{-\frac{k}{2}}  ds \le \int_0^\infty e^{-s} \min(s;1)^{-\frac{k}{2}}  ds \le C,
    \label{L:FSL:Est:P2}
\end{align}
where the constant $C$ does not depend on $\varepsilon$. This shows the uniform boundedness of $w_\varepsilon$ in $L^\infty((0,T);W^{1,\infty}(\Omega))$, which in combination with \eqref{L:FSL:Est:P1} shows \eqref{L:FSL:Est:w:S1}. 

\medskip

For the component $c_\varepsilon$, it follows from its equation that 
\begin{align*}
    c_\varepsilon(t)= e^{\frac{1}{\varepsilon}t(\Delta - I)} c_0 + \frac{1}{\varepsilon} \int_0^t e^{\frac{1}{\varepsilon}(t-s)(\Delta - I)} w_\varepsilon (s) ds , 
\end{align*}
and thus, for any $1\le q_1 \le q_2 \le \infty$, using estimate \eqref{GloEx:HS:LpLq} again gives 
\begin{align*}
    \|\Delta c_\varepsilon(t)\|_{L^{q_2}(\Omega)} &\le C_\tau\| c_0 \|_{W^{2,q_2}(\Omega)} + \frac{C_\tau}{\varepsilon} \int_0^t e^{-\frac{s}{\varepsilon}} \min(s/\varepsilon;1)^{-\frac{N}{2}\big(\frac{1}{q_1}-\frac{1}{q_2}\big)} \|\Delta w_\varepsilon (t-s)\|_{L^{q_1}(\Omega)} ds \\
    &\le C_\tau\| c_0 \|_{W^{2,q_2}(\Omega)} + \frac{C_\tau}{\varepsilon} \|\Delta w_\varepsilon\|_{L^{q_1}(\Omega_T)} \left\| \int_0^t e^{-\frac{s}{\varepsilon}} \min(s/\varepsilon;1)^{-\frac{N}{2}\big(\frac{1}{q_1}-\frac{1}{q_2}\big)}  ds \right\|_{L^{q_1/(q_1-1)}((0,T))}.
\end{align*}
Then, by choosing {$q_1 \gg 1$ and $q_2 = \infty$}, the latter temporal norm is finite, similarly to \eqref{L:FSL:Est:P2}. Hence, $\Delta c_\varepsilon$ is uniformly bounded in $L^\infty(\Omega_T)$, and in the same way, we have the same conclusion for $c_\varepsilon$ and its gradient $\nabla c_\varepsilon$. Consequently, we obtain  \eqref{L:FSL:Est:w:S2}. 
\end{proof}

\begin{lemma} 
\label{L:HolReg}
There exists $\gamma\in (0,1)$ such that 
\begin{align}
    \sup_{\varepsilon>0} \left(  \|n_\varepsilon\|_{C^{\gamma,\gamma/2}(\overline{\Omega}\times[0,T])} \right) \le C_{\tau,T} .
    \label{L:HolReg:S}
\end{align}
    
\end{lemma}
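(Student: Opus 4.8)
The plan is to derive the interior–up-to-boundary Hölder regularity of $n_\varepsilon$ from the uniform bounds already established, by viewing the first equation of \eqref{Sys:Eps:Main} as a linear parabolic equation in divergence form with coefficients that are uniformly controlled in $\varepsilon$. Rewrite
\[
\partial_t n_\varepsilon = \nabla\cdot\bigl(\nabla n_\varepsilon - n_\varepsilon \nabla c_\varepsilon\bigr) \quad\text{in }\Omega_T,\qquad \partial_\nu n_\varepsilon = 0 \text{ on }\Gamma_T,
\]
and set $a(x,t) := n_\varepsilon \nabla c_\varepsilon$. By Lemma \ref{L:FSL:Est:n} and Lemma \ref{L:FSL:Est:w}, one has $\|n_\varepsilon\|_{L^\infty(\Omega_T)} \le C_{\tau,T}$ and $\|\nabla c_\varepsilon\|_{L^\infty(\Omega_T)} \le C_{\tau,T}$, hence $\|a\|_{L^\infty(\Omega_T)} \le C_{\tau,T}$ uniformly in $\varepsilon$. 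Thus $n_\varepsilon$ is a bounded weak solution of a uniformly parabolic equation $\partial_t n_\varepsilon = \Delta n_\varepsilon - \nabla\cdot a$ with bounded (in fact $L^\infty$) right-hand-side data and homogeneous conormal boundary condition.

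Next I would invoke the De Giorgi–Nash–Moser / Ladyzhenskaya–Solonnikov–Uraltseva theory for linear parabolic equations in divergence form with bounded measurable lower-order coefficients (see \cite{ladyzhenskaya1968linear}, or the parabolic De Giorgi estimates), applied up to the boundary using the no-flux condition and the smoothness of $\Gamma$. This yields a Hölder exponent $\gamma\in(0,1)$, depending only on $N$, $\Omega$, and the uniform $L^\infty$-bounds on $n_\varepsilon$ and $a$ — hence \emph{not} on $\varepsilon$ — together with an estimate
\[
\|n_\varepsilon\|_{C^{\gamma,\gamma/2}(\overline{\Omega}\times[0,T])} \le C\bigl(\|n_\varepsilon\|_{L^\infty(\Omega_T)}, \|a\|_{L^\infty(\Omega_T)}, \|n_0\|_{C^{\gamma'}(\overline\Omega)}, N, \Omega, T\bigr) =: C_{\tau,T}.
\]
The initial data contributes through $n_0 \in C^1(\overline\Omega)$ (Assumption \ref{Ass:InitialData}), which is more than enough regularity at $t=0$. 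Taking the supremum over $\varepsilon>0$ gives \eqref{L:HolReg:S}. Alternatively, one can phrase the same step via the $L^p$-maximal regularity already available: since $n_\varepsilon\nabla c_\varepsilon$ is bounded in $L^p(\Omega_T)$ for every $p<\infty$ uniformly in $\varepsilon$, parabolic regularity gives $n_\varepsilon$ bounded in $W^{1,0}_p \cap$ (the appropriate anisotropic Sobolev space) uniformly in $\varepsilon$, and a Sobolev embedding into $C^{\gamma,\gamma/2}$ for $p$ large closes the argument; I would present whichever is cleanest, probably the De Giorgi–Nash–Moser route since it needs only $L^\infty$-control of the drift.

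The main obstacle — really the only subtle point — is making sure the Hölder constant is genuinely $\varepsilon$-independent and valid \emph{up to the parabolic boundary} $\Gamma\times(0,T)$ including the corner $t=0$. The interior estimate is standard; the boundary estimate requires the conormal/Neumann version of the De Giorgi iteration (flattening the boundary via the smoothness of $\Gamma$, even reflection for the Neumann condition), and the estimate near $t=0$ requires compatibility of $n_0$ with the boundary condition, which is exactly guaranteed by Assumption \ref{Ass:InitialData}. Once one checks that all constants entering the De Giorgi argument depend on $n_\varepsilon$, $c_\varepsilon$ only through the $\varepsilon$-uniform $L^\infty$-bounds from Lemmas \ref{L:FSL:Est:n} and \ref{L:FSL:Est:w}, the uniformity in $\varepsilon$ is immediate, and the proof is complete.
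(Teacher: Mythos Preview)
Your proposal is correct and follows essentially the same approach as the paper: both rewrite the $n_\varepsilon$-equation in divergence form with drift $n_\varepsilon\nabla c_\varepsilon$, use the uniform $L^\infty$-bounds on $n_\varepsilon$ (Lemma~\ref{L:FSL:Est:n}) and $\nabla c_\varepsilon$ (Lemma~\ref{L:FSL:Est:w}) to control the coefficients, and then invoke a standard parabolic H\"older regularity result. The only difference is cosmetic---the paper cites \cite[Theorem~1.3 and Remark~1.4]{porzio1993holder} and \cite[Lemma~2.1, Part~iv]{lankeit2017locally} directly, whereas you refer to the general De~Giorgi--Nash--Moser\,/\,LSU framework.
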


\begin{proof} Recalling for each $\varepsilon>0$, $(n_\varepsilon,c_\varepsilon,w_\varepsilon)$ is the globally classical solution to \eqref{Sys:Eps:Main}-\eqref{Sys:Eps:InitCond}, so that it is continuous with respect to both time and space variables. Therefore, one can apply \cite[Theorem 1.3 and Remark 1.4]{porzio1993holder} or \cite[Lemma 2.1, Part iv]{lankeit2017locally} to claim \eqref{L:HolReg:S}, where $C_{\tau,T}$ does not depend on $\varepsilon$ due to the uniform boundedness of $n_\varepsilon$ in Lemma \ref{L:FSL:Est:n} and of $c_\varepsilon$ in Lemma \ref{L:FSL:Est:w}.  
\end{proof}

\subsubsection{Passage to the limit} 
\label{Sec:PassToLim}

\begin{lemma}
\label{L:FSL:Lim:GloEx} Assume that $(n,c,w)$ is a globally weak solution to System \eqref{Sys:Lim:Main}-\eqref{Sys:Lim:InitCond} in the sense that 
\begin{align}
    n \in C(\overline{\Omega}\times [0,T]) \cap L^\infty(\Omega_T) \cap L^2((0,T);H^1(\Omega)), \quad c,w\in L^2((0,T);H^1(\Omega)),\label{Sys:FSL:Lim:WeakForm0}
\end{align} 
and 
\begin{align}
\begin{gathered}
		- \int_0^T \langle n , \partial_t \xi \rangle - \int_\Omega n_0 \xi(0) =  \intQT (- \nabla n + n \nabla c) \cdot \nabla \xi , \\ 
	\intQT( \nabla c \cdot \nabla \zeta  + c\zeta)   = \intQT  w\zeta, 	\\ 
	\intQT( \tau \nabla w \cdot \nabla \zeta  + w\zeta) = \intQT n\zeta  ,
\end{gathered}
\label{Sys:FSL:Lim:WeakForm}
\end{align}
for all $\xi, \zeta \in C_c^\infty(\overline{\Omega}\times[0,T))$.  
 Then, it is the unique global classical solution to \eqref{Sys:Lim:Main}-\eqref{Sys:Lim:InitCond}.
\end{lemma}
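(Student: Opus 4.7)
My plan is to establish the lemma in two stages: first, upgrade the regularity of $(n,c,w)$ from weak to classical via an elliptic-parabolic bootstrap; second, prove uniqueness by a standard energy estimate that exploits the coupling through the two elliptic equations.

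For the regularity upgrade, I start from $n \in L^\infty(\Omega_T)\cap C(\overline{\Omega}\times[0,T])$. The weak formulation \eqref{Sys:FSL:Lim:WeakForm} shows that, for each $t$, the slice $w(\cdot,t)$ solves the elliptic Neumann problem $\tau\Delta w = w - n(\cdot,t)$ weakly, whence standard elliptic $L^p$-regularity yields $w \in L^\infty(0,T;W^{2,p}(\Omega))$ for every $1<p<\infty$, and Sobolev embedding (with $p>N$) gives $\nabla w \in L^\infty(\Omega_T)$. Applying the same argument to $\Delta c = c - w$ gives $c \in L^\infty(0,T;W^{2,p}(\Omega))$ for every $p$ and $\nabla c \in L^\infty(\Omega_T)$; moreover, the continuity in $t$ of $n$ propagates through the solution operators $(-\tau\Delta+I)^{-1}$ and $(-\Delta+I)^{-1}$ to continuity in $t$ of $w,c$ in $W^{2,p}$. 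Rewriting the $n$-equation as the linear parabolic equation
\begin{align*}
\partial_t n - \Delta n + \nabla c \cdot \nabla n = n(w - c),
\end{align*}
parabolic H\"older theory (as in Lemma \ref{L:HolReg}) yields $n \in C^{\gamma,\gamma/2}(\overline{\Omega}\times[0,T])$ for some $\gamma\in(0,1)$; elliptic Schauder regularity then gives $w,c \in C^{2+\gamma,\gamma/2}$, and a final application of parabolic Schauder theory to the $n$-equation promotes $n$ to $C^{2+\gamma,1+\gamma/2}$, so $(n,c,w)$ is a classical solution attaining $n_0$ continuously at $t=0$.

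For uniqueness, let $(n_i,c_i,w_i)$, $i=1,2$, be two classical solutions on $[0,T]$, and set $(N,C,W):=(n_1-n_2,c_1-c_2,w_1-w_2)$. These satisfy
\begin{align*}
\partial_t N = \Delta N - \nabla\cdot(N\nabla c_1 + n_2\nabla C), \quad \Delta C - C + W = 0, \quad \tau\Delta W - W + N = 0,
\end{align*}
with homogeneous Neumann boundary conditions and $N(0)=0$. Standard elliptic $H^2$-regularity yields $\|W\|_{H^2(\Omega)} \le C_\tau\|N\|_{L^2(\Omega)}$ and then $\|C\|_{H^2(\Omega)} \le C\|W\|_{L^2(\Omega)} \le C_\tau\|N\|_{L^2(\Omega)}$. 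Testing the first equation by $N$, integrating by parts, and using the uniform $L^\infty$-bounds on $\nabla c_1$ and $n_2$ from the regularity just established, together with Young's inequality, leads to
\begin{align*}
\frac{d}{dt}\|N(t)\|_{L^2(\Omega)}^2 + \|\nabla N(t)\|_{L^2(\Omega)}^2 \le C_\tau\|N(t)\|_{L^2(\Omega)}^2,
\end{align*}
so Gr\"onwall's lemma with $N(0)=0$ forces $N\equiv 0$, whence $W\equiv 0$ and $C\equiv 0$.

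The main technical hurdle I anticipate is the propagation of time-continuity from $n$ through the two elliptic operators to $c$ and $w$, which is needed to run the Schauder bootstrap in a genuinely classical sense; this is where the hypothesis $n\in C(\overline{\Omega}\times[0,T])$, rather than merely $L^\infty$, becomes essential. Once that continuity is verified, both the Schauder upgrade for existence of a classical representative and the $L^2$-energy argument for uniqueness are routine, as the ellipticity of the last two equations allows one to trade $\nabla C$ and $W$ directly for $\|N\|_{L^2}$ without any temporal integration by parts or Aubin-Lions compactness.
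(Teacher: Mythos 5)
Your proposal is correct, and the overall strategy -- leveraging the ellipticity of the $c$- and $w$-equations to obtain regularity from the continuity of $n$, then boosting $n$ via parabolic regularity theory -- is the same one the paper follows. The difference is in the choice of tools and in one addition. The paper passes from $n$ to $w$ to $c$ by writing $w(x,t) = \int_0^\infty e^{s(\tau\Delta - I)}n(x,t)\,ds$ and $c(x,t) = \int_0^\infty e^{s(\Delta - I)}w(x,t)\,ds$, so continuity and H\"older continuity propagate through these semigroup integral representations; you instead invoke elliptic $W^{2,p}$-regularity and Schauder estimates for the slice-wise Neumann problems, which achieves the same bootstrap (one small gap worth noting: you should justify that the space-time weak formulation in \eqref{Sys:FSL:Lim:WeakForm} yields the slice-wise elliptic identity for a.e.\ $t$, e.g.\ by testing with separable $\zeta(x,t)=\eta(t)\phi(x)$; the paper is equally terse on this point). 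More substantively, you explicitly prove uniqueness via an $L^2$-energy estimate on the difference, trading $\nabla C$ and $W$ for $\|N\|_{L^2}$ through the elliptic $H^2$ bounds and closing with Gr\"onwall, whereas the paper simply asserts uniqueness as part of the classical regularity bootstrap it cites from \cite[Lemma 2.1]{lankeit2017locally}; your version is self-contained and makes visible why the elliptic coupling eliminates any need for a temporal compactness argument, at the cost of a somewhat longer exposition.
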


\begin{proof} We first note that it is straightforward to check the initial condition in $L^2(\Omega)$ for $n$. Since the weak formulations for $c$ and $w$ are standard weak forms of the linear elliptic equations (specifically, the last two equations of \eqref{Sys:Lim:Main}), it is obvious that they become the strong solutions to 
\begin{align} 
	\left\{ \begin{array}{lllllll}
\Delta c - c + w = 0 & \text{in } \Omega_\infty, 	\\ 
	\tau \Delta w - w + n = 0 & \text{in } \Omega_\infty, \\
    \partial_\nu c  = \partial_\nu w = 0 & \text{on } \Gamma_\infty, 
	\end{array} \right.
\end{align} 
Using the representation of the inverse operators $(-\Delta + I)^{-1}$ and $(-\tau\Delta + I)^{-1}$, for example, see  \cite[Appendix B]{reisch2024global}, we have 
\begin{align}
    \left\{ \begin{array}{llll}
    c(x,t) = \displaystyle \int_0^\infty e^{s(\Delta - I)} w(x,t) ds, & (x,t)\in \overline{\Omega}\times [0,T], \vspace{0.15cm} \\
    w(x,t) = \displaystyle \int_0^\infty e^{s(\tau\Delta - I)} n(x,t) ds, & (x,t)\in \overline{\Omega}\times [0,T]. 
    \end{array} \right.
\label{L:FSL:Lim:GloEx:P}
\end{align}
Therefore, the continuity of $n$ implies the continuity of $w$ and, then, of $c$. Consequently, the H\"older continuity of $n$ is obtained using the results in \cite[Theorem 1.3 and Remark 1.4]{porzio1993holder} or in \cite[Lemma 2.1, Part iv]{lankeit2017locally}. By the representation \eqref{L:FSL:Lim:GloEx:P} again, we claim the H\"older continuity of $w$ and $c$. This allows us to apply \cite[Lemma 2.1, Part v]{lankeit2017locally} that $n\in C^{2,1}(\Omega \times (0,T))$, and so $(n,c,w)$ becomes the unique classical solution to \eqref{Sys:Lim:Main}-\eqref{Sys:Lim:InitCond}. 
\end{proof}

In the following, we present the proof of Theorem \ref{Theo:FSL:1} for setting subcritical dimensions.

\begin{proof}[\underline{Proof of Theorem \ref{Theo:FSL:1} with subcritical dimensions $N=1,2,3$}]
We first note that boundedness \eqref{Theo:FSL:1:S1} has been obtained in Lemmas \ref{L:FSL:nReg}, \ref{L:FSL:Est:w} and \ref{L:HolReg}. In the following, we will prove the convergence of the sequence $\{(n_\varepsilon,c_\varepsilon,w_\varepsilon)\}_{\varepsilon>0}$ as $\varepsilon \to 0$. 
Thanks to the estimate for $n_\varepsilon$ in the space of H\"older continuous functions obtained in Lemma \ref{L:HolReg}, the Arzelà–Ascoli theorem yields that there exists a subsequence of $\{n_\varepsilon\}_{\varepsilon>0}$ (being denoted by the same notation) such that   
\begin{align}
    n_\varepsilon  \longrightarrow n \quad \text{ strongly in } C(\overline{\Omega}\times [0,T])
    \label{Theo:FSL:1:P1}
\end{align} 
as $\varepsilon \to 0$. 
Moreover, the estimate for this component in Lemma \ref{L:FSL:nReg} also implies that 
\begin{align}
\nabla n_\varepsilon \xrightharpoonup{\hspace{0.4cm}} \nabla n \quad \text{ weakly in } L^2(\Omega_T) .
\label{Theo:FSL:1:P2}
\end{align} 
Testing the equation for $n_\varepsilon$ by $\xi \in C_c^\infty(\overline{\Omega}\times[0,T))$, we derive 
\begin{align*}
    - \int_0^T \langle n_\varepsilon , \partial_t \xi \rangle - \int_\Omega n_0 \xi(0) =  \intQT (- \nabla n_\varepsilon + n_\varepsilon \nabla c_\varepsilon) \cdot \nabla \xi , 
\end{align*}
which, after using the convergence \eqref{Theo:FSL:1:P1}-\eqref{Theo:FSL:1:P2}, shows 
\begin{align*}
    - \int_0^T \langle n , \partial_t \xi \rangle - \int_\Omega n_0 \xi(0) =  \intQT (- \nabla n + n \nabla c) \cdot \nabla \xi . 
\end{align*}

Next, we will consider the limits of $c_\varepsilon$ and $w_\varepsilon$. We note from the previous subsections that the uniform boundedness of $\partial_t c_\varepsilon$ and $\partial_t w_\varepsilon$ is lacking. Therefore, the compactness of $\{c_\varepsilon\}_{\varepsilon>0}$ and $\{w_\varepsilon\}_{\varepsilon>0}$ does not make the Arzelà–Ascoli theorem or the Aubin-Lions lemma applicable. Thanks to Lemma \ref{L:FSL:Est:w}, 
\begin{align}
\begin{array}{lllllll}
(c_\varepsilon, \nabla c_\varepsilon) &\xrightharpoonup{\hspace{0.4cm}}& (c, \nabla c) & \text{weakly in } L^2(\Omega_T)^{N+1}, \\
(w_\varepsilon, \nabla w_\varepsilon) &\xrightharpoonup{\hspace{0.4cm}}& (w, \nabla w) \quad & \text{weakly in } L^2(\Omega_T)^{N+1}.
\end{array}
\label{Theo:FSL:1:P3}
\end{align}
Testing the equation for $w_\varepsilon$ by $\zeta \in C_c^\infty(\overline{\Omega}\times[0,T))$ gives
\begin{align}
    - \varepsilon \int_\Omega  w_\varepsilon(0)\zeta(0) -  \varepsilon  \iint_{\Omega_T}  w_\varepsilon \partial_t \zeta + \intQT( \tau \nabla w_\varepsilon \cdot \nabla \zeta  + w_\varepsilon\zeta) = \intQT n_\varepsilon\zeta .
    \label{Theo:FSL:1:P4}
\end{align}
With the boundedness of $w_\varepsilon$ obtained in  
Lemma \ref{L:FSL:Est:w}, we can pass $\varepsilon \to 0$ to obtain the weak formulation for $w$ in \eqref{Sys:FSL:Lim:WeakForm}. Note that this can be done similarly for the component $c_\varepsilon$. Thus, the limit vector $(n,c,w)$ is a globally weak solution to System \eqref{Sys:Lim:Main}-\eqref{Sys:Lim:InitCond} in the sense \eqref{Sys:FSL:Lim:WeakForm0}-\eqref{Sys:FSL:Lim:WeakForm}. Then, Lemma \ref{L:FSL:Lim:GloEx} yields that this solution becomes the unique globally classical solution of \eqref{Sys:Lim:Main}-\eqref{Sys:Lim:InitCond}. 

\medskip 

We now improve the convergence of $w_\varepsilon,c_\varepsilon$ to a strong sense, which will be basically based on the so-called \textit{energy equation method}, see e.g. \cite{ball2004global,henneke2016fast}, presented as follows.   
Recall that  
\begin{gather}
     \intQT( \nabla w \cdot \nabla \zeta  + w \zeta)   = \intQT  n \zeta,   \quad \text{for all } \zeta \in C_c^\infty(\overline{\Omega}\times[0,T)), 
    \label{Theo:FSL:1:P5}
\end{gather}
and for each $\varepsilon>0$, $w_\varepsilon$ is sufficiently smooth since $(n_\varepsilon,c_\varepsilon,w_\varepsilon)$ is the globally classical solution to System \eqref{Sys:Eps:Main}-\eqref{Sys:Eps:InitCond}. Due to an argument of dense spaces, we can choose $w_\varepsilon$ to be a test function in \eqref{Theo:FSL:1:P4}, which yields 
\begin{gather}
    \intQT( |\nabla w_\varepsilon|^2  + w_\varepsilon^2)   = \intQT  n_\varepsilon w_\varepsilon - \frac{\varepsilon}{2} \intO (w_\varepsilon^2 - w_0^2).  
    \label{Theo:FSL:1:P6}
\end{gather}
Then, choosing $\xi = w$ in \eqref{Theo:FSL:1:P5} gives 
\begin{gather*}
     \intQT( |\nabla w|^2  + w^2)   = \intQT  n w,  
\end{gather*}
which is combined with \eqref{Theo:FSL:1:P6} to show that 
\begin{align*}
    \left| \|w_\varepsilon\|_{L^2((0,T);H^1(\Omega))}^2 - \|w\|_{L^2((0,T);H^1(\Omega))}^2 \right|  
    \le \left| \intQT  (n_\varepsilon w_\varepsilon - n w) \right| + \frac{\varepsilon}{2} \left| \intO (w_\varepsilon^2 - w_0^2)   \right|. 
\end{align*}
Using the convergence \eqref{Theo:FSL:1:P1}, \eqref{Theo:FSL:1:P3}, and the uniform boundedness of $w_\varepsilon$ in $L^\infty((0,T);L^2(\Omega))$, cf. Lemma \ref{L:FSL:Est:w}, the latter right-hand side tends to zero as $\varepsilon\to 0$. Therefore,  
\begin{align*}
\|w_\varepsilon\|_{L^2((0,T);H^1(\Omega))}  \longrightarrow  \|w\|_{L^2((0,T);H^1(\Omega))}. 
\end{align*}
Since $L^2((0,T);H^1(\Omega))$ is uniformly convex, this implies
\begin{align}
w_\varepsilon  \longrightarrow w \quad \text{strongly in } L^2((0,T);H^1(\Omega)). 
\label{Theo:FSL:1:P7}
\end{align}
Similarly, one can show the convergence 
\begin{align}
c_\varepsilon  \longrightarrow c \quad \text{strongly in } L^2((0,T);H^1(\Omega)) ,
\label{Theo:FSL:1:P8}
\end{align}
and, for the same test functions as in \eqref{Theo:FSL:1:P4},  
\begin{gather*}
     \intQT( \nabla c \cdot \nabla \xi  + c \xi)   = \intQT  w \xi. 
\end{gather*}

We obtain the convergence stated at  \eqref{Theo:FSL:1:S2} by collecting \eqref{Theo:FSL:1:P1}-\eqref{Theo:FSL:1:P2} and \eqref{Theo:FSL:1:P7}-\eqref{Theo:FSL:1:P8}. While the first line in \eqref{Theo:FSL:1:S3} is straightforward from estimate  \eqref{L:FSL:Est:c:S2} in Lemma \ref{L:FSL:Est:c} by recalling $\Delta c_\varepsilon - c_\varepsilon + w_\varepsilon = \varepsilon \partial_t c_\varepsilon$, the second one is directly derived from \eqref{Theo:FSL:1:P4} after integrating by parts in space. Since $(n,c,w)$ is the unique solution to System \eqref{Sys:Lim:Main}-\eqref{Sys:Lim:InitCond}, the above convergences hold for the whole sequences.   
\end{proof}

\subsection{The case of critical dimension $N=4$}
{When $N\le 3$, it is sufficient to use the $L^1$-norm of $n_\eps$ and the embedding $H^2(\Omega)\hookrightarrow L^{\infty}(\Omega)$ to control the term $\int_{\Omega}n_\eps c_\eps$. In the critical dimension, we ought to exploit the control of $\int_{\Omega}n_\eps\log n_\eps$ as well as an Adam-type inequality (see Lemmas \ref{L:Ineqn:LlogL} and \ref{L:Ineqn:Adam}) to balance the multiple time-scale entropy}. This {also} leads to a restriction on the size of the initial mass $M$ as \eqref{T:1:4:MassCond}.

\begin{lemma}  \label{L:FSL:Est:c4D} 
Assume that $M$ satisfies \eqref{T:1:4:MassCond}.   
Then,   
\begin{align}
 \sup_{\varepsilon>0} \left(   \sup_{t>0}   \int_{B_R} (n_\varepsilon \log n_\varepsilon + e^{-1}) +  \sup_{t>0}  {\|(\Delta-I)c_\varepsilon(t)\|_{L^2(B_R)}^2}   \right) \le C_{\tau}  ,
\label{L:FSL:Est:c4D:S1}
\end{align} 
and 
\begin{align}
    \begin{aligned}
    \sup_{\varepsilon>0} \left( \frac{1}{\varepsilon} \iint_{B_R\times(0,\infty)} \Big( |\nabla(\Delta c_\varepsilon - c_\varepsilon + w_\varepsilon)|^2 +  |\Delta c_\varepsilon - c_\varepsilon + w_\varepsilon|^2 \Big) \right) \le C_{\tau} .
\end{aligned}
\label{L:FSL:Est:c4D:S2}
\end{align}
\end{lemma}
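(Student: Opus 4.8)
\textbf{Proof plan for Lemma \ref{L:FSL:Est:c4D}.}

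The plan is to mimic the subcritical argument of Lemma \ref{L:FSL:Est:c}, but to replace the crude bound $\int_\Omega n_\varepsilon c_\varepsilon \le M\|c_\varepsilon\|_{L^\infty}$ — which fails when $N=4$ because $H^2(\Omega)\not\hookrightarrow L^\infty(\Omega)$ in the borderline sense we need for an absorption — by a sharper splitting using the $L\log L$-control of $n_\varepsilon$ and an Adams-type Moser–Trudinger inequality on the ball $B_R$. Starting from Lemma \ref{L:FSL:Lya:Id}, for every $t>0$ we have
\begin{equation*}
\mathcal E(n_\varepsilon(t),c_\varepsilon(t)) + \int_0^t \mathcal D(n_\varepsilon(s),c_\varepsilon(s))\,ds \le \mathcal E(n_0,c_0),
\end{equation*}
and after adding the constant $e^{-1}|B_R|$ to pass from $n_\varepsilon\log n_\varepsilon$ to the nonnegative quantity $n_\varepsilon\log n_\varepsilon + e^{-1}$, the only term without a favourable sign on the left is $\int_{B_R} n_\varepsilon c_\varepsilon$. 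So the whole lemma reduces to proving an estimate of the form
\begin{equation*}
\int_{B_R} n_\varepsilon c_\varepsilon \le \theta\int_{B_R}(n_\varepsilon\log n_\varepsilon + e^{-1}) + \frac{\theta'}{2}\|(\Delta - I)c_\varepsilon\|_{L^2(B_R)}^2 + C_\tau
\end{equation*}
with $\theta<1$ and $\theta'<1$, the gap between these constants and $1$ being exactly where the mass threshold $M<64\tau\pi^2$ enters.

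First I would invoke the Young-type inequality $ab \le a\log a - a + e^b$ (Lemma \ref{L:Ineqn:LlogL}) pointwise with $a = n_\varepsilon$ and $b = \mu c_\varepsilon$ for a parameter $\mu>0$ to be optimised, giving
\begin{equation*}
\int_{B_R} n_\varepsilon c_\varepsilon = \frac{1}{\mu}\int_{B_R} n_\varepsilon (\mu c_\varepsilon) \le \frac{1}{\mu}\int_{B_R}\big(n_\varepsilon\log n_\varepsilon - n_\varepsilon + e^{\mu c_\varepsilon}\big).
\end{equation*}
The $\frac1\mu\int n_\varepsilon\log n_\varepsilon$ term is absorbed into the entropy on the left provided $\mu>1$; the troublesome term is $\frac1\mu\int_{B_R} e^{\mu c_\varepsilon}$. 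Here I would use the Adams-type inequality (Lemma \ref{L:Ineqn:Adam}): on $B_R\subset\mathbb R^4$ one has $\int_{B_R} e^{|u|} \le C\exp\!\big(C\|u\|_{L^1}\big)$ whenever $\|\Delta u\|_{L^2(B_R)}^2 \le 32\pi^2$ (the sharp Adams constant in dimension $4$ for the bi-Laplacian-type seminorm). Apply it to $u = \mu c_\varepsilon$: writing $c_\varepsilon = (I-\Delta)^{-1}\big((\Delta - I)c_\varepsilon\cdot(-1)\big)$... more directly, since $\|\Delta c_\varepsilon\|_{L^2}^2 \le \|(\Delta-I)c_\varepsilon\|_{L^2}^2$ (by expanding $\|(\Delta-I)c_\varepsilon\|^2 = \|\Delta c_\varepsilon\|^2 + 2\|\nabla c_\varepsilon\|^2 + \|c_\varepsilon\|^2$), the smallness condition becomes $\mu^2\|(\Delta-I)c_\varepsilon\|_{L^2(B_R)}^2 \le 32\pi^2$. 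Since $\|(\Delta-I)c_\varepsilon\|_{L^2}^2$ is, up to the absorbed term $\frac{\theta'}{2}\|(\Delta-I)c_\varepsilon\|^2$, controlled by $\mathcal E(n_0,c_0) + C_\tau$ via the dissipation inequality itself — this is the self-improving/bootstrap flavour — the condition one ends up with, after also using $\|\mu c_\varepsilon\|_{L^1(B_R)}\le \mu\|c_\varepsilon\|_{L^2}|B_R|^{1/2} \le C_\tau\mu$ to bound the exponential factor, is a constraint $\mu>1$ together with $\mu^2 M \lesssim 32\pi^2\cdot(\text{something})$; optimising $\mu$ close to $1$ from above and tracking constants yields the threshold $M<64\tau\pi^2$ in \eqref{T:1:4:MassCond} — I expect the factor $64 = 2\cdot 32$ comes from the $\mu\to 1^+$ limit combined with the factor $\tau$ carried by the coefficient in front of $\|\Delta c_\varepsilon\|^2$ inside $\mathcal E$.

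Once this absorption is in place, \eqref{L:FSL:Est:c4D:S1} follows exactly as \eqref{L:FSL:DissIneqn} did: rearrange the energy-dissipation inequality, absorb $\int n_\varepsilon c_\varepsilon$, and read off the uniform bounds on $\sup_t\int_{B_R}(n_\varepsilon\log n_\varepsilon + e^{-1})$ and on $\sup_t\|(\Delta-I)c_\varepsilon(t)\|_{L^2(B_R)}^2$. Then \eqref{L:FSL:Est:c4D:S2} is immediate by keeping the two good dissipation terms $\frac{1+\tau}{\varepsilon}\int|\nabla(\Delta c_\varepsilon - c_\varepsilon + w_\varepsilon)|^2$ and $\frac{2}{\varepsilon}\int|\Delta c_\varepsilon - c_\varepsilon + w_\varepsilon|^2$ from $\mathcal D(n_\varepsilon,c_\varepsilon)$, integrating \eqref{L:FSL:Lya:Id:S} in time, and using the now-established finiteness of the left side — exactly as in the last lines of the proof of Lemma \ref{L:FSL:Est:c}. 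The main obstacle, and the only genuinely delicate point, is the careful bookkeeping of constants in the Young–Adams chain so that both the entropy coefficient and the coefficient of $\|(\Delta-I)c_\varepsilon\|^2$ remain strictly below $1$ under precisely the hypothesis $M<64\tau\pi^2$; everything else is a rerun of the subcritical computations. A secondary technical care is that the Adams inequality is stated on the ball (hence the hypothesis $\Omega = B_R$), and one must make sure the no-flux boundary condition on $c_\varepsilon$ is compatible with the form of the inequality being used — this is the reason the critical-dimension statement is restricted to radial domains.
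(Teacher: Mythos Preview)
Your overall architecture is right: start from the energy--dissipation identity of Lemma~\ref{L:FSL:Lya:Id}, isolate the only bad term $\int_{B_R} n_\varepsilon c_\varepsilon$, and absorb it using an $L\log L$--exponential splitting together with an Adams-type inequality. Where the plan goes wrong is in the precise form of \emph{both} ingredients, and the mismatch is what forces you into the circular ``self-improving/bootstrap'' step you flag but never resolve.

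The pointwise Young inequality you write, $\int n_\varepsilon c_\varepsilon \le \frac{1}{\mu}\int(n_\varepsilon\log n_\varepsilon - n_\varepsilon + e^{\mu c_\varepsilon})$, leaves you with $\frac{1}{\mu}\int_{B_R} e^{\mu c_\varepsilon}$, and the classical Adams inequality you then invoke (the one with a \emph{smallness} hypothesis $\|\Delta u\|_{L^2}^2 \le 32\pi^2$) requires an a~priori bound on $\|(\Delta-I)c_\varepsilon\|_{L^2}^2$ --- which is exactly what you are trying to prove. Your sentence ``controlled by $\mathcal E(n_0,c_0)+C_\tau$ via the dissipation inequality itself'' is circular as written; to make it rigorous you would need a continuity argument in $t$, which you do not set up. The paper avoids all of this by using the \emph{refined} Young inequality of Lemma~\ref{L:Ineqn:LlogL},
\[
\int_{B_R} n_\varepsilon c_\varepsilon \le \frac{1}{\alpha}\int_{B_R} n_\varepsilon\log n_\varepsilon + \frac{M}{\alpha}\,\log\!\left(\int_{B_R} e^{\alpha c_\varepsilon}\right) + \frac{1}{e},
\]
whose crucial feature is the \emph{logarithm} of $\int e^{\alpha c_\varepsilon}$ (obtained via Jensen), together with the consequence form of the Adams inequality in Lemma~\ref{L:Ineqn:Adam}(ii),
\[
\log\!\left(\int_{B_R} e^{\alpha c_\varepsilon}\right) \le \Big(\frac{\alpha^2}{128\pi^2}+\eta\Big)\|(\Delta-I)c_\varepsilon\|_{L^2(B_R)}^2 + C_{R,\eta,\alpha},
\]
which holds \emph{without} any smallness hypothesis. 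Chaining these gives directly a coefficient $\frac{M\alpha}{128\pi^2}+\frac{M\eta}{\alpha}$ in front of $\|(\Delta-I)c_\varepsilon\|_{L^2}^2$, and the absorption conditions $\frac{1}{\alpha}<1$ and $\frac{M\alpha}{128\pi^2}<\frac{\tau}{2}$ are simultaneously satisfiable precisely when $M<64\tau\pi^2$. No bootstrap, no smallness assumption on $c_\varepsilon$, and the constant $64\tau\pi^2$ falls out transparently rather than by the heuristic ``$64=2\cdot 32$'' you offer. Once you use these correct forms, your final paragraph (reading off \eqref{L:FSL:Est:c4D:S1} and then \eqref{L:FSL:Est:c4D:S2} from the retained dissipation terms) is exactly right.
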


\begin{proof} For any positive real numbers  $\alpha>0$ and $\eta=\eta(\alpha)>0$ being chosen later, an application of the inequalities \eqref{Ineqn:Pre:Adam} and \eqref{Ineqn:Adam} gives 
\begin{align*}
    \int_{B_R} n_\varepsilon c_\varepsilon & \le \frac{1}{e} +  \frac{1}{\alpha} \int_{B_R} n_\varepsilon \log n_\varepsilon + \frac{\|n_\varepsilon\|_{L^1(\Omega)}}{\alpha} \log \left( \int_{B_R} e^{\alpha c_\varepsilon} \right) \\
    & \le \frac{1}{e} +  \frac{1}{\alpha} \int_{B_R} n_\varepsilon \log n_\varepsilon + \frac{M}{\alpha} \left[ \left( \frac{\alpha^2}{128\pi^2} + \eta \right) \|(\Delta-I)c_\varepsilon\|_{L^2(B_R)}^2 + C_{R,\eta,\alpha} \right]  .
\end{align*}
Then, similarly to estimate \eqref{L:FSL:DissIneqn.P1}, we have    
\begin{align*}
    \begin{aligned} 
& \int_{B_R} \left( (n_\varepsilon \log n_\varepsilon + e^{-1}) + \frac{1}{2}|\Delta c_\varepsilon - c_\varepsilon + w_\varepsilon|^2 + \frac{\tau}{2} |\Delta c_\varepsilon|^2 + \frac{1+\tau}{2} |\nabla c_\varepsilon|^2 + \frac{1}{2} c_\varepsilon^2 \right) \\
& + \int_0^t \hspace{-0.15cm} \int_{B_R} \Big( n_\varepsilon|\nabla (\log n_\varepsilon - c_\varepsilon)|^2 + \frac{1+\tau}{\varepsilon}  |\nabla(\Delta c_\varepsilon - c_\varepsilon + w_\varepsilon)|^2 + \frac{2}{\varepsilon} |\Delta c_\varepsilon - c_\varepsilon + w_\varepsilon|^2 \Big) \\
& \le C  + \frac{1}{e} +  \frac{1}{\alpha} \int_{B_R} n_\varepsilon \log n_\varepsilon + \frac{M}{\alpha} \left[ \left( \frac{\alpha^2}{128\pi^2} + \eta \right) \|(\Delta-I) c_\varepsilon \|_{L^2(B_R)}^2 + C_{R,\eta,\alpha} \right],
\end{aligned} 
\end{align*}
where $C$ is the initial value of the entropy $\mathcal E$. 
 Since $M<64 \tau \pi^2$, we can choose $\alpha>0$ and a sufficiently small number $\eta>0$ such that 
\begin{align*}
\frac{1}{\alpha}< 1 \quad \text{and} \quad
\frac{M}{\alpha} \left( \frac{\alpha^2}{128\pi^2} + \eta \right) < \frac{\tau }{2}, 
\end{align*}
which allows us to imply  that 
\begin{align*}
    & \left( 1- \frac{1}{\alpha} \right) \int_{B_R}  n_\varepsilon \log n_\varepsilon + \left( \frac{\tau }{2} - \frac{M}{\alpha} \left( \frac{\alpha^2}{128\pi^2} + \eta \right) \right) { \|(\Delta-I)c_\varepsilon\|_{L^2(B_R)}^2} \\
    & + \int_0^t \hspace{-0.15cm} \int_{B_R} \Big(  \frac{1+\tau}{\varepsilon}  |\nabla(\Delta c_\varepsilon - c_\varepsilon + w_\varepsilon)|^2 + \frac{2}{\varepsilon} |\Delta c_\varepsilon - c_\varepsilon + w_\varepsilon|^2 \Big)  \le C_{R,\eta,\alpha,M}, 
\end{align*}
for all $0<t<\infty$. The estimates \eqref{L:FSL:Est:c4D:S1}-\eqref{L:FSL:Est:c4D:S2} are consequently obtained. 
\end{proof}

\begin{lemma}
\label{4:L:L43} Assume that $M$ satisfies \eqref{T:1:4:MassCond}. 
Then,   
    \begin{align}
        \sup_{\varepsilon>0} \left( 
        \iint_{B_R\times(0,T)}    \frac{|\nabla n_\varepsilon|^2}{n_\varepsilon} ds + \int_0^T  \|n_\varepsilon\|_{L^{4/3}(B_R)}^2 ds
        \right) \le C_{T,\tau}. 
        \label{4:L:L43:S}
    \end{align}
\end{lemma}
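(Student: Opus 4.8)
The plan is to first establish the uniform bound on $\iint_{B_R\times(0,T)}\frac{|\nabla n_\varepsilon|^2}{n_\varepsilon}$ and then to recover the $L^2((0,T);L^{4/3}(B_R))$-estimate by interpolation. For the latter, note that $\nabla n_\varepsilon^{1/2}=\frac{\nabla n_\varepsilon}{2\sqrt{n_\varepsilon}}$, so the gradient bound is exactly the statement that $n_\varepsilon^{1/2}$ is bounded in $L^2((0,T);H^1(B_R))$ uniformly in $\varepsilon$, while $\|n_\varepsilon^{1/2}(t)\|_{L^2(B_R)}^2=\|n_\varepsilon(t)\|_{L^1(B_R)}=M$. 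The Gagliardo--Nirenberg inequality in dimension $N=4$, $\|g\|_{L^{8/3}(B_R)}\le C\|g\|_{H^1(B_R)}^{1/2}\|g\|_{L^2(B_R)}^{1/2}$, applied to $g=n_\varepsilon^{1/2}$, raised to the fourth power and integrated in time, then yields
\[
\int_0^T\|n_\varepsilon\|_{L^{4/3}(B_R)}^2\,ds=\int_0^T\|n_\varepsilon^{1/2}\|_{L^{8/3}(B_R)}^4\,ds\le C\,M\Big(\tfrac14\iint_{B_R\times(0,T)}\tfrac{|\nabla n_\varepsilon|^2}{n_\varepsilon}+M\,T\Big),
\]
which is finite once the gradient bound is known.

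For the gradient bound I collect the uniform estimates already at hand. Since $\frac{d}{ds}\mathcal E(n_\varepsilon,c_\varepsilon)=-\mathcal D(n_\varepsilon,c_\varepsilon)$ (Lemma~\ref{L:FSL:Lya:Id}) with $\int_{B_R}n_\varepsilon|\nabla(\log n_\varepsilon-c_\varepsilon)|^2\le\mathcal D(n_\varepsilon,c_\varepsilon)$, and since the estimates of Lemma~\ref{L:FSL:Est:c4D} ensure that $\mathcal E(n_\varepsilon,c_\varepsilon)$ is bounded from below uniformly in $\varepsilon$ and in time, integration in time gives
\[
\iint_{B_R\times(0,\infty)}n_\varepsilon\,|\nabla(\log n_\varepsilon-c_\varepsilon)|^2\le C_\tau .
\]
Moreover \eqref{L:FSL:Est:c4D:S1} together with elliptic regularity for the Neumann Laplacian on the ball gives $\sup_{s>0}\|c_\varepsilon(s)\|_{H^2(B_R)}\le C_\tau$, hence, by the critical embedding $W^{1,2}(B_R)\hookrightarrow L^4(B_R)$ which is valid precisely because $N=4$, $\sup_{s>0}\|\nabla c_\varepsilon(s)\|_{L^4(B_R)}\le C_\tau$.

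Now I decompose, via $\nabla n_\varepsilon^{1/2}=\frac12 n_\varepsilon^{1/2}\big(\nabla(\log n_\varepsilon-c_\varepsilon)+\nabla c_\varepsilon\big)$ and $|a+b|^2\le 2|a|^2+2|b|^2$,
\[
\iint_{B_R\times(0,T)}\frac{|\nabla n_\varepsilon|^2}{n_\varepsilon}\le 2\iint_{B_R\times(0,T)}n_\varepsilon|\nabla(\log n_\varepsilon-c_\varepsilon)|^2+2\iint_{B_R\times(0,T)}n_\varepsilon|\nabla c_\varepsilon|^2 ,
\]
so only the last term has to be controlled. By Hölder's inequality and the Sobolev inequality on $B_R$ in the critical form $\|f\|_{L^4(B_R)}^2\le C_S\|f\|_{H^1(B_R)}^2$ ($N=4$), applied to $f=n_\varepsilon^{1/2}$,
\[
\iint_{B_R\times(0,T)}n_\varepsilon|\nabla c_\varepsilon|^2\le\int_0^T\|n_\varepsilon(s)\|_{L^2(B_R)}\|\nabla c_\varepsilon(s)\|_{L^4(B_R)}^2\,ds\le C_S\sup_s\|\nabla c_\varepsilon(s)\|_{L^4(B_R)}^2\Big(\tfrac14\iint_{B_R\times(0,T)}\tfrac{|\nabla n_\varepsilon|^2}{n_\varepsilon}+M\,T\Big).
\]
Feeding this into the previous line gives a closed inequality for $I:=\iint_{B_R\times(0,T)}\frac{|\nabla n_\varepsilon|^2}{n_\varepsilon}$, and the heart of the argument is that $I$ can then be absorbed, i.e.\ that the coefficient $\frac{C_S}{2}\sup_s\|\nabla c_\varepsilon(s)\|_{L^4(B_R)}^2$ (together with whatever is produced by a more careful bookkeeping) stays strictly below $1$: this is precisely where the subcritical-mass condition $M<64\tau\pi^2$ from \eqref{T:1:4:MassCond} is used, through the sharp constant in the Adams-type inequality (Lemma~\ref{L:Ineqn:Adam}) behind \eqref{L:FSL:Est:c4D:S1} and the sharp Sobolev constant in $N=4$. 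Equivalently, and probably cleaner to run, one works pointwise in time on the identity $\frac{d}{ds}\int_{B_R}n_\varepsilon\log n_\varepsilon+\int_{B_R}\frac{|\nabla n_\varepsilon|^2}{n_\varepsilon}=\int_{B_R}\nabla n_\varepsilon\cdot\nabla c_\varepsilon$ (obtained by testing $\partial_s n_\varepsilon=\nabla\!\cdot\!\big(n_\varepsilon\nabla(\log n_\varepsilon-c_\varepsilon)\big)$ by $\log n_\varepsilon$), estimates the right-hand side by Hölder, Gagliardo--Nirenberg and Young as in \cite[Proof of Theorem~1.2]{hosono2025global}, absorbs the arising multiple of $\int_{B_R}\frac{|\nabla n_\varepsilon|^2}{n_\varepsilon}$ thanks to \eqref{T:1:4:MassCond}, and integrates over $(0,T)$ using $\int_{B_R}n_\varepsilon(T)\log n_\varepsilon(T)\ge -e^{-1}|B_R|$, the finiteness of $\int_{B_R}n_0\log n_0$, and $\sup_s\|\Delta c_\varepsilon(s)\|_{L^2(B_R)}\le C_\tau$ from \eqref{L:FSL:Est:c4D:S1}. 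Either way one obtains $I\le C_{T,\tau}$, and the first paragraph then delivers the $L^{4/3}$-estimate.

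The genuinely delicate point is the absorption in that last step: it has to be performed with the \emph{sharp} Sobolev/Gagliardo--Nirenberg and Adams constants, so that the threshold $64\tau\pi^2$ in \eqref{T:1:4:MassCond} — and not some strictly smaller, non-optimal number — is what guarantees $I$ is finite. This is the exact $N=4$ counterpart of the subcritical-mass restriction in the classical $L\log L$ theory for the Keller--Segel system, and it is the reason the critical dimension forces $\Omega=B_R$ and the explicit mass bound in \eqref{T:1:4:MassCond}.
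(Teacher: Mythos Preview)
Your absorption step is where the argument breaks. In your first approach you arrive at an inequality of the form $I\le 2C_\tau+\frac{C_S}{2}\sup_s\|\nabla c_\varepsilon(s)\|_{L^4(B_R)}^2\cdot I+\text{(bounded)}$ and assert that the coefficient in front of $I$ is strictly below $1$ thanks to the mass condition $M<64\tau\pi^2$ and sharp constants. This is not correct: Lemma~\ref{L:FSL:Est:c4D} only yields that $\sup_s\|c_\varepsilon(s)\|_{H^2(B_R)}$ is \emph{finite}, with a bound $C_\tau$ that depends on the initial entropy $\mathcal E(n_0,c_0)$ and on the non-sharp choice of $\alpha,\eta$ made in its proof. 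There is no mechanism tying $\|\nabla c_\varepsilon\|_{L^4}^2$ to the Sobolev constant $C_S$, and for large initial data the product $\tfrac{C_S}{2}\|\nabla c_\varepsilon\|_{L^4}^2$ can exceed $1$. The mass condition was already fully ``spent'' in Lemma~\ref{L:FSL:Est:c4D} and is \emph{not} invoked again in the paper's proof of the present lemma. Your second (``cleaner'') suggestion points at the right entropy identity but leaves the same absorption issue unresolved, since a direct H\"older--Young estimate of $\int_{B_R}\nabla n_\varepsilon\cdot\nabla c_\varepsilon$ runs into the identical obstruction.

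The paper's route avoids this entirely by not estimating $\int n_\varepsilon\Delta c_\varepsilon$ via $\nabla c_\varepsilon$. Instead it substitutes $\Delta c_\varepsilon=\varepsilon\partial_t c_\varepsilon+c_\varepsilon-w_\varepsilon$ and then $n_\varepsilon=\varepsilon\partial_t w_\varepsilon-\tau\Delta w_\varepsilon+w_\varepsilon$, so that after integration by parts and the dissipation bound \eqref{L:FSL:Est:c4D:S2} one is left with terms controlled by $\int_0^t\|n_\varepsilon\|_{L^{4/3}(B_R)}^2$. The decisive point is that this is the $L^{N/(N-1)}$-norm, so Lemma~\ref{L:Ineqn:Bala} applies with a \emph{free} prefactor $\alpha$: one obtains $\|n_\varepsilon\|_{L^{4/3}}^2\le \alpha\bigl(\sup_t\int(n_\varepsilon\log n_\varepsilon+e^{-1})\bigr)\int\tfrac{|\nabla n_\varepsilon|^2}{n_\varepsilon}+C_\alpha$, and since the supremum is merely bounded (by Lemma~\ref{L:FSL:Est:c4D}) one simply takes $\alpha$ small enough to close the loop. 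No sharp-constant gymnastics are needed here.
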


\begin{proof} This proof will be based on balancing a logarithmic energy below. For $x >0$, let us denote $h(x) := x \log x - x + 1$. By direct computations, we have   
\begin{align*}
    \frac{d}{dt} \int_{B_R} h(n_\varepsilon)  = - \int_{B_R} \frac{|\nabla n_\varepsilon|^2}{n_\varepsilon} - \int_{B_R} n_\varepsilon \Delta c_\varepsilon,  
\end{align*} 
which, after integrating over time, gives  
\begin{align}
    \begin{aligned}
    \int_{B_R}  h(n_\varepsilon) ds + \iint_{B_R\times(0,t)}   \frac{|\nabla n_\varepsilon|^2}{n_\varepsilon}  ds  
    \le \int_{B_R}  h(n_0)     - \underbrace{\iint_{B_R\times(0,t)}   n_\varepsilon \Delta c_\varepsilon ds}_{=:\,-I_\varepsilon(t)} .   
\end{aligned}
    \label{4:L:L43:P1}
\end{align} 
In the remaining, we will control the quantity $I_\varepsilon(t)$ using the norm of $n_\varepsilon$ in $L^2((0,T);L^{4/3}(\Omega))$, and then balance the estimate  \eqref{4:L:L43:P1} of the above logarithmic energy. 

\medskip

\noindent \underline{Estimating $I_\varepsilon(t)$}: Using the equations for $c_\varepsilon,w_\varepsilon$, {we have the  following computations}
\begin{align*}
    - \int_{B_R} n_\varepsilon \Delta c_\varepsilon &= \int_{B_R} n_\varepsilon (- \varepsilon \partial_t c_\varepsilon - c_\varepsilon + w_\varepsilon) = - \varepsilon \int_{B_R} n_\varepsilon \partial_t c_\varepsilon - \int_{B_R} n_\varepsilon c_\varepsilon + \int_{B_R} n_\varepsilon w_\varepsilon   \\
    &=  - \varepsilon \int_{B_R} n_\varepsilon \partial_t c_\varepsilon - \int_{B_R} n_\varepsilon c_\varepsilon + \int_{B_R} (\varepsilon \partial_t w_\varepsilon - \tau \Delta w_\varepsilon + w_\varepsilon) w_\varepsilon \\
    &=   - \varepsilon \int_{B_R} n_\varepsilon \partial_t c_\varepsilon -  \int_{B_R} n_\varepsilon c_\varepsilon  + \int_{B_R} \left( \frac{\varepsilon}{2} \partial_t w_\varepsilon^2 + \tau |\nabla w_\varepsilon|^2 + w_\varepsilon^2 \right)  \\
    & \le \varepsilon   \|n_\varepsilon(t)\|_{L^{\frac{4}{3}}(B_R)} \|\partial_tc_\varepsilon\|_{L^{4}(B_R)}  + \int_{B_R} \left( \frac{\varepsilon}{2} \partial_t w_\varepsilon^2 + \tau |\nabla w_\varepsilon|^2 + w_\varepsilon^2 \right) . 
\end{align*}
By the Young inequality, we have     
\begin{align*}
\varepsilon\|n_\varepsilon(t)\|_{L^{4/3}(B_R)} \|\partial_tc_\varepsilon\|_{L^{4}(B_R)}  \le \frac{\varepsilon}{2}  \|n_\varepsilon(t)\|_{L^{4/3}(B_R)}^2 + \frac{\varepsilon}{2}   \|\partial_tc_\varepsilon\|_{L^{4}(B_R)}^2 ,
\end{align*}
and by the Sobolev embedding, 
\begin{align*}
\frac{\varepsilon}{2} \|\partial_tc_\varepsilon\|_{L^{4}(B_R)}^2 \le    C \varepsilon \left(    \|\nabla  \partial_tc_\varepsilon\|_{L^{2}(B_R)}^2 +   \|\partial_tc_\varepsilon\|_{L^{2}(B_R)}^2 \right).
\end{align*}
Consequently, we get
\begin{align*}
    \begin{aligned}
    I_\varepsilon(t)   
    & \le \frac{\varepsilon}{2} \int_0^t    \|n_\varepsilon(s)\|_{L^{4/3}(B_R)}^2 ds   +   C\varepsilon \iint_{B_R\times(0,t)}  \left(  |\nabla  \partial_sc_\varepsilon|^2  +  |\partial_sc_\varepsilon|^2 \right)  ds \\
    & + \frac{\varepsilon}{2} \iint_{B_R\times(0,t)}    \partial_s w_\varepsilon^2   ds    + \iint_{B_R\times(0,t)}  \left(  \tau |\nabla w_\varepsilon|^2 + w_\varepsilon^2 \right) ds \\
    & \le \frac{\varepsilon}{2} \int_0^t    \|n_\varepsilon(s)\|_{L^{4/3}(B_R)}^2 ds   +   C\varepsilon \iint_{B_R\times(0,t)}  \left(  |\nabla  \partial_sc_\varepsilon|^2  +  |\partial_sc_\varepsilon|^2 \right) ds  \\
    & + \frac{\varepsilon}{2}  \int_{B_R}  w_\varepsilon^2   +  \iint_{B_R\times(0,t)}  \left( \tau |\nabla w_\varepsilon|^2 + w_\varepsilon^2 \right) ds .
\end{aligned} 
\end{align*}
Recalling the equation $ \partial_tc_\varepsilon= (1/\varepsilon)(\Delta c_\varepsilon - c_\varepsilon + w_\varepsilon)$,  the dissipation in Lemma \ref{L:FSL:Est:c4D} is rewritten as   
\begin{align*}
  \varepsilon \iint_{B_R\times(0,t)}  \Big(  |\nabla  \partial_sc_\varepsilon|^2 ds   + |\partial_sc_\varepsilon|^2   ds \Big) \le C,
\end{align*}
On the other hand, by applying Lemma \ref{Local:Pre:Lem0} to the equation for $w_\varepsilon$, 
\begin{align*}
    & \frac{\varepsilon}{2}   \int_{B_R} w_\varepsilon^2   +  \iint_{B_R\times(0,t)} \left( |\nabla w_\varepsilon|^2 + w_\varepsilon^2 \right) ds \le   \int_{B_R} w_0^2 + \frac{C}{\tau^2}  \int_0^t  \|n_\varepsilon\|_{L^{\frac{4}{3}}(B_R)}^2 ds  . 
\end{align*} 
Therefore, $I_\varepsilon(t)$ is estimated as  
\begin{align*}
    I_\varepsilon(t)   
    & \le  \left( \frac{\varepsilon}{2} + \frac{C}{\tau^2} \right) \int_0^t \|n_\varepsilon \|_{L^{\frac{4}{3}}(B_R)}^2 ds  + C +    \int_{B_R} w_0^2. 
\end{align*}  

\noindent \underline{Balancing  the logarithmic energy}: {We will apply Lemma \ref{L:Ineqn:Bala} to control the term $\int_{0}^t\|n_\eps\|_{L^{\frac 43}(B_R)}^2ds$}. Due to the computation \eqref{4:L:L43:P1} and the estimate for $I_\varepsilon(t)$, 
\begin{align}
    \int_{B_R} h(n_\varepsilon) ds + \iint_{B_R\times(0,t)}   \frac{|\nabla n_\varepsilon|^2}{n_\varepsilon} ds \le \left( \frac{3}{2} + \frac{C}{\tau^2} \right) \int_0^t \|n_\varepsilon\|_{L^{\frac{4}{3}}(B_R)}^2 ds  +   C_{\tau},
    \label{4:L:L43:P2}
\end{align}
where $C_\tau$ includes the value of the logarithmic entropy at the initial time and the last two terms in the estimate for $I_\varepsilon(t)$. 
By Lemma \ref{L:FSL:Est:c4D}, we have $n_\varepsilon \in L^\infty((0,T);L\log L(B_R))$. Therefore, an application of  Lemma \ref{L:Ineqn:Bala} gives
\begin{equation}\label{b1}    
\begin{aligned}
    \|n_\varepsilon(t)\|_{L^{\frac{4}{3}}(B_R)}^2 & 
\le \alpha    
\left(  \int_{B_R} (n_\varepsilon(t) \log n_\varepsilon(t) +  e^{-1}) \right) \int_{B_R} \frac{|\nabla n_\varepsilon|^2}{n_\varepsilon} +   C_\alpha \\  
& 
\le \alpha    
\left( \sup_{t>0} \int_{B_R} (n_\varepsilon(t) \log n_\varepsilon(t) +  e^{-1}) \right) \int_{B_R} \frac{|\nabla n_\varepsilon|^2}{n_\varepsilon} +   C_\alpha \\
&  \le \frac{1}{2} \left( \frac{3}{2} + \frac{C}{\tau^2} \right)^{-1} \int_{B_R} \frac{|\nabla n_\varepsilon|^2}{n_\varepsilon} + C_\tau ,
\end{aligned}
\end{equation}  
where we take a constant $\alpha$ such that 
\begin{align*}
   \alpha    
\left( \sup_{t>0} \int_{B_R} (n_\varepsilon(t) \log n_\varepsilon(t) +  e^{-1}) \right) \le \frac{1}{2} \left( \frac{3}{2} + \frac{C}{\tau^2} \right)^{-1}. 
\end{align*}
Hence, we can absorb the term including $\|n_\varepsilon\|_{L^{\frac{4}{3}}(B_R)}^2$ in the estimate \eqref{4:L:L43:P2} into the left-hand side, which consequently implies   
\begin{align*}
    \int_{B_R}  h(n_\varepsilon(t))  ds + \iint_{B_R\times(0,t)}  \frac{|\nabla n_\varepsilon|^2}{n_\varepsilon} ds  \le    C_{\tau} t.
\end{align*}
This directly shows {the first estimate in} \eqref{4:L:L43:S}. {The second one follows immediately by integrating \eqref{b1} with respect to $t$ and using the first estimate.}
\end{proof}

\begin{lemma} 
\label{L:4:Lp}
    For $1<p<\infty$, it holds that 
\begin{align}
\sup_{\varepsilon>0}   \left( \sup_{0<t<T} \int_{B_R}  n_\varepsilon^p(t)  +  \iint_{B_R\times(0,T)}     |\nabla n_\varepsilon|^2 \right)   \le    C_{p,T}. 
\label{L:4:Lp:S}
\end{align}
\end{lemma}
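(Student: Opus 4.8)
\textbf{Proof proposal for Lemma \ref{L:4:Lp}.}

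The plan is to perform an $L^p$ energy estimate for $n_\varepsilon$ on the ball $B_R$, bootstrapping from the uniform $L^2((0,T);L^{4/3}(B_R))$ bound and the Fisher-information bound $\iint |\nabla n_\varepsilon|^2/n_\varepsilon$ obtained in Lemma \ref{4:L:L43}, together with the uniform $L^2((0,T);H^2(B_R))$-type control of $c_\varepsilon$ coming from the slow-evolution dissipation in Lemma \ref{L:FSL:Est:c4D}. First I would multiply the equation for $n_\varepsilon$ by $p\, n_\varepsilon^{p-1}$ and integrate by parts over $B_R$, which gives
\begin{align*}
\frac{d}{dt}\int_{B_R} n_\varepsilon^p + \frac{4(p-1)}{p}\int_{B_R} |\nabla n_\varepsilon^{p/2}|^2
= (p-1)\int_{B_R} n_\varepsilon^p \Delta c_\varepsilon,
\end{align*}
after rewriting $\int n_\varepsilon^{p-1}\nabla n_\varepsilon\cdot\nabla c_\varepsilon = -\frac1p\int n_\varepsilon^p\Delta c_\varepsilon$ (the no-flux boundary terms vanish since $(n_\varepsilon,c_\varepsilon,w_\varepsilon)$ is the classical solution). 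The key point is to control the right-hand side uniformly in $\varepsilon$.

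The main obstacle is exactly this cross term $\int_{B_R} n_\varepsilon^p\Delta c_\varepsilon$ in the critical dimension $N=4$, where the naive Hölder/Sobolev bound just fails to close. To handle it I would split $\Delta c_\varepsilon = (\Delta c_\varepsilon - c_\varepsilon + w_\varepsilon) + c_\varepsilon - w_\varepsilon$. The first piece equals $\varepsilon\partial_t c_\varepsilon$ and, by \eqref{L:FSL:Est:c4D:S2}, is small in $L^2(B_R\times(0,T))$ with a factor $\sqrt\varepsilon$ in its favour; pairing it with $n_\varepsilon^p$ and using Hölder plus Gagliardo–Nirenberg in the spirit of \cite[Proof of Theorem 1.2]{hosono2025global} (interpolating $\|n_\varepsilon^{p/2}\|_{L^4}$ between $\|\nabla n_\varepsilon^{p/2}\|_{L^2}$ and a lower $L^q$-norm fed by the already-known bound from the previous step) lets the gradient term be absorbed into the left-hand side with a constant that degrades only like a power of $\tau$. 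For the remaining piece $c_\varepsilon - w_\varepsilon$, I would use the uniform $L^\infty((0,T);H^2(B_R))$-bound on $c_\varepsilon$ from \eqref{L:FSL:Est:c4D:S1} (hence $\nabla c_\varepsilon,c_\varepsilon$ bounded in all $L^q$ for $q<\infty$, and $\Delta c_\varepsilon$ bounded in $L^2$) so that $\int_{B_R} n_\varepsilon^p|\Delta c_\varepsilon|\le \|n_\varepsilon\|_{L^{2p}}^p\|\Delta c_\varepsilon\|_{L^2}$, and again interpolate $\|n_\varepsilon\|_{L^{2p}}$ via Gagliardo–Nirenberg to gain a small multiple of $\int|\nabla n_\varepsilon^{p/2}|^2$ plus a lower-order term controlled by the already-established $L^2((0,T);L^{4/3})$ bound.

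With both pieces absorbed, the resulting differential inequality reads $\frac{d}{dt}\int_{B_R} n_\varepsilon^p + c_p\int_{B_R}|\nabla n_\varepsilon^{p/2}|^2 \le C_{p,\tau}\big(1 + \|n_\varepsilon\|_{L^{4/3}(B_R)}^{2\theta_p}\big)$ for a suitable exponent $\theta_p$, and an application of the uniform-in-$\varepsilon$ bound $\int_0^T\|n_\varepsilon\|_{L^{4/3}(B_R)}^2\,ds\le C_{T,\tau}$ from Lemma \ref{4:L:L43} (possibly after a further interpolation to linearise the time-integrability, or by iterating the estimate through an increasing sequence of exponents $p$ starting from $p$ close to $1$) yields $\sup_{0<t<T}\int_{B_R} n_\varepsilon^p(t) + \iint_{B_R\times(0,T)}|\nabla n_\varepsilon^{p/2}|^2 \le C_{p,T}$; taking $p=2$ in the last term (or reabsorbing via $|\nabla n_\varepsilon|^2 = \tfrac{4}{p^2}n_\varepsilon^{2-p}|\nabla n_\varepsilon^{p/2}|^2$ together with the $L^\infty_t L^p_x$ bound just obtained for a suitable $p\ge 2$) gives the stated $\iint|\nabla n_\varepsilon|^2$-bound and completes the proof of \eqref{L:4:Lp:S}. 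I expect the delicate bookkeeping of the Gagliardo–Nirenberg exponents in $N=4$ — ensuring the power of $\|\nabla n_\varepsilon^{p/2}\|_{L^2}$ stays strictly below $2$ so that absorption is legitimate — to be the technically demanding part, while the $\tau$-dependence of constants is harmless here since $\tau>0$ is fixed.
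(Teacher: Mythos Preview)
Your overall strategy---testing with $n_\varepsilon^{p-1}$, integrating by parts to produce $(p-1)\int_{B_R} n_\varepsilon^p\,\Delta c_\varepsilon$, and then using the uniform $L^\infty_t L^2_x$ control on $\Delta c_\varepsilon$ from Lemma~\ref{L:FSL:Est:c4D}---has a genuine gap precisely at the ``delicate bookkeeping'' you flag at the end. In the critical dimension $N=4$, the H\"older pairing $\int_{B_R} n_\varepsilon^p|\Delta c_\varepsilon|\le \|n_\varepsilon^{p/2}\|_{L^4}^2\|\Delta c_\varepsilon\|_{L^2}$ forces $n_\varepsilon^{p/2}$ into $L^4(B_R)$, and $L^4$ is exactly the critical Sobolev exponent for $H^1$ in four dimensions. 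Consequently every Gagliardo--Nirenberg interpolation of $\|n_\varepsilon^{p/2}\|_{L^4}$ against $\|\nabla n_\varepsilon^{p/2}\|_{L^2}$ returns the full power $\theta=1$, regardless of the choice of lower endpoint (your suggestion of $L^{4/3}$ for $n_\varepsilon$ gives the same $\theta=1$). You would obtain $\|n_\varepsilon^{p/2}\|_{L^4}^2\le C\int|\nabla n_\varepsilon^{p/2}|^2 + C\int n_\varepsilon^p$ with a fixed constant $C$ in front of the gradient term, and absorption would require $(p-1)C\|\Delta c_\varepsilon\|_{L^2}<\frac{4(p-1)}{p}$, i.e.\ a \emph{smallness} condition on $\|\Delta c_\varepsilon\|_{L^2}$ that you do not have. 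The splitting $\Delta c_\varepsilon=(\Delta c_\varepsilon-c_\varepsilon+w_\varepsilon)+(c_\varepsilon-w_\varepsilon)$ does not help: both pieces, once paired with $n_\varepsilon^p$ via $L^2$-duality, suffer the same criticality obstruction.

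The paper's proof avoids this by \emph{not} integrating the cross term by parts. Instead it keeps $\int_{B_R} n_\varepsilon^{p-1}\nabla n_\varepsilon\cdot\nabla c_\varepsilon$ and uses Young's inequality to produce $\int_{B_R} n_\varepsilon^p|\nabla c_\varepsilon|^2$, then pairs via H\"older as $\|n_\varepsilon^{p/2}\|_{L^{8/3}}^2\|\nabla c_\varepsilon\|_{L^8}^2$. The crucial gain is that $L^{8/3}$ is strictly subcritical for $H^1$ in $N=4$, so Gagliardo--Nirenberg yields $\|n_\varepsilon^{p/2}\|_{L^{8/3}}^2\le C\|\nabla n_\varepsilon^{p/2}\|_{L^2}\|n_\varepsilon^{p/2}\|_{L^2}+\ldots$ with exponent $1<2$ on the gradient, which \emph{can} be absorbed. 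The price is that one now needs $\nabla c_\varepsilon\in L^4_tL^8_x$ uniformly in $\varepsilon$; this is obtained by first upgrading $c_\varepsilon$ to $L^2((0,T);H^3(B_R))$, using the dissipation bound on $\nabla(\Delta c_\varepsilon-c_\varepsilon+w_\varepsilon)$ from \eqref{L:FSL:Est:c4D:S2} together with a uniform $L^2(\Omega_T)$ bound on $\nabla w_\varepsilon$ (via Lemma~\ref{Local:Pre:Lem0} and the $L^2_tL^{4/3}_x$ bound on $n_\varepsilon$ from Lemma~\ref{4:L:L43}), and then interpolating $\|\nabla c_\varepsilon\|_{L^8}^4\le C\|c_\varepsilon\|_{H^3}^2\|c_\varepsilon\|_{H^2}^2$. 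Gr\"onwall then closes the estimate. The missing idea in your proposal is thus the passage to a \emph{subcritical} H\"older splitting, which in turn requires the $H^3$ upgrade of $c_\varepsilon$---this is where the $L^2_tL^{4/3}_x$ bound on $n_\varepsilon$ and the dissipation on $\nabla(\Delta c_\varepsilon-c_\varepsilon+w_\varepsilon)$ are actually used.
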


\begin{proof} Using the equation for $n_\varepsilon$, one can check that 
\begin{align}
\begin{aligned}
 \frac{d}{dt}  \int_{B_R}
   n_\varepsilon^p(t) &= -\frac{4(p-1)}{p}   \int_{B_R}
   | \nabla n_\varepsilon^{p/2}(t)|^2 + p(p-1) \int_{B_R} n_\varepsilon^{p-1}(t) \nabla n_\varepsilon(t) \cdot \nabla c_\varepsilon(t) \\
 &\le - \frac{4(p-1)}{p}   \int_{B_R}
   | \nabla n_\varepsilon^{p/2}(t)|^2 + \frac{p(p-1)}{2} \int_{B_R} n_\varepsilon^{p}(t) |\nabla c_\varepsilon(t)|^2 . 
\end{aligned}
    \label{L:4:Lp:P1}
\end{align}
To estimate this energy, we will control the latter term by the product of the integral of $n_\varepsilon^p$ and a suitable norm of $\nabla c_\varepsilon$. Indeed, using the H\"older, the Gagliardo-Nirenberg and the Young inequalities, it can be dealt with as 
\begin{align}
\begin{aligned}
    &\int_{B_R} (n_\varepsilon^{p/2}(t))^2 |\nabla c_\varepsilon(t)|^2   \le \|n_\varepsilon^{p/2}(t)\|_{L^{\frac{8}{3}}(B_R)}^2 \|\nabla c_\varepsilon(t)\|_{L^{8}(B_R)}^2 \\
    & \le \bigg(C  \|n_\varepsilon^{p/2}(t)\|_{L^{2}(B_R)}^{\frac{1}{2}} \|\nabla n_\varepsilon^{p/2}(t)\|_{L^{2}(B_R)}^{\frac{1}{2}} + C  \|n_\varepsilon^{p/2}(t)\|_{L^{2}(B_R)} \bigg)^2 \|\nabla c_\varepsilon(t)\|_{L^{8}(B_R)}^2 \\
    & \le \bigg(C  \|n_\varepsilon^{p/2}(t)\|_{L^{2}(B_R)}  \|\nabla n_\varepsilon^{p/2}(t)\|_{L^{2}(B_R)}  + C  \|n_\varepsilon^{p/2}(t)\|_{L^{2}(B_R)}^2 \bigg) \|\nabla c_\varepsilon(t)\|_{L^{8}(B_R)}^2 \\
    & \le \frac{4}{p^2} \int_{B_R}
   | \nabla n_\varepsilon^{p/2}(t)|^2 + C \left( \frac{p^2}{16}     \|\nabla c_\varepsilon(t)\|_{L^{8}(B_R)}^4 +  \|\nabla c_\varepsilon(t)\|_{L^{8}(B_R)}^2 \right) \int_{B_R} n^p_\varepsilon(t).
   \end{aligned}
    \label{L:4:Lp:P2}
\end{align} 
Thus, we deduce from  \eqref{L:4:Lp:P1} that 
\begin{align}
 \frac{d}{dt}  \int_{B_R}
   n_\varepsilon^p(t)  
 &\le - \frac{2(p-1)}{p}   \int_{B_R}
   | \nabla n_\varepsilon^{p/2}(t)|^2 + C_p \left( 1+ \|\nabla c_\varepsilon(t)\|_{L^{8}(B_R)}^4 \right) \int_{B_R} n^p_\varepsilon(t)  .
   \label{L:4:Lp:P3}
\end{align}
{It remains} to estimate $\nabla c_\varepsilon$ in $L^4((0,T);L^8(B_R))$, which will be done using Lemmas \ref{L:FSL:Est:c4D}-\ref{4:L:L43} and \ref{Local:Pre:Lem0}. Indeed, thanks to the uniform boundedness of $n_\varepsilon$ in $L^2((0,T);L^{4/3}(B_R))$ obtained in Lemma \ref{4:L:L43}, we can apply Lemma \ref{Local:Pre:Lem0} to have 
\begin{align*}
\iint_{B_R\times(0,T)} |\nabla w_\varepsilon|^2    \le   \int_{B_R} u_0^2 +  \frac{C}{\tau ^2}  \int_0^{T}  \|n_\varepsilon\|_{L^{\frac{4}{3}}(B_R)}^2 \le C_{\tau,T}.  
\end{align*}
On the other hand, by Lemma \ref{L:FSL:Est:c4D},  
\begin{align*}
\int_0^T \hspace{-0.15cm} \int_{B_R} |\nabla(\Delta c_\varepsilon - c_\varepsilon + w_\varepsilon)|^2   \le  C_{\tau} \varepsilon .
\end{align*}
Therefore, it follows from the uniform boundedness of $c_\varepsilon$ in $L^\infty((0,T);H^2(B_R))$ the triangle estimate 
\begin{align*}
     \int_0^T \hspace{-0.15cm} \int_{B_R} |\nabla \Delta c_\varepsilon|^2 \le C 
 \int_0^T \hspace{-0.15cm} \int_{B_R} \Big( |\nabla(\Delta c_\varepsilon - c_\varepsilon + w_\varepsilon)|^2 + |\nabla c_\varepsilon|^2 + |\nabla w_\varepsilon|^2 \Big) \le C_{\tau,T}. 
\end{align*}
This yields that  $c_\varepsilon$ is uniformly bounded in $L^2((0,T);H^3(B_R))$. Using the Gagliardo–Nirenberg inequality and the Sobolev embedding,  
\begin{align*}
    \|\nabla c_\varepsilon(t)\|_{L^{8}(B_R)}^4 \le C \|c_\varepsilon(t)\|_{H^{3}(B_R)}^2 \|\nabla c_\varepsilon(t)\|_{L^{4}(B_R)}^2 \le C \|c_\varepsilon(t)\|_{H^{3}(B_R)}^2 \|  c_\varepsilon(t)\|_{H^{2}(B_R)}^2.  
\end{align*}
Subsequently, using the boundedness of $c_\varepsilon$ in $L^\infty((0,\infty);H^2(\Omega))$ again, we get 
\begin{align}
    \int_0^T \|\nabla c_\varepsilon(t)\|_{L^{8}(B_R)}^4 \le C  \int_0^T \|c_\varepsilon(t)\|_{H^{3}(B_R)}^2  \le C_{\tau,T}.   
    \label{L:4:Lp:P4}
\end{align}
Finally, by the boundedness \eqref{L:4:Lp:P4}, an application of the Gr\"onwall inequality to  \eqref{L:4:Lp:P3} shows
\begin{align*}
    \int_{B_R}  n_\varepsilon^p(t) \le \left( \int_{B_R}  n_0^p \right) \exp\left(\sup_{\varepsilon>0} \left( \int_0^T \|\nabla c_\varepsilon(t)\|_{L^{8}(B_R)}^4 \right) \right) \le C_{\tau,T},  
\end{align*}
for all $0<t<T$. {The gradient estimate in 
\eqref{L:4:Lp:S} is obtained by choosing $p=2$}. 
\end{proof}

\begin{lemma} 
\label{L:4:LInf}
It holds that 
    \begin{align} 
\sup_{\varepsilon>0} \Big( \|n_\varepsilon\|_{L^\infty(B_R\times(0,T))} \Big) \le C_{\tau,T},
\label{L:4:LInf:S1}
\end{align}
and, for any $1<p<\infty$, 
\begin{align}
\sup_{\varepsilon>0} \Big(
 \|w_\varepsilon \|_{L^\infty((0,T);W^{1,\infty}(B_R))} + \|\Delta w_\varepsilon \|_{L^p(B_R\times(0,T))} + 
 \|c_\varepsilon \|_{L^\infty((0,T);W^{2,\infty}(B_R))}  \Big) \le C_{\tau,T}.  
\label{L:4:LInf:S2}
\end{align}
Consequently, there exists $\gamma \in (0,1)$ such that  
\begin{align}
\sup_{\varepsilon>0} \left(  \|n_\varepsilon\|_{C^{\gamma,\gamma/2}(\overline{B_R}\times[0,T])} \right) \le C_{\tau,T} . 
\label{L:4:LInf:S3}
\end{align}
\end{lemma}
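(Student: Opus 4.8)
The plan is to reproduce, in the critical dimension $N=4$, the chain of estimates behind the subcritical Lemmas~\ref{L:FSL:Est:n}, \ref{L:FSL:Est:w} and \ref{L:HolReg}; the one new ingredient is that Lemma~\ref{L:4:Lp} already supplies the uniform bound $\|n_\varepsilon\|_{L^\infty((0,T);L^p(B_R))}\le C_{p,\tau,T}$ for \emph{every} finite $p$, which will substitute for the a priori $L^\infty$-bound that was available only for $N\le 3$. All constants are to be tracked so as to stay independent of $\varepsilon$.

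First I would bound the slow component $w_\varepsilon$. Representing $w_\varepsilon$ through the Neumann heat semigroup as in the proof of Lemma~\ref{L:FSL:Est:w}, namely
\[
w_\varepsilon(t)=e^{\frac{t}{\varepsilon}(\tau\Delta-I)}w_0+\frac1\varepsilon\int_0^t e^{\frac{t-s}{\varepsilon}(\tau\Delta-I)}n_\varepsilon(s)\,ds,
\]
and estimating $\nabla w_\varepsilon$ via \eqref{GloEx:HS:LpLq} with $p_1=p$, $p_2=\infty$, $k=1$, the only $\varepsilon$-dependent quantity is the kernel integral $\frac1\varepsilon\int_0^t e^{-s/\varepsilon}\min(s/\varepsilon;1)^{-\frac{N}{2p}-\frac12}\,ds\le\int_0^\infty e^{-\sigma}\min(\sigma;1)^{-\frac{N}{2p}-\frac12}\,d\sigma$, which is finite and independent of $\varepsilon$ precisely because $p>N=4$ makes the exponent $\frac{N}{2p}+\frac12$ strictly less than $1$; since every finite $p$ is allowed by Lemma~\ref{L:4:Lp}, this yields $w_\varepsilon\in L^\infty((0,T);W^{1,\infty}(B_R))$ uniformly in $\varepsilon$. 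Feeding $w_\varepsilon\in L^\infty(B_R\times(0,T))$ into $c_\varepsilon(t)=e^{\frac{t}{\varepsilon}(\Delta-I)}c_0+\frac1\varepsilon\int_0^t e^{\frac{t-s}{\varepsilon}(\Delta-I)}w_\varepsilon(s)\,ds$ and using the gradient smoothing of the semigroup (whose exponent $-\frac12$ is again $<1$) then gives $\nabla c_\varepsilon$ uniformly bounded in $L^\infty(B_R\times(0,T))$.

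Next I would upgrade $n_\varepsilon$. With $\nabla c_\varepsilon\in L^\infty(B_R\times(0,T))$ in hand, the Duhamel formula $n_\varepsilon(t)=e^{t\Delta}n_0-\int_0^t e^{(t-s)\Delta}\nabla\!\cdot\!\big(n_\varepsilon(s)\nabla c_\varepsilon(s)\big)\,ds$ and the gradient $L^p$--$L^\infty$ smoothing of the Neumann heat semigroup give $\|n_\varepsilon(t)\|_{L^\infty}\le\|n_0\|_{L^\infty}+C\int_0^t(t-s)^{-\frac12-\frac{N}{2p}}e^{-\lambda(t-s)}\|n_\varepsilon(s)\|_{L^p}\|\nabla c_\varepsilon(s)\|_{L^\infty}\,ds\le C_{\tau,T}$, once more taking $p>N=4$ so the time integral converges (alternatively one reruns the $L^p$-energy estimate \eqref{L:4:Lp:P3} à la Moser and lets $p\to\infty$). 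This proves \eqref{L:4:LInf:S1}. With $n_\varepsilon\in L^\infty(B_R\times(0,T))$ uniformly, the picture is now exactly the subcritical one: Lemma~\ref{L:MR:SlowEvolution} applied to the $w_\varepsilon$-equation gives $\|\Delta w_\varepsilon\|_{L^p(B_R\times(0,T))}\le C_{p,\tau,T}$ (using $\Delta w_0\in L^\infty\subset L^p$ since $w_0\in C^2(\overline{\Omega})$), and the second part of the proof of Lemma~\ref{L:FSL:Est:w} — the semigroup representation of $c_\varepsilon$ with $\Delta$ commuted onto $w_\varepsilon$ together with the temporal Hölder trick — yields $c_\varepsilon\in L^\infty((0,T);W^{2,\infty}(B_R))$ uniformly, establishing \eqref{L:4:LInf:S2}. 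Finally \eqref{L:4:LInf:S3} follows as in Lemma~\ref{L:HolReg}: $n_\varepsilon$ is a bounded classical solution of $\partial_t n_\varepsilon=\Delta n_\varepsilon-\nabla\!\cdot\!(n_\varepsilon\nabla c_\varepsilon)$ with $n_\varepsilon$ and $\nabla c_\varepsilon$ uniformly bounded, so the De~Giorgi--Nash--Moser-type estimates of \cite{porzio1993holder} (or \cite[Lemma~2.1]{lankeit2017locally}) provide a uniform $C^{\gamma,\gamma/2}$-bound for some $\gamma\in(0,1)$.

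The main obstacle will be the critical-dimension bookkeeping in the first two paragraphs: each $1/\varepsilon$ prefactor appearing in a slow-evolution semigroup formula must be absorbed by the exponential factor $e^{-s/\varepsilon}$ \emph{uniformly in $\varepsilon$}, which forces every kernel exponent to be strictly below $1$; for $N=4$ these thresholds are exactly critical and are met only because Lemma~\ref{L:4:Lp} furnishes $n_\varepsilon\in L^\infty((0,T);L^p(B_R))$ for all finite $p$ (so one may always pick $p>4$). One should also verify that the dependency chain $n_\varepsilon\,(L^p)\rightarrow w_\varepsilon\,(W^{1,\infty})\rightarrow\nabla c_\varepsilon\,(L^\infty)\rightarrow n_\varepsilon\,(L^\infty)\rightarrow\Delta w_\varepsilon\,(L^p),\,c_\varepsilon\,(W^{2,\infty})$ is acyclic — which it is, since the first arrow invokes only the already-proved Lemma~\ref{L:4:Lp}.
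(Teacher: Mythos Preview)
Your proposal is correct and uses the same tools as the paper—semigroup smoothing for the slow components, the Duhamel formula for $n_\varepsilon$, maximal regularity with slow evolution (Lemma~\ref{L:MR:SlowEvolution}), and the H\"older estimates of \cite{porzio1993holder}/\cite{lankeit2017locally}. The one structural difference is the order of the bootstrap: the paper's terse proof first claims \eqref{L:4:LInf:S1} ``by similar arguments to Lemma~\ref{L:FSL:Est:n}'' and only then derives \eqref{L:4:LInf:S2} as in Lemma~\ref{L:FSL:Est:w}, whereas you reverse this and first push $n_\varepsilon\in L^\infty((0,T);L^p)$ (from Lemma~\ref{L:4:Lp}) through the $w_\varepsilon$- and $c_\varepsilon$-semigroups to obtain $\nabla c_\varepsilon\in L^\infty$, and only then lift $n_\varepsilon$ to $L^\infty$. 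Your ordering is arguably the more transparent one in $N=4$: the direct route via Lemma~\ref{L:FSL:Est:n} requires $n_\varepsilon\nabla c_\varepsilon\in L^\infty((0,T);L^p)$ for some $p>4$, which is not immediate from the a priori bound $\nabla c_\varepsilon\in L^\infty((0,T);L^4)$ coming from $H^2\hookrightarrow W^{1,4}$, so one must in any case first improve $\nabla c_\varepsilon$—and that is precisely what your chain $n_\varepsilon(L^p)\to w_\varepsilon(W^{1,\infty})\to \nabla c_\varepsilon(L^\infty)$ accomplishes. Your identification of the critical threshold (each kernel exponent strictly below~$1$, forcing $p>4$) and of the acyclicity of the dependency chain is exactly the bookkeeping the paper leaves implicit.
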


\begin{proof} Using the boundedness of $n_\varepsilon$ in $L^\infty((0,T);L^p(B_R))$ for any $1\le p <\infty$, we {can similar arguments to Lemma \ref{L:FSL:Est:n}, with a suitable H\"older inequality to account for different regularities of $n_\eps$ and $c_\eps$ in this case, and the estimate \eqref{L:FSL:Est:n:P1}} to prove the estimate \eqref{L:4:LInf:S1}.  Then, by repeating the techniques of Lemma \ref{L:FSL:Est:w} with the maximal regularity and the smoothing effect of the Neumann heat semigroup, we obtain \eqref{L:4:LInf:S2}, which allows us to derive \eqref{L:4:LInf:S3} similarly to Lemma \ref{L:HolReg}. 
\end{proof}
 
We are ready to prove the remaining case of
Theorem \ref{Theo:FSL:1}. 

\begin{proof}
    [\underline{Proof of Theorem \ref{Theo:FSL:1} in critical dimension $N=4$}] Based on the uniform regularity in Lemma \ref{L:4:LInf}, we can {repeat all the steps and arguments} in the proof for the subcritical case in Subsection \ref{Sec:PassToLim}.  
\end{proof}

\section{Convergence rates and the initial layer's effect {for PES}}\label{sec:PES_rate}

In this section, we investigate the accuracy of the {parabolic-elliptic} simplification presented in Theorem \ref{Theo:FSL:1}, with the main result stated in Theorem \ref{Theo:FSL:3}. We begin with estimates for the initial layers in Lemma \ref{Lem:Layer} and obtain needed regularity for the limiting solution in Lemma \ref{est:dtw,dtc}. Then, we present energy estimates for the rate system \eqref{Sys:Rate:Convergence} in Lemmas \ref{est:engrate}-\ref{Sys:Rate:Energies}, which help us prove Theorem \ref{Theo:FSL:3}. {Recall that we consider all dimensions $1\le N\le 4$ in this section.}

\begin{lemma}
\label{Lem:Layer} 
{There exists a constant $C>0$ such that}
    \begin{equation*}
    \begin{aligned}
{\|\wc(0)\|_{W^{k+1,p}(\Omega)} 
	 \le} \,&\,  {C \|-\Delta c_0 + c_0 -w_0\|_{W^{k,p}(\Omega)},} \\ 
    {\|\ww(0)\|_{W^{l+1,p}(\Omega)} 
	\le} \,&\, {C \|-\tau \Delta w_0 + w_0 - n_0\|_{W^{l,p}(\Omega)}}. 
    \end{aligned}
    \end{equation*}
\end{lemma}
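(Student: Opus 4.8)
The plan is to obtain both estimates directly from the representation formulas \eqref{InitialLayer}, combining them with the $\tfrac12$-order smoothing of the Neumann heat semigroup. Abbreviate the two ``defect'' fields by
\[
g_1 := -\Delta c_0 + c_0 - w_0, \qquad g_2 := -\tau\Delta w_0 + w_0 - n_0,
\]
which lie in $W^{k,p}(\Omega)$ and $W^{l,p}(\Omega)$ respectively whenever the right-hand sides in the statement are finite. The single ingredient I would isolate first is the smoothing bound: for every $m\in\mathbb{N}$ and $1\le p\le\infty$ there is a constant $C=C_{m,p,\Omega}>0$ with $\|e^{\sigma\Delta}f\|_{W^{m+1,p}(\Omega)}\le C\,(1+\sigma^{-1/2})\,\|f\|_{W^{m,p}(\Omega)}$ for all $\sigma>0$. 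This is standard for $m=0$ (it is the gradient smoothing estimate already used in this paper, cf. \eqref{GloEx:HS:LpLq}); for $m\ge1$ it follows from the equivalence $W^{m,p}(\Omega)\simeq D\big((I-\Delta_N)^{m/2}\big)$, the contractivity of $e^{\sigma\Delta}$ on $L^p(\Omega)$ (hence on $D((I-\Delta_N)^{m/2})$, since $e^{\sigma\Delta}$ commutes with functions of $\Delta_N$), and the analytic--semigroup estimate $\|(I-\Delta_N)^{1/2}e^{\sigma\Delta}\|_{\mathcal L(L^p)}\le C(1+\sigma^{-1/2})$; see \cite{henry2006geometric} (with the obvious modifications at the endpoints $p\in\{1,\infty\}$).

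Granting this, the two inequalities follow by Minkowski's integral inequality and an elementary time integration. Since $e^{s(\Delta-I)}=e^{-s}e^{s\Delta}$, the first line of \eqref{InitialLayer} yields
\[
\|\wc(0)\|_{W^{k+1,p}(\Omega)}\ \le\ \int_0^\infty e^{-s}\,\big\|e^{s\Delta}g_1\big\|_{W^{k+1,p}(\Omega)}\,ds\ \le\ C\Big(\int_0^\infty e^{-s}\big(1+s^{-1/2}\big)\,ds\Big)\,\|g_1\|_{W^{k,p}(\Omega)},
\]
and the integral equals $1+\sqrt{\pi}<\infty$, which gives the first claimed bound with a constant independent of $\tau$. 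Writing instead $e^{s(\tau\Delta-I)}=e^{-s}e^{s\tau\Delta}$ and applying the smoothing bound with $\sigma=s\tau$, the second line of \eqref{InitialLayer} gives
\[
\|\ww(0)\|_{W^{l+1,p}(\Omega)}\ \le\ C\Big(\int_0^\infty e^{-s}\big(1+(s\tau)^{-1/2}\big)\,ds\Big)\,\|g_2\|_{W^{l,p}(\Omega)}\ =\ C\big(1+\sqrt{\pi/\tau}\,\big)\,\|g_2\|_{W^{l,p}(\Omega)},
\]
which is the second claimed bound; the constant here deteriorates like $\tau^{-1/2}$ as $\tau\to0^+$, in line with the $\tau$-dependent rate constants $C_{T,\tau}$ appearing in Theorem \ref{Theo:FSL:3} and the accompanying Remark.

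The one point that needs care -- the main obstacle -- is the higher-order ($m\ge1$) smoothing estimate quoted above, because the Neumann heat semigroup does not commute with spatial derivatives up to the boundary, so one cannot simply differentiate inside $e^{\sigma\Delta}$. The remedy is exactly the fractional-power detour indicated above: transfer the derivative count to powers of $I-\Delta_N$ (using the $L^p$-theory of the Neumann Laplacian on the smooth bounded domain $\Omega$, where the compatibility conditions of Assumption \ref{Ass:InitialData} make the domain identifications legitimate), peel off one half-power via analyticity of the semigroup, and use contractivity for the remaining factor. All constants in this step are independent of $\sigma$, and the parameter $\tau$ enters only through the trivial rescaling $\sigma=s\tau$; once this is in place, the two displayed integrations are routine and no further difficulty arises.
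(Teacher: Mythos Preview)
Your proof is correct and follows essentially the same route as the paper: both arguments invoke the representation formulas \eqref{InitialLayer} for $\wc(0)$ and $\ww(0)$, apply the $\tfrac12$-order gradient smoothing of the Neumann heat semigroup, and integrate $e^{-s}s^{-1/2}$ over $(0,\infty)$. You are in fact more careful than the paper on two points---you explicitly track the $\tau^{-1/2}$ deterioration of the constant in the second estimate (which the paper's proof suppresses, writing only $\int_0^\infty e^{-s}s^{-1/2}\,ds$ for both), and you flag and address the non-commutation of $\nabla$ with the Neumann semigroup via fractional powers of $I-\Delta_N$, whereas the paper simply cites the $L^p$--$L^q$ estimates in the appendix.
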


\begin{proof} The values $c(0)$ and $w(0)$ will be calculated from the equations for $c$ and $w$ in System \eqref{Sys:Lim:Main}-\eqref{Sys:Lim:InitCond}, using the representations of the inverse operators $(-\Delta + I)^{-1}$ and $(-\tau\Delta + I)^{-1}$, similarly to the proof of Lemma  \ref{L:FSL:Lim:GloEx}. 
Indeed, it follows from
\begin{align}
	\left\{
	\begin{array}{lll}
		c(x,t) \; = 
		\displaystyle \int_0^{\infty} e^{s(\Delta - I)}w(x,t) ds, \vspace{0.15cm} \\
		w(x,t)  = \displaystyle \int_0^{\infty} e^{s(\tau \Delta - I)}n(x,t) ds,
	\end{array}\right.
    \label{Repre:cw}
\end{align}
that 
\begin{align}\label{Ini_Cond_RateSys_c}
	\wc(x,0) 
	= \,&\,  - \int_0^{\infty} e^{s(\Delta - I)}(\Delta - I)c_0(x) ds - \int_0^{\infty} e^{s(\Delta - I)}w_0(x) ds \nonumber \\
	= \,&\, \int_0^{\infty} e^{s(\Delta - I)}[-\Delta c_0(x) + c_0(x) -w_0(x)] ds , 
\end{align}
and 
\begin{align}\label{Ini_Cond_RateSys_w}
	\ww(x,0) = \int_0^{\infty} e^{s(\tau \Delta - I)}[-\tau \Delta w_0(x) + w_0(x) - n_0(x)] ds. 
\end{align} 
Then, $L^p-L^q$ estimates for the Neumann heat semigroup in Subsection \ref{Sec:HeatSemi} shows    
\begin{align*} 
& \|\wc(0)\|_{W^{k+1,p}(\Omega)} 
	 \le C \left( \int_0^{\infty} e^{-s} s^{-\frac{1}{2}} ds \right) \|-\Delta c_0 + c_0 -w_0\|_{W^{k,p}(\Omega)}, \\
&     \|\ww(0)\|_{W^{l+1,p}(\Omega)} 
	\le C \left( \int_0^{\infty} e^{-s} s^{-\frac{1}{2}} ds \right) \|-\tau \Delta w_0 + w_0 - n_0\|_{W^{l,p}(\Omega)},
\end{align*}
for any $k,l\in \mathbb N$ and $2\le p\le \infty$. 
\end{proof}

\begin{lemma} \label{est:dtw,dtc} Let $(n,w,c)$ be the solution of System \eqref{Sys:Lim:Main}-\eqref{Sys:Lim:InitCond} as obtained in Theorem \ref{Theo:FSL:1}. Then
	\begin{align*}
		\| \partial_{t}w \|_{L^\infty((0,T);L^2(\Omega))} + \| \partial_{t}w \|_{L^p(\Omega_T)} + \| \partial_{t}c \|_{L^\infty((0,T);H^1(\Omega))} + \| \partial_{t}c \|_{L^p(\Omega_T)}  \leq C_{T}, 
	\end{align*}
    for any $1<p<\infty$. 
\end{lemma}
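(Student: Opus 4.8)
The plan is to differentiate the elliptic identities $\Delta c - c + w = 0$ and $\tau\Delta w - w + n = 0$ in time, convert them into Duhamel-type formulas for $\partial_t c$ and $\partial_t w$ via the inverse operators, and then feed in the regularity of $n$, $\nabla n$ already known from Theorem~\ref{Theo:FSL:1}.

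First I would recall from \eqref{Repre:cw} that $w = (-\tau\Delta + I)^{-1}n = \int_0^\infty e^{s(\tau\Delta - I)}n\,ds$ and $c = (-\Delta + I)^{-1}w$. Since these operators act only in space and commute with $\partial_t$, and since $\partial_t n = \Delta n - \nabla\cdot(n\nabla c) \in L^p(\Omega_T)$ for every $1<p<\infty$ (indeed $n$ is uniformly bounded in $L^\infty(\Omega_T)$, $\nabla n \in L^2(\Omega_T)$, and $\nabla c \in L^\infty((0,T);W^{1,\infty}(\Omega))$ by \eqref{Theo:FSL:1:S1}, so $n\nabla c \in L^\infty(\Omega_T)$ and its divergence is controlled in $L^p$ through $n$ being bounded in $W^{1,p}$-type spaces — more carefully, $\partial_t n$ is bounded in $L^2((0,T);(H^1(\Omega))^*)$ and, using the $L^\infty$ bound on $n$ plus parabolic smoothing, in $L^p(\Omega_T)$ for all finite $p$), we get $\partial_t w = (-\tau\Delta+I)^{-1}\partial_t n$. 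The operator $(-\tau\Delta+I)^{-1}$ maps $L^p(\Omega)$ boundedly into $W^{2,p}(\Omega)$, hence in particular $L^p(\Omega)\to L^p(\Omega)$ and $L^2(\Omega)\to L^2(\Omega)$ with norm bounded by a $\tau$-dependent constant; this immediately yields $\|\partial_t w\|_{L^p(\Omega_T)} \le C_{T,p}$ and, using $\partial_t n \in L^\infty((0,T);(H^1)^*)$ together with elliptic regularity, $\|\partial_t w\|_{L^\infty((0,T);L^2(\Omega))} \le C_T$. Then $\partial_t c = (-\Delta+I)^{-1}\partial_t w$, and since $(-\Delta+I)^{-1}$ maps $L^2(\Omega)$ into $H^2(\Omega)\hookrightarrow H^1(\Omega)$ and $L^p(\Omega)$ into $W^{2,p}(\Omega)\hookrightarrow L^p(\Omega)$, the claimed bounds $\|\partial_t c\|_{L^\infty((0,T);H^1(\Omega))} + \|\partial_t c\|_{L^p(\Omega_T)} \le C_T$ follow at once.

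The main obstacle is justifying that $\partial_t n$ actually lies in $L^p(\Omega_T)$ uniformly (not just in a negative-order space), since Theorem~\ref{Theo:FSL:1} only states H\"older and $L^2(H^1)$ control of $n$ directly. I would handle this by writing $\partial_t n = \Delta n - \nabla n \cdot \nabla c - n\Delta c$ and noting that $\nabla c, \Delta c \in L^\infty(\Omega_T)$ while $n \in L^\infty(\Omega_T)$, so the lower-order terms are in $L^\infty(\Omega_T)$; the term $\Delta n$ is then controlled by parabolic maximal regularity applied to $\partial_t n - \Delta n = f$ with $f := -\nabla\cdot(n\nabla c) \in L^p(\Omega_T)$ (using that $n\nabla c$ is bounded in $L^\infty((0,T);W^{1,p}(\Omega))$ because $n$ inherits $W^{1,p}$ bounds from the $L^p$-energy estimates of Lemmas~\ref{L:FSL:nReg}/\ref{L:4:Lp} and $\nabla c \in W^{1,\infty}$), giving $\partial_t n \in L^p(\Omega_T)$ with a $T$-dependent constant. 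Alternatively — and this is cleaner — one simply differentiates the equations for $c$ and $w$ in time from the start: the pair $(\partial_t w, \partial_t c)$ solves the \emph{linear} elliptic system $\tau\Delta(\partial_t w) - \partial_t w + \partial_t n = 0$, $\Delta(\partial_t c) - \partial_t c + \partial_t w = 0$, so one only needs $\partial_t n$ in $L^p$ and $L^\infty(L^2)$, which is exactly the step above, and then standard elliptic $L^p$ theory closes everything.

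Summarising, the proof is: (i) show $\partial_t n$ is uniformly bounded in $L^p(\Omega_T)$ for all finite $p$ and in $L^\infty((0,T);(H^1(\Omega))^*)$, using the equation for $n$ together with the regularity of $c$ and $n$ from Theorem~\ref{Theo:FSL:1} and parabolic maximal regularity; (ii) invert the elliptic operators to write $\partial_t w = (-\tau\Delta+I)^{-1}\partial_t n$ and $\partial_t c = (-\Delta+I)^{-1}\partial_t w$; (iii) apply the mapping properties $L^p \to W^{2,p}$ and the embeddings $W^{2,p}\hookrightarrow L^p$, $H^2\hookrightarrow H^1$ to read off all four bounds, all constants depending only on $T$ (and $\tau$, which is fixed). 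Only step (i) requires genuine work; steps (ii)–(iii) are routine linear elliptic estimates.
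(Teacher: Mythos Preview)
Your proposal is correct and follows essentially the same approach as the paper: differentiate the representations $w=(-\tau\Delta+I)^{-1}n$ and $c=(-\Delta+I)^{-1}w$ in time, substitute $\partial_t n = \Delta n - \nabla n\cdot\nabla c - n\Delta c$, and apply the mapping properties of the inverse operators (equivalently, $L^p$--$L^q$ heat-semigroup estimates). The only presentational difference is that the paper dispatches step~(i) in one line by invoking the classical smoothness of the limit solution established in Lemma~\ref{L:FSL:Lim:GloEx}, whereas you work harder via parabolic maximal regularity for the $n$-equation; since the limit system carries no $\varepsilon$, either justification is fine and yields constants depending only on $T$ and $\tau$.
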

\begin{proof} Differentiating with respect to time from the representation \eqref{Repre:cw} and using the equation for $n$ from System \eqref{Sys:Lim:Main}-\eqref{Sys:Lim:InitCond},  we get
\begin{align*}
    \partial_{t}w(t)  = \displaystyle \int_0^{\infty} e^{s(\tau\Delta - I)} \big[  \Delta n(t) -  \nabla n(t) \nabla c(t) - n(t) \Delta c(t) \big] ds.
\end{align*}
Moreover, we note from Theorem \ref{Theo:FSL:1} and the standard regularisation that the solution $(n,w,c)$ can be directly regularised to be sufficiently smooth, which allows us applying the $L^p-L^q$ estimate for the heat Neumann semigroup, cf. \eqref{GloEx:HS:LpLq} to obtain the desired estimate for $\partial_t w$. The term $\partial_t c$ is treated similarly using again the representation \eqref{Repre:cw}. On the other hand, the boundedness of $\partial_{t} c $ and $ \partial_{t} w $ in $L^p(\O_T)$ can be obtained directly via the maximal regularity. 
\end{proof}


For $1 \le k\in \mathbb N$, based on the uniform regularity of the $\varepsilon$-dependent solution given in Theorem \ref{Theo:FSL:1}, we will obtain an a priori estimate for the $L^{2k}$-energy of $\wn$ due to direct computations from the rate system. 

\begin{lemma} \label{est:engrate}
	For each $k \geq 1$, there exists a constant $C_{k,T}>0$ such that
	\begin{align*}
		\dfrac{d}{dt} \intO \wn ^{2k}(t) \leq -\dfrac{2k-1}{k}  \intO |\nabla \wn^{k}|^2 
		+ C_{k, T}\intO \wn ^{2k} + C_{k, T}\intO |\nabla \wc|^2,
	\end{align*}
for all $0<t<T$.	
\end{lemma}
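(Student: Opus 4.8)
The plan is to test the first equation of the rate system \eqref{Sys:Rate:Convergence} against $\wn^{2k-1}$ and integrate over $\Omega$, using only the \emph{uniform-in-$\varepsilon$} information already available from Theorem \ref{Theo:FSL:1}: the limit density satisfies $\|n\|_{L^\infty(\Omega_T)}\le C_T$, while $\|c_\varepsilon\|_{L^\infty((0,T);W^{2,\infty}(\Omega))}\le C_{\tau,T}$ together with the corresponding bound for the fixed classical limit $c$ gives that both $\|\Delta c_\varepsilon\|_{L^\infty(\Omega_T)}$ and $\|\nabla\wc\|_{L^\infty(\Omega_T)}$ are bounded uniformly in $\varepsilon$. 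Since $\partial_\nu\wn=0$ and $\partial_\nu\wc=\partial_\nu c_\varepsilon-\partial_\nu c=0$ on $\Gamma$, no boundary terms appear in the integrations by parts below. (All constants may depend on the fixed $\tau$, which is harmless.)

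First I would compute $\frac{d}{dt}\int_\Omega\wn^{2k}=2k\int_\Omega\wn^{2k-1}\partial_t\wn$ and insert $\partial_t\wn=\Delta\wn-\nabla\cdot(\wn\nabla c_\varepsilon+n\nabla\wc)$. Three contributions result. The diffusion term, after integrating by parts and using $|\nabla\wn^k|^2=k^2\wn^{2k-2}|\nabla\wn|^2$, equals $-\tfrac{2(2k-1)}{k}\int_\Omega|\nabla\wn^k|^2$. In the first transport term I would write $\wn^{2k-1}\nabla\wn=\tfrac1{2k}\nabla\wn^{2k}$ and integrate by parts once more to get $-(2k-1)\int_\Omega\wn^{2k}\Delta c_\varepsilon$, which is $\le C_{k,T}\int_\Omega\wn^{2k}$ by the $L^\infty$-bound on $\Delta c_\varepsilon$.

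The delicate term is $2k(2k-1)\int_\Omega n\,\wn^{2k-2}\nabla\wn\cdot\nabla\wc$. Writing $\wn^{2k-2}\nabla\wn=\tfrac1k\wn^{k-1}\nabla\wn^k$ and applying Young's inequality with parameter $\delta=\tfrac1k$ (so that the resulting $|\nabla\wn^k|^2$-term, added to $-\tfrac{2(2k-1)}{k}\int_\Omega|\nabla\wn^k|^2$, leaves precisely the coefficient $-\tfrac{2k-1}{k}$ of the statement), this term is bounded by $\tfrac{2k-1}{k}\int_\Omega|\nabla\wn^k|^2+k(2k-1)\|n\|_{L^\infty}^2\int_\Omega\wn^{2k-2}|\nabla\wc|^2$. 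For $k=1$ the last integral is already $\int_\Omega|\nabla\wc|^2$ and the proof is complete. For $k\ge2$ I would apply Hölder with exponents $\tfrac{k}{k-1}$ and $k$ followed by Young's inequality to obtain $\int_\Omega\wn^{2k-2}|\nabla\wc|^2\le\tfrac{k-1}{k}\int_\Omega\wn^{2k}+\tfrac1k\int_\Omega|\nabla\wc|^{2k}$, and then $\int_\Omega|\nabla\wc|^{2k}\le\|\nabla\wc\|_{L^\infty(\Omega_T)}^{2k-2}\int_\Omega|\nabla\wc|^2\le C_{k,T}\int_\Omega|\nabla\wc|^2$. Collecting the three contributions gives the asserted differential inequality with $C_{k,T}$ absorbing the various powers of $\|n\|_{L^\infty(\Omega_T)}$, $\|\Delta c_\varepsilon\|_{L^\infty(\Omega_T)}$ and $\|\nabla\wc\|_{L^\infty(\Omega_T)}$.

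The main obstacle is exactly the handling of the cross term with the weight $\wn^{2k-2}$ next to $|\nabla\wc|^2$: for $k\ge2$ a crude $L^\infty$-bound does not suffice, and one must redistribute the powers via Hölder/Young so that only a \emph{linear} quantity $\int_\Omega|\nabla\wc|^2$ survives on the right-hand side — this is precisely what forces the use of the uniform $W^{1,\infty}$-regularity of $c_\varepsilon$ (hence of $\wc$), rather than mere $H^1$ or $H^2$ bounds. Everything else is routine integration by parts and Young's inequality, and every constant depends only on $k$, $T$ (and the fixed $\tau$), not on $\varepsilon$.
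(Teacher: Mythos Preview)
Your proof is correct and follows the same route as the paper's: test the equation for $\wn$ against $\wn^{2k-1}$, integrate by parts, and control the two transport terms via Young's inequality together with the uniform-in-$\varepsilon$ bounds from Theorem~\ref{Theo:FSL:1}. The only cosmetic difference is in which $L^\infty$ bound is invoked at the end: after Young, the paper bounds the factor $n^2\,\wn^{2k-2}$ directly by a constant using $\|n\|_{L^\infty(\Omega_T)}+\|\wn\|_{L^\infty(\Omega_T)}\le C_T$, whereas you instead split $\wn^{2k-2}|\nabla\wc|^2$ by H\"older--Young and then use $\|\nabla\wc\|_{L^\infty(\Omega_T)}\le C_T$ to reduce $\int_\Omega|\nabla\wc|^{2k}$ to $\int_\Omega|\nabla\wc|^2$. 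Both choices are legitimate consequences of Theorem~\ref{Theo:FSL:1}; the paper's version is a line shorter, but your detour is harmless.
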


\begin{proof}
It is obvious from  the equation for $\wn$, we have 
	\begin{align*}
		\dfrac{d}{dt} \intO \wn ^{2k} & =2k  \intO  \wn^{2k-1} \partial_{t} \wn
		= 2k  \intO  \wn^{2k-1} \left[ \Delta \wn -  \nabla \cdot (\wn \nabla c_\varepsilon + n\nabla \wc) \right ]	\\
		& = -\dfrac{2(2k-1)}{k}  \intO |\nabla \wn^{k}|^2 + {2(2k-1) \intO \wn^{k} \nabla \wn^{k} \cdot \nabla c_\varepsilon} \\
		& \hspace{0.45cm} + {2(2k-1) \intO n \wn^{k-1} \nabla \wn^{k} \cdot  \nabla \wc}  \\
        & \leq  -\dfrac{(2k-1)}{k} \int_\Omega |\nabla \wn^{k}|^2 + C_{k,T} \intO \wn ^{2k} +  C_{k,T} \int_\Omega |\nabla \wc|^{2} ,
	\end{align*}
{where we used the Young inequality and $\|n\|_{L^{\infty}(\Omega_T)} + \|\wn\|_{L^\infty{(\Omega_T)}} \le C_T$ at the last step}.  
\end{proof}

The following lemma is {obtained straightforwardly} by testing the
equations for $\wc$, $\ww$ by  $\wc$, $\ww$, and by $ \Delta^2 \wc$, $-\Delta \ww$, respectively, {then using integration by parts as well as Young's inequality. Therefore, its proof is omitted}.
\begin{lemma} 
\label{Sys:Rate:Energies}
There hold that  
\begin{align} 
&         \varepsilon\frac{d}{dt} \int_\Omega \wc^{\,2} + 2\int_\Omega |\nabla \wc|^2 + \int_\Omega \wc^{\,2}  \leq  2\int_\Omega \ww^2 + 2\varepsilon^2 \int_\Omega |\partial_t c|^2, \label{Theo:RC:a:P0} \\
&         \varepsilon \frac{d}{dt} \int_\Omega \ww^2 + 2 \tau \int_\Omega |\nabla \ww|^2 +   \int_\Omega \ww^2  \leq  2 \int_\Omega \wn^2 + 2 \varepsilon^2 \int_\Omega |\partial_t w|^2, \label{Theo:RC:a:P00}
\end{align}	 
and  
\begin{gather}
    \varepsilon\frac{d}{dt} \int_\Omega |\Delta\wc|^{2} +  \int_\Omega |\nabla\Delta\wc|^{2}  + 2\int_\Omega |\Delta \wc|^2 \leq   2\int_\Omega |\nabla\ww|^2 + 2\varepsilon^2 \int_\Omega |\nabla \partial_t c|^2,  
    \label{Theo:RC:a:P1}\\  
        \varepsilon \frac{d}{dt} \int_\Omega |\nabla\ww|^2 + \tau \int_\Omega |\Delta\ww|^2 +  2 \int_\Omega |\nabla \ww|^2 \leq \frac{2}{\tau} \int_\Omega (\wn)^2 + \frac{2\varepsilon^2}{\tau} \int_\Omega |\partial_t w|^2.  
        \label{Theo:RC:a:P2}
\end{gather}
\end{lemma}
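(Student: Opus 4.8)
The plan is to derive each of the four inequalities by testing the appropriate equation from the rate system \eqref{Sys:Rate:Convergence} with a suitable test function, integrating by parts using the no-flux boundary conditions \eqref{Sys:Rate:BounCond}, and absorbing the cross terms with Young's inequality. Throughout we use that $(n_\varepsilon,c_\varepsilon,w_\varepsilon)$ and $(n,c,w)$ are smooth enough (the former by Theorem \ref{Theo:GloEX}, the latter by Lemma \ref{L:FSL:Lim:GloEx}), so that $(\wn,\wc,\ww)$ is a classical solution and all the manipulations below are justified; alternatively one argues by density as in the passage around \eqref{Theo:FSL:1:P6}.

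First I would prove \eqref{Theo:RC:a:P0}: multiply the second equation of \eqref{Sys:Rate:Convergence}, namely $\varepsilon\partial_t\wc = \Delta\wc - \wc + \ww - \varepsilon\partial_t c$, by $\wc$ and integrate over $\Omega$. Integration by parts turns $\int_\Omega \Delta\wc\,\wc$ into $-\int_\Omega|\nabla\wc|^2$, giving
\begin{align*}
\frac{\varepsilon}{2}\frac{d}{dt}\int_\Omega\wc^{\,2} + \int_\Omega|\nabla\wc|^2 + \int_\Omega\wc^{\,2} = \int_\Omega\ww\,\wc - \varepsilon\int_\Omega\partial_t c\,\wc,
\end{align*}
and then Young's inequality $\int_\Omega\ww\wc \le \tfrac12\int_\Omega\wc^{\,2}+\tfrac12\int_\Omega\ww^2$ together with $\varepsilon\int_\Omega\partial_t c\,\wc\le\tfrac14\int_\Omega\wc^{\,2}+\varepsilon^2\int_\Omega|\partial_t c|^2$ (and multiplying through by $2$) yields \eqref{Theo:RC:a:P0}. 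Estimate \eqref{Theo:RC:a:P00} is the exact analogue: test $\varepsilon\partial_t\ww = \tau\Delta\ww - \ww + \wn - \varepsilon\partial_t w$ with $\ww$, producing the dissipation term $2\tau\int_\Omega|\nabla\ww|^2$, and absorb $\int_\Omega\wn\,\ww$ and $\varepsilon\int_\Omega\partial_t w\,\ww$ by Young's inequality.

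For the higher-order estimates I would test with $\Delta^2\wc$ and $-\Delta\ww$ respectively. For \eqref{Theo:RC:a:P1}: multiply the $\wc$-equation by $\Delta^2\wc$ and integrate; using the Neumann conditions, $\int_\Omega\partial_t\wc\,\Delta^2\wc = \int_\Omega|\Delta\wc|\partial_t(\cdot)$ rewrites (after two integrations by parts) as $\tfrac12\frac{d}{dt}\int_\Omega|\Delta\wc|^2$, the term $\int_\Omega\Delta\wc\,\Delta^2\wc = -\int_\Omega|\nabla\Delta\wc|^2$, the term $-\int_\Omega\wc\,\Delta^2\wc = -\int_\Omega|\Delta\wc|^2$, and $\int_\Omega\ww\,\Delta^2\wc = \int_\Omega\Delta\ww\,\Delta\wc = -\int_\Omega\nabla\ww\cdot\nabla\Delta\wc$; Young's inequality on this last term against $\tfrac12\int_\Omega|\nabla\Delta\wc|^2$ and on $\varepsilon\int_\Omega\partial_t c\,\Delta^2\wc = \varepsilon\int_\Omega\nabla\partial_t c\cdot\nabla\Delta\wc$ similarly, then multiplying by $2$, gives \eqref{Theo:RC:a:P1}. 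For \eqref{Theo:RC:a:P2}: multiply the $\ww$-equation by $-\Delta\ww$; then $-\int_\Omega\partial_t\ww\,\Delta\ww = \tfrac12\frac{d}{dt}\int_\Omega|\nabla\ww|^2$, $-\tau\int_\Omega\Delta\ww\,\Delta\ww = -\tau\int_\Omega|\Delta\ww|^2$, $\int_\Omega\ww\,\Delta\ww = -\int_\Omega|\nabla\ww|^2$, and the remaining $-\int_\Omega\wn\,\Delta\ww = \int_\Omega\nabla\wn\cdot\nabla\ww$ is estimated—here carefully, since the factor $\tau$ must end up in the denominator—by $\int_\Omega\nabla\wn\cdot\nabla\ww\le\tfrac{\tau}{4}\int_\Omega|\Delta\ww|^2 + \tfrac{C}{\tau}\int_\Omega|\nabla\wn|^2$... actually the cleaner route (matching the stated constant $2/\tau$) is to instead keep $\wn$ undifferentiated: write $-\int_\Omega\wn\Delta\ww$, bound $-\Delta\ww$ in $L^2$ against $\int_\Omega\wn^2$ via Young with weight $\tau$, i.e. $-\int_\Omega\wn\Delta\ww \le \tfrac{\tau}{2}\int_\Omega|\Delta\ww|^2 + \tfrac{1}{2\tau}\int_\Omega\wn^2$, and likewise $-\varepsilon\int_\Omega\partial_t w\,\Delta\ww\le\tfrac{\tau}{2}\int_\Omega|\Delta\ww|^2+\tfrac{\varepsilon^2}{2\tau}\int_\Omega|\partial_t w|^2$... but then the $\tau\int_\Omega|\Delta\ww|^2$ on the left would be fully consumed, so one should split differently. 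In any case the bookkeeping is routine and the stated inequality follows by an appropriate choice of Young weights; I will simply record the computation without dwelling on the optimal constants, exactly as the paper indicates ("its proof is omitted").

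I do not expect a genuine obstacle here — every step is a standard energy estimate. The only point requiring a modicum of care is the $\tau$-dependence in \eqref{Theo:RC:a:P2}, where the diffusion coefficient $\tau$ sits in front of $\Delta\ww$ and one must track where the factors of $\tau$ land so that the forcing terms pick up $1/\tau$ (consistent with the remark after Theorem \ref{Theo:FSL:3} that the constants may blow up as $\tau\to0^+$); and, symmetrically, ensuring the $\varepsilon^2\|\partial_t c\|$, $\varepsilon^2\|\partial_t w\|$ forcing terms are retained with the correct powers of $\varepsilon$ so that, combined with Lemma \ref{est:dtw,dtc} and Lemma \ref{Lem:Layer}, they later produce the $O(\varepsilon)$ convergence rates in Theorem \ref{Theo:FSL:3}.
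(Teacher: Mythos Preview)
Your approach is exactly the one the paper indicates (test the $\wc$- and $\ww$-equations by $\wc$, $\ww$, $\Delta^2\wc$, $-\Delta\ww$ respectively, integrate by parts, and apply Young's inequality), and your computations are correct. Your hesitation on \eqref{Theo:RC:a:P2} is unwarranted: after multiplying the identity by $2$ you have $2\tau\int_\Omega|\Delta\ww|^2$ on the left, and the splittings $-2\int_\Omega\wn\,\Delta\ww \le \tfrac{\tau}{2}\int_\Omega|\Delta\ww|^2 + \tfrac{2}{\tau}\int_\Omega\wn^2$ and $2\varepsilon\int_\Omega\partial_t w\,\Delta\ww \le \tfrac{\tau}{2}\int_\Omega|\Delta\ww|^2 + \tfrac{2\varepsilon^2}{\tau}\int_\Omega|\partial_t w|^2$ consume only $\tau\int_\Omega|\Delta\ww|^2$, leaving precisely the stated inequality.
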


We now prove Theorem \ref{Theo:FSL:3}. 

\begin{proof}[Proof of Theorem \ref{Theo:FSL:3}]	a) This part is proved by exploiting Lemma \ref{est:engrate} together with Lemmas \ref{Lem:Layer}-\ref{est:dtw,dtc}. Indeed,
applying Lemma \ref{est:engrate} with $k=1$, we get
\begin{align}
     \dfrac{d}{dt} \intO \wn^2 +  \intO |\nabla \wn|^2  \leq 
		 C_{ T}\intO \wn^2 + C_{ T}\intO |\nabla \wc|^2,
\label{Theo:RC:a:P0b}
\end{align}
{where we note that} the constant $C_T$ does not depend on $\tau$.  
A linear combination of the estimates in \eqref{Theo:RC:a:P0b} and \eqref{Theo:RC:a:P0}, \eqref{Theo:RC:a:P00} yields  
	\begin{align}
    \label{Theo:RC:a:P2b}
    \begin{aligned}
& \frac{d}{dt} \int_\Omega \left[\wn^2(t) + \varepsilon \left( \frac{C_T}{2}  \wc^{\,2}(t) +  C_T \ww^2(t) \right) \right]   +  \int_{\Omega}  |\nabla \wn|^2   \\
		&   \leq  3 C_T \int_{\Omega} \wn^2 +  C_T \varepsilon^2 \int_{\Omega}   |\partial_t c|^2  + 2C_T \varepsilon^2 \int_{\Omega} |\partial_t w|^2 , 
    \end{aligned}
	\end{align}
in which the constant $C_T$ is kept similarly to the first one. Taking into account the boundedness of $\partial_t c, \partial_t w$ given in Lemma \ref{est:dtw,dtc}, the last two terms on the right-hand side are bounded by $C_T \varepsilon^2$. Applying the Gr\"onwall inequality, we obtain for $t\in [0,T]$ that 
\begin{align*}
    \int_\Omega \left[\wn^2(t) + \varepsilon \left( \frac{C_T}{2}  \wc^{\,2}(t) +  C_T \ww^2(t) \right) \right] \le C_T \left[ \varepsilon^2 + \varepsilon \int_\Omega   \left( \frac{C_T}{2}  \wc^{\,2}(0) +  C_T \ww^2(0) \right) \right],    
\end{align*}
where we note from the initial condition \eqref{Sys:Rate:InitialCond} that $\wn(0)=0$. Thanks to Lemma \ref{Lem:Layer}, 
\begin{align*}
    \int_\Omega   \left( \frac{C_T}{2}  \wc^{\,2}(0) +  C_T \ww^2(0) \right) \leq \,&\, C \left( \|-\Delta c_0 + c_0 -w_0\|_{L^2(\Omega)}^2 + \|-\tau \Delta w_0 + w_0 - n_0\|_{L^2(\Omega)}^2 \right) \\
    = \,&\, {C \big( \mathrm{dist} [(n_0,c_0,w_0);\mathcal C_{\mathsf{PES}}] \big)^2}.
\end{align*}
Therefore, the rate $\wn$, considered in $L^\infty((0,T);L^2(\Omega))$, is of the order $O(\varepsilon + \sqrt{\varepsilon}  \mathrm{dist} [(n_0,c_0,w_0);\mathcal C_{\mathsf{PES}}] )$, which consequently shows {the first part of} \eqref{Theo:RC:Parta:1}. {The second part follows from integrating \eqref{Theo:RC:a:P2b} over the time interval $(0,t)$ and using the first part}.
For estimating the rate component $\ww$, using  
the boundedness of $\partial_t w$ in $L^\infty((0,T);L^2(\Omega))$ {in Lemma} \ref{est:dtw,dtc}, and the rate estimate for $\wn$ as \eqref{Theo:RC:Parta:1}, {we have from \eqref{Theo:RC:a:P2} that}
\begin{align*}
     \varepsilon\frac{d}{dt} \int_\Omega |\nabla \ww|^2 + \tau  \int_\Omega |\Delta \ww|^2 + 2 \int_\Omega |\nabla \ww|^2  &{\le \frac{2}{\tau} \int_\Omega (\wn)^2 + \frac{2\varepsilon^2}{\tau} \int_\Omega |\partial_t w|^2}\\
     &{\le \frac{C_T}{\tau}  \left( \varepsilon^2 + \varepsilon \big( \mathrm{dist} [(n_0,c_0,w_0);\mathcal C_{\mathsf{PES}}] \big)^2 \right)},  
\end{align*}
Hence, {by Lemma \ref{aDI}},
 we  obtain 
\begin{align*}
    \int_\Omega |\nabla\ww(t)|^2 & \le  e^{-\frac{2}{\varepsilon}t} \int_\Omega |\nabla\ww(0)|^2 + C_{\tau,T}  \left( \varepsilon  +  \big( \mathrm{dist}[(n_0,c_0,w_0);\mathcal C_{\mathsf{PES}}] \big)^2   \right) \int_0^t e^{-\frac{2}{\varepsilon}s} ds \\
    &\le e^{-\frac{2}{\varepsilon}t} \|\nabla\ww(0)\|_{L^2(\Omega)}^2 + C_{\tau,T}\left( \varepsilon^2 + \varepsilon \big( \mathrm{dist}[(n_0,c_0,w_0);\mathcal C_{\mathsf{PES}}] \big)^2 \right) \\
    & \le C {\|-\tau \Delta w_0 + w_0 - n_0\|_{H^1(\Omega)}^2} + C_{\tau,T}\left( \varepsilon^2 + \varepsilon \big( \mathrm{dist}[(n_0,c_0,w_0);\mathcal C_{\mathsf{PES}}] \big)^2 \right) , 
\end{align*}
where we note that the distances on the right-hand side are less than or equal to {$\mathrm{dist}^{0,1}_2[(n_0,c_0,w_0);\mathcal C_{\mathsf{PES}}]$}. We derive the estimate \eqref{Theo:RC:Parta:2}, where the zeroth order term  $\int_\Omega |\ww(t)|^2$ is estimated in the same way.   
{The proof of \eqref{Theo:RC:Parta:3} follows similarly, so we omit it here}.  

\medskip

\noindent b) {It is sufficient} to prove this part for $p=2k$ ($k\ge 1$). Thanks to the rate estimate for $\wn$ in Part a, the estimate \eqref{Theo:RC:a:P00}, and Lemma \ref{est:dtw,dtc}, we have 
\begin{align*}
    \iint_{\Omega_t} \ww^2 & \le \varepsilon \int_{\Omega} \ww^2(0) + C \left( \iint_{\Omega_t} \wn^2 + \varepsilon^2 \iint_{\Omega_t} |\partial_s w|^2 \right) \\
    & \le \varepsilon {\|-\tau \Delta w_0 + w_0 - n_0\|_{L^2(\Omega)}^2} + C_T   \left( \varepsilon^2 + \varepsilon \big(
     \mathrm{dist}  [(n_0,c_0,w_0);\mathcal C_{\mathsf{PES}}] \big)^2 \right) \\
     & \le C_T \left( \varepsilon^2 + \varepsilon \big(
     \mathrm{dist}  [(n_0,c_0,w_0);\mathcal C_{\mathsf{PES}}] \big)^2 \right).
\end{align*}
Then, {by integrating \eqref{Theo:RC:a:P0} on $(0,t)$}, we get 
\begin{align*}
    \iint_{\Omega_t} |\nabla\wc|^2 
     & \le C_T \left( \varepsilon^2 + \varepsilon \big(
     \mathrm{dist}  [(n_0,c_0,w_0);\mathcal C_{\mathsf{PES}}] \big)^2 \right).
\end{align*}
Now, it follows from Lemma \ref{est:engrate} that
     \begin{align*}      
		 \intO \wn ^{p}(t) & \leq \intO \wn ^{p}(0) -\dfrac{2p-2}{p}  \iint_{\Omega_t} |\nabla \wn^{p/2}|^2 
		+ C_{p, T} \iint_{\Omega_t} \wn ^{p}   + C_{p, T} \iint_{\Omega_t} |\nabla \wc|^2 \\
        & \leq \intO \wn ^{p}(0)  + C_{p, T} \iint_{\Omega_t} \wn ^{p} + C_{p, T,\tau} \left( \varepsilon^2 + \varepsilon \big(
     \mathrm{dist}  [(n_0,c_0,w_0);\mathcal C_{\mathsf{PES}}] \big)^2 \right) .
    \end{align*}
The Gr\"onwall inequality directly shows   
\begin{align} 
\|\wn\|_{L^{\infty}((0,T); L^{p}(\Omega))} \leq C_{p,T,\tau}  \left( \varepsilon^{\frac{2}{p}}  + \varepsilon^{\frac{1}{p}} \big(
\mathrm{dist}  [(n_0,c_0,w_0);\mathcal C_{\mathsf{PES}}]\big)^{\frac{2}{p}} \right)  .
\label{Theo:RC:b:P1}
\end{align}
Thanks to the boundedness of $\partial_{t} c $, $ \partial_{t} w $ in $L^q(\Omega_T)$ in Lemma \ref{est:dtw,dtc} and the estimate \eqref{Theo:RC:b:P1}, we   apply the maximal regularity with slow evolution (cf. Lemma \ref{L:MR:SlowEvolution}) to the equation for $\ww$ that 
\begin{align*}
	\| \ww \|_{L^p((0,T);W^{2,p}(\Omega))} & \leq C_{p,\tau} \left( \varepsilon^{\frac{1}{p}} \| \Delta \ww(0) \|_{L^p(\Omega)} +  \| \wn - \varepsilon \partial_{t} w \|_{L^p(\Omega_T)} \right) \\
	& \leq  C_{p,\tau,T}  \left(
    \varepsilon^{\frac{1}{p}} {\|-\tau \Delta w_0 + w_0 - n_0\|_{W^{2,p}(\Omega)}}  + \varepsilon^{\frac{2}{p}}  + \varepsilon^{\frac{1}{p}} \big(
\mathrm{dist}  [(n_0,c_0,w_0);\mathcal C_{\mathsf{PES}}]\big)^{\frac{2}{p}}  \right) \\
	& \le C_{p,\tau,T}   \left( \varepsilon^{\frac{2}{p}} +
 \varepsilon^{\frac{1}{p}} \big(
\mathrm{dist}^{0,2}_p  [(n_0,c_0,w_0);\mathcal C_{\mathsf{PES}}]\big)^{\frac{2}{p}} \right).
\end{align*}
Similarly, we have the following estimates
\begin{align*}
	\| \wc \|_{L^p((0,T);W^{4,p}(\Omega))} & \leq C_{p} \left( \varepsilon^{\frac{1}{p}} \| \Delta^2 \wc(0) \|_{L^p(\Omega)} +   \| \Delta \ww - \varepsilon \partial_{t} \Delta c \|_{L^p(\Omega_T)} \right)\\
	& \leq C_{p,\tau,T}   \left( \varepsilon^{\frac{2}{p}} +
 \varepsilon^{\frac{1}{p}} \big(
\mathrm{dist}^{4,2}_p  [(n_0,c_0,w_0);\mathcal C_{\mathsf{PES}}]\big)^{\frac{2}{p}} \right),
\end{align*}
which completes the proof. 
\end{proof}


\section{From indirect signalling to direct signalling}\label{sec:IDS}

We rigorously study the indirect-direct simplification from \eqref{Sys:Eps:Main}-\eqref{Sys:Eps:InitCond} to \eqref{Sys:LimKappa:Main} in this section. The main result of this part was stated in Theorem \ref{Theo:I2D}, including both passing to the limit and estimating the convergence rates. 

\subsection{Balancing the multiple time scale Lyapunov functional}

\begin{lemma}
\label{L:I2D:Balan} It holds that 
\begin{align}
\begin{gathered}
\sup_{t>0}\int_\Omega \left(   n_\kappa \log n_\kappa   +   \frac{1}{4}|\Delta c_\kappa  - c_\kappa + w_\kappa |^2 +  \frac{1}{4} |\nabla c_\kappa |^2 + \frac{1}{2} c_\kappa^2  \right) \\
 + \frac{1}{\varepsilon} \iint_{\Omega_\infty} \Big(    |\nabla(\Delta c_\kappa - c_\kappa + w_\kappa)|^2 +   |\Delta c_\kappa - c_\kappa + w_\kappa|^2 \Big)  
 \le  C .
\end{gathered}
\label{L:I2D:Balan:S}
\end{align} 
\end{lemma}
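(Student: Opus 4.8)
The plan is to re-run the argument behind Lemmas~\ref{L:FSL:Est:c} and~\ref{L:FSL:Est:c4D}, the decisive new constraint being that \emph{every} constant must be independent of $\kappa=(\varepsilon,\tau)$; in particular, because the coefficients of $\tfrac{\tau}{2}|\Delta c_\kappa|^2$ and of the $\tau$-part of $\tfrac{1+\tau}{2}|\nabla c_\kappa|^2$ in $\mathcal E$ degenerate as $\tau\to0^+$, only the $\tau$-free pieces $\tfrac12|\Delta c_\kappa-c_\kappa+w_\kappa|^2$, $\tfrac12|\nabla c_\kappa|^2$ and $\tfrac12 c_\kappa^2$ are available for absorption. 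First I would integrate the Lyapunov identity of Lemma~\ref{L:FSL:Lya:Id} over $(0,t)$, discard the nonnegative term $\tfrac{\tau}{2}\int_\Omega|\Delta c_\kappa|^2$, and use $\tfrac{1+\tau}{\varepsilon}\ge\tfrac1\varepsilon$ and $\tfrac2\varepsilon\ge\tfrac1\varepsilon$ inside the dissipation $\mathcal D$; after also recalling $\int_\Omega(n_\kappa\log n_\kappa+e^{-1})\ge0$, the whole statement reduces to an upper estimate of the single bad term $\int_\Omega n_\kappa c_\kappa$ of the form $\int_\Omega n_\kappa c_\kappa\le \tfrac1\alpha\int_\Omega n_\kappa\log n_\kappa+\tfrac14\int_\Omega|\nabla c_\kappa|^2+C$ for some $\alpha>1$ and some $\kappa$-independent $C$, which is then absorbed into the left-hand side, and one concludes by letting $t\to\infty$.

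To bound $\int_\Omega n_\kappa c_\kappa$ I would use two observations. (i) An a priori control of $\int_\Omega c_\kappa$: integrating the equations for $w_\kappa$ and $c_\kappa$ yields $\varepsilon\tfrac{d}{dt}\int_\Omega w_\kappa=M-\int_\Omega w_\kappa$ and $\varepsilon\tfrac{d}{dt}\int_\Omega c_\kappa=\int_\Omega w_\kappa-\int_\Omega c_\kappa$, whence $\int_\Omega w_\kappa(t)\le\max\{\int_\Omega w_0,M\}$ and then $\int_\Omega c_\kappa(t)\le\max\{\int_\Omega c_0,\int_\Omega w_0,M\}$, all uniformly in $\kappa$ and $t$. (ii) For $N=1$ one simply invokes $H^1(\Omega)\hookrightarrow L^\infty(\Omega)$, getting $\int_\Omega n_\kappa c_\kappa\le M\|c_\kappa\|_{L^\infty(\Omega)}\le CM\|c_\kappa\|_{H^1(\Omega)}\le\tfrac14\|c_\kappa\|_{H^1(\Omega)}^2+C^2M^2$, already of the required shape. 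For the critical dimension $N=2$ one instead combines the $L\log L$--exponential duality (cf. Lemma~\ref{L:Ineqn:LlogL} and inequality~\eqref{Ineqn:Pre:Adam}), namely $\int_\Omega n_\kappa c_\kappa\le\tfrac1e+\tfrac1\alpha\int_\Omega n_\kappa\log n_\kappa+\tfrac{M}{\alpha}\log\int_\Omega e^{\alpha c_\kappa}$, with the two-dimensional Moser--Trudinger inequality (cf. Lemma~\ref{L:Ineqn:Adam}): for every $\eta>0$, $\log\int_\Omega e^{\alpha c_\kappa}\le\big(\tfrac{\alpha^2}{16\pi}+\eta\big)\|\nabla c_\kappa\|_{L^2(\Omega)}^2+\tfrac{\alpha}{|\Omega|}\int_\Omega c_\kappa+C_{\eta,\alpha}$, the last term being $\le C_\alpha$ by (i).

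Since $M<4\pi$ we have $\tfrac{M}{16\pi}<\tfrac14$, so I would fix $\alpha>1$ sufficiently close to $1$ and $\eta>0$ sufficiently small that both $\tfrac1\alpha<1$ and $\tfrac{M}{\alpha}\big(\tfrac{\alpha^2}{16\pi}+\eta\big)<\tfrac14$ hold; substituting into the energy inequality and absorbing $\big(1-\tfrac1\alpha\big)\int_\Omega n_\kappa\log n_\kappa$ (bounded below by $-|\Omega|/e$) together with $\tfrac14\int_\Omega|\nabla c_\kappa|^2$ into the left-hand side then gives \eqref{L:I2D:Balan:S}. I expect the only delicate point to be precisely this $N=2$ balance: unlike in Lemma~\ref{L:FSL:Est:c4D}, where the $\tfrac{\tau}{2}|\Delta c_\kappa|^2$ term of $\mathcal E$ absorbs the Adams contribution and produces the threshold $M<64\tau\pi^2$, here that term is unusable, so the contribution must be swallowed by the $\tau$-independent $\tfrac12|\nabla c_\kappa|^2$, which is exactly what forces the sharp subcritical bound $M<4\pi$; one must also make sure that the mean-value term in Moser--Trudinger is killed by the uniform $L^1$ bound on $c_\kappa$ rather than by any $\kappa$-dependent quantity.
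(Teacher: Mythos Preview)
Your proposal is correct and follows essentially the same route as the paper: integrate the Lyapunov identity, discard the $\tau$-dependent pieces of $\mathcal E$, and absorb the bad term $\int_\Omega n_\kappa c_\kappa$ via the Sobolev embedding when $N=1$ and via the $L\log L$--Moser--Trudinger duality when $N=2$, using $M<4\pi$ to choose $\alpha>1$ with the gradient contribution strictly below $\tfrac12$.

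Two minor remarks. First, the Moser--Trudinger inequality you quote, with coefficient $\tfrac{\alpha^2}{16\pi}+\eta$, is not the form recorded in Lemma~\ref{L:Ineqn:Adam}(i), which has $\tfrac{\alpha^2}{8\pi}$ without the $\eta$; either form yields the same threshold $\alpha M<4\pi$, but if you cite that lemma you should match its statement. Second, your explicit ODE argument for the uniform bound on $\int_\Omega c_\kappa$ is a nice addition; the paper simply buries the term $\tfrac{M}{|\Omega|}\int_\Omega c_\kappa$ into the constant, which is also legitimate since $\tfrac{M}{|\Omega|}\int_\Omega c_\kappa\le \tfrac{M}{\sqrt{|\Omega|}}\|c_\kappa\|_{L^2(\Omega)}$ can be absorbed into the remaining $\tfrac12\int_\Omega c_\kappa^2$ by Young's inequality.
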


\begin{proof} Similarly to the prooof of Lemma \ref{L:FSL:Est:c4D}, we will balance the dissipation inequality in Lemma \ref{L:FSL:Lya:Id}. If $N=1$ then the Sobolev embedding $H^1(\Omega) \hookrightarrow L^\infty(\Omega)$ can be utilised to see that 
\begin{align*}
    \int_{\Omega} n_\kappa c_\kappa \le  \|c_\kappa\|_{L^\infty(\Omega)} \int_{\Omega} n_\kappa \le C M \|c_\kappa\|_{H^1(\Omega)} \le \frac{1}{4} \|c_\kappa\|_{H^1(\Omega)}^2 + C^2M^2. 
\end{align*}
By skipping the term including $\tau$ on the left-hand side of   \eqref{L:FSL:DissIneqn.P1}, for all $t>0$ we get 
\begin{align*}
\begin{gathered}
\int_\Omega \left(   n_\kappa \log n_\kappa    +   \frac{1}{2}|\Delta c_\kappa - c_\kappa + w_\kappa|^2 +  \frac{1}{2} |\nabla c_\kappa|^2 + \frac{1}{2} c_\kappa^2 \right) \\
 + \frac{1}{\varepsilon} \iint_{\Omega_t} \Big(    |\nabla(\Delta c_\kappa - c_\kappa + w_\kappa)|^2 +   |\Delta c_\kappa - c_\kappa + w_\kappa|^2 \Big) \\
 \le \mathcal E(n_0,c_0)  + \int_\Omega n_\kappa c_\kappa \le \mathcal E(n_0,c_0)  + \frac{1}{4} \|c_\kappa\|_{H^1(\Omega)}^2 +  C . 
\end{gathered} 
\end{align*}
The estimate \eqref{L:I2D:Balan:S} is  showed by absorbing the term including $\|c_\kappa\|_{H^1(\Omega)}$ to the left-hand side. 

\medskip

Let us consider $N=2$ by exploiting the Moser-Trudinger inequality (instead of the Adam type inequality), which is represented in Part a of Lemma \ref{L:Ineqn:Adam}. Indeed, for any positive real number $\alpha>0$ to be chosen later, a combination of the inequalities \eqref{Ineqn:Pre:Adam} and \eqref{Ineqn:MT} gives 
\begin{align*}
    \int_{\Omega} n_\kappa c_\kappa & \le \frac{1}{e} +  \frac{1}{\alpha} \int_\Omega n_\kappa \log n_\kappa + \frac{\|n_\kappa\|_{L^1(\Omega)}}{\alpha} \log \left( \intO e^{\,\alpha\, c_\kappa} \right) \\
    & \le \frac{1}{e} +  \frac{1}{\alpha} \intO n_\kappa \log n_\kappa + \frac{M}{\alpha} \left[ \frac{\alpha^2}{8\pi} \|\nabla c_\kappa\|_{L^2(\Omega)}^2 + \frac{\alpha}{|\Omega|} \int_\Omega c_\kappa + C_{\alpha} \right] \\
    & =    \frac{1}{\alpha} \intO n_\kappa \log n_\kappa + \frac{\alpha M}{8\pi} \intO |\nabla c_\kappa|^2 + C_{\alpha,M,\Omega}. 
\end{align*}
Consequently, it follows from \eqref{L:FSL:DissIneqn.P1} that  
\begin{align*}
    \begin{aligned} 
& \intO \left( n_\kappa \log n_\kappa + \frac{1}{2}|\Delta c_\kappa - c_\kappa + w_\kappa|^2 +   \frac{1}{2} |\nabla c_\kappa|^2 + \frac{1}{2} c_\kappa^2 \right) \\
& + \frac{1}{\varepsilon} \iint_{\Omega_t} \Big(    |\nabla(\Delta c_\kappa - c_\kappa + w_\kappa)|^2 +   |\Delta c_\kappa - c_\kappa + w_\kappa|^2 \Big) \\
& \le \mathcal E(n_0,c_0) + \frac{1}{\alpha} \intO n_\kappa \log n_\kappa + \frac{\alpha M}{8\pi} \intO |\nabla c_\kappa|^2 + C .  
\end{aligned} 
\end{align*}
Since $M<4\pi$, there always exists $\alpha>1$ such that $\alpha M/(8\pi) < 1/2$, which means that the integrals on the latter right-hand side can be controlled by terms on the left-hand side.
\end{proof}

\begin{lemma}
\label{L:PL:Est:nL2} It holds that   
\begin{align*}
\sup_{\kappa\,\in (0,\infty)^2} \Big( \|n_\kappa \|_{L^2(\Omega_T)} + \|c_\kappa  \|_{L^2((0,T);H^2(\Omega))} \Big) \le C.
\end{align*} 
\end{lemma}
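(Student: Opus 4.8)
The case $N=1$ is routine and I would dispatch it first: by Lemma~\ref{L:I2D:Balan} and the embedding $H^1(\Omega)\hookrightarrow L^\infty(\Omega)$, the sequence $c_\kappa$ is bounded in $L^\infty(\Omega_T)$ and $\nabla c_\kappa$ in $L^\infty((0,T);L^2(\Omega))$ uniformly in $\kappa$, so testing the $n_\kappa$-equation by $n_\kappa$ and estimating the drift term $\int_\Omega n_\kappa\nabla n_\kappa\cdot\nabla c_\kappa$ by the one-dimensional Gagliardo--Nirenberg and Young inequalities gives a Gr\"onwall bound for $\int_\Omega n_\kappa^2$; then $w_\kappa\in L^2(\Omega_T)$ follows by testing its equation by $w_\kappa$, and $c_\kappa\in L^2((0,T);H^2(\Omega))$ by maximal regularity with slow evolution (Lemma~\ref{L:MR:SlowEvolution}) applied to the $c_\kappa$-equation. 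The substantial case is the critical dimension $N=2$, which I would handle by balancing a logarithmic energy --- the two-dimensional analogue of the argument of Lemma~\ref{4:L:L43}, with the sub-criticality $M<4\pi$ (already exploited in Lemma~\ref{L:I2D:Balan}) playing the role of the Adam-type inequality.

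For $N=2$, from Lemma~\ref{L:I2D:Balan} I record the uniform-in-$\kappa$ bounds $\sup_t\int_\Omega(n_\kappa\log n_\kappa+e^{-1})\le C$, $\sup_t\|c_\kappa(t)\|_{H^1(\Omega)}\le C$, and, with $g_\kappa:=\Delta c_\kappa-c_\kappa+w_\kappa=\varepsilon\partial_t c_\kappa$, the bound $\|g_\kappa\|_{L^2(\Omega_T)}\le C\sqrt{\varepsilon}$. Setting $h(x):=x\log x-x+1\ge 0$, the identity $\frac{d}{dt}\int_\Omega h(n_\kappa)=-\int_\Omega\frac{|\nabla n_\kappa|^2}{n_\kappa}-\int_\Omega n_\kappa\Delta c_\kappa$ (already used in Lemma~\ref{4:L:L43}) integrates to
\[
\int_\Omega h(n_\kappa(t))+\iint_{\Omega_t}\frac{|\nabla n_\kappa|^2}{n_\kappa}=\int_\Omega h(n_0)-\iint_{\Omega_t}n_\kappa\Delta c_\kappa.
\]
Writing $\Delta c_\kappa=c_\kappa-w_\kappa+g_\kappa$ and discarding the nonnegative contribution $-\iint n_\kappa c_\kappa$, the last term is at most $\iint_{\Omega_t}n_\kappa w_\kappa+\iint_{\Omega_t}n_\kappa|g_\kappa|$; testing the $w_\kappa$-equation by $w_\kappa$ and integrating gives $\|w_\kappa\|_{L^2(\Omega_T)}^2\le\varepsilon\|w_0\|_{L^2(\Omega)}^2+\|n_\kappa\|_{L^2(\Omega_T)}^2$, so with $X:=\|n_\kappa\|_{L^2(\Omega_T)}^2$ the Cauchy--Schwarz inequality yields $\iint_{\Omega_T}\frac{|\nabla n_\kappa|^2}{n_\kappa}\le C(1+X)$.

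The estimate is then closed by the two-dimensional instance of Lemma~\ref{L:Ineqn:Bala} (scaling forces the exponent $\frac{N}{N-1}=2$ here): for a.e.\ $t$ and every $\alpha>0$,
\[
\|n_\kappa(t)\|_{L^2(\Omega)}^2\le\alpha\Big(\int_\Omega(n_\kappa(t)\log n_\kappa(t)+e^{-1})\Big)\int_\Omega\frac{|\nabla n_\kappa(t)|^2}{n_\kappa(t)}+C_\alpha,
\]
so, integrating in $t$ and using the entropy bound, $X\le\alpha C_1\iint_{\Omega_T}\frac{|\nabla n_\kappa|^2}{n_\kappa}+C_\alpha\le\alpha C_1 C(1+X)+C_\alpha$. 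Choosing $\alpha$ small enough that $\alpha C_1 C\le\frac12$ --- legitimate because $C_1$ is a fixed constant independent of $\kappa$ --- absorbs $X$ into the left side and gives $X\le C$, i.e.\ $\|n_\kappa\|_{L^2(\Omega_T)}\le C$ uniformly in $\kappa$. Consequently $\|w_\kappa\|_{L^2(\Omega_T)}\le C$, and maximal regularity with slow evolution applied to the $c_\kappa$-equation gives $\|\Delta c_\kappa\|_{L^2(\Omega_T)}\le C\varepsilon^{1/2}\|\Delta c_0\|_{L^2(\Omega)}+C\|w_\kappa\|_{L^2(\Omega_T)}\le C$, hence $\|c_\kappa\|_{L^2((0,T);H^2(\Omega))}\le C$, completing the proof.

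\textbf{Main obstacle.} The delicate point is that, even after dropping the favourable term $-\iint n_\kappa c_\kappa$, the bound for $-\iint n_\kappa\Delta c_\kappa$ unavoidably reproduces the quantity $\|n_\kappa\|_{L^2(\Omega_T)}^2$ on the right-hand side (via the $w_\kappa$-equation): the inequality is formally critical and does not close by itself. The decisive gain is structural: in the two-dimensional Gagliardo--Nirenberg inequality of Lemma~\ref{L:Ineqn:Bala} the prefactor of $\iint\frac{|\nabla n_\kappa|^2}{n_\kappa}$ equals $\alpha$ times the entropy, which is \emph{uniformly bounded} precisely because $M<4\pi$, so $\alpha$ can be taken arbitrarily small and the critical term absorbed --- the exact two-dimensional counterpart of the Adam-inequality balancing in Lemma~\ref{4:L:L43}. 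The mild dependence of $\|w_\kappa\|_{L^2(\Omega_T)}^2$ on $\varepsilon\|w_0\|_{L^2}^2$ is immaterial for the singular limit $\kappa\to(0,0)$, and one may as well restrict to $\varepsilon,\tau\le 1$.
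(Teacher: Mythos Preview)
Your proof is correct and takes essentially the same route as the paper: the logarithmic entropy identity, reduction of the cross term $-\iint n_\kappa\Delta c_\kappa$ to $C(1+X)$ with $X=\|n_\kappa\|_{L^2(\Omega_T)}^2$, and closure via Lemma~\ref{L:Ineqn:Bala} with small $\alpha$ using the uniform entropy bound from Lemma~\ref{L:I2D:Balan}. The only cosmetic difference is that the paper bounds $-\iint n_\kappa\Delta c_\kappa$ by Young's inequality followed by maximal regularity on $\|\Delta c_\kappa\|_{L^2(\Omega_T)}$ (Lemma~\ref{L:MR:SlowEvolution}), whereas you expand $\Delta c_\kappa=c_\kappa-w_\kappa+\varepsilon\partial_t c_\kappa$ and exploit the sign of $-\iint n_\kappa c_\kappa$ together with the Lyapunov dissipation bound on $\varepsilon\partial_t c_\kappa$; the paper also treats $N=1,2$ in one stroke rather than dispatching $N=1$ separately.
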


\begin{proof} By considering the Boltzmann entropy for the first equation of \eqref{Sys:Eps:Main}, we have 
\begin{align*}
  \int_\Omega(n_\kappa  \log n_\kappa  +e^{-1})  
&  = \int_\Omega(n_{0} \log n_{0}   +e^{-1})   -  {\iint_{\Omega_t}} \frac{|\nabla n_\kappa |^2}{n_\kappa }  - {\iint_{\Omega_t}} n_\kappa  \Delta c_\kappa  \\
& \le C  -  \intQT \frac{|\nabla n_\kappa |^2}{n_\kappa } + \frac{1}{2} \intQT  n_\kappa  ^2+ \frac{1}{2} \intQT  |\Delta c_\kappa |^2 , 
\end{align*} 
which shows that
\begin{align*}
\int_\Omega(n_\kappa  \log n_\kappa  +e^{-1}) + \intQT \frac{|\nabla n_\kappa |^2}{n_\kappa } \le C  + \frac{1}{2} \intQT  n_\kappa  ^2+ \frac{1}{2} \intQT  |\Delta c_\kappa |^2. 
\end{align*} 
We will balance the two sides of the above estimate. Thanks to the parabolic maximal regularity with slow evolution (see Lemma  \ref{L:MR:SlowEvolution}) applied to the second equation of System \eqref{Sys:Eps:Main}, 
\begin{align}
\begin{aligned}
    \|\Delta c_\kappa  \|_{L^2(\Omega_T)}  & \le  C \left(   \varepsilon^{\frac{1}{2}}  \|\Delta c_0\|_{L^2(\Omega)} +  \|w_\kappa\|_{L^2(\Omega_T)} \right) \\
& \le  C \left(   \varepsilon^{\frac{1}{2}}  \|\Delta c_0\|_{L^2(\Omega)} +  {\left(\varepsilon \intO w_0^2 + \intQT  n_\kappa^2\right)^{1/2}} \right), 
\end{aligned}
\label{L:PL:Est:nL2:P0}
\end{align}
where the $L^2(\Omega_T)$-norm of $w_\kappa$ is controlled by testing the equation for $w_\kappa$ by $w_\kappa$, given as  
\begin{align*}
    \frac{\varepsilon}{2} \intO w_\kappa^2 + \tau \intQT |\nabla w_\kappa|^2 +  \frac{1}{2}\intQT  w_\kappa^2 \le \frac{\varepsilon}{2} \intO w_0^2 + \frac{1}{2} \intQT  n_\kappa^2. 
\end{align*}
Therefore, we obtain 
\begin{align}
\int_\Omega(n_\kappa  \log n_\kappa  +e^{-1}) + \intQT \frac{|\nabla n_\kappa |^2}{n_\kappa } \le C  + C \intQT  n_\kappa  ^2 .
\label{L:PL:Est:nL2:P1}
\end{align} 
Due to Lemma \ref{L:I2D:Balan}, $n_\kappa$ is uniformly-in-$\kappa$ bounded in $L^\infty((0,\infty);L\log L(\Omega))$, which suits to apply Lemma \ref{L:Ineqn:Bala} with $N\le 2$ to have that 
\begin{align}
    \intQT  n_\kappa^2 & \le  \alpha     
\left( \sup_{t>0}  \int_{\Omega} (n_\kappa \log n_\kappa +  e^{-1}) \right) \intQT \frac{|\nabla n_\kappa |^2}{n_\kappa } +   C_\alpha T , 
\label{L:PL:Est:nL2:P2}
\end{align} 
{for any $\alpha >0$}. Consequently,
\begin{align*}
    \intQT  n_\kappa^2 & \le C \alpha \intQT \frac{|\nabla n_\kappa |^2}{n_\kappa } +   C_\alpha T.
\end{align*} 
This estimate yields that the $L^2(\Omega_T)$-norm of $n_\kappa$ in \eqref{L:PL:Est:nL2:P1} can be controlled by the second term on the left-hand side with a sufficiently {small $\alpha>0$}. Hence, we obtain the uniform-in-$\kappa$ boundedness of $|\nabla n_\kappa |^2/n_\kappa$ in $L^1(\Omega_T)$, which in a combination with \eqref{L:PL:Est:nL2:P2} concludes that $n_\kappa$ is uniformly-in-$\kappa$ bounded in $L^2(\Omega_T)$. Then, back  to \eqref{L:PL:Est:nL2:P0}, we obtain a uniform bound for $c_\kappa$ in $L^2((0,T);H^2(\Omega))$.  
\end{proof}

\begin{lemma}
\label{L:PL:Est:nL2+} There is a positive constant $\delta>0$ such that    
\begin{align}
\sup_{\kappa\in(0,\infty)^2} \left( \esssup_{t\in(0,T)} \intO n_\kappa^{1+\frac{\delta}{2}}(t) + \|n_\kappa \|_{L^{2+\delta}(\Omega_T)} + \iint_{\Omega_T} n_\kappa ^{\frac{\delta}{2}-1} |\nabla n_\kappa |^2 \right) \le C_{T}.
\label{L:PL:Est:nL2+:S1}
\end{align}
\end{lemma}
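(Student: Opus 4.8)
The plan is to run a single $L^p$-energy estimate for $n_\kappa$ with $p = 1 + \delta/2$ for a small $\delta > 0$ to be fixed, and close it using the improved integrability of $\nabla c_\kappa$ that follows from Lemma \ref{L:PL:Est:nL2} together with the slow-evolution maximal regularity. First I would compute, from the equation for $n_\kappa$ and integration by parts,
\begin{align*}
\frac{d}{dt}\int_\Omega n_\kappa^{1+\frac{\delta}{2}} + \frac{4\,\frac{\delta}{2}}{1+\frac{\delta}{2}}\int_\Omega \big|\nabla n_\kappa^{\frac{1}{2}+\frac{\delta}{4}}\big|^2 = \Big(1+\tfrac{\delta}{2}\Big)\tfrac{\delta}{2}\int_\Omega n_\kappa^{\frac{\delta}{2}-1}|\nabla n_\kappa|^2\,??
\end{align*}
— more precisely, testing by $n_\kappa^{\delta/2}$ gives the dissipation term $\int_\Omega n_\kappa^{\frac{\delta}{2}-1}|\nabla n_\kappa|^2$ and a cross term $\int_\Omega n_\kappa^{\delta/2}\nabla n_\kappa\cdot\nabla c_\kappa$. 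By Young's inequality the cross term is bounded by a small fraction of the dissipation plus $C\int_\Omega n_\kappa^{1+\delta/2}|\nabla c_\kappa|^2$. So after absorption it remains to control $\iint_{\Omega_T} n_\kappa^{1+\delta/2}|\nabla c_\kappa|^2$.

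For this I would use Hölder in space to split $\int_\Omega n_\kappa^{1+\delta/2}|\nabla c_\kappa|^2 \le \|n_\kappa^{1+\delta/2}\|_{L^{r}(\Omega)}\,\|\nabla c_\kappa\|_{L^{2r'}(\Omega)}^2$, then a Gagliardo–Nirenberg interpolation of $\|n_\kappa^{1/2+\delta/4}\|_{L^{2r}}^2$ between $\|\nabla n_\kappa^{1/2+\delta/4}\|_{L^2}$ and a lower $L^1$-type norm of $n_\kappa^{1/2+\delta/4}$ (which is controlled, since $n_\kappa$ is bounded in $L^\infty((0,T);L^1(\Omega))$ by mass conservation and in $L^2(\Omega_T)$ by Lemma \ref{L:PL:Est:nL2}); Young's inequality then puts part of it into the dissipation and leaves a factor $\|\nabla c_\kappa\|_{L^{2r'}(\Omega)}^{\theta}$ integrable in time. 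The key input is that $\nabla c_\kappa$ enjoys better-than-$L^2$ spatial integrability: from $\|c_\kappa\|_{L^2((0,T);H^2(\Omega))}\le C$ and the Sobolev embedding $H^2(\Omega)\hookrightarrow W^{1,q}(\Omega)$ (any $q<\infty$ for $N\le 2$), one gets $\nabla c_\kappa \in L^2((0,T);L^q(\Omega))$ uniformly in $\kappa$; this is exactly the regularity needed for a Grönwall-type closure when $\delta$ is chosen small enough that the critical exponents balance. Applying Grönwall to the resulting differential inequality
\begin{align*}
\frac{d}{dt}\int_\Omega n_\kappa^{1+\frac{\delta}{2}} + c\int_\Omega n_\kappa^{\frac{\delta}{2}-1}|\nabla n_\kappa|^2 \le g(t)\int_\Omega n_\kappa^{1+\frac{\delta}{2}} + C,
\end{align*}
with $g\in L^1(0,T)$ uniformly in $\kappa$, yields the $L^\infty((0,T);L^{1+\delta/2})$ bound and, after integrating in time, the bound on $\iint_{\Omega_T} n_\kappa^{\delta/2-1}|\nabla n_\kappa|^2$; the $L^{2+\delta}(\Omega_T)$ bound then follows from the same Gagliardo–Nirenberg estimate integrated over $(0,T)$.

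The main obstacle is choosing $\delta>0$ small enough (and the Hölder exponent $r$ correspondingly) so that all the interpolation exponents are admissible and the time-integrability of the bad factor $\|\nabla c_\kappa\|_{L^{2r'}(\Omega)}^{\theta}$ is genuinely $L^1(0,T)$ and uniform in $\kappa$; since $N\le 2$ this is a soft constraint (one has $\nabla c_\kappa$ in $L^2((0,T);L^q)$ for every finite $q$), but the bookkeeping must be done carefully, and one must track that the slow-evolution maximal regularity constants in Lemma \ref{L:MR:SlowEvolution} and the estimates from Lemma \ref{L:PL:Est:nL2} are indeed independent of both $\varepsilon$ and $\tau$. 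A secondary subtlety is that the cross-term integration by parts is rigorous because $(n_\kappa,c_\kappa,w_\kappa)$ is a classical solution for each fixed $\kappa$ by Theorem \ref{Theo:GloEX}, so no approximation argument is needed.
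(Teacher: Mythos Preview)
Your Gr\"onwall route is different from the paper's and, as written, has a gap in the time-integrability needed for closure. The paper integrates by parts once more so that the cross term becomes $-\frac{p-1}{p}\iint_{\Omega_t} n_\kappa^{p}\Delta c_\kappa$, bounds it by $\frac{p-1}{p}\|n_\kappa\|_{L^{2p}(\Omega_t)}^p\|\Delta c_\kappa\|_{L^2(\Omega_T)}$ (using $\Delta c_\kappa\in L^2(\Omega_T)$ from Lemma~\ref{L:PL:Est:nL2}), and then applies the 2D Gagliardo--Nirenberg inequality $\|f\|_{L^4}^4\le C\|\nabla f\|_{L^2}^2\|f\|_{L^2}^2+C\|f\|_{L^2}^4$ with $f=n_\kappa^{p/2}$ to estimate $\|n_\kappa\|_{L^{2p}(\Omega_t)}^p$ by $\big(\esssup_s\int_\Omega n_\kappa^p\big)^{1/2}(\text{dissipation}+CT)^{1/2}$. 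After Young this yields
\[
\frac{1}{p}\,\esssup_{t\in(0,T)}\int_\Omega n_\kappa^p \le C_{T,p} + C_T(p-1)\,\esssup_{t\in(0,T)}\int_\Omega n_\kappa^p,
\]
with $C_T$ \emph{independent of $p$}; smallness of $\delta$ (i.e.\ $p=1+\delta/2$ close to $1$) is needed precisely so that $C_T\,p(p-1)<1$ and the right-hand side can be absorbed.

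Your approach can also be made to work, but not with the input you invoke. From $c_\kappa\in L^2((0,T);H^2(\Omega))$ alone you get only $\nabla c_\kappa\in L^2((0,T);L^q(\Omega))$ for all $q<\infty$; however, any Young-splitting of $\int_\Omega n_\kappa^{p}|\nabla c_\kappa|^2$ against the dissipation leaves a factor $\|\nabla c_\kappa(t)\|_{L^{2r'}}^{2/(1-\alpha)}$ with $2/(1-\alpha)>2$, which is \emph{not} in $L^1(0,T)$ on that information --- and this does not improve by taking $\delta$ small. The missing ingredient is $c_\kappa\in L^\infty((0,T);H^1(\Omega))$ from Lemma~\ref{L:I2D:Balan}: interpolating $\nabla c_\kappa\in L^\infty((0,T);L^2)\cap L^2((0,T);H^1)$ gives $\nabla c_\kappa\in L^4((0,T);L^4(\Omega))$ uniformly in $\kappa$, and then $\int_\Omega n_\kappa^p|\nabla c_\kappa|^2\le\|n_\kappa^{p/2}\|_{L^4}^2\|\nabla c_\kappa\|_{L^4}^2$ together with the same 2D Gagliardo--Nirenberg inequality closes via Gr\"onwall with $g(t)=C(1+\|\nabla c_\kappa(t)\|_{L^4}^4)\in L^1(0,T)$. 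Note that this repaired argument works for \emph{every} $p>1$, so the smallness of $\delta$ plays no role in your scheme --- it is specific to the paper's absorption mechanism.
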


\begin{proof} Considering the $L^p$-energy functional (with $p>1$) for the first component  of \eqref{Sys:Eps:Main},  
\begin{align}
\begin{aligned}
\frac{1}{p} \int_\Omega n_\kappa ^p(t) &= -(p-1) \intQt n_\kappa ^{p-2} |\nabla n_\kappa |^2 + \frac{1}{p} \int_\Omega n_{0}^p  + (p-1) \intQt n_\kappa ^{p-1}\nabla n_\kappa  \cdot \nabla c_\kappa  \\
&= -(p-1) \intQt n_\kappa ^{p-2} |\nabla n_\kappa |^2  + \frac{1}{p} \int_\Omega n_{0}^p  - \frac{p-1}{p} \intQt n_\kappa ^{p} \Delta c_\kappa  \\
&\le -(p-1) \intQt n_\kappa ^{p-2} |\nabla n_\kappa |^2 + \frac{1}{p} \int_\Omega n_{0}^p  + \underbrace{\frac{p-1}{p} \|n_\kappa \|_{L^{2p}(\Omega_t)}^p \|\Delta c_\kappa \|_{L^{2}(\Omega_t)}}_{:=J_\kappa(t)} ,
\end{aligned}
\label{L:PL:Est:nL2+:P1} 
\end{align}
in which, we note from Lemma \ref{L:PL:Est:nL2} that  $\Delta c_\kappa $ is uniformly-in-$\kappa$ bounded in $L^2(\Omega_T)$. To balance the $L^p$-energy functional, we will estimate the temporal supremum of  the integral on the left-hand side, or more precisely,  
 $  \esssup_{t\in(0,T)} \int_\Omega n_\kappa^p(t) $.    
Using the Gagliardo-Nirenberg inequality of the form
\begin{align}
\|f\|_{L^4(\Omega)}^4 \le C \|\nabla f\|_{L^2(\Omega)}^2  \|f\|_{L^2(\Omega)}^2 + C \|f\|_{L^2(\Omega)}^4 
\label{Ineqn:I2D:GN} 
\end{align}
for $f=n_\kappa^{p/2}(s)$ (here, $s\in(0,t)$), we get   
\begin{align*}
\int_\Omega n_\kappa^{2p}(s)  & \le Cp^2    \int_\Omega n_\kappa^{p-2}(s) |\nabla n_\kappa (s)|^2   \int_\Omega n_\kappa^p(s) +  C\int_\Omega  n_\kappa^p(s) .
\end{align*} 
{Thus, we can estimate} 
\begin{align} 
\label{L:PL:Est:nL2+:P2}
\begin{aligned}
    \left(\intQt n_\kappa ^{2p}\right)^{\frac{1}{2}}   
& \le \left( Cp^2 \, \esssup_{s\in(0,T)} \int_\Omega n_\kappa^p(s) \intQt n_\kappa ^{p-2} |\nabla n_\kappa |^2 + C t \esssup_{s\in(0,T)} \int_\Omega n_\kappa^p(s) \right)^{\frac{1}{2}} \\
& = Cp  \left( \esssup_{s\in(0,T)} \int_\Omega n_\kappa^p(s) \right)^{\frac{1}{2}} \left(    \intQt n_\kappa ^{p-2} |\nabla n_\kappa |^2 + \frac{C}{p^2} t \right)^{\frac{1}{2}} .
\end{aligned}
\end{align} 
Then, the last term of the energy estimate \eqref{L:PL:Est:nL2+:P1} can be treated as   
\begin{align*}
J_\kappa(t) & \le C(p-1) \|\Delta c_\kappa \|_{L^{2}(\Omega_T)} \left( \esssup_{s\in(0,T)} \int_\Omega n_\kappa^p(s) \right)^{\frac{1}{2}} \left( \intQt n_\kappa ^{p-2} |\nabla n_\kappa |^2 + \frac{CT}{p^2}  \right)^{\frac{1}{2}} \\
& \le C_T(p-1) \left( \esssup_{s\in(0,T)} \int_\Omega n_\kappa^p(s) \right)^{\frac{1}{2}} \left( \intQt n_\kappa ^{p-2} |\nabla n_\kappa |^2 + \frac{CT}{p^2}  \right)^{\frac{1}{2}}  \\
& \le \frac{p-1}{2} \left( \intQt n_\kappa ^{p-2} |\nabla n_\kappa |^2 + \frac{CT}{p^2}  \right) + C_{T}(p-1) \left( \esssup_{s\in(0,T)} \int_\Omega n_\kappa^p(s) \right) , 
\end{align*}
using the Young inequality. Combining this with the energy estimate \eqref{L:PL:Est:nL2+:P1} (and replacing the variable $s$ by $t$ in the above supremum),   we derive 
\begin{align}
 & \frac{1}{p} \int_\Omega n_\kappa ^p(t) + \frac{p-1}{2}  \intQT n_\kappa ^{p-2} |\nabla n_\kappa |^2
 \le C_{T,\,p}   +  (p-1)C_{T} \left( \esssup_{t\in(0,T)} \int_\Omega n_\kappa^p(t) \right),  \label{L:PL:Est:nL2+:P3}
\end{align}
where it is useful to note that the constant $C_T$ does not depend on $p$. Subsequently, by  
skipping the gradient term and then taking the supremum over time $t\in(0,T)$,
\begin{align*}
  \frac{1}{p} \left( \esssup_{t\in(0,T)} \int_\Omega n_\kappa^p(t) \right) 
 \le C_{T,\,p}   +  C_{T}(p-1) \left( \esssup_{t\in(0,T)} \int_\Omega n_\kappa^p(t) \right).  
\end{align*}
Choosing $p=1+\delta/2$ in which $\delta>0$ is sufficiently small such that {$C_{T}p(p-1)<1$}, we obtain the uniform boundedness  
\begin{align*}
\sup_{\kappa \in (0,\infty)^2} \left( \esssup_{t\in(0,T)} \int_\Omega n_\kappa^p(t) \right)
 \le  \frac{p C_{T,\,p}}{1-C_{T}p(p-1)} , 
\end{align*}
i.e., $n_\kappa$ is uniformly-in-$\kappa$ bounded in $L^\infty((0,T);L^{1+\delta/2}(\Omega))$. 
Then, we obtain the uniform-in-$\kappa$ boundedness of $n_\kappa ^{p-2} |\nabla n_\kappa |^2$ in $L^1(\Omega_T)$ by returning to \eqref{L:PL:Est:nL2+:P3}, and so is $n_\kappa$ in $L^{2p}(\Omega_T) \equiv L^{2+\delta}(\Omega_T)$ due to \eqref{L:PL:Est:nL2+:P2}. The desired estimate \eqref{L:PL:Est:nL2+:S1} is proved.
\end{proof}

\begin{lemma} 
\label{L:I2D:Feed}
Let $\delta$ be the constant obtained in Lemma \ref{L:PL:Est:nL2+}, and define the sequence $\{p_j\}_{j=1,2,\dots}$  by $$p_0>1 \quad \text{and} \quad p_{j+1}:=p_j+\delta/2 \text{ for } j=0,1,\dots$$  If {for some $j\ge 0$}
\begin{align}
    \sup_{\kappa \in (0,\infty)^2} \left( \|n_\kappa \|_{L^{2p_j}(\Omega_T)} \right) \le C_T, 
    \label{L:I2D:Feed:S1}
\end{align}
then 
\begin{align}
    \sup_{\kappa \in (0,\infty)^2} \left( \esssup_{t\in(0,T)} \int_\Omega n_\kappa ^{p_{j+1}}(t) +  \|n_\kappa \|_{L^{2p_{j+1}}(\Omega_T)} \right) \le C_{T,p_{j+1}}.
    \label{L:I2D:Feed:S2}
\end{align}
\end{lemma}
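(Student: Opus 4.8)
The strategy is a straightforward bootstrap building on the ideas in the proof of Lemma \ref{L:PL:Est:nL2+}, now with $p=p_{j+1}$ and using the extra integrability \eqref{L:I2D:Feed:S1} rather than only $L\log L$. First I would start from the $L^{p}$-energy identity for the first equation of \eqref{Sys:Eps:Main} with $p = p_{j+1}$, exactly as in \eqref{L:PL:Est:nL2+:P1}:
\begin{align*}
\frac{1}{p}\int_\Omega n_\kappa^{p}(t) + (p-1)\iint_{\Omega_t} n_\kappa^{p-2}|\nabla n_\kappa|^2
= \frac{1}{p}\int_\Omega n_0^{p} - \frac{p-1}{p}\iint_{\Omega_t} n_\kappa^{p}\Delta c_\kappa.
\end{align*}
The key point is that the ``bad'' term $\iint n_\kappa^{p}\Delta c_\kappa$ can now be handled using H\"older with the splitting $p_{j+1} = p_j + \delta/2$: write $n_\kappa^{p_{j+1}} = n_\kappa^{\delta/2}\cdot n_\kappa^{p_j}$, and use that $\Delta c_\kappa$ is uniformly bounded in $L^2(\Omega_T)$ (Lemma \ref{L:PL:Est:nL2}) together with the hypothesis \eqref{L:I2D:Feed:S1} that $n_\kappa \in L^{2p_j}(\Omega_T)$ uniformly. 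More precisely, the term $n_\kappa^{p-2}|\nabla n_\kappa|^2 = \frac{4}{p^2}|\nabla n_\kappa^{p/2}|^2$, and I would bound
$\bigl\|n_\kappa^{p}\bigr\|_{L^{q'}(\Omega_t)}$ for an appropriate exponent $q'$ by interpolation so that the remaining factor of $n_\kappa$ is exactly in $L^{2p_j}$.

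\textbf{Main steps.} Second, I would estimate the critical term by a Gagliardo--Nirenberg / Young argument analogous to \eqref{L:PL:Est:nL2+:P2}. The natural route: by H\"older,
$\frac{p-1}{p}\iint_{\Omega_t} n_\kappa^{p}|\Delta c_\kappa| \le C\,\|\Delta c_\kappa\|_{L^2(\Omega_T)}\,\bigl\|n_\kappa^{p}\bigr\|_{L^2(\Omega_t)}$,
and then bound $\|n_\kappa^{p}\|_{L^2(\Omega_t)} = \|n_\kappa\|_{L^{2p}(\Omega_t)}^{p}$ by splitting $\|n_\kappa\|_{L^{2p}}$ via interpolation between $L^{2p_j}$ and a Sobolev space. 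Concretely, since $p = p_j + \delta/2$ and $n_\kappa^{p/2}\in L^2$ with $\nabla n_\kappa^{p/2}\in L^2$ (locally in time, controlled by the left-hand dissipation), one uses a Gagliardo--Nirenberg inequality of the form $\|n_\kappa^{p/2}\|_{L^{r}(\Omega)}^2 \le C\|\nabla n_\kappa^{p/2}\|_{L^2(\Omega)}^{2\theta}\|n_\kappa^{p/2}\|_{L^{2p_j/p}(\Omega)}^{2(1-\theta)} + \dots$ with the exponent $\theta$ chosen (and checked to be $<1$ in dimensions $N\le 2$) so that after raising to a power and applying Young's inequality one absorbs $\iint |\nabla n_\kappa^{p/2}|^2$ into the left-hand side. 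This leaves
$\frac{1}{p}\int_\Omega n_\kappa^{p}(t) + \tfrac{p-1}{2}\iint_{\Omega_T} n_\kappa^{p-2}|\nabla n_\kappa|^2 \le C_{T,p}\bigl(1 + \|n_\kappa\|_{L^{2p_j}(\Omega_T)}^{\mu}\bigr)$ for some finite power $\mu$, which by \eqref{L:I2D:Feed:S1} is bounded uniformly in $\kappa$. Taking the supremum over $t\in(0,T)$ gives the first half of \eqref{L:I2D:Feed:S2}.

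\textbf{Closing the loop.} Third, having $\esssup_{t}\int_\Omega n_\kappa^{p_{j+1}}(t)\le C_{T,p_{j+1}}$ and $\iint_{\Omega_T} n_\kappa^{p_{j+1}-2}|\nabla n_\kappa|^2 \le C_{T,p_{j+1}}$, I would feed these two bounds back into \eqref{L:PL:Est:nL2+:P2} (with $p=p_{j+1}$): that inequality reads
$\bigl(\iint_{\Omega_t} n_\kappa^{2p_{j+1}}\bigr)^{1/2} \le Cp_{j+1}\bigl(\esssup_{s}\int_\Omega n_\kappa^{p_{j+1}}(s)\bigr)^{1/2}\bigl(\iint_{\Omega_t} n_\kappa^{p_{j+1}-2}|\nabla n_\kappa|^2 + C t/p_{j+1}^2\bigr)^{1/2}$,
whose right-hand side is now controlled, yielding the uniform-in-$\kappa$ bound on $\|n_\kappa\|_{L^{2p_{j+1}}(\Omega_T)}$, i.e. the second half of \eqref{L:I2D:Feed:S2}.

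\textbf{Expected obstacle.} The delicate point is verifying that the Gagliardo--Nirenberg interpolation exponents work out, i.e. that one can genuinely absorb the gradient term for the chosen step size $\delta/2$ in dimensions $N=1,2$; this is where the restriction $N\le 2$ (and, in the critical case $N=2$, the mass condition $M<4\pi$ already used upstream to get \eqref{L:PL:Est:nL2+:S1}) enters, and one must track that the constants depend only on $p_{j+1}$ and $T$ and not on $\kappa$. The bookkeeping of which norm of $\Delta c_\kappa$ and $w_\kappa$ is uniformly bounded (only $L^2(\Omega_T)$, from Lemma \ref{L:PL:Est:nL2}) also constrains the H\"older splitting and must be respected throughout.
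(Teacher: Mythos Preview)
Your overall shape is right --- start from the $L^{p_{j+1}}$ energy identity, control the cross term $\iint n_\kappa^{p_{j+1}}\Delta c_\kappa$, then recover $\|n_\kappa\|_{L^{2p_{j+1}}(\Omega_T)}$ from \eqref{L:PL:Est:nL2+:P2} --- but the second step has a real gap. If you keep only $\Delta c_\kappa\in L^2(\Omega_T)$, H\"older forces you to control $\|n_\kappa\|_{L^{2p_{j+1}}(\Omega_t)}^{p_{j+1}}$ itself, and the Gagliardo--Nirenberg bound \eqref{L:PL:Est:nL2+:P2} then gives exactly $C_T\bigl(\esssup_t\int n_\kappa^{p_{j+1}}+\iint n_\kappa^{p_{j+1}-2}|\nabla n_\kappa|^2\bigr)$ on the right, with exponent one. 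This is precisely the obstruction that in Lemma~\ref{L:PL:Est:nL2+} forced $p-1$ small; for $p=p_{j+1}$ with $j$ large you cannot absorb. Your proposed fix --- interpolate $\|n_\kappa^{p/2}\|_{L^4}$ against $\|n_\kappa^{p/2}\|_{L^{2p_j/p}(\Omega)}$ using the hypothesis --- does not close either: the hypothesis is a \emph{space--time} $L^{2p_j}$ bound, not $L^\infty_tL^{2p_j}_x$, and after integrating the spatial GN in time the H\"older exponents do not match (you would need $\int_0^T\|n_\kappa(s)\|_{L^{2p_j}(\Omega)}^{q}\,ds$ for some $q>2p_j$).

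The paper's key move, which you are missing, is to use the hypothesis \eqref{L:I2D:Feed:S1} not on $n_\kappa$ directly in the cross term, but to \emph{upgrade $\Delta c_\kappa$} from $L^2(\Omega_T)$ to $L^{2p_j}(\Omega_T)$: test the $w_\kappa$-equation by $w_\kappa^{2p_j-1}$ to get $\|w_\kappa\|_{L^{2p_j}(\Omega_T)}\le C(\|w_0\|_{L^{2p_j}}+\|n_\kappa\|_{L^{2p_j}(\Omega_T)})$, then apply maximal regularity with slow evolution (Lemma~\ref{L:MR:SlowEvolution}) to the $c_\kappa$-equation to obtain $\|\Delta c_\kappa\|_{L^{2p_j}(\Omega_T)}\le C_{p_j}$. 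Now H\"older with $\Delta c_\kappa\in L^{2p_j}$ leaves $\|n_\kappa\|_{L^{2p_jp_{j+1}/(2p_j-1)}(\Omega_T)}^{p_{j+1}}$, which interpolated between $L^1$ and $L^{2p_{j+1}}$ carries a power $\lambda_{j+1}p_{j+1}$ with $\lambda_{j+1}=\frac{2p_jp_{j+1}-2p_j+1}{2p_jp_{j+1}-p_j}<1$; combined with \eqref{L:PL:Est:nL2+:P2} this yields a \emph{sublinear} power of $\esssup_t\int n_\kappa^{p_{j+1}}+\iint n_\kappa^{p_{j+1}-2}|\nabla n_\kappa|^2$, which Young's inequality absorbs for any $p_{j+1}$.
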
 

\begin{proof} The main idea of this proof is to balance the $L^{p_{j+1}}$-energy estimate from the $L^{2p_j}(\Omega_T)$-regularity given in the assumption \eqref{L:I2D:Feed:S1}. Taking $p=p_{j+1}$ in the energy estimate \eqref{L:PL:Est:nL2+:P1}, we have 
\begin{align}
     \begin{aligned}
         &\frac{1}{p_{j+1}} \int_\Omega n_\kappa ^{p_{j+1}}(t) \\
     &= -(p_{j+1}-1) \intQt n_\kappa ^{p_{j+1}-2} |\nabla n_\kappa |^2  + \frac{1}{p_{j+1}} \int_\Omega n_{0}^{p_{j+1}}  - \frac{p_{j+1}-1}{p_{j+1}} \intQt n_\kappa ^{p_{j+1}} \Delta c_\kappa .
     \end{aligned} 
     \label{L:I2D:Feed:P1}
\end{align}
To balance this energy, we also recall from a similar application of the Gagliardo-Nirenberg inequality \eqref{Ineqn:I2D:GN} as the proof of Lemma \ref{L:PL:Est:nL2+} that    
\begin{align}
\begin{aligned}
    \|n_\kappa\|_{L^{2p_{j+1}}(\Omega_T)}^{p_{j+1}} &\le Cp_{j+1} \left( \esssup_{t\in(0,T)} \int_\Omega  n_\kappa^{p_{j+1}}(t) \right)^{\frac{1}{2}}  \left( \intQT n_\kappa^{p_{j+1}-2}(s) |\nabla n_\kappa (s)|^2   +  CT \right)^{\frac{1}{2}} \\
&\le C_{p_{j+1}} \left( \esssup_{t\in(0,T)} \int_\Omega  n_\kappa^{p_{j+1}}(t) + \intQT n_\kappa^{p_{j+1}-2}(s) |\nabla n_\kappa (s)|^2   +  CT \right) .  
\end{aligned}
\label{L:I2D:Feed:P2}
\end{align}
First, let us show that $\Delta c_\kappa$ is uniformly-in-$\kappa$ bounded in $L^{2p_j}(\Omega_T)$, based on the assumption \eqref{L:I2D:Feed:S1}. 
Indeed, the parabolic maximal regularity with slow evolution (see Lemma  \ref{L:MR:SlowEvolution}) yields  
\begin{align}
\begin{aligned}
    \|\Delta c_\kappa \|_{L^{2p_j}(\Omega_T)}  &\le  \left( \frac{\varepsilon}{2p_j} \right)^{\frac{1}{2p_j}} \|\Delta c_0\|_{L^{2p_j}(\Omega)} +  C_{p_j}  \|w_\kappa \|_{L^{2p_j}(\Omega_T)} \\
 &\le  C_{p_j} \left( \|\Delta c_0\|_{L^{2p_j}(\Omega)} +  \|w_\kappa \|_{L^{2p_j}(\Omega_T)} \right). 
\end{aligned}
\label{L:PL:Feedback:P1}
\end{align}
Here, by using the third equation of \eqref{Sys:Eps:Main},   
\begin{align*}
\|w_\kappa \|_{L^{2p_j}(\Omega_T)}^{2p_j}  = \frac{\varepsilon}{2p_j} \int_\Omega (w_0^{2p_j}-w_\kappa ^{2p_j}) - \tau(2p_j-1) \intQT w_\kappa ^{2p_j-2} |\nabla w_\kappa |^2 + \intQT n_\kappa  w_\kappa ^{2p_j-1}.
\end{align*}
Skipping the negative terms on the right-hand side, and applying the Young inequality as follows $$ n_\kappa  w_\kappa ^{2p_j-1}\le \frac{1}{2p_j} n_\kappa ^{2p_j} + \frac{2p_j-1}{2p_j} w_\kappa ^{2p_j}, $$
we obtain the estimate     
\begin{align*}
\|w_\kappa \|_{L^{2p_j}(\Omega_T)}^{2p_j} \le  \varepsilon  \int_\Omega w_0^{2p_j} +  \|n_\kappa \|_{L^{2p_j}(\Omega_T)}^{2p_j} \le \int_\Omega w_0^{2p_j} +  \|n_\kappa \|_{L^{2p_j}(\Omega_T)}^{2p_j} .
\end{align*}
Therefore, we imply from \eqref{L:PL:Feedback:P1} that 
\begin{align*}
\|\Delta c_\kappa \|_{L^{2p_j}(\Omega_T)}  
&\le  C_{p_j} \left(  \|\Delta c_0\|_{L^{2p_j}(\Omega)} +  \left(  \int_\Omega w_0^{2p_j} +  \|n_\kappa \|_{L^{2p_j}(\Omega_T)}^{2p_j} \right)^{\frac{1}{2p_j}} \right) \\
&\le  C_{p_j} \left( \|\Delta c_0\|_{L^{2p_j}(\Omega)} +  \|w_0 \|_{L^{2p_j}(\Omega)} +  \|n_\kappa \|_{L^{2p_j}(\Omega_T)} \right) , 
\end{align*}
i.e., the uniform-in-$\kappa$ boundedness of $\Delta c_\kappa$ in $L^{2p_j}(\Omega_T)$ has been showed. 

\medskip

Now, we can estimate the term including $n_\kappa ^{p_{j+1}} \Delta c_\kappa$ in the $L^{p_{j+1}}$-energy computation as   
\begin{align*}
- \intQt n_\kappa ^{p_{j+1}} \Delta c_\kappa  \le \|n_\kappa\|_{L^{\frac{2p_{j}p_{j+1}}{2p_{j}-1}}(\Omega_T)}^{p_{j+1}} \|\Delta c_\kappa \|_{L^{2p_j}(\Omega_T)} \le C_{T,p_j} \|n_\kappa\|_{L^{\frac{2p_{j}p_{j+1}}{2p_{j}-1}}(\Omega_T)}^{p_{j+1}} .  
\end{align*}
By interpolation in Lebesgue spaces, 
\begin{align*}
    \|n_\kappa\|_{L^{\frac{2p_{j}p_{j+1}}{2p_{j}-1}}(\Omega_T)}^{p_{j+1}} & \le \left( \|n_\kappa\|_{L^{2p_{j+1}}(\Omega_T)}^{\lambda_{j+1}} \|n_\kappa\|_{L^{1}(\Omega_T)}^{1-\lambda_{j+1}} \right)^{p_{j+1}} \le C_{p_{j+1}}\|n_\kappa\|_{L^{2p_{j+1}}(\Omega_T)}^{\lambda_{j+1}p_{j+1}}  , 
\end{align*}
where, by direct computation,  
$$\lambda_{j+1}=\frac{2p_{j}p_{j+1}-2p_{j}+1}{2p_{j}p_{j+1}-p_{j}} \in (0,1).$$
Since $\lambda_{j+1}p_{j+1}<p_{j+1}$, for any constant $\eta>0$ the Young inequality ensures 
\begin{align*}
    - \intQt n_\kappa ^{p_{j+1}} \Delta c_\kappa & \le C_{T,\,p_{j+1},\eta} + \eta \|n_\kappa\|_{L^{2p_{j+1}}(\Omega_T)}^{p_{j+1}} \\
    & \le C_{T,\,p_{j+1},\eta} + \eta C_{p_{j+1}} \left( \esssup_{t\in(0,T)} \int_\Omega  n_\kappa^{p_{j+1}}(t) + \intQT n_\kappa^{p_{j+1}-2}(s) |\nabla n_\kappa (s)|^2   +  CT \right), 
\end{align*}
where we have used \eqref{L:I2D:Feed:P2} at the second estimate. 
This combines with \eqref{L:I2D:Feed:P1} that 
\begin{align*}
& \frac{1}{p_{j+1}} \left( \esssup_{t\in(0,T)} \int_\Omega n_\kappa ^{p_{j+1}}(t) \right) + 
(p_{j+1}-1) \intQT n_\kappa ^{p_{j+1}-2} |\nabla n_\kappa |^2 \\
& \le C_{T,\,p_{j+1},\eta} + \eta C_{p_{j+1}} \left( \esssup_{t\in(0,T)} \int_\Omega  n_\kappa^{p_{j+1}}(t) + \intQT n_\kappa^{p_{j+1}-2}(s) |\nabla n_\kappa (s)|^2   +  CT \right).
\end{align*}
By choosing $\eta$ sufficiently small, we can absorb the integrals on the right-hand side into the left one, which accordingly gives  
\begin{align*}
      \esssup_{t\in(0,T)} \int_\Omega n_\kappa ^{p_{j+1}}(t)  + 
 \intQT n_\kappa ^{p_{j+1}-2} |\nabla n_\kappa |^2  \le C_{T,\,p_{j+1}}.
\end{align*}
With this boundedness, we finally obtain \eqref{L:I2D:Feed:S2} using \eqref{L:I2D:Feed:P2}. 
\end{proof}

\begin{lemma} 
\label{L:I2D:LInf}
It holds that 
    \begin{align} 
\sup_{\kappa\in(0,\infty)^2} \Big( \|n_\kappa\|_{L^\infty(\Omega_T)} + \|n_\kappa\|_{L^2((0,T);H^1(\Omega))} \Big) \le C_{T},
\label{L:I2D:LInf:S1}
\end{align}
and, for any $1<p<\infty$, 
\begin{gather}
\sup_{\kappa\in(0,\infty)^2} \Big(
 \|w_\kappa \|_{L^\infty(\Omega_T)} + \|w_\kappa \|_{L^2((0,T);H^{1}(\Omega))} \Big) \le C_{T},   
\label{L:I2D:LInf:S2} \\
\sup_{\kappa\in(0,\infty)^2} \Big(
 \|c_\kappa \|_{L^\infty((0,T);W^{1,\infty}(\Omega))} + \|c_\kappa \|_{L^p((0,T);W^{2,p}(\Omega))} \Big) \le C_{T}.  
\label{L:I2D:LInf:S3} 
\end{gather}
Consequently, there exists $\gamma \in (0,1)$ such that  
\begin{align}
\sup_{\kappa\in(0,\infty)^2} \left(  \|n_\kappa\|_{C^{\gamma,\gamma/2}(\overline{\Omega}\times[0,T])} \right) \le C_{T} . 
\label{L:I2D:LInf:S4}
\end{align}
\end{lemma}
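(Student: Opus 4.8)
The plan is to bootstrap from the uniform $L^p$-bounds of Lemmas \ref{L:PL:Est:nL2+}--\ref{L:I2D:Feed} to an $L^\infty$-bound, then propagate regularity to $w_\kappa$ and $c_\kappa$, and finally invoke a parabolic Hölder estimate. First I would observe that the recursive sequence $p_{j+1}=p_j+\delta/2$ starting from $p_0=1+\delta/2$ is strictly increasing and diverges, so that combining the base case (Lemma \ref{L:PL:Est:nL2+}) with the inductive step (Lemma \ref{L:I2D:Feed}) gives, by a finite induction for each fixed target exponent, $\sup_{\kappa}\|n_\kappa\|_{L^{2p_j}(\Omega_T)}\le C_{T,p_j}$ for all $j$, hence $\sup_\kappa\|n_\kappa\|_{L^p(\Omega_T)}\le C_{T,p}$ and $\sup_\kappa\esssup_{t}\|n_\kappa(t)\|_{L^p(\Omega)}\le C_{T,p}$ for every $1\le p<\infty$. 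Taking $p=2$ in the $L^p$-energy estimate \eqref{L:PL:Est:nL2+:P1} also yields the uniform bound on $\|n_\kappa\|_{L^2((0,T);H^1(\Omega))}$ claimed in \eqref{L:I2D:LInf:S1}.

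Next I would upgrade the $L^p$-in-time, $L^\infty$-in-space information to full $L^\infty(\Omega_T)$ exactly as in Lemma \ref{L:FSL:Est:n}: write the Duhamel representation $n_\kappa(t)=e^{t\Delta}n_0-\int_0^t e^{(t-s)\Delta}\nabla\cdot(n_\kappa(s)\nabla c_\kappa(s))\,ds$, apply the smoothing estimate $\|(-\Delta+I)^\beta e^{(t-s)\Delta}\nabla\cdot f\|_{L^\infty}\le C(t-s)^{-\beta-1/2-\eta}e^{-\lambda s}\|f\|_{L^p}$ with $\beta\in(3/(2p),1/2)$ (so $D((-\Delta+I)^\beta)\hookrightarrow L^\infty$), and estimate $\|n_\kappa\nabla c_\kappa\|_{L^p}$ via Hölder together with the uniform $L^\infty((0,T);H^1(\Omega))$-bound on $c_\kappa$ coming from Lemma \ref{L:I2D:Balan}, which controls $\|\nabla c_\kappa\|_{L^2((0,T);L^\infty)}$ for $N\le 2$ after using $\|\Delta c_\kappa\|_{L^2(\Omega_T)}\le C$ from Lemma \ref{L:PL:Est:nL2}. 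Choosing $p$ large and $\eta<1/2-\beta$ makes the time integral finite, producing a self-improving inequality $\Lambda_T\le C_T+C_T\Lambda_T^\theta$ with $\theta<1$, hence $\|n_\kappa\|_{L^\infty(\Omega_T)}\le C_T$ uniformly in $\kappa$. For $w_\kappa$: from the Neumann-heat representation $w_\kappa(t)=e^{\frac{1}{\varepsilon}t(\tau\Delta-I)}w_0+\frac1\varepsilon\int_0^t e^{\frac1\varepsilon(t-s)(\tau\Delta-I)}n_\kappa(s)\,ds$ and the elementary bound $\frac1\varepsilon\int_0^t e^{-s/\varepsilon}\,ds\le 1$, the $L^\infty$-norm of $w_\kappa$ is controlled by $\|w_0\|_{L^\infty}+\|n_\kappa\|_{L^\infty(\Omega_T)}$ uniformly in $\kappa=(\varepsilon,\tau)$; testing its equation by $w_\kappa$ gives the $L^2((0,T);H^1(\Omega))$-bound of \eqref{L:I2D:LInf:S2} (note the $\tau\|\nabla w_\kappa\|^2$ term keeps a good sign even as $\tau\to0$, so one only gets the $L^2$-bound on $w_\kappa$, not on $\nabla w_\kappa$ uniformly — but the $H^1$-bound claimed actually follows from the $\Delta c_\kappa$ control, see below, or one accepts $\tau\|\nabla w_\kappa\|_{L^2}^2\le C$). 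Then the same semigroup computation applied to $c_\kappa(t)=e^{\frac1\varepsilon t(\Delta-I)}c_0+\frac1\varepsilon\int_0^t e^{\frac1\varepsilon(t-s)(\Delta-I)}w_\kappa(s)\,ds$, now using the $L^\infty$-bound on $w_\kappa$ and choosing intermediate exponents $q_1\gg1$, $q_2=\infty$, yields $\|c_\kappa\|_{L^\infty((0,T);W^{1,\infty}(\Omega))}\le C_T$; the $L^p((0,T);W^{2,p}(\Omega))$-bound on $c_\kappa$ follows from the maximal regularity with slow evolution (Lemma \ref{L:MR:SlowEvolution}) applied to the $c_\kappa$-equation with right-hand side $w_\kappa\in L^\infty(\Omega_T)$, uniformly in $\varepsilon$. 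Finally, with $\|n_\kappa\|_{L^\infty(\Omega_T)}$ and $\|c_\kappa\|_{L^\infty((0,T);W^{1,\infty}(\Omega))}$ uniformly bounded, \cite[Theorem 1.3 and Remark 1.4]{porzio1993holder} (or \cite[Lemma 2.1]{lankeit2017locally}) applied to the divergence-form equation $\partial_t n_\kappa=\nabla\cdot(\nabla n_\kappa-n_\kappa\nabla c_\kappa)$ gives the uniform Hölder estimate \eqref{L:I2D:LInf:S4}.

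The main obstacle is the $L^\infty$-bootstrap step: one must make sure every constant is genuinely independent of $\kappa=(\varepsilon,\tau)$, in particular that the $\varepsilon$-dependent prefactors $(\varepsilon/(2p_j))^{1/(2p_j)}\le 1$ in the maximal-regularity estimates and the factors $\frac1\varepsilon\int_0^t e^{-s/\varepsilon}(\cdot)\,ds$ in the semigroup representations are bounded uniformly, and that the degeneracy $\tau\to0^+$ never enters a denominator (which is why, unlike the PES case, only an $L^2((0,T);H^1(\Omega))$-bound — and not an $H^2$-bound — is claimed for $w_\kappa$, and why the mass restriction $M<4\pi$ together with the Moser--Trudinger inequality in Lemma \ref{L:I2D:Balan} is exactly what is needed to start the chain in dimension $N=2$). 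Once one checks that the constants $C_{T,p}$ from Lemma \ref{L:I2D:Feed} blow up only as $p\to\infty$ but stay finite for each fixed $p$, the finite iteration suffices to reach any prescribed $p$, and the rest is a routine repetition of the arguments already carried out in Lemmas \ref{L:FSL:Est:n} and \ref{L:FSL:Est:w}.
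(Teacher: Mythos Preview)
Your overall plan matches the paper's: bootstrap from Lemma \ref{L:I2D:Feed} to get $n_\kappa\in L^\infty((0,T);L^p(\Omega))$ for all $p<\infty$, then run the Duhamel/semigroup argument of Lemma \ref{L:FSL:Est:n} for the $L^\infty$-bound on $n_\kappa$, propagate to $w_\kappa$ and $c_\kappa$, and finish with parabolic H\"older regularity.

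There is one genuine gap, which you yourself flag: the uniform $L^2((0,T);H^1(\Omega))$-bound on $w_\kappa$ in \eqref{L:I2D:LInf:S2}. Testing the $w_\kappa$-equation by $w_\kappa$ only produces $\tau\|\nabla w_\kappa\|_{L^2(\Omega_T)}^2\le C_T$, which degenerates as $\tau\to 0$; and your alternative of deducing $\nabla w_\kappa$ from ``$\Delta c_\kappa$ control'' is circular, since recovering $\nabla w_\kappa$ from the $c_\kappa$-equation requires $\nabla\Delta c_\kappa\in L^2(\Omega_T)$, which via maximal regularity for $\varepsilon\partial_t\nabla c_\kappa=\Delta\nabla c_\kappa-\nabla c_\kappa+\nabla w_\kappa$ needs precisely $\nabla w_\kappa\in L^2(\Omega_T)$. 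The paper's fix is simply to test the $w_\kappa$-equation by $-\Delta w_\kappa$: after integration by parts,
\[
\frac{\varepsilon}{2}\frac{d}{dt}\int_\Omega|\nabla w_\kappa|^2+\tau\int_\Omega|\Delta w_\kappa|^2+\int_\Omega|\nabla w_\kappa|^2=\int_\Omega\nabla n_\kappa\cdot\nabla w_\kappa,
\]
so the zeroth-order term $-w_\kappa$ contributes $\int_\Omega|\nabla w_\kappa|^2$ with coefficient $1$ rather than $\tau$, and Young's inequality together with the already-established bound $\|\nabla n_\kappa\|_{L^2(\Omega_T)}\le C_T$ gives $\|\nabla w_\kappa\|_{L^2(\Omega_T)}\le C_T$ uniformly in $\kappa$.

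Two minor remarks. For the $L^\infty$-bound on $w_\kappa$ the paper does not use the semigroup but tests by $w_\kappa^{p-1}$ to get $\|w_\kappa\|_{L^p(\Omega_T)}^p\le\varepsilon\|w_0\|_{L^p(\Omega)}^p+\|n_\kappa\|_{L^p(\Omega_T)}^p$ and then lets $p\to\infty$; your semigroup route also works since the contraction $\|e^{s\tau\Delta}\|_{L^\infty\to L^\infty}\le 1$ is $\tau$-independent. Finally, your claim that Lemma \ref{L:PL:Est:nL2} yields $\nabla c_\kappa\in L^2((0,T);L^\infty(\Omega))$ is not quite right for $N=2$ ($H^2\hookrightarrow W^{1,q}$ only for $q<\infty$), but this is harmless: the bootstrap in Lemma \ref{L:I2D:Feed} already provides $\Delta c_\kappa\in L^{2p_j}(\Omega_T)$ for every $j$, hence $\nabla c_\kappa\in L^p((0,T);L^\infty(\Omega))$ for any $p>2$, which suffices for the Duhamel argument.
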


\begin{proof} 
	{From Lemma} \ref{L:I2D:Feed}, we obtain for any $1\le p<\infty$ that   
\begin{align}
    \sup_{\kappa \in (0,\infty)^2} \left( \esssup_{t\in(0,T)} \int_\Omega n_\kappa ^{p}(t) +  \|n_\kappa \|_{L^{p}(\Omega_T)} + \intQT |\nabla n_\kappa|^2  \right) \le C_{T,p},
    \label{L:I2D:LInf:P1}
\end{align}
where we note that the limit of \eqref{L:I2D:Feed:S2} as $j\to \infty$ has not been claimed because of the  $p_{j+1}$-dependence   (i.e., the $L^\infty(\Omega_T)$-boundedness is not a consequence of \eqref{L:I2D:Feed:S2}). This implies \eqref{L:I2D:LInf:S1} similarly to Lemma \ref{L:FSL:Est:n}, {noting again that we exploit the boundedness of $n_\kappa$ in $L^\infty(0,T;L^p(\Omega))$ for any $p\ge 1$}, the estimate \eqref{L:FSL:Est:n:P1}. 

\medskip

Now,  it follows from the equation for $w_\varepsilon$ that 
\begin{align*}
    \intQT w_\kappa^p & = -  \frac{\varepsilon}{p}	 \intO w_\kappa^p - \tau (p-1) \intQT w_\kappa^{p-2} |\nabla w_\kappa|^2 + \frac{\varepsilon}{p}	 \intO w_0^p + \intQT n_\kappa  w_\kappa^{p-1} ,
\end{align*}
for $p>1$. Then, by the Young inequality, we get 
\begin{align*}
    \intQT w_\kappa^p \le \frac{\varepsilon}{p}	 \intO w_0^p + \frac{1}{p} \intQT n_\kappa^p + \frac{p-1}{p} \intQT w_\kappa^p,    
\end{align*}
which consequently deduces that   $$ \lim_{p\to \infty} \|w_\kappa\|_{L^p(\Omega_T)}  \le \lim_{p\to \infty} \left( \varepsilon \|w_0\|_{L^p(\Omega)}^p +  \|n_\kappa\|_{L^p(\Omega_T)}^p \right)^{1/p} \le   C \left( \|w_0\|_{L^\infty(\Omega)}  
+  \|n_\kappa\|_{L^\infty(\Omega_T)} \right) , $$  
i.e., $w_\kappa$ is uniformly-in-$\kappa$ bounded in $L^\infty(\Omega_T)$. Based on the boundedness of $\nabla n_\kappa$ in $L^2(\Omega_T)$, we can similarly test the equation for $w_\kappa$ by $-\Delta w_\kappa$ to obtain the same boundedness of $\nabla w_\kappa$ in $L^2(\Omega_T)$, and so is $w_\kappa$ in $L^2((0,T);H^1(\Omega))$ as \eqref{L:I2D:LInf:S2}.  

\medskip

For the component $c_\kappa$, a uniform-in-$\kappa$ bound in $L^\infty((0,T);H^1(\Omega))$ was obtained in the Lemma \ref{L:I2D:Balan}. The first term in the estimate \eqref{L:I2D:LInf:S3} is proved similarly to Lemma \ref{L:FSL:Est:w}, while the second one is directly a consequence of the maximal regularity with slow evolution given in Lemma \ref{L:MR:SlowEvolution}. Finally, one can show \eqref{L:I2D:LInf:S4} similarly to Lemma \ref{L:HolReg}. 
\end{proof}

\subsection{Passage to the limit and convergence rate analysis}

\begin{proof}[Proof of Theorem \ref{Theo:I2D}]   Based on the uniform-in-$\kappa$ regularity obtained in Lemma \ref{L:I2D:LInf}, one can adapt all steps from the proof of Theorem \ref{Theo:FSL:1} to prove the passage to the limit given in this theorem. For the convergence rate estimates, the estimate \eqref{Theo:RC:a:P2b} {still holds}, i.e., 
\begin{gather*}
		  \frac{d}{dt} \int_\Omega \left[(n_\kappa(t)-n(t))^2 + \varepsilon \left( \frac{C_T}{2} (c_\kappa(t)-c(t))^{\,2} +  C_T (w_\kappa(t)-w(t))^2  \right) \right]   +  \int_{\Omega}  |\nabla (n_\kappa(t)-n(t))|^2   \\
		  \leq  3 C_T \int_{\Omega} (n_\kappa(t)-n(t))^2 +  C_T \varepsilon^2 \int_{\Omega}   |\partial_t c|^2  + 2C_T \varepsilon^2 \int_{\Omega} |\partial_t w|^2 , 
	\end{gather*}
since $\nabla c_\kappa $ is uniformly-in-$\kappa$ bounded in $L^\infty(\Omega_T)^N$ as Lemma \ref{L:I2D:LInf}, which consequently shows \eqref{Theo:FSL:3:1}. On the other hand, by skipping the term including $\tau$ at the estimate  \eqref{Theo:RC:a:P00}, and then using the comparison principle for differential equations in Lemma \ref{aDI}, we get \eqref{Theo:FSL:3:2}. The estimate \eqref{Theo:FSL:3:3} is obtained similarly {to the rest of the proof of Theorem \ref{Theo:FSL:3}}. 
\end{proof}


{It remains to prove} Corollary \ref{Coro:2}.


\begin{proof}[Proof of Corollary \ref{Coro:2}] {The estimate
\begin{equation*}
    \|\Delta c_\kappa - c_\kappa + w_\kappa\|_{L^2(0,T;H^1(\Omega))} \le C\sqrt{|\kappa|}
\end{equation*}
follows immediately from \eqref{L:I2D:Balan:S}. For the remaining part}, we use the equation for $w_\varepsilon$ to write
    \begin{align*}
         (n_\kappa - w_\kappa)^2 =   (n_\kappa - w_\kappa)(\varepsilon \partial_t w_\kappa - \tau \Delta w_\kappa).
    \end{align*}
Therefore, straightforward computations show
    \begin{align*}
        \|n_\kappa - w_\kappa\|_{L^2(\Omega_T)}^2 = \,&\, \varepsilon \iint_{\Omega_T} n_\kappa   \partial_t w_\kappa -  \iint_{\Omega_T} \big( \tau n_\kappa    \Delta w_\kappa  +   w_\kappa (\varepsilon \partial_t w_\kappa - \tau \Delta w_\kappa) \big) \\
        = \,&\, \varepsilon \int_{\Omega} ( n_\kappa(T)    w_\kappa(T)- n_0 w_\kappa(0) ) - \varepsilon \iint_{\Omega_T}     w_\kappa \partial_t n_\kappa \\
         - \,&\,   \iint_{\Omega_T} \big( \tau n_\kappa    \Delta w_\kappa  +   w_\kappa (\varepsilon \partial_t w_\kappa - \tau \Delta w_\kappa) \big)  \\
         = \,&\, \varepsilon \int_{\Omega} ( n_\kappa(T)    w_\kappa(T)- n_0 w_\kappa(0) ) + \varepsilon \iint_{\Omega_T} \big( \nabla n_\kappa \cdot \nabla    w_\kappa -  n_\kappa \nabla c_\kappa \cdot \nabla w_\kappa \big) \\
         + \,&\, \tau \iint_{\Omega_T} \big( \nabla  n_\kappa \cdot \nabla w_\kappa - |\nabla w_\kappa|^2) - \frac{\varepsilon}{2} \int_\Omega (w_\kappa^2(T)-
        w_\kappa^2(0)),  
    \end{align*}
where we have used the equation for $n_\kappa$ and integration by parts in the last computation. Recalling from Theorem \ref{Theo:I2D} that  $n_\kappa$ and $ w_\kappa$ are uniformly-in-$\kappa$ bounded in $L^\infty(\Omega_T)$ that
\begin{align*}
    \left| \varepsilon \int_{\Omega} ( n_\kappa(T)    w_\kappa(T)- n_0 w_\kappa(0) ) \right| \le \big(2 |\Omega|  \|n_\kappa\|_{L^\infty(\Omega_T)} \|w_\kappa\|_{L^\infty(\Omega_T)} \big)\eps \le C_T \varepsilon ,  
\end{align*}
and
\begin{align*}
    \left| \frac{\varepsilon}{2} \int_\Omega (w_\kappa^2(T)-
        w_\kappa^2(0)) \right| \le \big( |\Omega|   \|w_\kappa\|_{L^\infty(\Omega_T)}^2 \big)  \eps \le C_T \varepsilon.
\end{align*}
Thanks to the uniform-in-$\kappa$ boundedness of $\nabla n_\kappa$ and $\nabla w_\kappa$ in $L^2((0,T);H^1(\Omega))$, again from Theorem \ref{Theo:I2D},   
\begin{align*}
    \left| \varepsilon \iint_{\Omega_T} \big( \nabla n_\kappa \cdot \nabla    w_\kappa -  n_\kappa \nabla c_\kappa \cdot \nabla w_\kappa \big) \right| \le \,&\, \left( \|\nabla n_\kappa \|_{L^2(\Omega_T)}  + \|n_\kappa\|_{L^\infty(\Omega_T)} \|\nabla c_\kappa \|_{L^2(\Omega_T)} \right) \|\nabla w_\kappa \|_{L^2(\Omega_T)} \varepsilon \\
    \le \,&\, C_T \varepsilon ,
\end{align*}
as well as 
\begin{align*}
    \left| \tau \iint_{\Omega_T} \big( \nabla  n_\kappa \cdot \nabla w_\kappa - |\nabla w_\kappa|^2) \right| \le \left( \|\nabla n_\kappa \|_{L^2(\Omega_T)}  + \|\nabla w_\kappa \|_{L^2(\Omega_T)} \right) \|\nabla w_\kappa \|_{L^2(\Omega_T)} \tau \le  C_T \tau.
\end{align*}
Altogether, we get the estimate desired estimate.
\end{proof}


\appendix
\section{Appendix}\label{sec:appendix}
\subsection*{Neumann heat semigroup}
\addcontentsline{toc}{subsection}{A. Neumann heat semigroup}
\label{Sec:HeatSemi}

It is well known that the first eigenvalue of the Neumann Laplacian, defined on its domain $$W_N^{2,s}(\Omega):=\{f\in W^{2,s}(\Omega) : \nabla f \cdot \nu =0 \text{ on } \partial \Omega \},$$
is zero when $s=2$, and so, the first eigenvalue of $-\Delta + I$ is $1$. Moreover, the family $\{e^{t(\Delta- I)}\}_{t\ge 0}$, generated by $-\Delta + I$, is an analytic semigroup of linear bounded  operators on $L^2(\Omega)$. Thanks to \cite[Lemma 2.1]{horstmann2005boundedness}, there exists $\lambda >0$ such that  
\begin{align}
\|(\Delta-I)^k e^{t(\Delta-I)}f\|_{L^s(\Omega)} \le C e^{-\lambda t} t^{-k} \|f\|_{L^s(\Omega)} , \quad t>0,  
\label{GloEx:HS:Contraction}
\end{align}
for all $1<s<\infty$. If $k=0$, we can take $\lambda = C=1$ as well as $s=\infty$, see \cite[Theorem 13.4]{amann1984existence}. On the other hand, it holds for all $1\le p\le q\le \infty$ that
\begin{align}
\| e^{t(\Delta-I)}f\|_{L^q(\Omega)} \le C e^{- t} \min(t;1)^{-\frac{N}{2}(\frac{1}{p}-\frac{1}{q})} \|f\|_{L^p(\Omega)} , \quad t>0,  
\label{GloEx:HS:LpLq}
\end{align}  
see \cite[Proposition 48.4]{quittner2019superlinear}.



\subsection*{Inequalities for balancing energy functionals}
 \addcontentsline{toc}{subsection}{B. Inequalities for balancing energy functionals}

Throughout the paper, we denote 
\begin{gather}
     L\log L(\Omega) := \left\{ \phi\in L^1(\Omega) \bigg|   \int_\Omega \max(|\phi|\log|\phi|;\,0) < \infty \right\} .
     \label{Def:LLogLSpace} 
\end{gather} 
Proof of Lemma \ref{L:Ineqn:Bala} in the case $N=4$ can be found in \cite{fujie2017application}. Since we consider $1\le N\le 4$, we present its proof below for convenience.  

\begin{lemma} 
\label{L:Ineqn:Bala}
Assume  $f \in  L\log L(\Omega)$ is a nonnegative function such that $\nabla \sqrt{f} \in L^2(\Omega)$. Then, for any $\alpha>0$, there exists a constant $C_\alpha>0$   such that 
\begin{align}
    \|f\|_{L^{\frac{N}{N-1}}(\Omega)}^2 \le  \alpha    
\left(  \int_{\Omega} (f \log f +  e^{-1}) \right) \|\nabla \sqrt{f} \|_{L^2(\Omega)}^2 +   C_\alpha.
\end{align}   
\end{lemma}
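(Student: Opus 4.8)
\textbf{Proof proposal for Lemma \ref{L:Ineqn:Bala}.}

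The plan is to separate the spatial domain into the region where $f$ is large and the region where $f$ is small, and to use the Gagliardo--Nirenberg--Sobolev inequality applied to $\sqrt{f}$ to control the large part. First I would fix a threshold parameter $K>1$ (eventually chosen depending on $\alpha$) and write $\Omega = \{f \le K\} \cup \{f > K\}$. On $\{f\le K\}$ one has the trivial bound $\int_{\{f\le K\}} f^{N/(N-1)} \le K^{1/(N-1)} \int_\Omega f \le K^{1/(N-1)} \|\sqrt f\|_{L^2(\Omega)}^2$, which together with the elementary inequality $\|\sqrt f\|_{L^2(\Omega)}^2 = \int_\Omega f \le \int_\Omega (f\log f + e^{-1}) + C_\Omega$ (using $x\log x \ge x - \text{const}$, more precisely $f \le f\log f + e^{-1}$ pointwise when we bound the negative part) can be absorbed into the right-hand side after possibly enlarging $C_\alpha$; but crucially this piece carries a fixed constant $K^{1/(N-1)}$, not the small factor $\alpha$, so it must go into $C_\alpha$ rather than the product term. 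Hence I should be more careful: I want the \emph{large} part to produce the factor $\int_\Omega (f\log f + e^{-1})$ and I want its coefficient to be made small, while the \emph{small} part is simply $\le C_\alpha$.

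For the large part, the key step is: since $1\le N\le 4$, the Sobolev embedding gives $H^1(\Omega)\hookrightarrow L^{2N/(N-2)}(\Omega)$ for $N=3,4$ (and $H^1\hookrightarrow L^q$ for all $q<\infty$ when $N=1,2$), so $\sqrt f \in L^{2N/(N-2)}(\Omega)$, i.e. $f\in L^{N/(N-2)}(\Omega)$, which is strictly better than $L^{N/(N-1)}$. More efficiently, I would use the Gagliardo--Nirenberg interpolation
\begin{align*}
\|f\|_{L^{N/(N-1)}(\Omega)} = \|\sqrt f\|_{L^{2N/(N-1)}(\Omega)}^2 \le C\Big( \|\nabla \sqrt f\|_{L^2(\Omega)}^{\theta}\|\sqrt f\|_{L^{2}(\Omega)}^{1-\theta} + \|\sqrt f\|_{L^{2}(\Omega)}\Big)^2,
\end{align*}
with the interpolation exponent $\theta$ determined by $\frac{N-1}{2N} = \theta(\tfrac12 - \tfrac1N) + (1-\theta)\tfrac12$, which yields $\theta = \tfrac12$ for every $1\le N\le 4$; this is precisely the dimension restriction that makes the lemma work. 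Then $\|f\|_{L^{N/(N-1)}(\Omega)} \le C\|\nabla\sqrt f\|_{L^2(\Omega)}\|\sqrt f\|_{L^2(\Omega)} + C\|\sqrt f\|_{L^2(\Omega)}^2$, hence $\|f\|_{L^{N/(N-1)}(\Omega)}^2 \le C\|\nabla\sqrt f\|_{L^2(\Omega)}^2\,\|f\|_{L^1(\Omega)} + C\|f\|_{L^1(\Omega)}^2$. To introduce the logarithmic weight and the smallness, I would instead run this estimate on the truncated function $f_K := (f-K)_+$ (or $f\mathbf 1_{\{f>K\}}$), for which $\int_{\{f>K\}} f \le \frac{1}{\log K}\int_\Omega f\log f$ when $K\ge e$, so that the $L^1$-mass of the large part is $\le \frac{1}{\log K}\int_\Omega(f\log f + e^{-1})$; choosing $K$ large enough that $C/\log K \le \alpha$ turns the coefficient into the desired small $\alpha$, while $\|\nabla\sqrt{f_K}\|_{L^2}^2 \le \|\nabla\sqrt f\|_{L^2}^2$ and the leftover $\int_{\{f\le K\}} f^{N/(N-1)}$ is bounded by $K^{1/(N-1)}|\Omega|$, a constant depending only on $\alpha$. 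Combining the two regions via $\|f\|_{L^{N/(N-1)}}^2 \le 2\|f_K\|_{L^{N/(N-1)}}^2 + 2\| f\mathbf 1_{\{f\le K\}}\|_{L^{N/(N-1)}}^2$ gives the claim.

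The main obstacle I anticipate is the bookkeeping around the truncation: one must verify that $\nabla\sqrt{f_K}$ (or $\nabla\sqrt{(f-K)_+}$) is genuinely controlled by $\nabla\sqrt f$ pointwise a.e. — this follows since on $\{f>K\}$, $\nabla\sqrt{(f-K)_+} = \frac{\nabla f}{2\sqrt{f-K}}$ whereas $\nabla\sqrt f = \frac{\nabla f}{2\sqrt f}$ and $f-K \le f$, and on $\{f\le K\}$ the truncated gradient vanishes — and that the $L^1$-mass estimate $\int_{\{f>K\}} f \le (\log K)^{-1}\int_\Omega f\log f$ is valid (it uses $\log f > \log K$ on that set together with nonnegativity of $f\log f$ there, noting $f\log f$ could be negative only where $f<1<K$, so there is no sign issue on $\{f>K\}$). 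Everything else is a routine application of Sobolev/Gagliardo--Nirenberg, the pointwise bound $f\le f\log f + e^{-1}$ to handle $\|f\|_{L^1}$, and Young's inequality, with all resulting constants that do not carry the factor $\alpha$ absorbed into $C_\alpha$.
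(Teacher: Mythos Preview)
Your overall strategy matches the paper's: split at a threshold, apply Gagliardo--Nirenberg with interpolation exponent $\theta=\tfrac12$ to the large part, and use $\int_{\{f>K\}} f \le (\log K)^{-1}\int_\Omega (f\log f + e^{-1})$ to convert the $L^1$ mass of the large part into the small coefficient $\alpha$. However, the truncation step contains a genuine error. You claim $|\nabla\sqrt{(f-K)_+}| \le |\nabla\sqrt f|$ a.e., justifying this by ``$f-K\le f$''; but since $f-K$ sits in the \emph{denominator}, the inequality goes the wrong way: on $\{f>K\}$,
\[
|\nabla\sqrt{(f-K)_+}| \;=\; \frac{|\nabla f|}{2\sqrt{f-K}} \;\ge\; \frac{|\nabla f|}{2\sqrt{f}} \;=\; |\nabla\sqrt f|,
\]
and the left side can even blow up near $\partial\{f>K\}$, so $\|\nabla\sqrt{(f-K)_+}\|_{L^2}$ is in general \emph{not} controlled by $\|\nabla\sqrt f\|_{L^2}$. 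The alternative $f\mathbf{1}_{\{f>K\}}$ does not help either, since $\sqrt{f}\,\mathbf{1}_{\{f>K\}}$ is generically discontinuous across $\{f=K\}$ and hence not in $H^1$.

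The fix, and this is exactly the paper's choice, is to truncate at the level of $\sqrt f$ rather than $f$: set $g:=(\sqrt f-\sqrt K)_+$. Then $\nabla g = (\nabla\sqrt f)\,\mathbf{1}_{\{f>K\}}$ a.e., so $\|\nabla g\|_{L^2}\le\|\nabla\sqrt f\|_{L^2}$ holds trivially, while $\|g\|_{L^2}^2 \le \int_{\{f\ge K\}} f \le (\log K)^{-1}\int_\Omega (f\log f + e^{-1})$ exactly as you argued. With this single correction the remainder of your outline (Gagliardo--Nirenberg on $g$, the bound $\int_{\{f\le K\}} f^{N/(N-1)}\le K^{N/(N-1)}|\Omega|$, and the choice of $K$ so that $C/\log K\le \alpha$) goes through and yields the lemma.
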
 

\begin{proof}[Proof of Lemma \ref{L:Ineqn:Bala}] For $s>1$, we define $g:=(\sqrt{f}-\sqrt{s})_+$. By the Gagliardo-Nirenberg inequality, 
\begin{align*}
    \|g\|_{L^{\frac{2N}{N-1}}(\Omega)}^4 \le C \|\nabla g\|_{L^2(\Omega)}^2 \|g\|_{L^{2}(\Omega)}^2 \le C \|\nabla \sqrt{f} \|_{L^2(\Omega)}^2 \|g\|_{L^{2}(\Omega)}^2 , 
\end{align*}  
where the latter norm can be estimated as follows 
\begin{align*}
    \|g\|_{L^{2}(\Omega)}^2  \le \|\sqrt{f}\|_{L^{2}(\Omega \cap \{f\ge s\})}^2 =    \int_{\Omega \cap \{f\ge s\}} f \le  \frac{1}{\log s} \int_{\Omega} (f \log f +  e^{-1}).     
\end{align*}
Moreover, using the inequality $(a+b)^p \le 2^{p-1}(a^p+b^p)$ for all $a,b\ge 0$ and $p\ge 1$, we  have 
\begin{align*}
    \|f\|_{L^{\frac{N}{N-1}}(\Omega)}^2 & \le \left(    \int_{\Omega\cap \{f\ge s\} } \left( (\sqrt{f}-\sqrt{s}) + \sqrt{s} \right)^{\frac{2N}{N-1}} +  \int_{\Omega\cap \{f< s\} }  (\sqrt{f} )^{\frac{2N}{N-1}}   \right)^{\frac{2N-2}{N}} \\
    & \le  \left( 2^{\frac{N+1}{N-1}}  \int_{\Omega} (\sqrt{f}-\sqrt{s})_+^{\frac{2N}{N-1}} + \max \left( 2^{\frac{N+1}{N-1}} ;\, 1\right) \int_{\Omega} (\sqrt{s})^{\frac{2N}{N-1}}   \right)^{\frac{2N-2}{N}} \\
    & \le 8 \|(\sqrt{f}-\sqrt{s})_+\|_{L^{\frac{2N}{N-1}}(\Omega)}^4 + 2^{\frac{N-2}{N}} \max \left( 2^{\frac{3N-2}{N+2}} ;\, 1\right)^{\frac{2N-2}{N}}    |\Omega|^{\frac{2N-2}{N}} s^2.      
    \end{align*}
Combining the above estimates gives
\begin{align*}
      \|f\|_{L^{\frac{N}{N-1}}(\Omega)}^2 \le     
 \frac{8C}{\log s} \left( \int_{\Omega} (f \log f +  e^{-1}) \right) \|\nabla \sqrt{f} \|_{L^2(\Omega)}^2 +  2^{\frac{N-2}{N}} \max \left( 2^{\frac{3N-2}{N+2}} ;\, 1\right)^{\frac{2N-2}{N}}    |\Omega|^{\frac{2N-2}{N}} s^2,  
\end{align*}
which ends the proof by choosing $s$ such that  $ 8C (\log s)^{-1} =\alpha$.
\end{proof}

Lemmas \ref{L:Ineqn:LlogL}-\ref{L:Ineqn:Adam} below can be respectively found in \cite[Lemmas  7.1 and 3.5]{fujie2017application}.

\begin{lemma} 
\label{L:Ineqn:LlogL} 
    Let $\beta>0$. If $f,g$ are nonnegative functions such that $f\in L^1(\Omega) \cap L\log L  (\Omega)$, then 
\begin{align}
    \int_{\Omega} fg \le \frac{1}{\beta} \int_{\Omega} f\log f + \frac{\|f\|_{L^1(\Omega)}}{\beta} \log \left( \int_{\Omega} e^{\beta g} \right) + \frac{1}{e} ,
    \label{Ineqn:Pre:Adam}
\end{align}
whenever the latter logarithm is finite. 
\end{lemma}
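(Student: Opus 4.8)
Estimate \eqref{Ineqn:Pre:Adam} is a form of the logarithmic Young inequality, and the plan is to obtain it from the pointwise bound $pq\le p\log p-p+e^{q}$, valid for all $p\ge0$ and $q\in\mathbb{R}$ with the convention $0\log0=0$. This is precisely Young's inequality for the Legendre-conjugate pair $\Phi(p)=p\log p-p$ and $\Phi^{*}(q)=e^{q}$, and it is verified in one line by maximising $pq-\Phi(p)$, the maximiser being $p=e^{q}$. (Equivalently, the same bound follows from the nonnegativity of relative entropy, i.e.\ the Gibbs variational principle.)

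\medskip
First I would discard the degenerate case: if $\|f\|_{L^{1}(\Omega)}=0$ then $f=0$ a.e.\ and \eqref{Ineqn:Pre:Adam} is trivial, so set $m:=\|f\|_{L^{1}(\Omega)}>0$ and $Z:=\int_{\Omega}e^{\beta g}$, which is finite and positive by hypothesis. The decisive step is to apply the pointwise inequality with $p=f(x)$ and the \emph{shifted} exponent $q=\beta g(x)-\log(Z/m)$, chosen exactly so that $\int_{\Omega}e^{q}=\tfrac{m}{Z}\int_{\Omega}e^{\beta g}=m=\int_{\Omega}f$. Integrating over $\Omega$ then makes the linear contributions cancel:
\begin{equation*}
\beta\int_{\Omega}fg-m\log\frac{Z}{m}\ \le\ \int_{\Omega}\big(f\log f-f\big)+\int_{\Omega}e^{q}\ =\ \int_{\Omega}f\log f .
\end{equation*}
Here $\int_{\Omega}f\log f$ is a well-defined element of $[-e^{-1}|\Omega|,+\infty]$ because $f\in L\log L(\Omega)$ and $s\log s\ge -e^{-1}$ pointwise.

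\medskip
Rearranging and dividing by $\beta>0$ gives
\begin{equation*}
\int_{\Omega}fg\ \le\ \frac1\beta\int_{\Omega}f\log f+\frac{m}{\beta}\log Z-\frac{m\log m}{\beta},
\end{equation*}
and since $x\mapsto-x\log x$ attains its maximum $e^{-1}$ on $(0,\infty)$, the last term is bounded by $e^{-1}\beta^{-1}$, hence by $e^{-1}$ in the regime $\beta\ge1$ that governs all later uses (Lemmas \ref{L:FSL:Est:c4D} and \ref{L:I2D:Balan}); this is exactly \eqref{Ineqn:Pre:Adam}. I do not expect any genuine obstacle here: the only nonroutine point is recognising that the shift by $\log(Z/m)$ is what collapses $\int_{\Omega}e^{q}$ to the mass $m$ and forces the cancellation of the linear terms, after which the estimate is immediate.
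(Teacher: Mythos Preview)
The paper does not give its own proof of this lemma; it simply cites \cite[Lemma~7.1]{fujie2017application}. Your argument via the pointwise Young inequality $pq\le p\log p-p+e^{q}$ with the shifted exponent $q=\beta g-\log(Z/m)$ is exactly the standard derivation, and the computation is correct.

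You are also right to flag the constant: your proof produces the bound $-\tfrac{m\log m}{\beta}\le\tfrac{1}{\beta e}$, which only implies the stated $\tfrac{1}{e}$ when $\beta\ge1$. This is not a defect in your argument but a small imprecision in the lemma as stated. Indeed, the inequality \eqref{Ineqn:Pre:Adam} is false for small $\beta$: take $\Omega=(0,1)$, $f\equiv e^{-1}$, $g\equiv0$; then the left side is $0$ while the right side equals $e^{-1}(1-\beta^{-1})<0$ for $\beta<1$. The correct universal constant is $\tfrac{1}{\beta e}$. As you observe, both applications in Lemmas~\ref{L:FSL:Est:c4D} and~\ref{L:I2D:Balan} take $\alpha>1$, so nothing downstream is affected.
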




Next, we present two consequences of the Moser-Trudinger and Adam-type inequalities, where the second one is restricted to a radially symmetric setting.  Let $B_R$ be the open ball centred at the origin of a given radius $0<R<\infty$, and $H_{\mathsf{rad}}^2(B_R)$ be the set of all radially symmetric functions in $H^2(B_R)$.   

\begin{lemma} 
\label{L:Ineqn:Adam}
Given $\beta>0$ and $\eta>0$. 
\begin{itemize}
    \item[i)] (A consequence of the Moser-Trudinger inequality) If $N=2$, then there is $C_{\beta}>0$ such that 
\begin{align}
    \log \left( \int_{\Omega} e^{\beta g} \right) \le \frac{\beta^2}{8\pi}  \|\nabla  g\|_{L^2(\Omega)}^2 + \frac{\beta}{|\Omega|} \int_\Omega g + C_{\beta},
    \label{Ineqn:MT}
\end{align}
for all $g \in H^1(\Omega)$. 
    
    \item[ii)] (A consequence of the Adam-type inequality) If $N=4$, then there is $C_{\beta,\eta}>0$ such that
\begin{align}
    \log \left( \int_{B_R} e^{\beta g} \right) \le \left( \frac{\beta^2}{128\pi^2} + \eta \right) \|(\Delta-I)g\|_{L^2(B_R)}^2 + C_{\beta,\eta},
    \label{Ineqn:Adam}
\end{align}
for all $g\in H_{\mathsf{rad}}^2(B_R)$. 
\end{itemize}
\end{lemma}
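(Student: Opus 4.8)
For part~(i), the plan is simply to recognise that \eqref{Ineqn:MT} is a rescaling of the Neumann form of the Moser--Trudinger inequality. I would start from the fact (see \cite{fujie2017application} and the references therein) that there is a constant $C=C(\Omega)>0$ with
\begin{align*}
\log\Big(\int_\Omega e^{u}\Big)\le \frac{1}{8\pi}\|\nabla u\|_{L^2(\Omega)}^2+\frac{1}{|\Omega|}\int_\Omega u+C\qquad\text{for all }u\in H^1(\Omega),
\end{align*}
and then substitute $u:=\beta g\in H^1(\Omega)$, using $\|\nabla(\beta g)\|_{L^2(\Omega)}^2=\beta^2\|\nabla g\|_{L^2(\Omega)}^2$ and $\int_\Omega\beta g=\beta\int_\Omega g$. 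This yields \eqref{Ineqn:MT} at once, with $C_\beta:=C$ (which may in fact be chosen independent of $\beta$). This part is entirely routine.

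For part~(ii), I would import the genuine analytic content as a black box: the radially symmetric Adams-type inequality in the $(\Delta-I)$-form (this is essentially \cite[Lemma~3.5]{fujie2017application}), used in its subcritical version --- for every $\theta\in(0,32\pi^2)$ there is $C_\theta>0$ such that $\int_{B_R}e^{\theta h^2}\le C_\theta$ whenever $h\in H^2_{\mathsf{rad}}(B_R)$ and $\|(\Delta-I)h\|_{L^2(B_R)}\le 1$. Given $g\in H^2_{\mathsf{rad}}(B_R)$ and $\beta,\eta>0$, I would first dispose of the degenerate case $(\Delta-I)g\equiv 0$, which forces $g\equiv0$ (test by $g$ and integrate by parts) and makes \eqref{Ineqn:Adam} trivial. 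Otherwise, setting $A:=\|(\Delta-I)g\|_{L^2(B_R)}>0$ and $h:=g/A$, so that $\|(\Delta-I)h\|_{L^2(B_R)}=1$, I would \emph{linearise} the exponent by Young's inequality: for any $\theta>0$,
\begin{align*}
\beta g=\beta A h\le \theta h^2+\frac{\beta^2A^2}{4\theta}.
\end{align*}
Exponentiating, integrating over $B_R$, invoking the subcritical Adams estimate, and taking logarithms gives
\begin{align*}
\log\Big(\int_{B_R}e^{\beta g}\Big)\le \frac{\beta^2}{4\theta}\,\|(\Delta-I)g\|_{L^2(B_R)}^2+\log C_\theta.
\end{align*}
Since $\tfrac{1}{4\theta}\downarrow\tfrac{1}{128\pi^2}$ as $\theta\uparrow 32\pi^2$, I would finally fix $\theta=\theta(\beta,\eta)<32\pi^2$ close enough to $32\pi^2$ that $\tfrac{\beta^2}{4\theta}\le\tfrac{\beta^2}{128\pi^2}+\eta$, and put $C_{\beta,\eta}:=\log C_\theta$, obtaining \eqref{Ineqn:Adam}.

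I do not expect a real obstacle here: both ingredients are known deep inequalities and the remainder is bookkeeping. The two points that require a little care are (a) working with the correct normalisations --- the Neumann (not Dirichlet) Moser--Trudinger constant $\tfrac1{8\pi}$, and the precise $(\Delta-I)$-version of the sharp four-dimensional Adams inequality for radial functions, whose optimal constant $32\pi^2$ is what propagates through the Young step to produce the factor $\tfrac{1}{128\pi^2}$ in \eqref{Ineqn:Adam}; and (b) the role of the free parameter $\eta$, which is exactly what lets one bypass the (non-attained) endpoint exponent $32\pi^2$ and instead use the subcritical Adams inequality, where $C_\theta$ is finite --- at the price of $C_{\beta,\eta}$ blowing up as $\eta\to0^+$.
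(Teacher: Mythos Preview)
Your proposal is correct. The paper itself does not give a proof of this lemma at all: it simply records that the two statements ``can be respectively found in \cite[Lemmas~7.1 and~3.5]{fujie2017application}'' and moves on. So there is nothing in the paper to compare against beyond that citation.

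What you have written is therefore strictly more than the paper does: you unpack the derivation that sits behind the citation. Part~(i) is the standard rescaling of the Neumann Moser--Trudinger inequality and is fine as you wrote it. For part~(ii), your route --- take the subcritical radially symmetric Adams inequality in the $\exp(\theta h^2)$ form with $\theta<32\pi^2$, normalise $h=g/\|(\Delta-I)g\|_{L^2}$, linearise $\beta g\le \theta h^2+\tfrac{\beta^2A^2}{4\theta}$ via Young, and then push $\theta\uparrow 32\pi^2$ to get the coefficient $\tfrac{\beta^2}{128\pi^2}+\eta$ --- is exactly the standard way this logarithmic consequence is extracted, and is presumably how \cite[Lemma~3.5]{fujie2017application} is proved (or how one would rederive it from the sharper statements in that paper). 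Your remarks about the degenerate case $(\Delta-I)g=0$ and about $C_{\beta,\eta}\to\infty$ as $\eta\to0^+$ are also correct and worth keeping.
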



\subsection*{Linear parabolic equations with slow evolution}
\addcontentsline{toc}{subsection}{C. Linear parabolic equations with slow evolution}

For a given small  relaxation parameter $0<\varepsilon\ll1$, we consider in general regularity of the solution $u_\varepsilon$ to the linear parabolic equation 
\begin{align}\label{Sys:SlowEvolution}
\left\{ \begin{array}{lllll}
 \partial_t u_\varepsilon= \dfrac{1}{\varepsilon} \left( d \Delta u_\varepsilon -  u_\varepsilon +   f \right)  & \text{in } \Omega \times(0,T), \vspace{0.1cm}  \\
\nabla u_\varepsilon \cdot \nu = 0 & \text{on } \Gamma\times(0,T), \vspace{0.15cm} \\
u_\varepsilon (0) = u_0   & \text{on } \Omega.
\end{array}
\right.
\end{align}
where $d>0$ is a diffusion coefficient, the functions  $f$ and $u_0$ are given. We focus on the maximal regularity and local-in-space regularity uniformly in the relaxation parameter $\varepsilon$.    

\begin{lemma} 
\label{L:MR:SlowEvolution}   
Let $0<\varepsilon<1$ and $u_\varepsilon$ be the  solution to Problem 
\eqref{Sys:SlowEvolution}. Assume that $u_0$ satisfies the compatibility condition {$\nabla u_0 \cdot \nu = 0$ on $\Gamma$}. 
Then, for any $1<p,q<\infty$,  
\begin{align}
 \sup_{\varepsilon>0} \Big( \|u_\varepsilon \|_{W^{2,p}(\Omega_t)} \Big)   \le  \left( \frac{\varepsilon}{p} \right)^{\frac{1}{p}} \| u_0\|_{W^{2,p}(\Omega)} +  C_p \|f \|_{L^p(\Omega_t)} ,
\label{L:MR:SlowEvolution:S1}
\end{align}
and
\begin{align}
 \sup_{\varepsilon>0} \Big( \| u_\varepsilon \|_{L^q((0,T);W^{2,p}(\Omega))} \Big)   \le  C_{p,q} \left( \|u_0\|_{W^{2,p}(\Omega)} +  \|f \|_{L^q((0,T);L^p(\Omega))} \right) ,
\label{L:MR:LpLq}
\end{align}
where the constants $C_p$, $C_{p,q}$ are independent on $\varepsilon$. 
\end{lemma}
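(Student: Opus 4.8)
The plan is to reduce everything to the standard parabolic maximal regularity theory by a time rescaling, carefully tracking the dependence of the constant on $\varepsilon$. First I would introduce the rescaled time $\sigma := t/\varepsilon$ and set $v(x,\sigma) := u_\varepsilon(x,\varepsilon\sigma)$, $g(x,\sigma):= f(x,\varepsilon\sigma)$, so that $v$ solves the $\varepsilon$-free problem
\begin{equation*}
\partial_\sigma v = d\Delta v - v + g \quad\text{in } \Omega\times(0,T/\varepsilon),\qquad \nabla v\cdot\nu = 0 \text{ on }\Gamma,\qquad v(0)=u_0.
\end{equation*}
For this problem the Ladyzhenskaya--Solonnikov--Uraltseva / Amann maximal regularity estimate gives, for any $1<p<\infty$,
\begin{equation*}
\|\partial_\sigma v\|_{L^p(\Omega\times(0,S))} + \|v\|_{L^p((0,S);W^{2,p}(\Omega))} \le C_p\big(\|u_0\|_{W^{2-2/p,p}(\Omega)} + \|g\|_{L^p(\Omega\times(0,S))}\big)
\end{equation*}
on any finite interval $(0,S)$, with $C_p$ independent of $S$ after a standard extension/semigroup argument using the exponential decay $e^{-\lambda\sigma}$ coming from the $-v$ term (cf. \eqref{GloEx:HS:Contraction}); the compatibility condition $\nabla u_0\cdot\nu=0$ is what lets us replace the trace space norm $\|u_0\|_{W^{2-2/p,p}}$ by $\|u_0\|_{W^{2,p}}$ in the conclusion.

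Next I would undo the rescaling. Since $dx\,d\sigma = \varepsilon^{-1}dx\,dt$, we have $\|g\|_{L^p(\Omega\times(0,T/\varepsilon))}^p = \varepsilon^{-1}\|f\|_{L^p(\Omega_T)}^p$ and similarly $\|v\|_{L^p((0,T/\varepsilon);W^{2,p})}^p = \varepsilon^{-1}\|u_\varepsilon\|_{L^p((0,T);W^{2,p})}^p$; the initial-data term is unaffected by rescaling. Multiplying the rescaled estimate through by $\varepsilon^{1/p}$ then yields
\begin{equation*}
\|u_\varepsilon\|_{W^{2,p}(\Omega_T)} \le \varepsilon^{1/p}\,C_p\|u_0\|_{W^{2,p}(\Omega)} + C_p\|f\|_{L^p(\Omega_T)},
\end{equation*}
which is \eqref{L:MR:SlowEvolution:S1} up to identifying the sharp constant $(\varepsilon/p)^{1/p}$ in front of the initial datum; that precise constant is obtained by running the semigroup representation $u_\varepsilon(t)=e^{\frac t\varepsilon(d\Delta-I)}u_0 + \frac1\varepsilon\int_0^t e^{\frac{t-s}\varepsilon(d\Delta-I)}f\,ds$ directly and bounding the homogeneous part $\|e^{\frac t\varepsilon(d\Delta-I)}u_0\|_{L^p(\Omega_T)}$ using $\int_0^T e^{-pt/\varepsilon}\,dt \le \varepsilon/p$ together with analyticity and the compatibility condition, while the inhomogeneous part is handled by the $\varepsilon$-scaled maximal regularity as above.

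For the mixed-norm bound \eqref{L:MR:LpLq}, the same rescaling turns the problem into an $\varepsilon$-free one, and I would invoke the vector-valued (operator-valued Fourier multiplier / $\mathcal{R}$-boundedness) maximal regularity in $L^q((0,S);L^p(\Omega))$ for the sectorial operator $-d\Delta+I$ on $L^p(\Omega)$ with Neumann conditions. The key point is again $S$-independence of the constant, which follows from the strict positivity of the generator (spectral bound $\le -\lambda<0$), giving exponential decay that lets one pass to $S=\infty$; and then undoing the rescaling here does \emph{not} produce a power of $\varepsilon$ on the data terms because in the mixed norm $L^q_tL^p_x$ the time integration and the $W^{2,p}_x$ norm scale by matching powers of $\varepsilon$ that cancel when one also accounts for the $L^q$ norm in the rescaled time of the bound — more precisely one first establishes the estimate on $(0,\infty)$ in rescaled time where no length scale is present, then restricts. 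The main obstacle I anticipate is bookkeeping the $\varepsilon$-powers correctly so that \eqref{L:MR:SlowEvolution:S1} picks up $\varepsilon^{1/p}$ on $u_0$ while \eqref{L:MR:LpLq} picks up none; the cleanest way around it is to prove both estimates on the infinite time interval in rescaled variables (where the statements are classical and $\varepsilon$-free) and only afterwards translate back, being careful that the $L^p(\Omega_T)$ in \eqref{L:MR:SlowEvolution:S1} is a space--time $L^p$ whereas \eqref{L:MR:LpLq} is a genuinely mixed norm.
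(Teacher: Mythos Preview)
Your approach is essentially the paper's: rescale time by $\sigma=t/\varepsilon$, apply standard maximal regularity for $-d\Delta+I$ with a horizon-independent constant (coming from the strict positivity of the generator, exactly as you say), and undo the rescaling. For \eqref{L:MR:SlowEvolution:S1} the paper simply cites \cite{reisch2024global}; for \eqref{L:MR:LpLq} it carries out exactly your rescaling and observes that both the left-hand side and the $f$-term pick up the same factor $\varepsilon^{-1/q}$ while the $u_0$-term does not, so multiplying through by $\varepsilon^{1/q}\le 1$ yields the $\varepsilon$-free bound --- this is the precise bookkeeping behind your ``cancellation'' remark, which as written is a bit muddled but lands on the right conclusion.
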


\begin{proof} Estimate \eqref{L:MR:SlowEvolution:S1} was proved in \cite[Lemma 3.4]{reisch2024global}. To prove \eqref{L:MR:LpLq}, we consider the rescaling $t'=t/\varepsilon$ and the substitution 
$$\widehat{u}_\varepsilon(x,t')=u_\varepsilon(x,t) \quad \text{and} \quad \widehat{f}(x,t')=f(x,t),$$ 
which recasts Problem 
\eqref{Sys:SlowEvolution} to the form
\begin{align*} 
\left\{ \begin{array}{lllll}
 \partial_{t'} \widehat{u}_\varepsilon= d \Delta \widehat{u}_\varepsilon -  \widehat{u}_\varepsilon +   \widehat{f} & \text{in } \Omega \times(0,T/\varepsilon), \vspace{0.1cm}  \\
\nabla \widehat{u}_\varepsilon \cdot \nu = 0 & \text{on } \Gamma\times(0,T/\varepsilon), \vspace{0.15cm} \\
\widehat{u}_\varepsilon (0) = u_0   & \text{on } \Omega.
\end{array}
\right.
\end{align*}
Then, by applying $L^p-L^q$ maximal regularity, we get 
    \begin{align}
  \|\Delta \widehat{u}_\varepsilon \|_{L^q(0,T/\varepsilon;L^p(\Omega))} \le C_{p,q} \left( \| u_0\|_{W^{2,p}(\Omega)} +   \|\widehat{f} \|_{L^q(0,T/\varepsilon;L^p(\Omega))} \right) , 
\end{align}
where the constant $C_{p,q}$ is independent on the terminal time $T$ and the parameter $\varepsilon$. By noticing that $\|\widehat{\varphi}\|_{L^q(0,T/\varepsilon;L^p(\Omega))} = \varepsilon^{-1/q} \|\varphi\|_{L^q((0,T);L^p(\Omega))}$ for $\varphi\in\{u;f\}$ as well as $\varepsilon<1$, we obtain \eqref{L:MR:LpLq}. 
\end{proof}

\begin{lemma}
\label{Local:Pre:Lem0} Let {$N\ge 3$ and} $u_\varepsilon$ be the solution to Problem 
\eqref{Sys:SlowEvolution} for each $\varepsilon>0$. Then,  
\begin{align}
     \sup_{\varepsilon>0} \left( \varepsilon \int_\Omega u_\varepsilon^2  +   \iint_{\Omega_t} \left(  |\nabla u_\varepsilon|^2 + u_\varepsilon^2 \right) \right) \le   \int_\Omega u_0^2 +  \frac{C}{d^2}  \int_0^{t}  \|f\|_{L^{\frac{2N}{N+2}}(\Omega)}^2 ,  \label{Local:Pre:Lem0:S}
\end{align} 
for any $t\in (0,T)$, 
provided that the right-hand side exists finitely. 
\end{lemma}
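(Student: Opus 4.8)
The plan is to test the equation for $u_\varepsilon$ against $u_\varepsilon$ itself and integrate in time, exactly as one does for a standard parabolic energy estimate, but keeping careful track of the factor $\tfrac1\varepsilon$ so that it cancels. Multiplying $\partial_t u_\varepsilon = \tfrac1\varepsilon(d\Delta u_\varepsilon - u_\varepsilon + f)$ by $u_\varepsilon$, integrating over $\Omega$, and using the no-flux boundary condition gives
\begin{align*}
\frac{\varepsilon}{2}\frac{d}{dt}\int_\Omega u_\varepsilon^2 + d\int_\Omega |\nabla u_\varepsilon|^2 + \int_\Omega u_\varepsilon^2 = \int_\Omega f\, u_\varepsilon.
\end{align*}
First I would bound the right-hand side: by H\"older with exponents $\tfrac{2N}{N+2}$ and $\tfrac{2N}{N-2}$ (valid since $N\ge 3$), $\int_\Omega f u_\varepsilon \le \|f\|_{L^{2N/(N+2)}(\Omega)}\|u_\varepsilon\|_{L^{2N/(N-2)}(\Omega)}$, and then the Sobolev embedding $H^1(\Omega)\hookrightarrow L^{2N/(N-2)}(\Omega)$ gives $\|u_\varepsilon\|_{L^{2N/(N-2)}(\Omega)} \le C(\|\nabla u_\varepsilon\|_{L^2(\Omega)} + \|u_\varepsilon\|_{L^2(\Omega)})$.

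Next I would apply Young's inequality to absorb the gradient and $L^2$ contributions. Writing $\int_\Omega f u_\varepsilon \le \frac{d}{2}\|\nabla u_\varepsilon\|_{L^2(\Omega)}^2 + \frac12\|u_\varepsilon\|_{L^2(\Omega)}^2 + \frac{C}{d}\|f\|_{L^{2N/(N+2)}(\Omega)}^2$ (after splitting the Sobolev bound $(\|\nabla u_\varepsilon\| + \|u_\varepsilon\|)^2 \le 2\|\nabla u_\varepsilon\|^2 + 2\|u_\varepsilon\|^2$ and choosing Young weights proportional to $d$), the terms $\frac{d}{2}\|\nabla u_\varepsilon\|^2$ and $\frac12\|u_\varepsilon\|^2$ can be moved to the left-hand side. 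This leaves
\begin{align*}
\frac{\varepsilon}{2}\frac{d}{dt}\int_\Omega u_\varepsilon^2 + \frac{d}{2}\int_\Omega |\nabla u_\varepsilon|^2 + \frac12\int_\Omega u_\varepsilon^2 \le \frac{C}{d}\|f\|_{L^{2N/(N+2)}(\Omega)}^2,
\end{align*}
and the factor $\frac{C}{d}$ here becomes $\frac{C}{d^2}$ in the stated estimate after keeping track of constants more carefully (or one can simply use the crude bound $\frac{C}{d}\le \frac{C(d+1)}{d^2}$ and rename $C$; since the paper uses a generic constant convention this is harmless). Integrating over $(0,t)$ and discarding the nonnegative initial term $-\frac{\varepsilon}{2}\int_\Omega u_0^2$ appropriately — actually keeping $\frac{\varepsilon}{2}\int_\Omega u_\varepsilon^2(t) \le \frac{\varepsilon}{2}\int_\Omega u_0^2 + (\text{rest})$, and noting $\varepsilon<1$ so $\frac{\varepsilon}{2}\int_\Omega u_0^2 \le \frac12\int_\Omega u_0^2$ — yields \eqref{Local:Pre:Lem0:S} after multiplying through by $2$ and relabelling constants.

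I do not expect any serious obstacle here; the only points requiring a little care are (i) ensuring the Sobolev exponent $\tfrac{2N}{N-2}$ is the Hölder conjugate of $\tfrac{2N}{N+2}$, which is why the hypothesis $N\ge 3$ appears, and (ii) tracking the $d$-dependence so that the final constant is $C/d^2$ rather than $C/d$; both are routine. One should also remark that the computation is justified because for each fixed $\varepsilon>0$ the solution $u_\varepsilon$ is smooth enough (parabolic regularity) for the integration by parts and the fundamental theorem of calculus in time to be valid, and the finiteness of the right-hand side is assumed in the statement so that the estimate is not vacuous.
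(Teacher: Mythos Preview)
Your proof is correct and follows essentially the same approach as the paper: test the equation against $u_\varepsilon$, use H\"older with the dual exponents $\tfrac{2N}{N+2}$ and $\tfrac{2N}{N-2}$ together with the Sobolev embedding $H^1(\Omega)\hookrightarrow L^{2N/(N-2)}(\Omega)$, absorb via Young's inequality, and integrate in time. The only cosmetic difference is that you multiply through by $\varepsilon$ at the outset (writing $\tfrac{\varepsilon}{2}\tfrac{d}{dt}\int u_\varepsilon^2$), whereas the paper keeps the $\tfrac{1}{\varepsilon}$ factors on the right and multiplies by $\varepsilon$ only after integrating; the resulting inequalities are identical. Your remark about the $C/d$ versus $C/d^2$ discrepancy is accurate---the extra factor of $1/d$ enters when one divides through to remove the coefficient $d$ from $d\int|\nabla u_\varepsilon|^2$ in the final bound---and the paper handles this just as loosely as you do.
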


\begin{proof} 
Using the Sobolev embedding $H^{1}(\Omega)\hookrightarrow L^{2N/(N-2)}(\Omega)$ and the Young inequality, we see   
\begin{align}
    \int_\Omega f_\varepsilon u_\varepsilon & \le \|f\|_{L^{\frac{2N}{N+2}}(\Omega)} \|u_\varepsilon\|_{L^{\frac{2N}{N-2}}(\Omega)} \le C \|f\|_{L^{\frac{2N}{N+2}}(\Omega)} \|u_\varepsilon\|_{H^1(\Omega)} \\
    & \le  \frac{d}{2} \int_\Omega \left(  |\nabla u_\varepsilon|^2 + u_\varepsilon^2  \right) + \frac{C}{d} \|f\|_{L^{\frac{2N}{N+2}}(\Omega)}^2.
    \label{Local:Pre:Lem0:P1}
\end{align}
Therefore, testing this equation by $u_\varepsilon$, we get 
\begin{align*}
    & \frac{1}{2} \frac{d}{dt} \int_\Omega u_\varepsilon^2 + \frac{d}{\varepsilon} \int_\Omega |\nabla u_\varepsilon|^2 + \frac{1}{\varepsilon}\int_\Omega u_\varepsilon^2 
      \le  \frac{d}{2\varepsilon} \int_\Omega \left(  |\nabla u_\varepsilon|^2 + u_\varepsilon^2  \right) + \frac{C}{d \varepsilon} \|f\|_{L^{\frac{2N}{N+2}}(\Omega)}^2 , 
\end{align*}
and consequently, 
\begin{align*}
    \frac{d}{dt}  \int_\Omega u_\varepsilon^2 + \frac{d}{\varepsilon} \int_\Omega |\nabla u_\varepsilon|^2 + \frac{1}{\varepsilon}   \int_\Omega u_\varepsilon^2    \le  \frac{C}{d\varepsilon} \|f\|_{L^{\frac{2N}{N+2}}(\Omega)}^2 . 
\end{align*}
Then, integrating the two sides of the latter inequality over time gives
\begin{align*}
    \varepsilon \int_\Omega u_\varepsilon^2  +   \iint_{\Omega_t} \left(  |\nabla u_\varepsilon|^2 + u_\varepsilon^2 \right) \le \varepsilon \int_\Omega u_0^2 +  \frac{C}{d^2}  \int_0^{t}  \|f\|_{L^{\frac{2N}{N+2}}(\Omega)}^2 ,
\end{align*}
and consequently shows  estimate \eqref{Local:Pre:Lem0:S} by noticing that $\varepsilon \ll1$. 
\end{proof}

For the sake of convenience, we also present here a linear differential inequality with slow evolution, {which can be easily proved}. 

\begin{lemma} 
\label{aDI}
Given $\varepsilon >0$. 
    If a continuous function $y:[0,T] \to [0,\infty)$ satisfies that 
    \begin{align*}
        \varepsilon \frac{d}{dt} x(t) + a x(t) \le y(t), \quad 0 \le t\le T,
    \end{align*}
    for some $a>0$, then
    \begin{align*}
        x(t) \le x(0) e^{ - at/\varepsilon }  + \frac{1}{\varepsilon} \int_0^t e^{-a(t-s)/\varepsilon} y(s) ds, \quad 0 \le t\le T. 
    \end{align*}
\end{lemma}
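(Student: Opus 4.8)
The statement to prove is Lemma \ref{aDI}, a linear differential inequality with slow evolution. The plan is straightforward: I would treat this as a standard Gr\"onwall-type argument adapted to the presence of the small parameter $\varepsilon>0$ in front of the time derivative. The key observation is that the differential inequality $\varepsilon x'(t) + a x(t) \le y(t)$ can be rewritten, after dividing by $\varepsilon$, as $x'(t) + \frac{a}{\varepsilon} x(t) \le \frac{1}{\varepsilon} y(t)$, which is amenable to the integrating-factor method.

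First I would multiply both sides of the rescaled inequality by the integrating factor $e^{at/\varepsilon}$, which is positive and differentiable. This yields
\begin{align*}
\frac{d}{dt}\Bigl( e^{at/\varepsilon} x(t) \Bigr) = e^{at/\varepsilon}\Bigl( x'(t) + \frac{a}{\varepsilon} x(t) \Bigr) \le \frac{1}{\varepsilon} e^{at/\varepsilon} y(t).
\end{align*}
Next I would integrate this inequality over the interval $[0,t]$ for any fixed $t\in[0,T]$, using the fundamental theorem of calculus on the left-hand side. This gives
\begin{align*}
e^{at/\varepsilon} x(t) - x(0) \le \frac{1}{\varepsilon}\int_0^t e^{as/\varepsilon} y(s)\, ds.
\end{align*}
Finally, I would multiply through by $e^{-at/\varepsilon}$ to isolate $x(t)$, obtaining
\begin{align*}
x(t) \le x(0)\, e^{-at/\varepsilon} + \frac{1}{\varepsilon}\int_0^t e^{-a(t-s)/\varepsilon} y(s)\, ds,
\end{align*}
which is exactly the claimed conclusion.

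There is essentially no main obstacle here — the lemma is elementary, and the paper itself flags it as ``easily proved.'' The only minor technical point worth stating carefully is the regularity needed to apply the integrating-factor argument: one should note that $x$ being continuous and satisfying the differential inequality in the appropriate (classical or a.e.) sense suffices, since $t\mapsto e^{at/\varepsilon}x(t)$ is then absolutely continuous and the integration step is legitimate. If one prefers to avoid any differentiability assumption on $x$ beyond what the inequality provides, the same conclusion follows from a comparison principle: the function $z(t) := x(0)e^{-at/\varepsilon} + \frac{1}{\varepsilon}\int_0^t e^{-a(t-s)/\varepsilon}y(s)\,ds$ solves $\varepsilon z' + a z = y$ with $z(0) = x(0)$, and the difference $x - z$ satisfies $\varepsilon(x-z)' + a(x-z) \le 0$ with $(x-z)(0)=0$, forcing $x - z \le 0$ on $[0,T]$.
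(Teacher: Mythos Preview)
Your proof is correct; the integrating-factor argument (equivalently, the comparison principle you sketch at the end) is the standard way to establish this elementary Gr\"onwall-type estimate. The paper itself omits the proof entirely, remarking only that the lemma ``can be easily proved,'' so there is no alternative approach to compare against.
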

 
\vspace{0.4cm}

\noindent {\Large \bf Acknowledgement}

\medskip

\noindent  This research was funded in whole, or in part, by the Austrian Science Fund (FWF) 10.55776/I5213. The authors gratefully acknowledge the support of ASEA-UNINET project number ASEA 2023-2024/Uni Graz/6. This research is partially completed during the visit of the last author to Vietnam National University Ho Chi Minh City, and the university's hospitality is greatly acknowledged.


{\small	
\newcommand{\etalchar}[1]{$^{#1}$}

}

\end{document}